\documentclass[reqno]{amsart}

\usepackage[utf8]{inputenc}
\usepackage{amsmath}
\usepackage{amssymb}
\usepackage{amsthm}
\usepackage{verbatim}
\usepackage{dsfont}
\usepackage[all]{xy}
\usepackage[usenames]{color}
\usepackage{amsfonts}
\usepackage{latexsym}
\usepackage{graphicx}
\usepackage[usenames,dvipsnames]{xcolor}
\usepackage{tikz}
\usepackage{tikz-cd}
\usepackage{todonotes}
\usepackage{hyperref}

\usepackage[capitalize]{cleveref}
%\newcommand{\newrefformat}[2]{}
% Cleveref definitions
\crefname{theorem}{Theorem}{Theorems}
\crefname{fact}{Fact}{Facts}
\crefname{note}{Note}{Notes}
\crefname{lemma}{Lemma}{Lemmas}
\crefname{alg}{Algorithm}{Algorithms}
\crefname{remark}{Remark}{Remarks}
\crefname{example}{Example}{Examples}
\crefname{prop}{Proposition}{Propositions}
\crefname{conj}{Conjecture}{Conjectures}
\crefname{cor}{Corollary}{Corollaries}
\crefname{defn}{Definition}{Definitions}
\crefname{equation}{\!\!}{\!\!} %Remove spacing around phantom equation name

%%%%%%%%%%%%%%%%%Oriented Webs Additions%%%%%%%%%%%%
%\usepackage{refcheck}

\usepackage[left=1in, top=1in, right=1in, bottom=1in]{geometry}

\usetikzlibrary{decorations.markings}
\usetikzlibrary{decorations.pathreplacing}
\usetikzlibrary{arrows,shapes,positioning}
\tikzstyle directed=[postaction={decorate,decoration={markings, mark=at position #1 with {\arrow[scale=1]{>}}}}]
\tikzstyle rdirected=[postaction={decorate,decoration={markings, mark=at position #1 with {\arrow[scale=1]{<}}}}]
\usepackage[all]{xy}
\usepackage{ytableau}
\setlength{\columnsep}{0pt}
\newcommand{\webs}{\mathfrak{q}(n)\text{-}\mathbf{Web}_{\up}}
\newcommand{\qwebs}{\mathfrak{q}\text{-}\mathbf{Web}_{\up}}

\newcommand{\websupdown}{\mathfrak{q}(n)\text{-}\mathbf{Web}_{\up\down}}
\newcommand{\qwebsupdown}{\mathfrak{q}\text{-}\mathbf{Web}_{\up\down}}

\newcommand{\frakh}{\mathfrak{h}}

\newcommand{\beq}{\begin{equation}}
\newcommand{\eeq}{\end{equation}}
\newcommand{\bea}{\begin{eqnarray*}}
\newcommand{\eea}{\end{eqnarray*}}
\newcommand{\bi}{\begin{itemize}}
\newcommand{\ei}{\end{itemize}}
\newcommand{\be}{\begin{enumerate}}
\newcommand{\ee}{\end{enumerate}}
\newcommand{\bc}{\begin{center}}
\newcommand{\ec}{\end{center}}
\newcommand{\bt}{\begin{tikzpicture}}
\newcommand{\et}{\end{tikzpicture}}
\newcommand{\nfrac}[2]{\genfrac{}{}{0pt}{}{#1}{#2}}
\newcommand{\bma}{\begin{bmatrix}}
\newcommand{\ema}{\end{bmatrix}}

\newcommand{\Vt}{V}
\renewcommand{\star}{\circledast}
\newcommand{\fqt}{\mathfrak{q}}
\newcommand{\s}{\mathcal{S}}
\newcommand{\mods}{\mathfrak{q}(n)\text{-}\mathbf{Mod}_{\mathcal{S}}}
\newcommand{\sort}{\operatorname{sort}}
\newcommand{\modsupdown}{\mathfrak{q}(n)\text{-}\mathbf{Mod}_{\mathcal{S},\mathcal{S}^*}}
\newcommand{\A}{\mathbb{A}}
\newcommand{\OBC}{\mathcal{OBC}}
\newcommand{\AOBC}{\mathcal{AOBC}}
\newcommand{\ul}{\underline}

\newcommand{\lcap}{
\begin{tikzpicture}[baseline = 3pt, scale=0.5, color=\clr]
        \draw[-,thick] (1,0) to[out=up, in=right] (0.53,0.5) to[out=left, in=right] (0.47,0.5);
        \draw[->,thick] (0.49,0.5) to[out=left,in=up] (0,0);
\end{tikzpicture}
}
\newcommand{\lcup}{
\begin{tikzpicture}[baseline = 6pt, scale=0.5, color=\clr]
        \draw[-,thick] (1,1) to[out=down, in=right] (0.53,0.5) to[out=left, in=right] (0.47,0.5);
        \draw[->,thick] (0.49,0.5) to[out=left,in=down] (0,1);
\end{tikzpicture}
}

\newcommand{\swap}{
\begin{tikzpicture}[baseline = 3pt, scale=0.5, color=\clr]
        % \draw[-,thick] (0,0) to[out=up, in=down] (1,0.85);
        % \draw[-,thick] (1,0) to[out=up, in=down] (0,0.85);
        % \draw[->,thick] (1,0.8) to[out=up, in=down] (1,1);
        % \draw[->,thick] (0,0.8) to[out=up, in=down] (0,1);
        \draw[->,thick] (0,0) to[out=up, in=down] (1,1);
        \draw[->,thick] (1,0) to[out=up, in=down] (0,1);
\end{tikzpicture}
}

\newcommand{\cldot}{
\begin{tikzpicture}[baseline = 3pt, scale=0.5, color=\clr]
        \draw[->,thick] (0,0) to[out=up, in=down] (0,1);
        \draw (0,0.4)\wdot;
\end{tikzpicture}
}

\newcommand{\clr}{rgb:black,1;blue,4;red,1}
\newcommand{\wdot}{ node[circle, draw, color=\clr, fill=white, thick, inner sep=0pt, minimum width=4pt]{}}

\newcommand{\modt}{\mathfrak{q}(n)\text{-}\mathbf{Mod}_{V, V^{*}}}

%%%%%%%%%%%%%%%%%%%%%%%%%%%

\makeatletter

\newtheorem{theorem}{Theorem}[subsection]

\newtheorem{lemma}[theorem]{Lemma}
\newtheorem{proposition}[theorem]{Proposition}
\newtheorem{corollary}[theorem]{Corollary} 
\newtheorem{definition}[theorem]{Definition}

\newtheorem{remark}[theorem]{Remark}

\@addtoreset{equation}{section}

%%%%%%%%%%%%%%% symbols shorthand %%%%%%%%%%%  

\newcommand{\End}{\operatorname{End}}
 
\newcommand{\Hom}{\operatorname{Hom}}

\newcommand{\id}{\text{id}}
\newcommand{\gl}{\mathfrak{gl}}

\newcommand{\Z}{\mathbb{Z}}

\newcommand{\C}{\k}

\newcommand{\cat}{\mathcal}

\newcommand{\arup}[1]{\stackrel{#1}{\longrightarrow}}
\newcommand{\larup}[1]{\stackrel{#1}{\longleftarrow}}
\newcommand{\onto}{\twoheadrightarrow}

\newcommand{\up}{\uparrow}
\newcommand{\down}{\downarrow}
\newcommand{\ob}[1]{\mathsf{#1}}

\newcommand{\unit}{\mathds{1}}

\newcommand{\col}{\operatorname{col}}
\newcommand{\q}{\mathfrak{q}}
\newcommand{\0}{\bar{0}}
\renewcommand{\1}{\bar{1}}

\newcommand{\ibar}{{\bar{\imath}}}
\newcommand{\jbar}{{\bar{\jmath}}}

\renewcommand{\k}{\mathbb{C}}

\newcommand{\ev}{\operatorname{ev}}
\newcommand{\coev}{\operatorname{coev}}
\newcommand{\Ser}{\operatorname{Ser}}

\newcommand{\fg}{\mathfrak{g}}
\newcommand{\fq}{\mathfrak{q}}
\newcommand{\fh}{\mathfrak{h}}

\newcommand{\fb}{\mathfrak{b}}
\newcommand{\Id}{\operatorname{Id}}

\newcommand{\svec}{\mathfrak{svec}}
\newcommand{\usvec}{{\underline{\mathfrak{svec}}}}

\newcommand{\gsmod}{\fg\operatorname{-smod}}

\newcommand{\p}[1]{|#1|}

\newcommand{\U}{U}

\newcommand{\frakS}{\Sigma}

\newcommand{\SP}{\mathcal{SP}}
\newcommand{\Udot}{\dot{\U}}
\newcommand{\bUdot}{\dot{\mathbf{U}}}
\newcommand{\etilde}{e}
\newcommand{\ftilde}{f}
\newcommand{\htilde}{h}
\newcommand{\uddoublearrow}{\vert}

%%%%%%%%%%%%%%%%%%%%%%%%%%%%%

\begin{document}

\title{Webs of Type Q}

\author{Gordon C. Brown }
\address{Fort Worth, TX}
\email{gcbrown@utexas.edu}
\author{Jonathan R. Kujawa}
\address{Department of Mathematics \\
          University of Oklahoma \\
          Norman, OK 73019}
\thanks{Research of the first author was partially supported by NSA grant H98230-11-1-0127.  Research of the second author was partially supported by NSA grant H98230-11-1-0127 and a Simons Collaboration Grant for Mathematicians.}\
\email{kujawa@math.ou.edu}

\date{\today}
\subjclass[2010]{Primary 17B10, 18D10}
%\keywords{TBD}

\begin{abstract}
We introduce web supercategories of type Q.  We describe the structure of these categories and show they have a symmetric braiding.  The main result of the paper shows these diagrammatically defined monoidal supercategories provide combinatorial models for the monoidal supercategories generated by the supersymmetric tensor powers of the natural supermodule for the Lie superalgebra of type Q. 
\end{abstract}

\maketitle  

%\tableofcontents

\section{Introduction} 

\subsection{Background}  The enveloping algebra of a complex Lie algebra, $U(\fg)$, is a Hopf algebra.  Consequently one can consider the tensor product and duals of $U(\fg)$-modules.  In particular, the category of finite-dimensional $U(\fg)$-modules is naturally a monoidal $\C$-linear category.  In trying to understand the representation theory of $U(\fg)$ it is natural to ask if one can describe this category (or well-chosen subcategories) as a monoidal category.  Kuperberg did this for the rank two Lie algebras by giving a diagrammatic presentation for the monoidal subcategory generated by the fundamental representations  \cite{Ku}.  The rank one case follows from work of Rumer, Teller, and Weyl \cite{RTW}.  For $\fg=\mathfrak{sl}(n)$ a diagrammatic presentation for the monoidal category generated by the fundamental representations was conjectured for $n=4$ by Kim in \cite{Ki} and for general $n$ by Morrison \cite{Mo}.  

Recently, Cautis-Kamnitzer-Morrison gave a complete combinatorial description of the monoidal category generated by the fundamental representations of $U(\mathfrak{sl}(n))$  \cite{CKM}; that is, the full subcategory of all modules of the form $\Lambda^{k_{1}}(V_{n}) \otimes \dotsb \otimes \Lambda^{k_{t}}(V_{n})$ for $t, k_{1}, \dotsc , k_{t} \in \Z_{\geq 0}$, where $V_{n}$ is the natural module for $\mathfrak{sl}(n)$.  Perhaps of greater significance is the method of proof used in \emph{loc cit}.  They show that a diagrammatic description of the category follows from a skew Howe duality between $\mathfrak{sl}(m)$ and $\mathfrak{sl}(n)$. More generally,  centralizing actions are now understood to naturally lead to presentations of monoidal categories. This philosophy has since been applied in a number of settings.  For example, Tubbenhauer-Vaz-Wedrich give a presentation of the monoidal category of $\gl (n)$-modules generated by the exterior and symmetric powers of the natural module \cite{TVW}. See \cite{RT, ST, QS} for further examples.

\subsection{Webs of Type Q}  The setting of this paper is $\C$-linear monoidal \emph{super}categories.  A supercategory is a category enriched over the category of superspaces and satisfying graded versions of the usual axioms for a $\C$-linear monoidal category.  For example, the interchange law now requires a sign according to the parity of the morphisms (see \cref{super-interchange} for a diagrammatic version).  

In \cref{D:UpwardWebs} we introduce the monoidal supercategory of (unoriented) webs of type $Q$, $\qwebs$.  The objects are words from the set 
\[
\left\{ \uparrow_{k} \mid k\in \Z_{\geq 0} \right\}.  
\]  The generating morphisms are certain diagrams which we call dots, merges, and splits.

\iffalse
\begin{equation*}
\xy
(0,0)*{
\begin{tikzpicture}[color=\clr, scale=1]
	\draw[color=\clr, thick, directed=1] (1,0) to (1,1);
	\node at (1,-0.15) {\scriptsize $k$};
	\node at (1,1.15) {\scriptsize $k$};
	\draw (1,0.5) \wdot;
\end{tikzpicture}
};
\endxy \ ,\quad\quad
\xy
(0,0)*{
\begin{tikzpicture}[color=\clr, scale=.35]
	\draw [color=\clr,  thick, directed=1] (0, .75) to (0,2);
	\draw [color=\clr,  thick, directed=.65] (1,-1) to [out=90,in=330] (0,.75);
	\draw [color=\clr,  thick, directed=.65] (-1,-1) to [out=90,in=210] (0,.75);
	\node at (0, 2.5) {\scriptsize $k\! +\! l$};
	\node at (-1,-1.5) {\scriptsize $k$};
	\node at (1,-1.5) {\scriptsize $l$};
\end{tikzpicture}
};
\endxy \ ,\quad\quad
\xy
(0,0)*{
\begin{tikzpicture}[color=\clr, scale=.35]
	\draw [color=\clr,  thick, directed=.65] (0,-0.5) to (0,.75);
	\draw [color=\clr,  thick, directed=1] (0,.75) to [out=30,in=270] (1,2.5);
	\draw [color=\clr,  thick, directed=1] (0,.75) to [out=150,in=270] (-1,2.5); 
	\node at (0, -1) {\scriptsize $k\! +\! l$};
	\node at (-1,3) {\scriptsize $k$};
	\node at (1,3) {\scriptsize $l$};
\end{tikzpicture}
}; .
\endxy
\end{equation*} 
\fi

 The $\Z_{2}$-grading is given by declaring dots to be of parity $\1$ and the merges and splits to be of parity $\0$.  As customary for diagrammatic categories, composition is given by vertical concatenation and tensor product is given by horizontal concatenation.  A diagram obtained by a finite sequence of these operations is called a \emph{web} and a general morphism is a linear combination of webs.  Note, our convention is to read diagrams from bottom to top.   For example, the following web is a morphism from $\uparrow_{4}\uparrow_{9}\uparrow_{6}\uparrow_{7}$ to $\uparrow_{6}\uparrow_{5}\uparrow_{1}\uparrow_{4}\uparrow_{8}\uparrow_{2}$:
\[
\xy
(0,0)*{
\bt[color=\clr, scale=1.2]
	\node at (2,0) {\scriptsize $4$};
	\node at (3,0) {\scriptsize $9$};
	\node at (4,0) {\scriptsize $6$};
	\node at (4.75,0) {\scriptsize $7$};
	\node at (2,2.9) {\scriptsize $6$};
	\node at (2.75,2.9) {\scriptsize $5$};
	\node at (3.25,2.9) {\scriptsize $1$};
	\node at (3.75,2.9) {\scriptsize $4$};
	\node at (4.375,2.9) {\scriptsize $8$};
	\node at (5,2.9) {\scriptsize $2$};
	\draw [thick, directed=0.65] (2,0.15) to (2,0.5);
	\draw [thick, directed=0.65] (3,0.15) to (3,0.5);
	\draw [thick, ] (4,0.15) to (4,0.5);
	\draw [thick, directed=0.75] (4.75,0.15) to (4.75,1);
	\draw [thick, directed=0.65] (2,0.5) [out=30, in=330] to (2,1.25);
	\draw [thick, directed=0.65] (2,0.5) [out=150, in=210] to (2,1.25);
	\draw [thick, directed=0.65] (2,1.25) [out=90, in=210] to (2.375,1.75);
	\draw [thick, ] (3,0.5) [out=150, in=270] to (2.75,1);
	\draw [thick, directed=0.65] (3,0.5) to (3.75,1);
	\draw [thick, ] (2.75,1) to (2.75,1.25);
	\draw [thick, directed=1] (3.5,2) [out=30, in=270] to (3.75,2.75);
	\draw [thick, directed=1] (3.5,2) [out=150, in=270] to (3.25,2.75);
	\draw [thick, directed=0.01] (2.75,1.25) [out=90, in=330] to (2.375,1.75);
	\draw [thick, directed=0.65] (2.375,1.75) to (2.375,2.25);
	\draw [thick, directed=1] (2.375,2.25) [out=30, in=270] to (2.75,2.75);
	\draw [thick, directed=1] (2.375,2.25) [out=150, in=270] to (2,2.75);
	\draw [thick, directed=0.45] (4,0.5) [out=90, in=330] to (3.75,1);
	\draw [thick, directed=0.65] (3.75,1) to (3.75,1.5);
	\draw [thick, directed=0.65] (3.75,1.5) [out=150, in=270] to (3.5,2);
	\draw [thick, ] (3.75,1.5) [out=30, in=270] to (4,2);
	\draw [thick, directed=0.01] (4,2) [out=90, in=210] to (4.375,2.5);
	\draw [thick, directed=0.65] (4.75,1) [out=30, in=330] to (4.75,1.75);
	\draw [thick, directed=0.65] (4.75,1) [out=150, in=210] to (4.75,1.75);
	\draw [thick, directed=0.65] (4.75,1.75) to (4.75,2.125);
	\draw [thick, directed=0.65] (4.75,2.125) to (4.375,2.5);
	\draw [thick, ] (4.75,2.125) [out=30, in=270] to (5,2.5);
	\draw [thick, directed=1] (5,2.5) to (5,2.75);
	\draw [thick, directed=1] (4.375,2.5) to (4.375,2.75);
	\node at (1.575,0.95) {\scriptsize $3$}; 
	\node at (2.4,0.95) {\scriptsize $1$}; 
	\node at (2.975,1.2) {\scriptsize $7$}; 
	\node at (2.65,2) {\scriptsize $11$}; 
	\node at (1.9,1.65) {\scriptsize $4$}; 
	\node at (3.5,0.575) {\scriptsize $2$}; 
	\node at (3.525,1.25) {\scriptsize $8$}; 
	\node at (3.35,1.65) {\scriptsize $5$}; 
	\node at (4.2,1.925) {\scriptsize $3$}; 
	\node at (4.95,1.925) {\scriptsize $7$}; 
	\node at (5.15,1.45) {\scriptsize $5$}; 
	\node at (4.35,1.45) {\scriptsize $2$}; 
	\node at (4.65,2.5) {\scriptsize $5$}; 
	\draw (1.85,0.65) \wdot;
	\draw (2.765,0.85) \wdot;
	\draw (2.65,1.525) \wdot;
	\draw (4.75,0.45) \wdot;
	\draw (4.95,2.3) \wdot;
	\draw (3.925,1.65) \wdot;
	\draw (3.25,2.45) \wdot;
\et
};
\endxy
\]
Morphisms are subject to an explicit list of diagrammatic relations.  See \cref{D:UpwardWebs} for details. 

In \cref{D:orientedwebs} we introduce the oriented version, $\qwebsupdown $.  It is the monoidal supercategory whose objects are all words from the set 
\[
\left\{\up_{k}, \down_{k} \mid k \in \Z_{\geq 0} \right\}.
\]  The generating morphisms of $\websupdown$ are the dots, merges, and splits from before along with two new morphisms called cups and caps.
\iffalse
\[ 
\xy
(0,0)*{
\bt[scale=.35, color=\clr]
	\draw [ thick, looseness=2, directed=0.99] (1,2.5) to [out=270,in=270] (-1,2.5);
	\node at (-1,3) {\scriptsize $k$};
	\node at (1,3) {\scriptsize $k$};
\et
};
\endxy \ ,\quad\quad
\xy
(0,0)*{
\bt[scale=.35, color=\clr]
	\draw [ thick, looseness=2, directed=0.99] (1,2.5) to [out=90,in=90] (-1,2.5);
	\node at (-1,2) {\scriptsize $k$};
	\node at (1,2) {\scriptsize $k$};
\et
}; .
\endxy 
\]
\fi
 Cups and caps are declared to have parity $\0$.  Again morphisms are linear combinations of webs and are subject to an explicit list of diagrammatic relations.

In \cref{S:Upward Webs of Type Q,S:OrientedWebs} we describe the structure of these categories.  Let $\uparrow_{1}^{k}$ denote the $k$-fold tensor product $\uparrow_{1}\uparrow_{1}\dotsb \uparrow_{1}$. An important ingredient in our analysis is a thorough understanding of the endomorphism algebra 
\[
\End_{\qwebs}(\uparrow_{1}^{k}) \cong \End_{\qwebsupdown }(\uparrow_{1}^{k})
\]
for every $k \geq 1$.  We show it is isomorphic to the Sergeev algebra, $\Ser_{k}$, and that it admits a natural analogue of the Jones-Wenzl projector which we call the \emph{Clasp idempotent}, $Cl_{k}$.  Using these we also introduce braiding isomorphisms and show these categories are symmetric braided monoidal supercategories.

\subsection{The Lie superalgebra of type Q}  Let $\fq (n)$ be the Lie superalgebra of $2n \times 2n$ complex matrices of the form 
\begin{equation*}
\fq(n)=\left\{\left(\begin{matrix}A & B \\
                            B & A 
\end{matrix} \right)\right\},
\end{equation*} where $A$ and $B$ are $n \times n$ matrices and where the Lie bracket is given by the graded matrix commutator.  This is a type $Q$ Lie superalgebra.  

The representations of $\fq (n)$ do not have a classical analogue. Despite the important early work done by Penkov-Serganova, Brundan, and others to obtain character formulas and other information (see \cite{PS,Br} and references therein), the representation theory in type Q remains mysterious.  For example, only very recently the structure of category $\mathcal{O}$ for $\fq(n)$ became clear thanks to the work of Chen \cite{Chen}, Cheng-Kwon-Wang \cite{CKW}, and Brundan-Davidson \cite{BD2,BD3}. 

If $V_{n}$ denotes the natural $\fq (n)$-module given by column vectors of height $2n$, then we can consider the degree $k$ supersymmetric tensors $S^{k}(V_{n})$.  Let $\mods$ and $\modsupdown$ denote the monoidal supercategory generated by the supersymmetric tensors, and supersymmetric tensors and their duals, respectively.  Note we allow all morphisms, not just grading preserving ones. This causes new phenomena to arise.  For example, $V_{n}$ is a simple $\fq (n)$-module and yet Schur's Lemma in this setting implies $\End_{\fq (n)}(V_{n})$ is two-dimensional. Similarly, for all $k\geq 1$ there is an odd isomorphism between $S^{k}(V_{n})$ and $\Lambda^{k}(V_{n})$.  Hence, our categories also contain the exterior powers of the natural module.  The main goal of the present work is to show the supercategories $\qwebs$ and $\qwebsupdown$ provide combinatorial models for $\mods$ and $\modsupdown$, respectively, in the spirit of the work of Kuperberg, Cautis-Kamnitzer-Morrison, et al.  
\subsection{Main Results}

Using a Howe duality for $\fq (m)$  and $\fq (n)$ first introduced by Cheng-Wang \cite{CW1} and the philosophy of Cautis-Kamnitzer-Morrison, we provide essentially surjective, full functors of monoidal supercategories 
\begin{align*} \notag
\Psi_{n}: & \qwebs  \to \mods, \\
\Psi_{n}: & \qwebsupdown   \to \modsupdown.
\end{align*}  On objects $\Psi_{n}(\uparrow_{k}) = S^{k}(V_{n})$ and $\Psi_{n}(\downarrow_{k}) = S^{k}(V_{n})^{*}$.  For example, 
\[
\Psi_{n}(\uparrow_{2}\downarrow_{3}\uparrow_{5}\uparrow_{3}) = S^{2}(V_{n}) \otimes S^{3}(V_{n})^{*} \otimes S^{5}(V_{n}) \otimes S^{3}(V_{n}).
\]  The description of $\Psi_{n}$ on morphisms is equally explicit.  See \cref{psi-functor,T:Psiupdown}.

For $n \geq 1$ and $k=(n+1)(n+2)/2$, we follow Sergeev and introduce an explicit quasi-idempotent $e_{\lambda(n)} \in \End_{\qwebs}(\uparrow_{1}^{k}) \cong \End_{\qwebsupdown}(\uparrow_{1}^{k})$.  Define $\webs$ and $\websupdown$ to be, respectively, the monoidal supercategories given by the same generators and relations as $\qwebs$ and $\qwebsupdown$ along with the additional relation,
\[
e_{\lambda(n)}=0.
\]

The main result of the paper is given in \cref{T:maintheorem}. We show the above functors induce equivalences of monoidal supercategories 
\begin{align}\label{E:mainresultintro} \notag
\Psi_{n}: & \webs  \xrightarrow{\cong} \mods, \\
\Psi_{n}: & \websupdown   \xrightarrow{\cong} \modsupdown.
\end{align}
In short, the categories $\webs$ and $\websupdown$ provide complete combinatorial models for these categories of $\fq (n)$-modules.  Recall, the Karoubi envelope of $\modsupdown$ is the category of all finite-dimensional $\fq(n)$-modules which appear as a direct summand of a direct sum of tensor products of such modules. In principle, all such $\fq (n)$-modules can be studied using the combinatorics of webs.

\subsection{Related Work}  This paper is part of a larger program to develop diagrammatic categories of type $Q$. That is, monoidal supercategories which have odd morphisms which square to a non-zero scalar multiple of the identity.  In \cite{CK} Comes and the second author introduced the oriented Brauer-Clifford category $\OBC$ and its affine analogue $\AOBC$.  The category $\OBC$ can be viewed as the special case of oriented webs where all edges have label $1$.  One corollary of the present work is an analogue of \cref{E:mainresultintro} for $\OBC$.  In other work the first author uses the category $\qwebs$ to give a complete combinatorial description of the full subcategory of permutation modules for the Sergeev algebra \cite{Br}.  Forthcoming work by the authors and Davidson will use quantum webs of type $Q$ to obtain analogues of the results of this paper for the quantized enveloping algebra associated to $\fq (n)$ \cite{BDK}.  Finally, in a somewhat different direction, Comes and the second author will show diagrammatic categories of type $Q$ can be used to categorify the twisted Heisenberg algebra at arbitrary integral level \cite{CK2}.

\subsection{Acknowledgements}  The authors are pleased to acknowledge Jonathan Comes, Nicholas Davidson, and Daniel Tubbenhauer for helpful discussions during this project.  Some of those conversations occured while the second author visited the Hausdorff Institute and the Mittag-Leffler Institute. He is grateful for their hospitality.

\section{Monoidal supercategories}\label{section: monoidal supercategories}
In this section we give a brief introduction to monoidal supercategories following \cite{BE, CK}. We refer the reader to \emph{op.~cit.}~for further details.

\subsection{Superspaces}\label{SS:superspaces} Throughout the ground field will be the complex numbers, $\k$.  A \emph{superspace} $V = V_{\0} \oplus V_{\1}$ is a $\Z_{2}$-graded $\k$-vector space.  Elements of $V_{\0}$ (resp.\ $V_{\1}$) are said to have parity $\0$ or to be \emph{even} (resp.\ parity $\1$ or \emph{odd}). Given a homogeneous element $v \in V$ we write $\p{v} \in \Z_{2}$ for the parity of the element. Given two superspaces $V$ and $W$, the set of all linear maps $\Hom_{\k}(V,W)$ is naturally $\Z_{2}$-graded by declaring that $f:V \to W$ has parity $\varepsilon \in \Z_{2}$ if $f(V_{\varepsilon'}) \subseteq V_{\varepsilon+\varepsilon'}$ for all $\varepsilon'\in  \Z_{2}$. Let $\svec$ denote the category of all superspaces with $\Hom_\svec(V,W)=\Hom_\k(V,W)$; in particular, we allow maps which do not preserve the $\Z_{2}$-grading.  Let $\usvec$ denote the underlying purely even category; that is, the subcategory of $\svec$ consisting of all $\k$-superspaces, but only the grading preserving linear maps.

Given superspaces $V$ and $W$, the tensor product $V \otimes W$ as vector spaces is also naturally a superspace with $\Z_{2}$-grading given by declaring $\p{v\otimes w}= \p{v}+\p{w}$ for all homogeneous $v \in V$ and $w \in W$.   The tensor product of linear maps between superspaces is defined via $(f\otimes g)(v\otimes w)=(-1)^{\p{g}\p{v}}f(v)\otimes g(w)$. This gives $\usvec$ (but \emph{not} $\svec$) the structure of a monoidal category with $\unit=\k$ (viewed as superspace concentrated in even parity). The graded flip map $v\otimes w\mapsto(-1)^{\p{v}\p{w}}w\otimes v$ gives $\usvec$ the structure of a symmetric monoidal category. Here and elsewhere we write the formula only for homogeneous elements with the general case given by extending linearly.

\subsection{Monoidal supercategories}\label{SS: monoidal supercats}    By a \emph{supercategory} we mean a category enriched in $\usvec$. Similarly, a \emph{superfunctor} is a functor enriched in $\usvec$.  Unless otherwise stated, all functors in this paper will be superfunctors which preserve the parity of homogeneous morphisms.  Given two supercategories $\cat{A}$ and $\cat{B}$, there is a supercategory $\cat{A}\boxtimes\cat{B}$ whose objects are pairs $(\ob a,\ob b)$ of objects $\ob a\in\cat{A}$ and $\ob b\in\cat{B}$ and whose morphisms are given by the tensor product of superspaces $\Hom_{\cat{A}\boxtimes\cat{B}}((\ob a,\ob b),(\ob a',\ob b'))=\Hom_\cat{A}(\ob a,\ob a')\otimes\Hom_\cat{B}(\ob b,\ob b')$ with composition defined using the symmetric braiding on $\usvec$: 
\begin{equation}\label{E:superinterchange}
(f\otimes g)\circ(h\otimes k)=(-1)^{\p{g}\p{h}}(f\circ h)\otimes(g\circ k).
\end{equation}  

By a \emph{monoidal supercategory} we mean a supercategory $\cat{A}$ equipped with a functor $-\otimes-:\cat{A}\boxtimes\cat{A}\to\cat{A}$, a unit object $\unit$, and even supernatural isomorphisms $(-\otimes-)\otimes-\arup{\sim}-\otimes(-\otimes-)$ and $\unit\otimes-\arup{\sim}-\larup{\sim}-\otimes\unit$ called \emph{coherence maps} satisfying certain axioms analogous to the ones for a monoidal category. A monoidal supercategory is called \emph{strict} if its coherence maps are identities.  
A \emph{monoidal functor} between two monoidal supercategories $\cat{A}$ and $\cat{B}$ is a functor $F:\cat{A}\to\cat{B}$ equipped with an even supernatural isomorphism $(F-)\otimes (F-)\arup{\sim}F(-\otimes-)$ and an even isomorphism $\unit_\cat{B}\arup{\sim}F\unit_\cat{A}$ satisfying axioms analogous to the ones for a monoidal functor. 

A \emph{braided monoidal supercategory} is a monoidal supercategory $\cat{A}$ equipped with a $\usvec$-enriched version of a braiding. More precisely, let $T:\cat{A}\boxtimes\cat{A}\to\cat{A}$ denote the functor defined on objects by $(\ob a,\ob b)\mapsto \ob b\otimes\ob a$ and on morphisms by $f\otimes g\mapsto (-1)^{\p{f}\p{g}}g\otimes f$. A braiding on $\cat{A}$ is a supernatural isomorphism $\gamma:-\otimes-\to T$ satisfying the usual hexagon axioms. A \emph{symmetric monoidal supercategory} is a braided monoidal supercategory $\cat{A}$ with $\gamma_{\ob a,\ob b}^{-1}=\gamma_{\ob b,\ob a}$ for all objects $\ob a,\ob b\in\cat{A}$.  In this paper all braidings will be even.

Given a monoidal supercategory $\cat{A}$ and an object $\ob a\in\cat{A}$, by a \emph{(left) dual} to $\ob a$ we mean an object $\ob a^*$ equipped with \emph{evaluation} and \emph{coevaluation} morphisms $\ev_{\ob{a}}: \ob a^*\otimes \ob a\to\unit$ and $\coev_{\ob{a}}: \unit\to\ob a\otimes\ob a^*$, respectively, in which $\ev_{\ob{a}}$ and $\coev_{\ob{a}}$ have the same parity and satisfy the super version of the usual adjunction axioms.  In our case the evaluation and coevaluation maps will be even and, hence, satisfy the usual adjunction axioms (see \cref{straighten-zigzag} for a diagrammatic example). For example, given a finite dimensional superspace $V$ with homogeneous basis $\{v_i~|~i\in I\}$, then $V^*=\Hom_\k(V,\k)$ with the evaluation and coevaluation given by $f\otimes v\mapsto f(v)$ and $1\mapsto\sum_{i\in I}v_i\otimes v^*_i$ respectively, where $v^*_i\in V^*$ is defined by $v_i^*(v_j)=\delta_{i,j}$.  A monoidal supercategory in which every object has a (left) dual is called \emph{(left) rigid}.

The following two examples will be relevant for this paper.

\begin{itemize} 
    \item[(i)] The tensor product and braiding defined in \cref{SS:superspaces} give $\svec$ the structure of a symmetric monoidal supercategory with $\unit=\k$ (viewed as a superspace concentrated in parity $\0$).  The symmetric braiding  $M \otimes N \to N \otimes M$ is given by the graded flip map.  The full subcategory of finite-dimensional superspaces is rigid.

    \item[(ii)] Given a Lie superalgebra $\fg = \fg_{\0}\oplus \fg_{\1 }$ over the field $\k$, let $\gsmod$ denote the category of all $\fg$-supermodules.  That is, superspaces $M = M_{\0}\oplus M_{\1}$ with an action by $\fg$ which respects the grading in the sense that $\fg_{\varepsilon}.M_{\varepsilon'} \subseteq M_{\varepsilon+\varepsilon'}$.  The tensor product $M \otimes N$ has action given by $x.(m\otimes n) = (x.m) \otimes n + (-1)^{\p{x}\p{m}}m \otimes (x.n)$ for all homogeneous $x \in \fg$, $m\in M$, and $n\in N$ and the graded flip map provides a symmetric braiding.  The unit object $\unit$ is the ground field $\k$ concentrated in parity $\0$ and with trivial $\fg$-action.  In this way $\gsmod$ is a symmetric monoidal supercategory. The full subcategory of finite-dimensional $\fg$-supermodules is rigid with the action given on $M^{*}$ by $(x.f)(m) = -(-1)^{\p{x}\p{f}}f(x.m)$.
\end{itemize}

When working with monoidal supercategories it will sometimes be convenient to use the following notation. Given objects $\ob a$ and $\ob b$ in a monoidal supercategory, we write $\ob a\ob b:=\ob a\otimes\ob b$. We will also write ${\ob a}^r:=\underbrace{\ob a\otimes\cdots\otimes\ob a}_{r\text{ times}}$.

\subsection{String calculus}\label{SS:stringcalculus}
There is an established string calculus for strict monoidal supercategories.  We briefly describe it here and refer the reader to \cite{BE} for details. A morphism $f:\ob a\to \ob b$ is drawn as
\begin{equation*}
    \begin{tikzpicture}[baseline = 12pt,scale=0.5,color=\clr,inner sep=0pt, minimum width=11pt]
        \draw[-,thick] (0,0) to (0,2);
        \draw (0,1) node[circle,draw,thick,fill=white]{$f$};
        \draw (0,-0.2) node{$\ob a$};
        \draw (0, 2.3) node{$\ob b$};
    \end{tikzpicture}
    \qquad\text{or simply as}\qquad
    \begin{tikzpicture}[baseline = 12pt,scale=0.5,color=\clr,inner sep=0pt, minimum width=11pt]
        \draw[-,thick] (0,0) to (0,2);
        \draw (0,1) node[circle,draw,thick,fill=white]{$f$};
    \end{tikzpicture}
\end{equation*}
when the objects are left implicit.  Notice that the convention used in this paper is to read diagrams from bottom to top. 
The products of morphisms $f\otimes g$ and $f\circ g$ are given by horizontal and vertical stacking respectively:
\begin{equation*}
    \begin{tikzpicture}[baseline = 19pt,scale=0.5,color=\clr,inner sep=0pt, minimum width=11pt]
        \draw[-,thick] (0,0) to (0,3);
        \draw[-,thick] (2,0) to (2,3);
        \draw (1,1.5) node[color=black]{$\otimes$};
        \draw (0,1.5) node[circle,draw,thick,fill=white]{$f$};
        \draw (2,1.5) node[circle,draw,thick,fill=white]{$g$};
        % \draw (0,-0.2) node{$\ob c$};
        % \draw (0, 3.3) node{$\ob d$};
        % \draw (2,-0.2) node{$\ob a$};
        % \draw (2, 3.3) node{$\ob b$};
    \end{tikzpicture}
    ~=~
    \begin{tikzpicture}[baseline = 19pt,scale=0.5,color=\clr,inner sep=0pt, minimum width=11pt]
        \draw[-,thick] (0,0) to (0,3);
        \draw[-,thick] (2,0) to (2,3);
        \draw (0,1.5) node[circle,draw,thick,fill=white]{$f$};
        \draw (2,1.5) node[circle,draw,thick,fill=white]{$g$};
        % \draw (0,-0.2) node{$\ob c$};
        % \draw (0, 3.3) node{$\ob d$};
        % \draw (2,-0.2) node{$\ob a$};
        % \draw (2, 3.3) node{$\ob b$};
    \end{tikzpicture}
    ~,\qquad
    \begin{tikzpicture}[baseline = 19pt,scale=0.5,color=\clr,inner sep=0pt, minimum width=11pt]
        \draw[-,thick] (0,0) to (0,3);
        \draw[-,thick] (2,0) to (2,3);
        \draw (1,1.5) node[color=black]{$\circ$};
        \draw (0,1.5) node[circle,draw,thick,fill=white]{$f$};
        \draw (2,1.5) node[circle,draw,thick,fill=white]{$g$};
        % \draw (0, 3.3) node{$\ob c$};
        % \draw (0,-0.2) node{$\ob b$};
        % \draw (2, 3.3) node{$\ob b$};
        % \draw (2,-0.2) node{$\ob a$};
    \end{tikzpicture}
    ~=~
    \begin{tikzpicture}[baseline = 19pt,scale=0.5,color=\clr,inner sep=0pt, minimum width=11pt]
        \draw[-,thick] (0,0) to (0,3);
        \draw (0,2.2) node[circle,draw,thick,fill=white]{$f$};
        \draw (0,0.8) node[circle,draw,thick,fill=white]{$g$};
        % \draw (0,-0.2) node{$\ob c$};
        % \draw (0.3,1.5) node{$\ob b$};
        % \draw (0, 3.3) node{$\ob a$};
    \end{tikzpicture}
    ~.
\end{equation*}
Pictures involving multiple products should be interpreted by \emph{first composing horizontally, then composing vertically}. For example, 
\begin{equation*}
    \begin{tikzpicture}[baseline = 19pt,scale=0.5,color=\clr,inner sep=0pt, minimum width=11pt]
        \draw[-,thick] (0,0) to (0,3);
        \draw[-,thick] (2,0) to (2,3);
        \draw (0,2.2) node[circle,draw,thick,fill=white]{$f$};
        \draw (2,2.2) node[circle,draw,thick,fill=white]{$g$};
        \draw (0,0.8) node[circle,draw,thick,fill=white]{$h$};
        \draw (2,0.8) node[circle,draw,thick,fill=white]{$k$};
    \end{tikzpicture}
\end{equation*}
should be interpreted as $(f\otimes g)\circ(h\otimes k)$. In general, this is \emph{not} the same as $(f\circ h)\otimes(g\circ k)$ because of the \emph{super-interchange law} given in \cref{E:superinterchange}.  Diagrammatically:
\begin{equation}\label{super-interchange}
    \begin{tikzpicture}[baseline = 19pt,scale=0.5,color=\clr,inner sep=0pt, minimum width=11pt]
        \draw[-,thick] (0,0) to (0,3);
        \draw[-,thick] (2,0) to (2,3);
        \draw (0,2) node[circle,draw,thick,fill=white]{$f$};
        \draw (2,1) node[circle,draw,thick,fill=white]{$g$};
    \end{tikzpicture}
    ~=~
    \begin{tikzpicture}[baseline = 19pt,scale=0.5,color=\clr,inner sep=0pt, minimum width=11pt]
        \draw[-,thick] (0,0) to (0,3);
        \draw[-,thick] (2,0) to (2,3);
        \draw (0,1.5) node[circle,draw,thick,fill=white]{$f$};
        \draw (2,1.5) node[circle,draw,thick,fill=white]{$g$};
    \end{tikzpicture}
    ~=(-1)^{\p{f}\p{g}}~
    \begin{tikzpicture}[baseline = 19pt,scale=0.5,color=\clr,inner sep=0pt, minimum width=11pt]
        \draw[-,thick] (0,0) to (0,3);
        \draw[-,thick] (2,0) to (2,3);
        \draw (0,1) node[circle,draw,thick,fill=white]{$f$};
        \draw (2,2) node[circle,draw,thick,fill=white]{$g$};
    \end{tikzpicture}
    ~.
\end{equation}

\subsection{Supercategories and idempotent superalgebras}\label{SS:IdempotentSuperalgebras}  Let $\mathcal{C}$ be a small $\k$-linear supercategory and let $\Lambda$ be a complete irredundant set of objects.  To $\mathcal{C}$ we can associate an associative $\k $-superalgebra $A_{\mathcal{C}}$.  As a superspace,
\[
A_{\mathcal{C}} = \oplus_{\lambda, \mu \in \Lambda} \Hom_{\mathcal{C}}(\lambda, \mu),
\] and the multiplication is given by extending composition of morphisms linearly.  In particular, the set of identity morphisms, $\left\{1_{\lambda}: \lambda \to \lambda \mid \lambda \in \Lambda \right\}$, is a distinguished set of orthogonal even idempotents which make $A_{\mathcal{C}}$ a locally unital superalgebra.  For each even $\k $-linear functor $F: \mathcal{C} \to \svec$ one can define an $A_{\mathcal{C}}$-supermodule by setting 
\[
M= \oplus_{\lambda \in \Lambda} M_{\lambda},
\] where $M_{\lambda}:=F(\lambda)$.  Given $g \in \Hom_{\mathcal{C}}(\lambda, \mu)$ we have the $\k$-linear morphism $F(g): F(\lambda) \to F(\mu)$.  The action of $g \in \Hom_{\mathcal{C}}(\lambda, \mu)$ on $M_{\gamma}$ is given by $g.x = F(g)(x)$ if $\gamma = \lambda$ and $g.x=0$ if $\gamma \neq  \lambda$; the action is given in general by extending linearly.  This makes $M$ into an $A_{\mathcal{C}}$-supermodule.  Moreover, we see that $M$ is locally unital since $1_{\lambda}$ acts on $M$ by projecting onto  $M_{\lambda}$ for each $\lambda \in \Lambda$.

Conversely, if $A$ is a locally unital superalgebra with a distinguished set of orthogonal even idempotents $\left\{1_{\lambda} \mid \lambda \in \Lambda \right\}$, then one can define a supercategory $\mathcal{C}_{A}$ as follows. The objects are the elements of the set $\Lambda$ and $\Hom_{\mathcal{C}_{A}}(\lambda, \mu) := 1_{\mu}A1_{\lambda}$ for all $\lambda, \mu \in \Lambda$.  The composition of morphisms is given by multiplication in $A$.  If $M$ is a locally unital $A$-supermodule, then there is an even $k$-linear functor $F_{M}: \mathcal{C}_{A} \to \svec$ given by $F(\lambda) = 1_{\lambda}M$ and, given $g \in \Hom_{\mathcal{C}_{A}}(\lambda, \mu) = 1_{\mu}A1_{\lambda}$, $F(g): 1_{\lambda}M \to 1_{\mu}M$ is the linear map given by the action of $g$.

These constructions are mutually inverse.  We will freely switch between the categorical and superalgebra points of view when convenient.  

\subsection{Monoidal supercategories and tensor ideals}\label{SS:MonoidalSupercatsandIdeals}  Suppose $\mathcal{C}$ is a monoidal supercategory. A \emph{tensor ideal} $\mathcal{I}$ of $\mathcal{C}$ consists of a subsuperspace $\mathcal{I}(\ob{a},\ob{b}) \subseteq \Hom_{\mathcal{C}}(\ob{a},\ob{b})$ for each pair of objects $\ob{a}$, $\ob{b}$ in $\mathcal{C}$, such that for all objects $\ob{a}$, $\ob{b}$, $\ob{c}$, $\ob{d}$ we have $h \circ g \circ f \in \mathcal{I}(\ob{a},\ob{d})$ whenever $f \in \Hom_{\mathcal{C}}(\ob{a},\ob{b})$, $g \in \Hom_{\mathcal{C}}(\ob{b},\ob{c})$, $h \in \Hom_{\mathcal{C}}(\ob{c},\ob{d})$ and $g \otimes \id_{c} \in \mathcal{I}(\ob{a}\otimes \ob{c}, \ob{b}\otimes \ob{c})$ and $ \id_{c} \otimes g \in \mathcal{I}(\ob{c}\otimes \ob{a}, \ob{c}\otimes \ob{b})$.

The quotient $\mathcal{C}/\mathcal{I}$ of $\mathcal{C}$ by the tensor ideal $\mathcal{I}$ is the supercategory with the same objects as $\mathcal{C}$ and morphisms given by $\Hom_{\mathcal{C}/\mathcal{I}}(\ob{a}, \ob{b}) = \Hom_{\mathcal{C}}(\ob{a},\ob{b})/\mathcal{I}(\ob{a},\ob{b})$.  It is straightforward to check that the quotient of a monoidal supercategory by a tensor ideal is again a monoidal supercategory. Moreover, the quotient of a braided (resp.\  symmetric) monoidal supercategory is again braided (resp.\  symmetric).  

Since the intersection (defined in the obvious way) of tensor ideals is again a tensor ideal, there is a unique minimal tensor ideal which contains a given fixed collection of morphisms.  We call this the tensor ideal generated by this collection of morphisms.  If $F: \mathcal{C} \to \mathcal{D}$ is an even functor of monoidal supercategories, then the kernel, $\mathcal{K}$, of $F$ given by setting $\mathcal{K}(\ob{a},\ob{b})= \left\{f \in \Hom_{\mathcal{C}}(\ob{a},\ob{b}) \mid F(f) =0 \right\}$ is a tensor ideal of $\mathcal{C}$ and there is an induced even functor $\tilde{F}: \mathcal{C}/\mathcal{K} \to \mathcal{D}$.

If $\mathcal{C}$ is only a supercategory, then one can instead consider an ideal $\mathcal{I}$ as above which satisfies only the condition on composition.  One can then form the quotient supercategory $\mathcal{C}/\mathcal{I}$ as above and similar non-monoidal statements still hold.
\section{Lie superalgebras of type Q}\label{S:Liesuperalgebras}

\subsection{Representations of superalgebras}\label{SS:repsofsuperalgebras}  We give a brief overview of the representation theory of associative unital superalgebras.  Details can be found in, for example, \cite{BK, CW2}.  Let $A$ be a finite-dimensional superalgebra and let $S$ be a simple $A$-supermodule.  We call $S$ type $M$ if it remains irreducible as an $A$-\emph{module} (ie.\  if $\Z_{2}$-gradings are ignored).  Otherwise we call $S$ type $Q$.  In the latter case $\End_{A}(S)$ is two-dimensional and, in particular, $S$ admits an odd involution. Correspondingly, there are two types of simple finite-dimensional associative superalgebras: the superalgebra of linear endomorphisms of an $m|n$-dimensional superspace $V$, $M(V)$, and the superalgebra of linear endomorphisms of an $n|n$-dimensional superspace $V$ which preserve an odd involution, $Q(V)$.  In both cases $V$ is the unique\footnote{Here and elsewhere, unless explicitly stated, we do not distinguish between a supermodule and its parity shift.} simple supermodule and is of type M and type Q, respectively.  More generally, if $A$ is a semisimple finite-dimensional associative unital superalgebra, then 
\[
A \cong \bigoplus_{V \text{ of type M}} M(V) \oplus \bigoplus_{V \text{ of type Q}} Q(V)
\] as superalgebras where the direct sum is over a complete, irredundant set of simple supermodules for $A$.

If $A$ and $B$ are associative superalgebras and $S$ and $T$ are simple $A$- and $B$-supermodules, respectively, then  $A\otimes B$ is naturally a superalgebra and the outer tensor product $S \boxtimes T$ is naturally a supermodule for $A \otimes B$.  If both $S$ and $T$ are of type $M$, then $S \boxtimes T$ is a simple $A \otimes B$-supermodule of type $M$. If exactly one of $S$ and $T$ are of type $Q$, then $S \boxtimes T$ is a simple $A \otimes B$-supermodule of type $Q$. If both $S$ and $T$ are of type $Q$, then $S \boxtimes T$ is a direct sum of two isomorphic $A \otimes B$-supermodules of type $M$.  We write $S \star T$ for this simple $A \otimes B$-supermodule.  We extend notation by declaring $S \star T$ to be the simple $A \otimes B$-supermodule $S \boxtimes T$ in the other cases.

\subsection{The Lie superalgebra \texorpdfstring{$\fq(n)$}{q(n)}}\label{SS:Liesuperalgebras}

Fix a superspace $V= V_{\0} \oplus V_{\1}$ with $\dim_{\k}(V_{\0}) = \dim_{\k}(V_{\1})=n$.   Fix a homogeneous basis $v_{1}, \dotsc , v_{n}, v_{\bar{1}}, \dotsc , v_{\bar{n}}$ with $\p{v_{i}}=\0$ and $\p{v_{\bar{i}}}=\1$ for $i=1, \dotsc , n$.  We write $I=I(n|n)$ for the index set $\left\{1, \dotsc , n, \bar{1}, \dotsc , \bar{n} \right\}$ and $I_{0}=I_{0}(n|n)$ for the set $\left\{1, \dotsc , n \right\}$.  There is an involution on $I$ given by $i \mapsto \bar{i}$ and $\bar{i}\mapsto i$ for any $i=1, \dotsc , n$. It is convenient to adopt the convention that $\bar{\bar{i}}=i$ for all $i \in I$.  Let $c: V\to V$ be the odd linear map given by $c(v_{i})=(-1)^{\p{v_{i}}}\sqrt{-1}v_{\bar{i}}$ for all $i \in I$.

The vector space of all linear endomorphisms of $V$, $\gl (V)$, is naturally $\Z_{2}$-graded as in \cref{SS:superspaces}.  Furthermore, $\gl (V)$ is a Lie superalgebra under the graded commutator bracket; this, by definition, is given by $[x,y]=xy-(-1)^{\p{x}\p{y}}yx$ for all homogeneous $x,y \in \gl (V)$.  For $i,j \in I$ we write $e_{i,j} \in \gl (V)$ for the linear map $e_{i,j}(v_{k})= \delta_{j,k}v_{i}$. These are the matrix units and they form a homogeneous basis for $\gl(V)$ with $\p{e_{i,j}}=\p{v_{i}}+\p{v_{j}}$.

By definition $\fq (V)$ is the Lie subsuperalgebra of $\gl (V)$ given by 
\[
\fq (V) = \left\{x \in \gl (V) \mid [x,c]=0 \right\}.
\]  Then $\fq (V)$ has a homogenous basis given by $e_{i,j}^{\0}:=e_{i,j}+e_{\ibar,\jbar}$ and $e_{i,j}^{\1 }:=e_{\ibar,j}+e_{i,\jbar}$ for $1\leq i,j\leq n$.  Note that $\p{e_{i,j}^{\varepsilon}} = \varepsilon$ for all $1 \leq i,j \leq n$ and $\varepsilon \in \Z_{2}$.

Realized as matrices with respect to our choice of basis for $V$, $\gl (V)=\gl (n|n)$ is all $2n \times 2n$ matrices with entries from $\k$.  In this matrix realization $\fq (V)= \fq(n)$ is the subspace 
\begin{equation}\label{E:qndef}
\fq (V)= \fq(n)=\left\{\left(\begin{matrix}A & B \\
                            B & A 
\end{matrix} \right)\right\}  \subseteq \gl (n|n) .
\end{equation} Then $\q(n)_{\0}$ (resp.\ $\q (n)_{\1}$) is the subspace of all such matrices with $B=0$ (resp.\ $A=0$).

 Fix the Cartan subalgebra of $\fh \subseteq \fq(n)$ consisting of matrices as in \cref{E:qndef} with $A$ and $B$ diagonal.  For $i=1, \dotsc , n$, let $\varepsilon_{i}: \fh_{\0} \to \k$ be defined by $\varepsilon_{i}(e^{\0}_{j,j}) =\delta_{i,j}$.  Set $X(T) = X(T_{n}) = \oplus_{i=1}^{n} \Z\varepsilon_{i} \subseteq \fh_{\0}^{*}$.  Fix the Borel subalgebra $\fb \subseteq \fq (n)$ consisting of matrices with $A$ and $B$ upper triangular. Corresponding to this choice, the set of roots, positive roots, and simple roots are $\left\{\alpha_{i,j} = \varepsilon_{i}-\varepsilon_{j} \mid 1 \leq i,j \leq n \right\}$,  $\left\{\alpha_{i,j} = \varepsilon_{i}-\varepsilon_{j} \mid 1 \leq i<j \leq n \right\}$, $\left\{\alpha_{i} = \varepsilon_{i}-\varepsilon_{i+1} \mid 1 \leq i \leq n-1 \right\}$, respectively.  %If $W$ is a $\fq$-supermodule, then a homogeneous vector $w \in W$ is called a highest weight vector of highest weight $\lambda \in \fh _{\0}^{*}$ if $w \in W_{\lambda}$ and $U(\fb)w \subseteq \k w$.

A $\fq (n)$-supermodule is a $\Z_{2}$-graded vector space $W = W_{\0}\oplus W_{\1}$ such that the action of $\fq(n)$ respects the $\Z_{2}$-grading and satisfies graded versions of the usual axioms for a supermodule for a Lie algebra.   A \emph{weight supermodule} for $\fq(n)$ is a $\fq(n)$-supermodule $W$ for which there is a superspace decomposition $W=\bigoplus_{\lambda\in\fh_{\0}^*}W_\lambda$ where $W_\lambda$ is the $\lambda$-\emph{weight space}, $$W_\lambda:=\{w\in W \mid h.w=\lambda(h)w\text{ for all }h\in\frakh_{0}\}.$$   We identify $X(T)$ with $\Z^{n}$ via the map $\sum_{i} a_{i}\varepsilon_{i} \leftrightarrow (a_{1}, \dotsc , a_{n})$.  All supermodules considered in this paper will be weight supermodules with weights lying in $X(T)$.

Given a superspace $W$ and $d\geq 0$, let $S^{d}(W)$ denote the $d$th supersymmetric power of $W$ and $\Lambda^{d}(W)$ the $d$th skew supersymmetric power.  If $W$ is a module for a Lie superalgebra, then $S^{d}(W)$ and $\Lambda^{d}(W)$ are naturally modules for the Lie superalgebra via the coproduct.  Furthermore, if $\Pi W$ denotes the parity shift of $W$, then $S^{d}(W) \cong  \Lambda^{d}(\Pi W)$ as supermodules.  In this paper we will be interested in the case when $W=V_{n}$ is the natural module for $\fq (n)$.  In this case $V_{n} \cong \Pi V_{n}$ by an odd map.  Consequently, there is no loss of generality in only considering the symmetric powers.

\subsection{The Sergeev algebra}\label{SS:SergeevAlgebra}
For each $k\geq 1$ the \emph{Sergeev} \emph{superalgebra} $\Ser_{k}$ is the associative, unital superalgebra generated by the even elements $s_1,\dots,s_{k-1}$ and odd elements $c_1,\dots,c_k$ subject to the relations:

\begin{equation}\label{sergeev-relations}\begin{gathered}
c_i^2=1,\quad c_ic_j=-c_jc_i,\\
s_i^2=1,\quad s_is_j=s_js_i\quad\text{if }i\neq j\pm 1,\quad s_is_{i+1}s_i=s_{i+1}s_is_{i+1},\\
 c_is_j=s_jc_i\quad\text{if }i\neq j\pm1,\quad s_ic_i=c_{i+1}s_i,\quad s_ic_{i+1}=c_is_i
 \end{gathered}
\end{equation} for all admissible $i,j$.  As a superspace $\Ser_k \cong C_{k} \otimes \k \Sigma_{k}$, where $C_{k}$ is the Clifford superalgebra on odd generators $c_{1}, \dotsc , c_{k}$, and $\k \Sigma_{k}$ is the group algebra of the symmetric group viewed as a superalgebra concentrated in parity $\0$.  Then $C_{k} \cong C_{k}\otimes 1$ and $\C \Sigma_{k} \cong 1 \otimes \k \Sigma_{k}$ as superalgebras and we have the mixed relation $sc_{i}= c_{s(i)}s$ for all $s \in \Sigma_{k}$.

 A \emph{strict partition} of $k$ is a nonincreasing sequence of nonnegative integers $\lambda=(\lambda_1,\lambda_2,\lambda_3,\dots)$ such that $\sum_i \lambda_i=k$ and $\lambda_i=\lambda_{i+1}$ implies $\lambda_i=0$. Let $\SP(k)$ denote the set of all strict partitions of $k$.  For a partition $\lambda \in \SP (k)$, set $|\lambda| = \sum_{i}\lambda_{i}$,  $\ell(\lambda)$ equal to the number of nonzero parts in $\lambda$, and 
\[
\delta(\lambda) = \begin{cases} 0, & \ell(\lambda) \text{ is even};\\
                                1,  & \ell(\lambda) \text{ is odd}.
\end{cases}
\]

It is known (e.g.\ see \cite[Lemma 3.6]{BK}) that $\Ser_{k}$ is semisimple and the simple supermodules are labelled by the strict partitions of $k$.  If we write $T^{\lambda}$ for the simple supermodule labelled by $\lambda \in \SP (k)$, then $T^{\lambda}$ is of type $Q$ or $M$ if $\delta(\lambda)=1$ or $0$, respectively.  That is, as discussed in \cref{SS:repsofsuperalgebras},  
\begin{equation}\label{E:ArtinWedderburn}
\Ser_{k}\simeq\bigoplus_{\substack{\lambda\in\SP(k)\\ \delta(\lambda)=0}}M(T^\lambda)\oplus\bigoplus_{\substack{\lambda\in\SP(k)\\ \delta(\lambda)=1}}Q(T^\lambda)
\end{equation} as superalgebras.

We next recall the definition from \cite{Se2} of certain quasi-idempotents $e_{\lambda}\in\Ser_{k}$ parameterized by $\SP (k)$.  To every strict partition $\lambda$ we associate the \emph{shifted frame} $[\lambda]$, the array of squares with $\lambda_i$ squares in row $i$ for $1\leq i\leq l(\lambda)$, such that row $i$ has been shifted to the right $i-1$ units, e.g. $$[(4,3,1)]=\ytableausetup{smalltableaux}\ydiagram{4,1+3,2+1} \ .$$ For strict partitions $\lambda,\mu$, we write $\lambda\subseteq\mu$ if $\lambda_i\leq\mu_i$ for all $i$, or, equivalently, if $[\lambda]$ is a shifted subframe of $[\mu]$.

For $1\leq i<j\leq k-1,$ define the elements $s_{i,j},\tau_{i,j},\pi_j\in\Ser_{k}$ by letting $s_{i,j}\in \Sigma_k$ be the transposition interchanging $i$ and $j$ and setting $\pi_{1}=0$, and 
\begin{align}\label{E:OddJMelements}
\tau_{i,j} &=\frac{1}{\sqrt{2}}(c_i-c_j)s_{i,j}, \\
\quad\pi_j&=\tau_{1,j}+\tau_{2,j}+\cdots+\tau_{j-1,j}.
\end{align}
 The $\pi_j$ are the \emph{odd Jucys-Murphy elements} of \cite{Se2}.

For $\lambda\in\SP(k)$, let $T_\lambda$ be the \emph{canonical filling} of $[\lambda]$ obtained by numbering the squares $1,2,3,\dots$ left-to-right in each row, starting from the top and working down; for example, $$T_{(4,3,1)}=\begin{ytableau}1&2&3&4\\ \none&5&6&7\\ \none&\none&8\end{ytableau} \ .$$  Given $T_{\lambda}$, let $\col(i)$ be the number of the column occupied by $i$ in $T_\lambda$.  Define $a_\lambda\in\Ser_{k}$ by $$a_\lambda=\prod_{i=1}^k\left(\frac{\col(i)(\col(i)+1)}{2}-\pi_i^2\right).$$ For example, $$a_{(4,3,1)}=1\cdot(3-\pi_2^2)(6-\pi_3^2)(10-\pi_4^2)(3-\pi_5^2)(6-\pi_6^2)(10-\pi_7^2)(6-\pi_8^2).$$ Let $R_\lambda\subseteq\Sigma_k$ denote the row stabilizer of $T_\lambda$ and set $b_\lambda\in\Ser_{k}$ to be $$b_\lambda=\sum_{\sigma\in R_\lambda}\sigma.$$ Finally, define $e_\lambda\in\Ser_{k}$ to be
\begin{equation}\label{E:Sergeevidempotent}
e_\lambda:=a_\lambda b_\lambda.
\end{equation}
 By \cite[Corollary 3.3.4]{Se2}, each $e_\lambda$ is quasi-idempotent. Moreover, $\Ser_{k}e_{\lambda}$ is a direct summand of the $T^{\lambda}$-isotypy component of the regular supermodule, $\Ser_{k}$. In particular, $e_{\lambda}$ lies in the summand of \cref{E:ArtinWedderburn} corresponding to $T^{\lambda}$. 

One of Sergeev's quasi-idempotents will play a distinguished role in what follows.  For $n\in\Z_{>0}$, define the strict partition $\lambda(n):=(n+1,n,n-1,\dots,3,2,1)$. The shifted frame of this partition will be an inverted staircase, e.g. $$[\lambda(2)]=\ydiagram{3,1+2,2+1} \ .$$  For $k = (n+1)(n+2)/2$, there is the corresponding quasi-idempotent 
\begin{equation}\label{E:staircaseidempotent}
e_{\lambda(n)} \in \Ser_{k}.
\end{equation}

%Since $\lambda(n)$ is the unique strict partition with length exceeding $n$ and minimal size, every strict partition $\mu$ with $l(\mu)>n$ has $\lambda\subseteq\mu$. 

\subsection{Sergeev Duality}\label{SS:Sergeevdualities}

We have the following \emph{Sergeev duality}  established by Sergeev \cite{Se1} (see also \cite[Section 3.4.1]{CW2}).

 \begin{theorem}\label{T:sergeev-duality} Let $V=V_{n}$ be the natural $\fq(n)$-supermodule and let $V^{\otimes k}$ denote its $k$-fold tensor product. 

\begin{enumerate}
\item If $r \neq s$, then $\Hom_{\fq (n)}\left(V^{\otimes r}, V^{\otimes s} \right)=0$.
\item There is a homomorphism 
\[
\psi: \Ser_k \to \End_{\fq (n)}\left(V^{\otimes k} \right)
\] given by:
\begin{gather*}
\psi(s_{i})(v_{1} \otimes \dotsb \otimes v_{i} \otimes v_{i+1} \otimes \dotsb \otimes v_{k})= (-1)^{\p{v_{i}}\p{v_{i+1}}}v_{1} \otimes \dotsb \otimes v_{i+1} \otimes v_{i} \otimes \dotsb \otimes v_{k} \\
\psi(c_{i})(v_{1} \otimes \dotsb \otimes v_{i} \otimes \dotsb \otimes v_{k}) = (-1)^{\p{v_{1}}+\dotsb + \p{v_{i-1}}} v_{1} \otimes \dotsb \otimes c(v_{i}) \otimes \dotsb \otimes v_{k}
\end{gather*}
\item The homomorphism $\psi$ is surjective.
\item As a $\U(\q(n))\otimes\Ser_{k}$-supermodule we have the following multiplicity-free decomposition:
\[
V^{\otimes k}\simeq\bigoplus_{\substack{\lambda \in \SP (k)\\ \ell(\lambda)\leq n }} L_n(\lambda)\star T^{\lambda}.
\] Here $L_{n}(\lambda)$ is the simple $\q (n)$-supermodule of highest weight $\lambda \in X(T_{n})$ and $T^{\lambda}$ the simple $\Ser_{k}$-supermodule labelled by $\lambda \in \SP (k)$.  These simple supermodules are both of type $Q$ if $\delta(\lambda)=1$ and are both of type $M$ otherwise.
\end{enumerate}
 \end{theorem}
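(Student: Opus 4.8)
The plan is to treat the four parts in order, with parts (1) and (2) short and parts (3)--(4) carrying the weight of the argument. For part (1) I would argue by a grading/weight argument: the even element $h=\sum_{i=1}^{n}e_{i,i}^{\0}\in\fh_{\0}$ acts on the natural module $V_{n}$ by the scalar $1$, since both $v_{i}$ and $v_{\bar{i}}$ have weight $\varepsilon_{i}$, and hence acts on $V^{\otimes r}$ by the scalar $r$. Any morphism $f\colon V^{\otimes r}\to V^{\otimes s}$ of $\fq(n)$-supermodules (homogeneous or not) intertwines the even element $h$, so $rf(x)=f(h.x)=h.f(x)=sf(x)$ for all $x$, forcing $f=0$ when $r\neq s$.

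For part (2) I would first check that the two families of operators actually lie in $\End_{\fq(n)}(V^{\otimes k})$. Up to the Koszul sign, $\psi(s_{i})$ is the graded flip applied to the $i$-th and $(i+1)$st tensor factors; since the graded flip is the symmetric braiding of $\svec$, it is natural for the diagonal action of any Lie superalgebra, so $\psi(s_{i})$ commutes with $\fq(n)$. Similarly $\psi(c_{i})$ is the odd map $c$ acting on the $i$-th factor with the stated Koszul sign; because by definition $\fq(n)=\fq(V)=\{x\in\gl(V)\mid[x,c]=0\}$, this operator also commutes with the diagonal $\fq(n)$-action. It then remains to verify that these operators satisfy the Sergeev relations \eqref{sergeev-relations}. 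These are all routine sign computations; the only substantive inputs are the identity $c^{2}=\id_{V}$ (which uses $\p{v_{i}}+\p{v_{\bar{i}}}=\1$ together with $(\sqrt{-1})^{2}=-1$) and the fact that odd operators supported on distinct tensor factors anticommute.

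For parts (3) and (4), which I would prove together, I would run the super double centralizer machine. The essential structural input is that $V^{\otimes k}$ is a \emph{semisimple} $\fq(n)$-supermodule; this does \emph{not} follow from a general complete reducibility statement (there is none for $\fq(n)$), and I would obtain it from the theory of polynomial representations of $\fq(n)$, or else simply cite Sergeev. Granting this, let $A\subseteq\End_{\k}(V^{\otimes k})$ be the image of $U(\fq(n))$, a semisimple superalgebra, and put $B=\End_{A}(V^{\otimes k})=\End_{\fq(n)}(V^{\otimes k})$. The super version of the double centralizer theorem then gives that $B$ is semisimple, that $A=\End_{B}(V^{\otimes k})$, and that as an $A\otimes B$-supermodule $V^{\otimes k}\cong\bigoplus_{\lambda}L_{n}(\lambda)\star N_{\lambda}$, the sum over the distinct highest weights of the simple $\fq(n)$-constituents, with $N_{\lambda}$ the associated simple $B$-module and the operation $\star$ recording whether the pair is of type $M$ or type $Q$. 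A highest-weight analysis on $\fq(n)$ identifies the $\lambda$ that occur: they are precisely the strict partitions of $k$ with at most $n$ parts, with $L_{n}(\lambda)$ of type $Q$ exactly when $\delta(\lambda)=1$. Finally, $\psi$ factors through $B$ by part (2); since $\Ser_{k}$ is semisimple, its image is a semisimple subalgebra of $B$, and matching this image against the decomposition of $B$ just obtained, together with the Artin--Wedderburn decomposition \eqref{E:ArtinWedderburn} of $\Ser_{k}$, forces $\psi$ to be surjective with $N_{\lambda}\cong T^{\lambda}$ and $\ker\psi$ equal to the sum of the blocks indexed by $\lambda$ with $\ell(\lambda)>n$; this yields (3) and (4) simultaneously.

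I expect the main obstacle to be exactly the semisimplicity of $V^{\otimes k}$ over $\fq(n)$ together with the precise identification of its constituents. Unlike the classical Schur--Weyl setting, complete reducibility is not available in general for $\fq(n)$, so one genuinely needs the special features of polynomial representations; and the fact that only \emph{strict} partitions (rather than all partitions with at most $n$ parts) appear -- a reflection of the non-abelian Cartan of type $Q$ and the Clifford-algebra structure of highest weight spaces -- is the combinatorial heart of the matter. Once these are in hand, the weight argument, the relation checks, and the extraction of the bimodule decomposition from the double centralizer theorem are all formal.
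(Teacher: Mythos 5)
A preliminary remark: the paper does not prove this theorem at all --- it is quoted from Sergeev [Se1] and Cheng--Wang [CW2, Section 3.4.1] --- so your write-up has to be judged as a reconstruction of the standard proof rather than against an in-paper argument. Parts (1) and (2) are correct as you present them: the weight argument with $\sum_{i}e_{i,i}^{\0}$ disposes of (1) even for non-homogeneous morphisms (the element is even, so no Koszul sign interferes), and naturality of the graded flip together with the defining property $[\fq(V),c]=0$ reduces (2) to the sign checks you describe.

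The genuine gap is in your derivation of (3). Knowing that $\psi(\Ser_k)$ is a semisimple subalgebra of $B=\End_{\fq(n)}(V^{\otimes k})$ and ``matching it against the decomposition of $B$'' does not force surjectivity: the image of the subalgebra $\k\Sigma_k\subseteq\Ser_k$ is also a semisimple subalgebra of $B$ compatible with its block decomposition, and it is proper. What is actually needed is that each multiplicity space $N_\lambda=\Hom_{\fq(n)}(L_n(\lambda),V^{\otimes k})$ is irreducible as a $\Ser_k$-supermodule, with distinct $N_\lambda$ pairwise non-isomorphic over $\Ser_k$; neither statement follows from semisimplicity of $\Ser_k$, nor from the Artin--Wedderburn decomposition \eqref{E:ArtinWedderburn}, without an independent computation of the multiplicities (e.g.\ via Schur $P$-functions), which would make the argument circular with part (4). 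The standard way to close the gap is to run the double centralizer machine in the opposite direction: one first proves by direct computation that $\End_{\Ser_k}(V^{\otimes k})$ equals the image of $U(\fq(n))$ --- this is the genuinely hard step, carried out by first passing to the centralizer of the Clifford operators and then invoking classical Schur--Weyl duality --- and then the double centralizer theorem applied to the semisimple superalgebra $\Ser_k$ delivers the surjectivity of $\psi$, the bimodule decomposition, \emph{and} the semisimplicity of $V^{\otimes k}$ over $\fq(n)$ simultaneously as conclusions. In your ordering the semisimplicity over $\fq(n)$ is an unproved input and the surjectivity still does not follow; moreover the identification of the occurring highest weights with strict partitions of length at most $n$, and of the type ($M$ versus $Q$) with the parity of $\ell(\lambda)$, is itself a substantive piece of highest-weight and Clifford-module analysis rather than a formality.
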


We have the following description of the kernel of the homomorphism $\psi$.

 \begin{proposition}\label{sergeev-kernel}
 For each $k, n \geq 1$ the kernel of the homomorphism $\psi:\Ser_k\to\End_{\q(n)}(V_{n}^{\otimes k})$ is generated as a two-sided ideal by 
\begin{equation}\label{}
\left\{e_{\lambda} \mid \lambda \in \SP (k), \ell (\lambda) > n \right\}.
\end{equation}  Moreover, $\psi$ is an isomorphism if and only if $k < (n+1)(n+2)/2$.
 \end{proposition}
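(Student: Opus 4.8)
The strategy is to combine the Artin--Wedderburn decomposition of $\Ser_k$ with Sergeev duality to identify the kernel as a sum of blocks, and then to recognize those blocks via the quasi-idempotents $e_\lambda$. By \cref{T:sergeev-duality}(4), as a $\Ser_k$-supermodule $V_n^{\otimes k}$ decomposes as a multiplicity-free sum of the $T^\lambda$ with $\ell(\lambda) \leq n$. Since $\psi$ is surjective by \cref{T:sergeev-duality}(3), the image $\End_{\q(n)}(V_n^{\otimes k})$ is a quotient of the semisimple superalgebra $\Ser_k$, hence is itself semisimple, and $\ker\psi$ must be a sum of some of the simple two-sided ideals appearing in \cref{E:ArtinWedderburn}. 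To pin down which ones: the ideal summand $M(T^\lambda)$ (resp.\ $Q(T^\lambda)$) acts as zero on $V_n^{\otimes k}$ precisely when $T^\lambda$ does not occur as a composition factor, i.e.\ precisely when $\ell(\lambda) > n$. Therefore
\[
\ker\psi \;=\; \bigoplus_{\substack{\lambda\in\SP(k)\\ \ell(\lambda) > n}} A_\lambda,
\]
where $A_\lambda$ denotes the corresponding simple two-sided ideal ($M(T^\lambda)$ or $Q(T^\lambda)$) of $\Ser_k$.

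\textbf{From blocks to the generators $e_\lambda$.} The next step is to replace the abstract block decomposition by the concrete generating set $\{e_\lambda \mid \ell(\lambda) > n\}$. Recall from the discussion following \cref{E:Sergeevidempotent} that $e_\lambda$ is a nonzero quasi-idempotent lying in the summand $A_\lambda$, and that $\Ser_k e_\lambda$ is a (nonzero) direct summand of the $T^\lambda$-isotypic component of the regular module. Because $A_\lambda$ is a simple superalgebra (of the form $M(T^\lambda)$ or $Q(T^\lambda)$) it has no proper nonzero two-sided ideals; since the two-sided ideal generated by $e_\lambda$ is contained in $A_\lambda$ and is nonzero (as $e_\lambda \neq 0$), it must equal all of $A_\lambda$. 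Hence the two-sided ideal generated by $\{e_\lambda \mid \ell(\lambda) > n\}$ is exactly $\bigoplus_{\ell(\lambda)>n} A_\lambda = \ker\psi$. This proves the first assertion.

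\textbf{The sharp bound.} For the last sentence, $\psi$ is an isomorphism if and only if $\ker\psi = 0$, i.e.\ if and only if there is no strict partition $\lambda$ of $k$ with $\ell(\lambda) > n$. The minimum value of $|\lambda|$ over strict partitions with $\ell(\lambda) = n+1$ is attained by $\lambda = (n+1, n, n-1, \dots, 2, 1)$, namely $1 + 2 + \cdots + (n+1) = (n+1)(n+2)/2$; and any strict partition with more than $n$ parts has $|\lambda| \geq (n+1)(n+2)/2$. Thus $\SP(k)$ contains a partition of length $> n$ if and only if $k \geq (n+1)(n+2)/2$, and $\ker\psi = 0$ exactly when $k < (n+1)(n+2)/2$.

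\textbf{Main obstacle.} The routine parts are the combinatorial bound and the block bookkeeping; the one point requiring a little care is the claim that $e_\lambda$ \emph{generates} the full block $A_\lambda$ rather than a proper piece of it. This rests on $A_\lambda$ being simple as a superalgebra together with $e_\lambda \neq 0$ (guaranteed by \cite[Corollary 3.3.4]{Se2} and the remark that $\Ser_k e_\lambda$ is a nonzero summand of the $T^\lambda$-isotypic component). Once one is comfortable that "simple two-sided ideal" in the super setting still forces a nonzero element to generate everything — which follows since $A_\lambda$ has no nontrivial \emph{two-sided} ideals, a statement insensitive to the grading — the argument goes through without further difficulty.
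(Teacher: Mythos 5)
Your proposal is correct and follows essentially the same route as the paper's proof: identify $\ker\psi$ as the sum of the simple summands of \cref{E:ArtinWedderburn} labelled by $\lambda$ with $\ell(\lambda)>n$ via Sergeev duality, note that $e_\lambda$ is a nonzero element of the simple summand for $\lambda$ and hence generates it, and conclude with the observation that strict partitions of $k$ with more than $n$ parts exist exactly when $k\geq (n+1)(n+2)/2$. Your write-up simply fills in the details the paper leaves implicit (in particular the justification that $e_\lambda$ generates the whole block).
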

 
 \begin{proof} In light of the decomposition into simple superalgebras given in \cref{SS:repsofsuperalgebras}, a direct summand given there will be in the kernel of $\psi$ if and only if $T^{\lambda}$ does not appear as a summand of the $\Ser_{k}$-supermodule $V_{n}^{\otimes k}$.  By \cref{E:ArtinWedderburn} this happens if and only if $\ell (n) > n$.  On the other hand, since $e_{\lambda}$ lies in the simple summand corresponding to $T^{\lambda}$, it follows that the kernel is generated by $\left\{e_{\lambda} \mid \lambda \in \SP (k), \ell(\lambda) > n \right\}$, as claimed.  The last statement follows from \cref{T:sergeev-duality} and observing there exist strict partitions of length greater than $n$ if and only if $k$ is greater than or equal to the given bound.
\end{proof}

\subsection{Howe duality of type Q}\label{commuting-actions}

We now describe a Howe duality of type Q first introduced by Cheng-Wang \cite{CW1}.  Let $\Vt_{m}$ and  $V_{n}$ denote the natural supermodules of $\fq (m)$ and $\fq (n)$, respectively.  Recall, $c:V_{m}\to V_{m}$ and $c:V_{n} \to V_{n}$ are the odd involutions used to define $\fq (m)$ and $\fq (n)$.

Let $\Vt_m\boxtimes V_n$ denote the  $U(\fqt(m))\otimes U(\q(n))$-supermodule given by the outer tensor product of $\Vt_{m}$ and $V_{n}$. Since both are of type $Q$, $\Vt_m\boxtimes V_n \cong \Vt_m\star V_n \oplus \Vt_m\star V_n$ as discussed in \cref{SS:repsofsuperalgebras}.  This decomposition can be made overt as follows.  There is an even supermodule involution $p:=\sqrt{-1}c\boxtimes c : \Vt_{m} \boxtimes V_{n} \to \Vt_{m} \boxtimes V_{n}$ given on homogeneous pure tensors by $p(v_{a} \otimes v_{b}) = (-1)^{\p{v_{a}}}\sqrt{-1} c(v_{a}) \otimes c(v_{b})$.  Since this map is an involution we may decompose $\Vt_{m} \boxtimes V_{n}$ into $\pm 1$-eigenspaces. This decomposition is as $U(\fqt(m))\otimes U(\q(n))$-supermodules and it provides the decomposition described above.  We choose $\Vt_m\star V_n$ to be the $+1$ eigenspace of $p$. 

We can describe $\Vt_m\star V_n$ completely explicitly as follows.  Set 
\begin{align}\label{E:symmetricgenerators}
x_{i,j}&=v_{i} \otimes v_{j}+\sqrt{-1}v_{\overline{i}}\otimes v_{\overline{j}},\\ \notag
 y_{i,j}&=v_{i} \otimes v_{\overline{j}}-\sqrt{-1}v_{\overline{i}} \otimes v_{j}. 
\end{align}

Then $\Vt_{m} \star V_{n}$ has a homogeneous basis $\{x_{i,j},y_{i,j}\mid i\in I_{0}(m|m), j\in I_{0}(n|n)\}$ with the parity of $x_{i,j}$ (resp.\ $y_{i,j})$ equal to $\0$ (resp.\ $\1$). As a $U(\fqt(m))$-supermodule $\Vt_m\star V_n \cong  \Vt_m^{\oplus n}$ with the  $U(\fqt(m))$ acting on $x_{i,j}$ (resp.\ $y_{i,j}$) as on $v_{i}$ (resp.\ $-\sqrt{-1}v_{\bar{i}}$).  As a $U(\q(n))$-supermodule $\Vt_m\star V_n \cong  V_n^{\oplus m}$ with  $U(\q(n))$ acting on $x_{i,j}$ (resp.\ $y_{i,j}$) as on $v_{j}$ (resp.\ $v_{\bar{j}}$). 

Note that since $U(\fqt(m))$ and $U(\fqt(n))$ are Hopf superalgebras, there is a natural Hopf superalgebra structure on $U(\fqt(m))\otimes U(\q(n))$.  In particular, the symmetric superalgebra 
\begin{equation*}
\s:=S \left(\Vt_m\star V_n \right)
\end{equation*}
is a $U(\fqt(m))\otimes U(\q(n))$-supermodule via this coproduct.  As a superalgebra $\s$ is the free supercommutative superalgebra generated by $\left\{x_{i,j}, y_{i,j} \mid i \in I_{0}(m|m), j\in I_{0}(n|n) \right\}$.  

A direct calculation verifies that $\s$ is a weight supermodule for $\fqt(m)$ with weights lying in 
\begin{equation*}
X(T_{m})_{\geq 0} := \left\{\lambda = \sum_{i=1}^{m} \lambda_{i}\varepsilon_{i} \in X(T_{m}) \mid \lambda_{i} \geq 0 \text{ for $i=1, \dotsc , m$} \right\}.
\end{equation*}
  Since the actions of $U(\fqt(m))$ and $U(\fqt(m))$ commute, the decomposition of $\s$ into weight spaces for $U(\fqt(m))$ is a decomposition into $\fq (n)$-supermodules. Given $t\geq 0$ and a tuple of nonnegative integers, $\lambda = (\lambda_{1}, \lambda_{2}, \dotsc , \lambda_{t})$, for short write 
\[
S^{\lambda}=S^{\lambda_{1}}(V_{n}) \otimes \dotsb \otimes  S^{\lambda_{t}}(V_{n}).
\] 
A straightforward computation using \cref{E:symmetricgenerators} verifies the following.
 
\begin{lemma}\label{weight-space-isomorphism}  Let 
\[
\s = \bigoplus_{\lambda \in X(T_{m})} \s_{\lambda}\quad 
\] be the decomposition into weight spaces with respect to the Cartan subalgebra of $\fqt (m)$.  Then $\s_{\lambda} \neq 0$ if and only if $\lambda \in X(T_{m})_{\geq 0}$.
  Furthermore, if $\lambda = \sum_{i=1}^{m}\lambda_{i}\varepsilon_{i} \in X(T_{m})_{\geq 0}$, then as a $\q(n)$-supermodule  
\[
\s_{\lambda} \cong S^{\lambda_1}(V_n)\otimes\cdots\otimes S^{\lambda_m}(V_n).
\]
\end{lemma}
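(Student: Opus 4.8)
The plan is to read everything off the explicit model of $\Vt_m\star V_n$ provided in \cref{E:symmetricgenerators} together with the two commuting actions described just before the lemma. For $i\in\{1,\dots,m\}$ let $W_i\subseteq\Vt_m\star V_n$ be the span of $\{x_{i,j},y_{i,j}\mid j\in I_{0}(n|n)\}$. First I would check that $W_i$ is exactly the $\varepsilon_i$-weight space of $\Vt_m\star V_n$ for the Cartan $\fh_{\0}\subseteq\fqt(m)$: by construction $U(\fqt(m))$ acts on $x_{i,j}$ (resp.\ $y_{i,j}$) as on $v_i$ (resp.\ $-\sqrt{-1}\,v_{\bar i}$), and both $v_i$ and $v_{\bar i}$ have $\fqt(m)$-weight $\varepsilon_i$, so $\Vt_m\star V_n=\bigoplus_{i=1}^{m}W_i$ as $\fh_{\0}$-modules. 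I would also record that each $W_i$ is a $\q(n)$-submodule and that $x_{i,j}\mapsto v_j$, $y_{i,j}\mapsto v_{\bar j}$ is a parity-preserving isomorphism $W_i\xrightarrow{\sim}V_n$ of $\q(n)$-supermodules, again straight from the stated action.

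Next I would invoke the standard fact that the symmetric superalgebra turns direct sums into (super) tensor products, so that as superalgebras $\s=S\!\left(\bigoplus_{i=1}^{m}W_i\right)\cong\bigotimes_{i=1}^{m}S(W_i)$, and that this isomorphism is $\q(n)$-equivariant for the diagonal action on the right (since $\q(n)$ acts on $\s$ through the coproduct and $S$ is functorial). Because $\fh_{\0}$ acts on $\s$ by even derivations and on $S^{d}(W_i)$ by the scalar $d\,\varepsilon_i$, a monomial that is a product of $\lambda_i$ factors from each $W_i$ has $\fqt(m)$-weight $\sum_i\lambda_i\varepsilon_i$; hence for $\lambda=\sum_{i=1}^{m}\lambda_i\varepsilon_i$ one obtains
\[
\s_\lambda \;=\; S^{\lambda_1}(W_1)\otimes\cdots\otimes S^{\lambda_m}(W_m),
\]
with the convention $S^{d}=0$ for $d<0$. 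In particular every weight of $\s$ is an $\N$-combination of the $\varepsilon_i$, so it lies in $X(T_{m})_{\geq 0}$.

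The lemma then falls out. If some $\lambda_i<0$ the corresponding factor, and hence $\s_\lambda$, vanishes, so a nonzero weight of $\s$ must lie in $X(T_{m})_{\geq 0}$. Conversely, if $\lambda\in X(T_{m})_{\geq 0}$ then each $S^{\lambda_i}(W_i)\cong S^{\lambda_i}(V_n)$ is nonzero --- it contains $S^{\lambda_i}\!\big((V_n)_{\0}\big)$, the degree-$\lambda_i$ piece of a polynomial ring in $n=\dim_{\k}(V_n)_{\0}$ variables --- so $\s_\lambda\neq 0$. Finally, transporting along the $\q(n)$-supermodule isomorphisms $W_i\cong V_n$ and using the $\q(n)$-equivariance of the tensor decomposition gives $\s_\lambda\cong S^{\lambda_1}(V_n)\otimes\cdots\otimes S^{\lambda_m}(V_n)$ as $\q(n)$-supermodules, as claimed.

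I do not anticipate a genuine obstacle; this is the ``straightforward computation'' referred to in the statement. The only point demanding care is sign and grading bookkeeping in the super setting --- that $S$ of a direct sum of superspaces really is the \emph{super} tensor product of the symmetric superalgebras, that the even part of $V_n$ contributes a polynomial (not exterior) algebra so that all $S^{d}(V_n)$ are nonzero, and that the identifications $W_i\cong V_n$ are parity-preserving --- but all of these are routine once the explicit basis of \cref{E:symmetricgenerators} is in hand. The single place where the structure of type $Q$ actually intervenes is the verification that each $W_i$ is a \emph{single} $\fh_{\0}$-weight space, which is immediate from the formulas for the $\fqt(m)$-action recalled just above the lemma.
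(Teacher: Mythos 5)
Your argument is correct and is exactly the ``straightforward computation'' the paper leaves to the reader: decomposing $\Vt_m\star V_n=\bigoplus_i W_i$ into the $\varepsilon_i$-weight spaces, identifying each $W_i\cong V_n$ as a $\q(n)$-supermodule via the explicit basis of \cref{E:symmetricgenerators}, and using $S(\bigoplus_i W_i)\cong\bigotimes_i S(W_i)$ to read off the weight spaces. The supporting details you flag (parity-preservation of $W_i\cong V_n$, nonvanishing of $S^{d}(V_n)$ via the even part) are handled correctly.
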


\subsection{The supercategory  \texorpdfstring{$\bUdot(\fq (m))$}{Udot}}\label{SS:IdempotentAlgebra}
We now introduce  a $\C$-linear supercategory $\bUdot(\fq (m))$ via generators and relations. When writing compositions of morphisms we often write them as products (e.g.\ $fg = f \circ g$).   To lighten notation we use the same name for morphisms between different objects and leave the objects implicit when there is no risk of confusion (e.g.\ when a statement is true for all morphisms for which it makes sense regardless of domain and range). With this convention in mind, if $f$ is a family of morphisms of the same name which can be composed and $k \geq 1$, then  $f^{(k)}$ denotes the morphism $f^{k}/k!$.  Finally, when a statement requires that we specify the domain and/or range, we do so by pre/post-composing with the relevant identity morphisms. In what follows recall $\left\{\alpha_{i} \mid i=1, \dotsc ,m-1 \right\} \subset X(T_{m})$ denotes the set of simple roots for $\fq (m)$ (see \cref{SS:Liesuperalgebras}).

\begin{definition}\label{D:dotU2}

Let $\bUdot(\q(m))$ be the $\C$-linear supercategory with set of objects $X(T_{m})$ and with the morphisms generated by $e_i,e_{\bar{i}}: \lambda \to \lambda+\alpha_{i}$, $f_{i}, f_{\bar{i}}: \lambda \to \lambda-\alpha_{i}$, and $h_{\bar{j}}: \lambda \to \lambda$ for all  $\lambda\in X(T_{m})$, $i=1, \dotsc, m-1$, $j=1, \dotsc , m$.  For all $i,j$ the parities of $e_{i}$ and $f_{i}$ are even and the parities of $e_{\bar{i}}$, $f_{\bar{i}}$, and $h_{\bar{j}}$ are odd. We write $1_{\lambda}: \lambda \to \lambda$ for the identity morphism. 

The morphisms in $\bUdot(\fq (m))$ are subject to the following relations for all objects $\lambda$ and all admissible $i,j$:
\begin{align}\label{Q1} \notag
h_{\overline{i}}^21_\lambda&=\lambda_i1_\lambda,\quad \\ \tag{Q1}
h_{\overline{i}}h_{\overline{j}}&=-h_{\overline{j}}h_{\overline{i}}\quad\text{if }i\neq j,
\end{align}
\begin{align}\label{Q3} \notag
h_{\overline{i}}e_j-e_jh_{\overline{i}}&= \delta_{i,j}e_{\overline{j}} - \delta_{i,j+1}e_{\overline{j}},\\ \notag
h_{\overline{i}}f_j-f_jh_{\overline{i}}&= -\delta_{i,j}f_{\overline{j}}+\delta_{i,j+1} f_{\overline{j}},\\ \tag{Q3} 
h_{\overline{i}}e_{\overline{j}}+e_{\overline{j}}h_{\overline{i}}&= (\delta_{i,j}+\delta_{i,j+1})e_{j},\\ \notag
h_{\overline{i}}f_{\overline{j}}+f_{\overline{j}}h_{\overline{i}}&= (\delta_{i,j}+\delta_{i,j+1})f_{j}, \notag
\end{align}
\begin{align}\label{Q4} \notag
(e_if_j-f_je_i)1_\lambda&=\delta_{i,j}(\lambda_i-\lambda_{i+1})1_\lambda,\\ \notag
(e_{\overline{i}}f_{\overline{j}}+f_{\overline{j}}e_{\overline{i}})1_\lambda&=\delta_{i,j}(\lambda_i+\lambda_{i+1})1_\lambda,\\ \tag{Q4}
e_{\overline{i}}f_j-f_je_{\overline{i}}&=\delta_{i,j}(h_{\overline{i}}-h_{\overline{i+1}}),\\ \notag
e_if_{\overline{j}}-f_{\overline{j}}e_i&=\delta_{i,j}(h_{\overline{i}}-h_{\overline{i+1}}), \notag
\end{align}
\begin{align}\label{Q5}\notag
e_ie_{\overline{j}}-e_{\overline{j}}e_i&=e_{\overline{i}}e_{\overline{j}}+e_{\overline{j}}e_{\overline{i}}=0, \quad  \text{if }i\neq j\pm1, \\ \tag{Q5}
f_if_{\overline{j}}-f_{\overline{j}}f_i&=f_{\overline{i}}f_{\overline{j}}+f_{\overline{j}}f_{\overline{i}}=0, \quad  \text{if }i\neq j\pm1,\\ \notag
e_ie_j-e_je_i&=f_if_j-f_jf_i=0\quad\text{if }|i-j|>1, \notag
\end{align}
\begin{align}\label{Q6} \notag
e_ie_{i+1}-e_{i+1}e_i&=e_{\overline{i}}e_{\overline{i+1}}+e_{\overline{i+1}}e_{\overline{i}}, \\\notag 
e_ie_{\overline{i+1}}-e_{\overline{i+1}}e_i&=e_{\overline{i}}e_{i+1}-e_{i+1}e_{\overline{i}},\\ \tag{Q6}
f_if_{i+1}-f_{i+1}f_i&=f_{\overline{i}}f_{\overline{i+1}}+f_{\overline{i+1}}f_{\overline{i}},\\  \notag 
f_if_{\overline{i+1}}-f_{\overline{i+1}}f_i&=f_{\overline{i}}f_{i+1}-f_{i+1}f_{\overline{i}},  \notag 
\end{align}
\begin{align}\label{Q7} \notag
e_i^{(2)}e_j-e_ie_je_i+e_je_i^{(2)}&=0 \quad\text{if }i=j\pm 1, \\ \notag
e_{\overline{i}}e_ie_j-e_{\overline{i}}e_je_i-e_ie_je_{\overline{i}}+e_je_ie_{\overline{i}}&=0\quad\text{if }i=j\pm 1,\\ \tag{Q7}
f_i^{(2)}f_j-f_if_jf_i+f_jf_i^{(2)}&=0 \quad\text{if }i=j\pm 1,\\ \notag
f_{\overline{i}}f_if_j-f_{\overline{i}}f_jf_i-f_if_jf_{\overline{i}}+f_jf_if_{\overline{i}}&=0\quad\text{if }i=j\pm 1. \notag
\end{align}
\end{definition}

\begin{remark}\label{R:idempotentalgebra}
As discussed in \cref{SS:IdempotentSuperalgebras}, the supercategory $\bUdot (\fq (m))$ defines a locally unital superalgebra
\[
\Udot(\q(m))=\bigoplus_{\lambda,\mu\in X(T_{m})} \Hom_{\bUdot(\q(m))}(\lambda, \mu).
\] Using the presentation given in \cite[Proposition 2.1]{DW} one sees that $\Udot (\fq (m))$ is an idempotent version of the enveloping superalgebra $U(\fq (m))$. 
\end{remark}

\begin{definition}\label{D:Udotgeq0}   Let $\bUdot(\q(m))_{\geq 0}$ be  the quotient of  $\bUdot(\q(m))$ given by setting $1_{\lambda}=0$ for all $\lambda \not\in X(T_{m})_{\geq 0}$ (i.e., it is $\bUdot (\q(m))/\mathcal{I}$, where $\mathcal{I}$ is the ideal generated by $\left\{ 1_{\lambda} \mid \lambda \notin X(T_{m})_{\geq 0} \right\}$).  On the algebra side, the corresponding superalgebra $\Udot (\fqt (m))_{\geq 0}$ is the quotient of $\Udot(\fqt (m)) $ by the ideal generated by $\left\{ 1_{\lambda} \mid \lambda \notin X(T_{m})_{\geq 0} \right\}$. 

\end{definition}

Define $\mods$ to be the monoidal supercategory of $\q (n)$-supermodules tensor generated by $\left\{ S^{p}(V_{n})\mid p \geq 0 \right\}$.  That is, it is the full subcategory of $\q(n)$-supermodules consisting of objects of the form 
\[
S^{p_{1}}(V_{n}) \otimes \dotsb \otimes S^{p_{t}}(V_{n}),
\] where $t, p_{1}, \dotsc , p_{t} \in  \Z_{\geq 0}$. In particular, by convention, $S^{0}(V_{n})$ is the trivial supermodule, $\k$.

Since $\s$ is a weight supermodule for $U(\fqt (m))$ with all weights lying in $X(T_{m})$, it is a supermodule for the idempotent version of the enveloping superalgebra, $\Udot(\fqt (m))$.  Moreover, since the weight space $\s_{\lambda}$ equals zero whenever $\lambda \not\in X(T_{m})_{\geq 0}$, this representation factors through and defines a representation of $\Udot (\fqt (m))_{\geq 0}$.  In an abuse of notation we write 
\[
\phi : \Udot (\fqt (m))_{\geq 0} \to \End_{\fq (n)}\left(\s \right)
\] for this representation.  As discussed in \cref{SS:IdempotentAlgebra}, the notion of a supermodule for a locally unital superalgebra with a distingushed set of idempotents is equivalent to having a functor $\bUdot (\fqt (m))_{\geq 0} \to \svec$.  The fact that the action of $\Udot (\fqt (m))_{\geq 0}$ on $\s$ commutes with the action of $\fq (n)$ implies that this functor can be viewed as having codomain the category of $\fq (n)$-supermodules.  The existence of this functor is summarized in the following result.

\begin{proposition}\label{Phi-up}
For every $m,n \geq 1$ there exists a functor of supercategories 
\begin{equation*}
\Phi_{m,n}: \bUdot(\fqt(m))_{\geq 0} \to \mods.
\end{equation*}
On objects, 
\[
\Phi_{m,n}(\lambda) = S_{\lambda} \cong S^{\lambda_{1}}(V_{n}) \otimes \dotsb \otimes S^{\lambda_{m}}(V_{n}),
\]
for all $\lambda = \sum_{i=1}^{m}\lambda_{i}\varepsilon_{i} \in X(T_{m})_{\geq 0}$.  On a morphism $x \in \Hom_{\bUdot (\fqt (m))}(\lambda, \mu)$,
\[
\Phi_{m,n} (x) = \phi(x).
\]
\end{proposition}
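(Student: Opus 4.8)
The plan is to build $\Phi_{m,n}$ entirely from material already assembled: the dictionary of \cref{SS:IdempotentSuperalgebras} between even $\C$-linear functors out of a supercategory and locally unital supermodules over the associated superalgebra, the representation $\phi\colon \Udot(\fq(m))_{\geq 0}\to\End_{\fq(n)}(\s)$ constructed just above the statement, and the weight-space identification of \cref{weight-space-isomorphism}. So it suffices to check that these pieces fit together and that the resulting functor lands in $\mods$.

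First I would recall that, by \cref{R:idempotentalgebra} and the presentation in \cite[Proposition 2.1]{DW}, the locally unital superalgebra $\Udot(\fq(m))=\bigoplus_{\lambda,\mu}\Hom_{\bUdot(\fq(m))}(\lambda,\mu)$ is the idempotented form of $U(\fq(m))$; concretely, any weight $U(\fq(m))$-supermodule $W$ with weights in $X(T_m)$ becomes a locally unital $\Udot(\fq(m))$-supermodule with $1_\lambda$ acting as projection onto $W_\lambda$ and the generators $e_i,e_{\bar{i}},f_i,f_{\bar{i}},h_{\bar{j}}$ acting through the corresponding elements of $U(\fq(m))$ --- these preserving the $\Z_2$-grading because $e_{\bar{i}},f_{\bar{i}},h_{\bar{j}}$ are odd in $U(\fq(m))$. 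Applying this to $W=\s=S(\Vt_m\star V_n)$, and invoking \cref{weight-space-isomorphism} to see that its weight spaces vanish outside $X(T_m)_{\geq 0}$, one finds that $1_\lambda$ acts as zero for every $\lambda\notin X(T_m)_{\geq 0}$; hence the action factors through the quotient $\Udot(\fq(m))_{\geq 0}$ of \cref{D:Udotgeq0}, and this factored action is exactly $\phi$.

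Translating back through \cref{SS:IdempotentSuperalgebras}, $\phi$ corresponds to an even $\C$-linear functor $\bUdot(\fq(m))_{\geq 0}\to\svec$ sending $\lambda\mapsto 1_\lambda\s=\s_\lambda$ and a morphism $x\in 1_\mu\Udot(\fq(m))_{\geq 0}1_\lambda$ to the linear map $\phi(x)$; the objects $\lambda\notin X(T_m)_{\geq 0}$, which have become zero objects of the quotient supercategory, go to the zero supermodule. Since the action of $\Udot(\fq(m))_{\geq 0}$ on $\s$ commutes with that of $\fq(n)$, each $\s_\lambda$ is an $\fq(n)$-subsupermodule of $\s$ and each $\phi(x)$ is $\fq(n)$-equivariant, so the functor may be regarded as taking values in $\fq(n)$-supermodules; by \cref{weight-space-isomorphism} once more, $\s_\lambda\cong S^{\lambda_1}(V_n)\otimes\dotsb\otimes S^{\lambda_m}(V_n)=S_\lambda$, which is an object of $\mods$. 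This is the functor $\Phi_{m,n}$, with the asserted effect on objects and morphisms by construction.

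The only step carrying genuine content is the identification of the abstract presentation of \cref{D:dotU2} --- the relations (Q1)--(Q7), in particular the type-$Q$ relation $h_{\bar{i}}^2 1_\lambda=\lambda_i 1_\lambda$ --- with the idempotented enveloping superalgebra, so that $x\mapsto\phi(x)$ is well defined; this is precisely what \cref{R:idempotentalgebra} records by reference to \cite{DW}. Everything else is bookkeeping: tracking the parities of the odd generators, checking the $\geq 0$-truncation is respected, and reading off the image objects via \cref{weight-space-isomorphism}.
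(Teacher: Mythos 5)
Your proposal is correct and follows essentially the same route as the paper: the authors likewise obtain $\Phi_{m,n}$ by noting that $\s$ is a weight supermodule for $U(\fq(m))$ with weights in $X(T_m)$, invoking \cref{R:idempotentalgebra} (via \cite{DW}) to make it a $\Udot(\fq(m))$-supermodule, using \cref{weight-space-isomorphism} to see the action factors through $\Udot(\fq(m))_{\geq 0}$, and then applying the dictionary of \cref{SS:IdempotentSuperalgebras} together with the commuting $\fq(n)$-action to land in $\mods$. Your identification of the one step with genuine content — matching the presentation (Q1)--(Q7) with the idempotented enveloping superalgebra — agrees with where the paper places the burden.
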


\begin{remark}\label{R:Compatibility}  Let $m', m$ be positive integers with $m' \geq m$.  Given an element $\lambda = (\lambda_{1}, \dotsc , \lambda_{m}) \in X(T_{m})$ we can view $\lambda$ as the element $\lambda= (\lambda_{1}, \dotsc , \lambda_{m}, 0, \dotsc , 0) \in X(T_{m'})$ by extending by $m'-m$ zeros.  With this identification in mind, there is a functor of supercategories, 
\[
\Theta_{m,m'}: \bUdot (\fqt(m)) \to \bUdot (\fqt(m')),
\]
 given by sending the objects and generating morphisms of $\bUdot (\fqt(m))$ to the objects and morphisms of the same name in $\bUdot (\fqt(m'))$.  Moreover this functor defines a functor, which we call by the same name,
\[
\Theta_{m,m'}: \bUdot (\fqt(m))_{\geq 0} \to \bUdot (\fqt(m'))_{\geq 0}.
\]  For any fixed $n\geq 1$ the functors $\Phi_{m,n}$ and $\Phi_{m',n}$ are compatible in the sense that $\Phi_{m',n}\circ \Theta_{m,m'}$ and $\Phi_{m,n}$ are canonically isomorphic. 
\end{remark}

\section{Upward Webs}\label{S:Upward Webs of Type Q}

\subsection{Upward Webs of Type Q}\label{SS:UpwardWebs}
  
\begin{definition}\label{D:UpwardWebs}  Let $\qwebs$ be the monoidal supercategory generated by the countable collection of objects $\{\up_{k} \mid k \in \Z_{\geq 0}\}$, and with generating morphisms
 
\begin{equation*}
\xy
(0,0)*{
\begin{tikzpicture}[color=\clr, scale=1]
	\draw[color=\clr, thick, directed=1] (1,0) to (1,1);
	\node at (1,-0.15) {\scriptsize $k$};
	\node at (1,1.15) {\scriptsize $k$};
	\draw (1,0.5) \wdot;
\end{tikzpicture}
};
\endxy \ ,\quad\quad
\xy
(0,0)*{
\begin{tikzpicture}[color=\clr, scale=.35]
	\draw [color=\clr,  thick, directed=1] (0, .75) to (0,2);
	\draw [color=\clr,  thick, directed=.65] (1,-1) to [out=90,in=330] (0,.75);
	\draw [color=\clr,  thick, directed=.65] (-1,-1) to [out=90,in=210] (0,.75);
	\node at (0, 2.5) {\scriptsize $k\! +\! l$};
	\node at (-1,-1.5) {\scriptsize $k$};
	\node at (1,-1.5) {\scriptsize $l$};
\end{tikzpicture}
};
\endxy \ ,\quad\quad
\xy
(0,0)*{
\begin{tikzpicture}[color=\clr, scale=.35]
	\draw [color=\clr,  thick, directed=.65] (0,-0.5) to (0,.75);
	\draw [color=\clr,  thick, directed=1] (0,.75) to [out=30,in=270] (1,2.5);
	\draw [color=\clr,  thick, directed=1] (0,.75) to [out=150,in=270] (-1,2.5); 
	\node at (0, -1) {\scriptsize $k\! +\! l$};
	\node at (-1,3) {\scriptsize $k$};
	\node at (1,3) {\scriptsize $l$};
\end{tikzpicture}
};
\endxy
\end{equation*}
 for $k,l\in\Z_{>0}$.  We call these \emph{dots}, \emph{merges}, and \emph{splits}, respectively. The $\Z_{2}$-grading is given by declaring  merges and splits to have parity $\0$ and dots to have parity $\1$. The morphisms in $\qwebs$ are subject to the relations \cref{associativity,digon-removal,dot-collision,dots-past-merges,dumbbell-relation,square-switch,square-switch-dots,double-rungs-1,double-rungs-2}.

\end{definition}

We use the diagrammatic calculus described in \cref{SS:stringcalculus} to work with morphisms in $\qwebs$.  In particular, diagrams are read bottom to top.  Vertical concatenation is composition and horizontal concatenation is the tensor product. In this way any finite sequence of these operations applied to merges, splits, and dots yields a diagram which is a morphism in $\qwebs$.  We call such a diagram a \emph{web}. If $\ob{a}=(a_{1}, \dotsc , a_{r})$ is a tuple of nonnegative integers, then we write $\up_{\ob{a}}$ for the object $\up_{a_{1}} \up_{a_{2}}\dotsb \up_{a_{r}}$.  If $\ob{a}=(a_{1}, \dotsc , a_{r})$ and $\ob{b}=(b_{1}, \dotsc , b_{s})$ are tuples of positive integers, then we say a web is of type $\ob{a}\to \ob{b}$ if it is a morphism from $\up_{\ob{a}} \to \up_{\ob{b}}$.  For example, the following is a web of type $(4,9,6,7)\to(6,5,1,4,8,2)$:
\[
\xy
(0,0)*{
\bt[color=\clr, scale=1.2]
	\node at (2,0) {\scriptsize $4$};
	\node at (3,0) {\scriptsize $9$};
	\node at (4,0) {\scriptsize $6$};
	\node at (4.75,0) {\scriptsize $7$};
	\node at (2,2.9) {\scriptsize $6$};
	\node at (2.75,2.9) {\scriptsize $5$};
	\node at (3.25,2.9) {\scriptsize $1$};
	\node at (3.75,2.9) {\scriptsize $4$};
	\node at (4.375,2.9) {\scriptsize $8$};
	\node at (5,2.9) {\scriptsize $2$};
	\draw [thick, directed=0.65] (2,0.15) to (2,0.5);
	\draw [thick, directed=0.65] (3,0.15) to (3,0.5);
	\draw [thick, ] (4,0.15) to (4,0.5);
	\draw [thick, directed=0.75] (4.75,0.15) to (4.75,1);
	\draw [thick, directed=0.65] (2,0.5) [out=30, in=330] to (2,1.25);
	\draw [thick, directed=0.65] (2,0.5) [out=150, in=210] to (2,1.25);
	\draw [thick, directed=0.65] (2,1.25) [out=90, in=210] to (2.375,1.75);
	\draw [thick, ] (3,0.5) [out=150, in=270] to (2.75,1);
	\draw [thick, directed=0.65] (3,0.5) to (3.75,1);
	\draw [thick, ] (2.75,1) to (2.75,1.25);
	\draw [thick, directed=1] (3.5,2) [out=30, in=270] to (3.75,2.75);
	\draw [thick, directed=1] (3.5,2) [out=150, in=270] to (3.25,2.75);
	\draw [thick, directed=0.01] (2.75,1.25) [out=90, in=330] to (2.375,1.75);
	\draw [thick, directed=0.65] (2.375,1.75) to (2.375,2.25);
	\draw [thick, directed=1] (2.375,2.25) [out=30, in=270] to (2.75,2.75);
	\draw [thick, directed=1] (2.375,2.25) [out=150, in=270] to (2,2.75);
	\draw [thick, directed=0.45] (4,0.5) [out=90, in=330] to (3.75,1);
	\draw [thick, directed=0.65] (3.75,1) to (3.75,1.5);
	\draw [thick, directed=0.65] (3.75,1.5) [out=150, in=270] to (3.5,2);
	\draw [thick, ] (3.75,1.5) [out=30, in=270] to (4,2);
	\draw [thick, directed=0.01] (4,2) [out=90, in=210] to (4.375,2.5);
	\draw [thick, directed=0.65] (4.75,1) [out=30, in=330] to (4.75,1.75);
	\draw [thick, directed=0.65] (4.75,1) [out=150, in=210] to (4.75,1.75);
	\draw [thick, directed=0.65] (4.75,1.75) to (4.75,2.125);
	\draw [thick, directed=0.65] (4.75,2.125) to (4.375,2.5);
	\draw [thick, ] (4.75,2.125) [out=30, in=270] to (5,2.5);
	\draw [thick, directed=1] (5,2.5) to (5,2.75);
	\draw [thick, directed=1] (4.375,2.5) to (4.375,2.75);
	\node at (1.575,0.95) {\scriptsize $3$}; 
	\node at (2.4,0.95) {\scriptsize $1$}; 
	\node at (2.975,1.2) {\scriptsize $7$}; 
	\node at (2.65,2) {\scriptsize $11$}; 
	\node at (1.9,1.65) {\scriptsize $4$}; 
	\node at (3.5,0.575) {\scriptsize $2$}; 
	\node at (3.525,1.25) {\scriptsize $8$}; 
	\node at (3.35,1.65) {\scriptsize $5$}; 
	\node at (4.2,1.925) {\scriptsize $3$}; 
	\node at (4.95,1.925) {\scriptsize $7$}; 
	\node at (5.15,1.45) {\scriptsize $5$}; 
	\node at (4.35,1.45) {\scriptsize $2$}; 
	\node at (4.65,2.5) {\scriptsize $5$}; 
	\draw (1.85,0.65) \wdot;
	\draw (2.765,0.85) \wdot;
	\draw (2.65,1.525) \wdot;
	\draw (4.75,0.45) \wdot;
	\draw (4.95,2.3) \wdot;
	\draw (3.925,1.65) \wdot;
	\draw (3.25,2.45) \wdot;
\et
};
\endxy
\]

We follow the convention that an edge labeled by zero is understood to mean the edge is omitted.  We declare any web containing an edge labeled by a negative integer to be the zero morphism. When no confusion is possible, we sometimes choose to suppress edge labels.

An arbitrary morphism from $\ob{a}$ to $\ob{b}$ is a linear combination of webs of type $\ob{a} \to \ob{b}$.  To write the relations for $\qwebs$ it is convenient to set the following shorthand. A \emph{ladder} is a web which is finite sequence of tensor products and compositions of identities, dots, and webs of the form
\[
\xy
(0,0)*{\reflectbox{
\bt[color=\clr, scale=.4]
	\draw [ thick, color=\clr,directed=1, directed=0.3 ] (2,-1) to (2,1);
	\draw [ thick, color=\clr,directed=0.6 ] (0,0) to (2,0);
	\draw [ thick, color=\clr,directed=1, directed=0.3 ] (0,-1) to (0,1);
	\node at (0,-1.5) {\reflectbox{\scriptsize $l$}};
	\node at (2,-1.5) {\reflectbox{\scriptsize $k$}};
	\node at (0,1.5) {\reflectbox{\scriptsize $l\! -\! j$}};
	\node at (2,1.5) {\reflectbox{\scriptsize $k\! +\! j$}};
	\node at (0.95,0.55) {\reflectbox{\scriptsize $j$ \ }};
\et
}};
\endxy:= \ 
\xy
(0,0)*{\reflectbox{
\bt[color=\clr, scale=.4]
	\draw [ thick, color=\clr,directed=1] (1,.75) to (1,2);
	\draw [ thick, color=\clr,directed=0.75] (2,-1.5) to [out=90,in=320] (1,.75);
	\draw [ thick, color=\clr,directed=.65] (-1,-1.5) to (-1,-.25);
	\draw [ thick, blue] (-1,-0.25) to [out=150,in=270] (-2,1.5);
	\draw [ thick, color=\clr,directed=1] (-2,1.5) to (-2,2);
	\draw [ thick, color=\clr,directed=.55] (-1,-.25) to (1,.75);
	\node at (-1,-2) {\reflectbox{\scriptsize $l$}};
	\node at (2,-2) {\reflectbox{\scriptsize $k$}};
	\node at (-2,2.5) {\reflectbox{\scriptsize $l\! -\! j$}};
	\node at (1,2.5) {\reflectbox{\scriptsize $k\! +\! j$}};
	\node at (-0.35,.8) {\reflectbox{\scriptsize $j$ \ }};
\et
}};
\endxy,
\]
\[
\xy
(0,0)*{
\bt[color=\clr, scale=.4]
	\draw [ thick, color=\clr,directed=1, directed=0.3 ] (2,-1) to (2,1);
	\draw [ thick, color=\clr,directed=0.6 ] (0,0) to (2,0);
	\draw [ thick, color=\clr,directed=1, directed=0.3 ] (0,-1) to (0,1);
	\node at (0,-1.5) {\scriptsize $k$};
	\node at (2,-1.5) {\scriptsize $l$};
	\node at (0,1.5) {\scriptsize $k\! -\! j$};
	\node at (2,1.5) {\scriptsize $l\! +\! j$};
	\node at (0.95,0.55) {\scriptsize $j$ \ };
\et
};
\endxy:= \ 
\xy
(0,0)*{
\bt[color=\clr, scale=.4]
	\draw [ thick, color=\clr,directed=1] (1,.75) to (1,2);
	\draw [ thick, color=\clr,directed=0.75] (2,-1.5) to [out=90,in=320] (1,.75);
	\draw [ thick, color=\clr,directed=.65] (-1,-1.5) to (-1,-.25);
	\draw [ thick, blue] (-1,-0.25) to [out=150,in=270] (-2,1.5);
	\draw [ thick, color=\clr,directed=1] (-2,1.5) to (-2,2);
	\draw [ thick, color=\clr,directed=.55] (-1,-.25) to (1,.75);
	\node at (-1,-2) {\scriptsize $k$};
	\node at (2,-2) {\scriptsize $l$};
	\node at (-2,2.5) {\scriptsize $k\! -\! j$};
	\node at (1,2.5) {\scriptsize $l\! +\! j$};
	\node at (-0.2,.8) {\scriptsize $j$ \ };
\et
};
\endxy
\]
for $k,l\in\Z_{>0}$ and $j\in\Z_{\geq0}$. The edge which connects two vertical strands is called a \emph{rung}.
Note that by choosing a suitable $k$, $l$, and $j$ every merge and split is itself a ladder.

The morphisms in $\qwebs$ are subject to the following relations  for all nonnegative integers $h,k,l$, along with the relations obtained by reflecting the webs in \cref{dots-past-merges} across a vertical axis, and by reversing all rung orientations\footnote{Where edge labels at the top of the diagram are changed as needed to make a valid web.}  of the ladders in \cref{double-rungs-1} and \cref{double-rungs-2}.  In what follows, to avoid confusing scalars with edge labels we usually choose to write the scalers in parentheses. Also, as usual, $\binom{k+l}{l} = \frac{(k+l)!}{k!l!}$.

\input{UpwardWebRelations}

\subsection{Additional Relations}\label{SS:AdditionalRelations}  From the defining relations of $\qwebs$ we deduce the following useful relations.

\begin{lemma}\label{additional-relations}
The following relation holds in $\qwebs$:
\beq\label{2-dots-zero}
\xy
(0,0)*{
\begin{tikzpicture}[color=\clr, scale=.3]
	\draw [ color=\clr, thick, directed=1] (0,.75) to (0,2);
	\draw [ color=\clr, thick, directed=.85] (0,-2.75) to [out=30,in=330] (0,.75);
	\draw [ color=\clr, thick, directed=.85] (0,-2.75) to [out=150,in=210] (0,.75);
	\draw [ color=\clr, thick, directed=.65] (0,-4) to (0,-2.75);
	\node at (0,-4.5) {\scriptsize $2$};
	\node at (0,2.5) {\scriptsize $2$};
	\node at (-1.5,-1) {\scriptsize $1$};
	\node at (1.5,-1) {\scriptsize $1$};
	\draw (-0.85,-1) \wdot;
	\draw (0.85,-1) \wdot;
\end{tikzpicture}
};
\endxy=0
\eeq
For $k \geq 1$,
\beq\label{dot-on-k-strand}
\xy
(0,0)*{
\begin{tikzpicture}[color=\clr, scale=.3]
	\draw [ color=\clr, thick, directed=1] (0,.75) to (0,2);
	\draw [ color=\clr, thick, directed=.85] (0,-2.75) to [out=30,in=330] (0,.75);
	\draw [ color=\clr, thick, directed=.85] (0,-2.75) to [out=150,in=210] (0,.75);
	\draw [ color=\clr, thick, directed=.65] (0,-4) to (0,-2.75);
	\node at (0,-4.5) {\scriptsize $k$};
	\node at (0,2.5) {\scriptsize $k$};
	\node at (-1.5,-1) {\scriptsize $1$};
	\node at (2,-1) {\scriptsize $k\!-\!1$};
	\draw (-0.875,-1) \wdot;
\end{tikzpicture}
};
\endxy=
\xy
(0,0)*{
\begin{tikzpicture}[color=\clr, scale=.3]
	\draw [ color=\clr, thick, directed=1] (0,-4.) to (0,2);
	\node at (0,-4.5) {\scriptsize $k$};
	\node at (0,2.5) {\scriptsize $k$};
	\draw (0,-1) \wdot;
\end{tikzpicture}
};
\endxy=
\xy
(0,0)*{
\begin{tikzpicture}[color=\clr, scale=.3]
	\draw [ color=\clr, thick, directed=1] (0,.75) to (0,2);
	\draw [ color=\clr, thick, directed=.85] (0,-2.75) to [out=30,in=330] (0,.75);
	\draw [ color=\clr, thick, directed=.85] (0,-2.75) to [out=150,in=210] (0,.75);
	\draw [ color=\clr, thick, directed=.65] (0,-4) to (0,-2.75);
	\node at (0,-4.5) {\scriptsize $k$};
	\node at (0,2.5) {\scriptsize $k$};
	\node at (-2,-1) {\scriptsize $k\!-\!1$};
	\node at (1.5,-1) {\scriptsize $1$};
	\draw (0.875,-1) \wdot;
\end{tikzpicture}
};
\endxy.
\eeq
\end{lemma}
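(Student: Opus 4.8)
The plan is to first deduce \eqref{2-dots-zero} from the dumbbell relation \eqref{dumbbell-relation}, and then to bootstrap \eqref{2-dots-zero} into \eqref{dot-on-k-strand} by induction on $k$. Throughout, write $m_{a,b}\colon\up_a\up_b\to\up_{a+b}$ and $s_{a,b}\colon\up_{a+b}\to\up_a\up_b$ for the merge and split, and $d_a$ for the dot on $\up_a$. Two immediate consequences of the defining relations will be used repeatedly: from \eqref{digon-removal}, $m_{a,b}\circ s_{a,b}=\binom{a+b}{b}\,\id_{\up_{a+b}}$, so every $s_{a,b}$ is a split monomorphism with retraction $\binom{a+b}{b}^{-1}m_{a,b}$; and from \eqref{dot-collision}, $d_a\circ d_a=a\,\id_{\up_a}$, so each $d_a$ with $a\ge 1$ is invertible and, by the super-interchange law, $(d_1\otimes d_1)\circ(d_1\otimes d_1)=-\id_{\up_1\up_1}$.

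For \eqref{2-dots-zero}, let $W:=m_{1,1}\circ(d_1\otimes d_1)\circ s_{1,1}\in\End_{\qwebs}(\up_2)$ denote its left-hand side. Composing \eqref{dumbbell-relation} on the right with $s_{1,1}$ and simplifying the term $\tfrac12\,s_{1,1}m_{1,1}s_{1,1}=s_{1,1}$ against the right-hand side leaves $(d_1\otimes d_1)\circ s_{1,1}\circ W=0$. Composing on the left first with $(d_1\otimes d_1)^{-1}$ and then with $\tfrac12\,m_{1,1}$, and using $m_{1,1}\circ s_{1,1}=2\,\id_{\up_2}$, gives $W=0$, which is \eqref{2-dots-zero}.

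For \eqref{dot-on-k-strand}, set $D^{(k)}:=m_{1,k-1}\circ(d_1\otimes\id_{k-1})\circ s_{1,k-1}$ (the left-hand side) and $E^{(k)}:=m_{1,k-1}\circ(\id_1\otimes d_{k-1})\circ s_{1,k-1}$ (the digon carrying the dot on its thick leg); the middle term is $d_k$. Sliding a dot through the merge by \eqref{dots-past-merges} and postcomposing with $s_{1,k-1}$ yields, after $m_{1,k-1}\circ s_{1,k-1}=k\,\id_{\up_k}$, the identity $D^{(k)}+E^{(k)}=k\,d_k$. The case $k=1$ is trivial. For $k=2$, rewriting $d_1\otimes d_1=(d_1\otimes\id_1)\circ(\id_1\otimes d_1)$ inside $W$, sliding the left dot past $m_{1,1}$ via \eqref{dots-past-merges}, and using $d_1\circ d_1=\id_{\up_1}$ together with $m_{1,1}\circ s_{1,1}=2\,\id_{\up_2}$, the already-proved identity $W=0$ becomes $d_2\circ E^{(2)}=2\,\id_{\up_2}$; inverting $d_2$ gives $E^{(2)}=d_2$, and then $D^{(2)}=2d_2-E^{(2)}=d_2$ (so for $k=2$ a digon with a dot on \emph{either} thin leg equals $d_2$). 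For the inductive step $k\ge 3$: substitute the inductive formula $d_{k-1}=m_{1,k-2}\circ(d_1\otimes\id_{k-2})\circ s_{1,k-2}$ into $E^{(k)}$, regroup the merges and splits using the associativity and coassociativity relations \eqref{associativity}, and use the $k=2$ case to collapse the resulting internal digon; this rewrites $E^{(k)}$ as $m_{2,k-2}\circ(d_2\otimes\id_{k-2})\circ s_{2,k-2}$. Expanding this $d_2$ once more, applying \eqref{associativity} again, and collapsing a digon on a $(k-1)$-strand via \eqref{digon-removal} (which produces the factor $\binom{k-1}{k-2}=k-1$) rewrites $E^{(k)}=(k-1)\,D^{(k)}$. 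Combined with $D^{(k)}+E^{(k)}=k\,d_k$ this gives $D^{(k)}=d_k$. The third equality of \eqref{dot-on-k-strand} follows by the left--right mirror of this argument: $d_k$ is reflection-invariant, and each relation invoked (\eqref{associativity}, \eqref{digon-removal}, \eqref{dot-collision}, \eqref{dumbbell-relation}, and \eqref{dots-past-merges}, whose vertical reflection is one of the defining relations) is symmetric under reflection across a vertical axis.

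The step I expect to be the main obstacle is the diagrammatic bookkeeping in the inductive step: keeping track of which strand carries the dot through the two successive regroupings via \eqref{associativity}, and handling the super-interchange signs correctly when the odd morphism $d_1$ is commuted past even merges and splits supported on disjoint strands. A secondary subtlety is that \eqref{dumbbell-relation} only yields $(d_1\otimes d_1)\circ s_{1,1}\circ W=0$ directly; deducing $W=0$ (and not merely $W^2=0$) uses that $d_1\otimes d_1$ is invertible and $s_{1,1}$ is a split monomorphism, so that no semisimplicity input about the endomorphism algebras is needed.
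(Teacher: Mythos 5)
Your proof is correct. For \cref{2-dots-zero} you use the same ingredients as the paper (\cref{dumbbell-relation}, \cref{digon-removal}, \cref{dot-collision}, and the super-interchange law), but you package the argument as a cancellation: compose the dumbbell relation with the split, then peel off the invertible factor $d_1\otimes d_1$ and the split monomorphism $s_{1,1}$. This is cleaner than the paper's direct chain of diagram rewrites, and your explicit remark that only invertibility and split-monomorphy are needed (no semisimplicity) is a genuine clarification. The $k=2$ analysis is essentially the paper's: both derive the statement from $W=0$ together with \cref{dots-past-merges} and \cref{digon-removal}, though you extract $E^{(2)}=d_2$ directly from $d_2E^{(2)}=2\,\id$ while the paper first shows the two one-dotted digons are equal and then averages. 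The real structural difference is in the general case: the paper fully explodes the $k$-strand into $k$ thin strands, uses \cref{dots-past-merges} to write the dotted digon as a sum over the $k$ positions of the dot, and invokes the $k=2$ case plus \cref{associativity} to see all summands agree; you instead run an induction on $k$, exploding only into a $1+1+(k{-}2)$ configuration, substituting the inductive formula for $d_{k-1}$, and collapsing an internal digon to get $E^{(k)}=(k-1)D^{(k)}$. Both routes are valid; the paper's full explosion makes the $\Sigma_k$-symmetry visible at once (which it reuses later, e.g.\ in \cref{L:claspsum}), while your induction keeps every intermediate diagram small and makes the bookkeeping of the single odd dot — and hence the absence of super-interchange signs — easier to verify.
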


\begin{proof}
To prove \cref{2-dots-zero} is a direct calculation similar to the analogous result for type A \cite[Lemma 2.9]{TVW} and is as follows:
\bea
\xy
(0,1)*{
\begin{tikzpicture}[color=\clr, scale=.3]
	\draw [ color=\clr, thick, directed=1] (0,.75) to (0,7.5);
	\draw [ color=\clr, thick, directed=.85] (0,-2.75) to [out=30,in=330] (0,.75);
	\draw [ color=\clr, thick, directed=.85] (0,-2.75) to [out=150,in=210] (0,.75);
	\draw [ color=\clr, thick, directed=.65] (0,-4) to (0,-2.75);
	\node at (0,-4.5) {\scriptsize $2$};
	\node at (0,8) {\scriptsize $2$};
	\node at (-1.5,-1) {\scriptsize $1$};
	\node at (1.5,-1) {\scriptsize $1$};
	\draw (-0.85,-1) \wdot;
	\draw (0.85,-1) \wdot;
\end{tikzpicture}
};
\endxy
 & \stackrel{\eqref{digon-removal}}{=} & 
\left(\frac{1}{2}\right)
\xy
(0,0)*{
\begin{tikzpicture}[color=\clr, scale=.3]
	\draw [ color=\clr, thick, directed=.65] (0,.75) to (0,2.5);
	\draw [ color=\clr, thick, directed=.85] (0,-2.75) to [out=30,in=330] (0,.75);
	\draw [ color=\clr, thick, directed=.85] (0,-2.75) to [out=150,in=210] (0,.75);
	\draw [ color=\clr, thick, directed=.65] (0,-4) to (0,-2.75);
	\draw [ color=\clr, thick, directed=1] (0,6) to (0,7.5);
	\draw [ color=\clr, thick, directed=.55] (0,2.5) to [out=30,in=330] (0,6);
	\draw [ color=\clr, thick, directed=.55] (0,2.5) to [out=150,in=210] (0,6);
	\node at (0,-4.5) {\scriptsize $2$};
	\node at (0,8) {\scriptsize $2$};
	\node at (-1.5,-1) {\scriptsize $1$};
	\node at (1.5,-1) {\scriptsize $1$};
	\node at (-1.75,4.25) {\scriptsize $1$};
	\node at (1.75,4.25) {\scriptsize $1$};
	\node at (0.75,1.75) {\scriptsize $2$};
	\draw (-0.85,-1) \wdot;
	\draw (0.85,-1) \wdot;
\end{tikzpicture}
};
\endxy
 \ \stackrel{\eqref{dumbbell-relation}}{=} \ 
\left(\frac{1}{2}\right)
\xy
(0,0)*{
\begin{tikzpicture}[color=\clr, scale=.3]
	\draw [ color=\clr, thick, directed=.65] (0,.75) to (0,2.5);
	\draw [ color=\clr, thick, directed=.85] (0,-2.75) to [out=30,in=330] (0,.75);
	\draw [ color=\clr, thick, directed=.85] (0,-2.75) to [out=150,in=210] (0,.75);
	\draw [ color=\clr, thick, directed=.65] (0,-4) to (0,-2.75);
	\draw [ color=\clr, thick, directed=1] (0,6) to (0,7.5);
	\draw [ color=\clr, thick, directed=.85] (0,2.5) to [out=30,in=330] (0,6);
	\draw [ color=\clr, thick, directed=.85] (0,2.5) to [out=150,in=210] (0,6);
	\node at (0,-4.5) {\scriptsize $2$};
	\node at (0,8) {\scriptsize $2$};
	\node at (-1.5,-1) {\scriptsize $1$};
	\node at (1.5,-1) {\scriptsize $1$};
	\node at (-1.5,4.25) {\scriptsize $1$};
	\node at (1.5,4.25) {\scriptsize $1$};
	\node at (0.75,1.75) {\scriptsize $2$};
	\draw (-0.85,-1) \wdot;
	\draw (0.85,-1) \wdot;
	\draw (-0.7,-2) \wdot;
	\draw (0.7,-2) \wdot;
	\draw (-0.85,4.25) \wdot;
	\draw (0.85,4.25) \wdot;
\end{tikzpicture}
};
\endxy
+
\xy
(0,0)*{
\begin{tikzpicture}[color=\clr, scale=.3]
	\draw [ color=\clr, thick, directed=.55] (.9,-1) to (.9,4.25);
	\draw [ color=\clr, thick, directed=.55] (-.9,-1) to (-.9,4.25);
	\draw [ thick] (0,-2.75) to [out=30,in=270] (.9,-1);
	\draw [ thick] (0,-2.75) to [out=150,in=270] (-.9,-1);
	\draw [ color=\clr, thick, directed=.65] (0,-4) to (0,-2.75);
	\draw [ color=\clr, thick, directed=1] (0,6) to (0,7.5);
	\draw [ thick] (.9,4.25) to [out=90,in=330] (0,6);
	\draw [ thick] (-.9,4.25) to [out=90,in=210] (0,6);
	\node at (0,-4.5) {\scriptsize $2$};
	\node at (0,8) {\scriptsize $2$};
	\node at (-1.75,1.6) {\scriptsize $1$};
	\node at (1.75,1.6) {\scriptsize $1$};
	\draw (-0.85,-1) \wdot;
	\draw (0.85,-1) \wdot;
\end{tikzpicture}
};
\endxy
\stackrel{\eqref{super-interchange}}{=}
-\left(\frac{1}{2}\right)
\xy
(0,0)*{
\begin{tikzpicture}[color=\clr, scale=.3]
	\draw [ color=\clr, thick, directed=.65] (0,.75) to (0,2.5);
	\draw [ color=\clr, thick, directed=.85] (0,-2.75) to [out=30,in=330] (0,.75);
	\draw [ color=\clr, thick, directed=.85] (0,-2.75) to [out=150,in=210] (0,.75);
	\draw [ color=\clr, thick, directed=.65] (0,-4) to (0,-2.75);
	\draw [ color=\clr, thick, directed=1] (0,6) to (0,7.5);
	\draw [ color=\clr, thick, directed=.85] (0,2.5) to [out=30,in=330] (0,6);
	\draw [ color=\clr, thick, directed=.85] (0,2.5) to [out=150,in=210] (0,6);
	\node at (0,-4.5) {\scriptsize $2$};
	\node at (0,8) {\scriptsize $2$};
	\node at (-1.5,-1) {\scriptsize $1$};
	\node at (1.5,-1) {\scriptsize $1$};
	\node at (-1.5,4.25) {\scriptsize $1$};
	\node at (1.5,4.25) {\scriptsize $1$};
	\node at (0.75,1.75) {\scriptsize $2$};
	\draw (-0.85,-0.5) \wdot;
	\draw (0.85,-1.25) \wdot;
	\draw (-0.85,-1.25) \wdot;
	\draw (0.7,-2) \wdot;
	\draw (-0.85,4.25) \wdot;
	\draw (0.85,4.25) \wdot;
\end{tikzpicture}
};
\endxy
+
\xy
(0,0)*{
\begin{tikzpicture}[color=\clr, scale=.3]
	\draw [ color=\clr, thick, directed=1] (0,.75) to (0,2);
	\draw [ color=\clr, thick, directed=.85] (0,-2.75) to [out=30,in=330] (0,.75);
	\draw [ color=\clr, thick, directed=.85] (0,-2.75) to [out=150,in=210] (0,.75);
	\draw [ color=\clr, thick, directed=.65] (0,-4) to (0,-2.75);
	\node at (0,-4.5) {\scriptsize $2$};
	\node at (0,2.5) {\scriptsize $2$};
	\node at (-1.5,-1) {\scriptsize $1$};
	\node at (1.5,-1) {\scriptsize $1$};
	\draw (-0.85,-1) \wdot;
	\draw (0.85,-1) \wdot;
\end{tikzpicture}
};
\endxy\\
 & \stackrel{\eqref{dot-collision}}{=} &
-\left(\frac{1}{2}\right)
\xy
(0,0)*{
\begin{tikzpicture}[color=\clr, scale=.3]
	\draw [ color=\clr, thick, directed=.65] (0,.75) to (0,2.5);
	\draw [ color=\clr, thick, directed=.55] (0,-2.75) to [out=30,in=330] (0,.75);
	\draw [ color=\clr, thick, directed=.55] (0,-2.75) to [out=150,in=210] (0,.75);
	\draw [ color=\clr, thick, directed=.65] (0,-4) to (0,-2.75);
	\draw [ color=\clr, thick, directed=1] (0,6) to (0,7.5);
	\draw [ color=\clr, thick, directed=.85] (0,2.5) to [out=30,in=330] (0,6);
	\draw [ color=\clr, thick, directed=.85] (0,2.5) to [out=150,in=210] (0,6);
	\node at (0,-4.5) {\scriptsize $2$};
	\node at (0,8) {\scriptsize $2$};
	\node at (-1.75,-1) {\scriptsize $1$};
	\node at (1.75,-1) {\scriptsize $1$};
	\node at (-1.5,4.25) {\scriptsize $1$};
	\node at (1.5,4.25) {\scriptsize $1$};
	\node at (0.75,1.75) {\scriptsize $2$};
	\draw (-0.85,4.25) \wdot;
	\draw (0.85,4.25) \wdot;
\end{tikzpicture}
};
\endxy
+
\xy
(0,0)*{
\begin{tikzpicture}[color=\clr, scale=.3]
	\draw [ color=\clr, thick, directed=1] (0,.75) to (0,2);
	\draw [ color=\clr, thick, directed=.85] (0,-2.75) to [out=30,in=330] (0,.75);
	\draw [ color=\clr, thick, directed=.85] (0,-2.75) to [out=150,in=210] (0,.75);
	\draw [ color=\clr, thick, directed=.65] (0,-4) to (0,-2.75);
	\node at (0,-4.5) {\scriptsize $2$};
	\node at (0,2.5) {\scriptsize $2$};
	\node at (-1.5,-1) {\scriptsize $1$};
	\node at (1.5,-1) {\scriptsize $1$};
	\draw (-0.85,-1) \wdot;
	\draw (0.85,-1) \wdot;
\end{tikzpicture}
};
\endxy
\stackrel{\eqref{digon-removal}}{=}
-
\xy
(0,0)*{
\begin{tikzpicture}[color=\clr, scale=.3]
	\draw [ color=\clr, thick, directed=1] (0,.75) to (0,2);
	\draw [ color=\clr, thick, directed=.85] (0,-2.75) to [out=30,in=330] (0,.75);
	\draw [ color=\clr, thick, directed=.85] (0,-2.75) to [out=150,in=210] (0,.75);
	\draw [ color=\clr, thick, directed=.65] (0,-4) to (0,-2.75);
	\node at (0,-4.5) {\scriptsize $2$};
	\node at (0,2.5) {\scriptsize $2$};
	\node at (-1.5,-1) {\scriptsize $1$};
	\node at (1.5,-1) {\scriptsize $1$};
	\draw (-0.85,-1) \wdot;
	\draw (0.85,-1) \wdot;
\end{tikzpicture}
};
\endxy+
\xy
(0,0)*{
\begin{tikzpicture}[color=\clr, scale=.3]
	\draw [ color=\clr, thick, directed=1] (0,.75) to (0,2);
	\draw [ color=\clr, thick, directed=.85] (0,-2.75) to [out=30,in=330] (0,.75);
	\draw [ color=\clr, thick, directed=.85] (0,-2.75) to [out=150,in=210] (0,.75);
	\draw [ color=\clr, thick, directed=.65] (0,-4) to (0,-2.75);
	\node at (0,-4.5) {\scriptsize $2$};
	\node at (0,2.5) {\scriptsize $2$};
	\node at (-1.5,-1) {\scriptsize $1$};
	\node at (1.5,-1) {\scriptsize $1$};
	\draw (-0.85,-1) \wdot;
	\draw (0.85,-1) \wdot;
\end{tikzpicture}
};
\endxy \ = \ 0.
\eea
To prove \cref{dot-on-k-strand}, we first prove the case of $k=2$. For this, we start by computing that

\beq\label{dot-on-2-strand}
\xy
(0,0)*{
\begin{tikzpicture}[color=\clr, scale=.3]
	\draw [ color=\clr, thick, directed=1] (0,-4) to (0,2);
	\node at (0,-4.5) {\scriptsize $2$};
	\node at (0,2.5) {\scriptsize $2$};
	\draw (0,1) \wdot;
\end{tikzpicture}
};
\endxy
 \ \stackrel{\eqref{digon-removal}}{=} \ 
\left(\frac{1}{2}\right)
\xy
(0,0)*{
\begin{tikzpicture}[color=\clr, scale=.3]
	\draw [ color=\clr, thick, directed=1] (0,.75) to (0,2.5);
	\draw [ color=\clr, thick, directed=.55] (0,-2.75) to [out=30,in=330] (0,.75);
	\draw [ color=\clr, thick, directed=.55] (0,-2.75) to [out=150,in=210] (0,.75);
	\draw [ color=\clr, thick, directed=.65] (0,-4) to (0,-2.75);
	\node at (0,-4.5) {\scriptsize $2$};
	\node at (0,3) {\scriptsize $2$};
	\node at (-1.75,-1) {\scriptsize $1$};
	\node at (1.75,-1) {\scriptsize $1$};
	\draw (0,1.5) \wdot;
\end{tikzpicture}
};
\endxy
 \ \stackrel{\eqref{dots-past-merges}}{=} \ 
\frac{1}{2}\left(
\xy
(0,0)*{
\begin{tikzpicture}[color=\clr, scale=.3]
	\draw [ color=\clr, thick, directed=1] (0,.75) to (0,2);
	\draw [ color=\clr, thick, directed=.85] (0,-2.75) to [out=30,in=330] (0,.75);
	\draw [ color=\clr, thick, directed=.85] (0,-2.75) to [out=150,in=210] (0,.75);
	\draw [ color=\clr, thick, directed=.65] (0,-4) to (0,-2.75);
	\node at (0,-4.5) {\scriptsize $2$};
	\node at (0,2.5) {\scriptsize $2$};
	\node at (-1.5,-1) {\scriptsize $1$};
	\node at (1.5,-1) {\scriptsize $1$};
	\draw (-0.875,-1) \wdot;
\end{tikzpicture}
};
\endxy+
\xy
(0,0)*{
\begin{tikzpicture}[color=\clr, scale=.3]
	\draw [ color=\clr, thick, directed=1] (0,.75) to (0,2);
	\draw [ color=\clr, thick, directed=.85] (0,-2.75) to [out=30,in=330] (0,.75);
	\draw [ color=\clr, thick, directed=.85] (0,-2.75) to [out=150,in=210] (0,.75);
	\draw [ color=\clr, thick, directed=.65] (0,-4) to (0,-2.75);
	\node at (0,-4.5) {\scriptsize $2$};
	\node at (0,2.5) {\scriptsize $2$};
	\node at (-1.5,-1) {\scriptsize $1$};
	\node at (1.5,-1) {\scriptsize $1$};
	\draw (0.875,-1) \wdot;
\end{tikzpicture}
};
\endxy\right).
\eeq
Next, we compose \cref{dots-past-merges} on bottom with 
$\xy
(0,0)*{
\begin{tikzpicture}[color=\clr, scale=.1] 
	\draw [ thick, directed=1] (1.5,-2) to (1.5,2.5);
	\draw [ thick, directed=1] (-1,-2) to (-1,2.5);
	\draw (-1,0) \wdot;
	\draw (1.5,0) \wdot;
\end{tikzpicture}
};
\endxy$
 followed by a split to get

\[
\xy
(0,0)*{
\begin{tikzpicture}[color=\clr, scale=.3]
	\draw [ color=\clr, thick, directed=1] (0,.75) to (0,2.5);
	\draw [ color=\clr, thick, directed=.35] (0,-2.75) to [out=30,in=330] (0,.75);
	\draw [ color=\clr, thick, directed=.35] (0,-2.75) to [out=150,in=210] (0,.75);
	\draw [ color=\clr, thick, directed=.65] (0,-4) to (0,-2.75);
	\node at (0,-4.5) {\scriptsize $2$};
	\node at (0,3) {\scriptsize $2$};
	\node at (-1.5,-1) {\scriptsize $1$};
	\node at (1.5,-1) {\scriptsize $1$};
	\draw (-0.85,-1) \wdot;
	\draw (0.85,-1) \wdot;
	\draw (0,1.5) \wdot;
\end{tikzpicture}
};
\endxy \ = \ 
\xy
(0,0)*{
\begin{tikzpicture}[color=\clr, scale=.3]
	\draw [ color=\clr, thick, directed=1] (0,.75) to (0,2);
	\draw [ color=\clr, thick, directed=.35] (0,-2.75) to [out=30,in=330] (0,.75);
	\draw [ color=\clr, thick, directed=.35] (0,-2.75) to [out=150,in=210] (0,.75);
	\draw [ color=\clr, thick, directed=.65] (0,-4) to (0,-2.75);
	\node at (0,-4.5) {\scriptsize $2$};
	\node at (0,2.5) {\scriptsize $2$};
	\node at (-1.5,-1) {\scriptsize $1$};
	\node at (1.5,-1) {\scriptsize $1$};
	\draw (-0.85,-1) \wdot;
	\draw (0.85,-1) \wdot;
	\draw (-0.65,0) \wdot;
\end{tikzpicture}
};
\endxy+
\xy
(0,0)*{
\begin{tikzpicture}[color=\clr, scale=.3]
	\draw [ color=\clr, thick, directed=1] (0,.75) to (0,2);
	\draw [ color=\clr, thick, directed=.35] (0,-2.75) to [out=30,in=330] (0,.75);
	\draw [ color=\clr, thick, directed=.35] (0,-2.75) to [out=150,in=210] (0,.75);
	\draw [ color=\clr, thick, directed=.65] (0,-4) to (0,-2.75);
	\node at (0,-4.5) {\scriptsize $2$};
	\node at (0,2.5) {\scriptsize $2$};
	\node at (-1.5,-1) {\scriptsize $1$};
	\node at (1.5,-1) {\scriptsize $1$};
	\draw (-0.85,-1) \wdot;
	\draw (0.85,-1) \wdot;
	\draw (0.65,0) \wdot;
\end{tikzpicture}
};
\endxy
\]
Using \cref{2-dots-zero} on the left, and superinterchange and \cref{dot-collision} on the right, this becomes
\[
0 \ =
\xy
(0,0)*{
\begin{tikzpicture}[color=\clr, scale=.3]
	\draw [ color=\clr, thick, directed=1] (0,.75) to (0,2);
	\draw [ color=\clr, thick, directed=.35] (0,-2.75) to [out=30,in=330] (0,.75);
	\draw [ color=\clr, thick, directed=.35] (0,-2.75) to [out=150,in=210] (0,.75);
	\draw [ color=\clr, thick, directed=.65] (0,-4) to (0,-2.75);
	\node at (0,-4.5) {\scriptsize $2$};
	\node at (0,2.5) {\scriptsize $2$};
	\node at (-1.5,-1) {\scriptsize $1$};
	\node at (1.5,-1) {\scriptsize $1$};
	\draw (0.875,-1) \wdot;
\end{tikzpicture}
};
\endxy-
\xy
(0,0)*{
\begin{tikzpicture}[color=\clr, scale=.3]
	\draw [ color=\clr, thick, directed=1] (0,.75) to (0,2);
	\draw [ color=\clr, thick, directed=.35] (0,-2.75) to [out=30,in=330] (0,.75);
	\draw [ color=\clr, thick, directed=.35] (0,-2.75) to [out=150,in=210] (0,.75);
	\draw [ color=\clr, thick, directed=.65] (0,-4) to (0,-2.75);
	\node at (0,-4.5) {\scriptsize $2$};
	\node at (0,2.5) {\scriptsize $2$};
	\node at (-1.5,-1) {\scriptsize $1$};
	\node at (1.5,-1) {\scriptsize $1$};
	\draw (-0.875,-1) \wdot;
\end{tikzpicture}
};
\endxy.
\]
Combining the above with \cref{dot-on-2-strand} and symmetry, we have \cref{dot-on-k-strand} in case $k=2.$ For general $k$, we use \cref{dots-past-merges} repeatedly to get
\beq\label{dot-past-k-explosion}
\xy
(0,0)*{
\begin{tikzpicture}[color=\clr, scale=.3]
	\draw [ color=\clr, thick, directed=1] (0,.75) to (0,2.5);
	\draw [ color=\clr, thick, directed=.35] (0,-2.75) to [out=30,in=330] (0,.75);
	\draw [ color=\clr, thick, directed=.35] (0,-2.75) to [out=150,in=210] (0,.75);
	\draw [ color=\clr, thick, directed=.65] (0,-4) to (0,-2.75);
	\node at (0,-4.5) {\scriptsize $k$};
	\node at (0,3) {\scriptsize $k$};
	\node at (-1.5,-1) {\scriptsize $1$};
	\node at (1.5,-1) {\scriptsize $1$};
	\node at (0,-1) { \ $\cdots$};
	\draw (0,1.5) \wdot;
\end{tikzpicture}
};
\endxy=
\xy
(0,0)*{
\begin{tikzpicture}[color=\clr, scale=.3]
	\draw [ color=\clr, thick, directed=1] (0,.75) to (0,2);
	\draw [ color=\clr, thick, directed=.35] (0,-2.75) to [out=30,in=330] (0,.75);
	\draw [ color=\clr, thick, directed=.35] (0,-2.75) to [out=150,in=210] (0,.75);
	\draw [ color=\clr, thick, directed=.65] (0,-4) to (0,-2.75);
	\node at (0,-4.5) {\scriptsize $k$};
	\node at (0,2.5) {\scriptsize $k$};
	\node at (-1.5,-1) {\scriptsize $1$};
	\node at (1.5,-1) {\scriptsize $1$};
	\node at (0,-1) { \ $\cdots$};
	\draw (-0.75,-0.25) \wdot;
\end{tikzpicture}
};
\endxy+\cdots+
\xy
(0,0)*{
\begin{tikzpicture}[color=\clr, scale=.3]
	\draw [ color=\clr, thick, directed=1] (0,.75) to (0,2);
	\draw [ color=\clr, thick, directed=.35] (0,-2.75) to [out=30,in=330] (0,.75);
	\draw [ color=\clr, thick, directed=.35] (0,-2.75) to [out=150,in=210] (0,.75);
	\draw [ color=\clr, thick, directed=.65] (0,-4) to (0,-2.75);
	\node at (0,-4.5) {\scriptsize $k$};
	\node at (0,2.5) {\scriptsize $k$};
	\node at (-1.5,-1) {\scriptsize $1$};
	\node at (1.5,-1) {\scriptsize $1$};
	\node at (0,-1) { \ $\cdots$};
	\draw (0.75,-0.25) \wdot;
\end{tikzpicture}
};
\endxy
\eeq
where the ellipses indicate $k$-strands which have been completely ``exploded'' into $k$ separate $1$-strands. By \cref{associativity} the order this is done does not matter. The sum is over the $k$ different webs with a dot on a unique $1$-strand. By \cref{associativity} and the $k=2$ case, the summands are pairwise equal and we have, for example, 
\beq\label{dot-goes-left}
\xy
(0,0)*{
\begin{tikzpicture}[color=\clr, scale=.3]
	\draw [ color=\clr, thick, directed=1] (0,.75) to (0,2.5);
	\draw [ color=\clr, thick, directed=.35] (0,-2.75) to [out=30,in=330] (0,.75);
	\draw [ color=\clr, thick, directed=.35] (0,-2.75) to [out=150,in=210] (0,.75);
	\draw [ color=\clr, thick, directed=.65] (0,-4) to (0,-2.75);
	\node at (0,-4.5) {\scriptsize $k$};
	\node at (0,3) {\scriptsize $k$};
	\node at (-1.5,-1) {\scriptsize $1$};
	\node at (1.5,-1) {\scriptsize $1$};
	\node at (0,-1) { \ $\cdots$};
	\draw (0,1.5) \wdot;
\end{tikzpicture}
};
\endxy=(k)
\xy
(0,0)*{
\begin{tikzpicture}[color=\clr, scale=.3]
	\draw [ color=\clr, thick, directed=1] (0,.75) to (0,2);
	\draw [ color=\clr, thick, directed=.35] (0,-2.75) to [out=30,in=330] (0,.75);
	\draw [ color=\clr, thick, directed=.35] (0,-2.75) to [out=150,in=210] (0,.75);
	\draw [ color=\clr, thick, directed=.65] (0,-4) to (0,-2.75);
	\node at (0,-4.5) {\scriptsize $k$};
	\node at (0,2.5) {\scriptsize $k$};
	\node at (-1.5,-1) {\scriptsize $1$};
	\node at (1.5,-1) {\scriptsize $1$};
	\node at (0,-1) { \ $\cdots$};
	\draw (-0.75,-0.25) \wdot;
\end{tikzpicture}
};
\endxy
\eeq
where, on the right, only the leftmost 1-strand has a dot. We finish the proof by computing that
\[
\xy
(0,0)*{
\begin{tikzpicture}[color=\clr, scale=.3]
	\draw [ color=\clr, thick, directed=1] (0,-4) to (0,2);
	\node at (0,-4.5) {\scriptsize $k$};
	\node at (0,2.5) {\scriptsize $k$};
	\draw (0,1) \wdot;
\end{tikzpicture}
};
\endxy
 \ \stackrel{\eqref{digon-removal}}{=} \ 
\frac{1}{k!}
\xy
(0,0)*{
\begin{tikzpicture}[color=\clr, scale=.3]
	\draw [ color=\clr, thick, directed=1] (0,.75) to (0,2.5);
	\draw [ color=\clr, thick, directed=.35] (0,-2.75) to [out=30,in=330] (0,.75);
	\draw [ color=\clr, thick, directed=.35] (0,-2.75) to [out=150,in=210] (0,.75);
	\draw [ color=\clr, thick, directed=.65] (0,-4) to (0,-2.75);
	\node at (0,-4.5) {\scriptsize $k$};
	\node at (0,3) {\scriptsize $k$};
	\node at (-1.5,-1) {\scriptsize $1$};
	\node at (1.5,-1) {\scriptsize $1$};
	\node at (0,-1) { \ $\cdots$};
	\draw (0,1.5) \wdot;
\end{tikzpicture}
};
\endxy
 \ \stackrel{\eqref{dot-goes-left}}{=} \ 
\frac{1}{(k-1)!}
\xy
(0,0)*{
\begin{tikzpicture}[color=\clr, scale=.3]
	\draw [ color=\clr, thick, directed=1] (0,.75) to (0,2);
	\draw [ color=\clr, thick, directed=.35] (0,-2.75) to [out=30,in=330] (0,.75);
	\draw [ color=\clr, thick, directed=.35] (0,-2.75) to [out=150,in=210] (0,.75);
	\draw [ color=\clr, thick, directed=.65] (0,-4) to (0,-2.75);
	\node at (0,-4.5) {\scriptsize $k$};
	\node at (0,2.5) {\scriptsize $k$};
	\node at (-1.5,-1) {\scriptsize $1$};
	\node at (1.5,-1) {\scriptsize $1$};
	\node at (0,-1) { \ $\cdots$};
	\draw (-0.75,-0.25) \wdot;
\end{tikzpicture}
};
\endxy
 \ \stackrel{\eqref{digon-removal}}{=} \ 
\xy
(0,0)*{
\begin{tikzpicture}[color=\clr, scale=.3]
	\draw [ color=\clr, thick, directed=1] (0,.75) to (0,2);
	\draw [ color=\clr, thick, directed=.35] (0,-2.75) to [out=30,in=330] (0,.75);
	\draw [ color=\clr, thick, directed=.35] (0,-2.75) to [out=150,in=210] (0,.75);
	\draw [ color=\clr, thick, directed=.65] (0,-4) to (0,-2.75);
	\node at (0,-4.5) {\scriptsize $k$};
	\node at (0,2.5) {\scriptsize $k$};
	\node at (-1.5,-1) {\scriptsize $1$};
	\node at (1.75,-1) { \ \scriptsize $k\!-\!1$};
	\draw (-0.875,-1) \wdot;
\end{tikzpicture}
};
\endxy
\]
and noting that the other side of \cref{dot-on-k-strand} follows by symmetry.
\end{proof}

\begin{lemma}\label{Udot-relations}
For $h,k,l,r,s \geq 0$, 
\begin{enumerate}
\item [(a)] 
\begin{equation}\label{rung-collision}
\xy
(0,0)*{
\begin{tikzpicture}[color=\clr]
	\draw [color=\clr, thick, directed=.15, directed=1, directed=.55] (0,0) to (0,1.75);
	\node at (0,-0.15) {\scriptsize $k$};
	\node at (0,1.9) {\scriptsize $k\!+\!r\!+\!s$ \ };
	\draw [color=\clr, thick, directed=.15, directed=1, directed=.55] (1,0) to (1,1.75);
	\node at (1,-0.15) {\scriptsize $l$};
	\node at (1,1.9) {\scriptsize  \ $l\!-\!r\!-\!s$};
	\draw [color=\clr, thick, directed=.55] (1,0.5) to (0,0.5);
	\node at (0.5,0.25) {\scriptsize$r$};
	\node at (-0.4,0.875) {\scriptsize $k\!+\!r$};
	\draw [color=\clr, thick, directed=.55] (1,1.25) to (0,1.25);
	\node at (0.5,1.5) {\scriptsize$s$};
	\node at (1.4,0.875) {\scriptsize $l\!-\!r$};
\end{tikzpicture}
};
\endxy=\binom{r+s}{s}
\xy
(0,0)*{
\begin{tikzpicture}[color=\clr]
	\draw [color=\clr, thick, directed=.15, directed=1] (0,0) to (0,1.75);
	\node at (0,-0.15) {\scriptsize $k$};
	\node at (0,1.9) {\scriptsize $k\!+\!r\!+\!s$ \ };
	\draw [color=\clr, thick, directed=.15, directed=1] (1,0) to (1,1.75);
	\node at (1,-0.15) {\scriptsize $l$};
	\node at (1,1.9) {\scriptsize  \ $l\!-\!r\!-\!s$};
	\draw [color=\clr, thick, directed=.55] (1,0.75) to (0,0.75);
	\node at (0.5,1) {\scriptsize$r\!+\!s$};
\end{tikzpicture}
};
\endxy,
\end{equation}
\item [(b)]
\begin{equation}\label{square-switch-double-dots}
\xy
(0,0)*{
\begin{tikzpicture}[color=\clr]
	\draw [color=\clr, thick, directed=.15, directed=1, directed=.55] (0,0) to (0,1.75);
	\node at (0,-0.15) {\scriptsize $k$};
	\node at (0,1.9) {\scriptsize $k$};
	\draw [color=\clr, thick, directed=.15, directed=1, directed=.55] (1,0) to (1,1.75);
	\node at (1,-0.15) {\scriptsize $l$};
	\node at (1,1.9) {\scriptsize $l$};
	\draw [color=\clr, thick, directed=.55] (0,0.5) to (1,0.5);
	\node at (0.5,0.25) {\scriptsize$1$};
	\node at (-0.4,0.875) {\scriptsize $k\!-\!1$};
	\draw [color=\clr, thick, directed=.55] (1,1.25) to (0,1.25);
	\node at (0.5,1.5) {\scriptsize$1$};
	\node at (1.4,0.875) {\scriptsize $l\!+\!1$};
	\draw  (0.75,0.5) \wdot;
	\draw  (0.25,1.25) \wdot;
\end{tikzpicture}
};
\endxy+
\xy
(0,0)*{
\begin{tikzpicture}[color=\clr]
	\draw [color=\clr, thick, directed=.15, directed=1, directed=.55] (0,0) to (0,1.75);
	\node at (0,-0.15) {\scriptsize $k$};
	\node at (0,1.9) {\scriptsize $k$};
	\draw [color=\clr, thick, directed=.15, directed=1, directed=.55] (1,0) to (1,1.75);
	\node at (1,-0.15) {\scriptsize $l$};
	\node at (1,1.9) {\scriptsize $l$};
	\draw [color=\clr, thick, directed=.55] (1,0.5) to (0,0.5);
	\node at (0.5,0.25) {\scriptsize$1$};
	\node at (-0.4,0.875) {\scriptsize $k\!+\!1$};
	\draw [color=\clr, thick, directed=.55] (0,1.25) to (1,1.25);
	\node at (0.5,1.5) {\scriptsize$1$};
	\node at (1.4,0.875) {\scriptsize $l\!-\!1$};
	\draw  (0.75,0.5) \wdot;
	\draw  (0.25,1.25) \wdot;
\end{tikzpicture}
};
\endxy=(k+l)
\xy
(0,0)*{
\begin{tikzpicture}[color=\clr, scale=.3] 
	\draw [color=\clr, thick, directed=1] (1,-2.75) to (1,2.5);
	\draw [color=\clr, thick, directed=1] (-1,-2.75) to (-1,2.5);
	\node at (-1,3) {\scriptsize $k$};
	\node at (1,3) {\scriptsize $l$};
	\node at (-1,-3.15) {\scriptsize $k$};
	\node at (1,-3.15) {\scriptsize $l$};
\end{tikzpicture}
};
\endxy \ , 
\end{equation}
\item [(c)]
\begin{equation}\label{double-rungs-3}
\xy
(0,0)*{
\begin{tikzpicture}[color=\clr]
	\draw [color=\clr, thick, directed=.15, directed=1] (-1,0) to (-1,1.75);
	\node at (-1,-0.15) {\scriptsize $h$};
	\node at (-1,1.9) {\scriptsize $h\!+\!2$};
	\draw [color=\clr, thick, directed=.15, directed=1, directed=.55] (0,0) to (0,1.75);
	\node at (0,-0.15) {\scriptsize $k$};
	\node at (0,1.9) {\scriptsize $k\!-\!1$};
	\draw [color=\clr, thick, directed=.15, directed=1] (1,0) to (1,1.75);
	\node at (1,-0.15) {\scriptsize $l$};
	\node at (1,1.9) {\scriptsize $l\!-\!1$};
	\draw [color=\clr, thick, directed=.55] (1,0.5) to (0,0.5);
	\node at (0.5,0.25) {\scriptsize$1$};
	\draw [color=\clr, thick, directed=.55] (0,1.25) to (-1,1.25);
	\node at (-0.5,1.5) {\scriptsize$2$};
\end{tikzpicture}
};
\endxy-
\xy
(0,0)*{
\begin{tikzpicture}[color=\clr]
	\draw [color=\clr, thick, directed=.15, directed=1, directed=.55] (-1,0) to (-1,1.75);
	\node at (-1,-0.15) {\scriptsize $h$};
	\node at (-1,1.9) {\scriptsize $h\!+\!2$};
	\draw [color=\clr, thick, directed=.15, directed=1] (0,0) to (0,1.75);
	\node at (0,-0.15) {\scriptsize $k$};
	\node at (0,1.9) {\scriptsize $k\!-\!1$};
	\draw [color=\clr, thick, directed=.15, directed=1] (1,0) to (1,1.75);
	\node at (1,-0.15) {\scriptsize $l$};
	\node at (1,1.9) {\scriptsize $l\!-\!1$};
	\draw [color=\clr, thick, directed=.55] (0,0.5) to (-1,0.5);
	\node at (-0.5,0.25) {\scriptsize$1$};
	\draw [color=\clr, thick, directed=.55] (1,0.875) to (0,0.875);
	\node at (0.5,1.125) {\scriptsize$1$};
	\draw [color=\clr, thick, directed=.55] (0,1.25) to (-1,1.25);
	\node at (-0.5,1.5) {\scriptsize$1$};
\end{tikzpicture}
};
\endxy+
\xy
(0,0)*{
\begin{tikzpicture}[color=\clr]
	\draw [color=\clr, thick, directed=.15, directed=1] (-1,0) to (-1,1.75);
	\node at (-1,-0.15) {\scriptsize $h$};
	\node at (-1,1.9) {\scriptsize $h\!+\!2$};
	\draw [color=\clr, thick, directed=.15, directed=1, directed=.55] (0,0) to (0,1.75);
	\node at (0,-0.15) {\scriptsize $k$};
	\node at (0,1.9) {\scriptsize $k\!-\!1$};
	\draw [color=\clr, thick, directed=.15, directed=1] (1,0) to (1,1.75);
	\node at (1,-0.15) {\scriptsize $l$};
	\node at (1,1.9) {\scriptsize $l\!-\!1$};
	\draw [color=\clr, thick, directed=.55] (1,1.25) to (0,1.25);
	\node at (0.5,1.5) {\scriptsize$1$};
	\draw [color=\clr, thick, directed=.55] (0,0.5) to (-1,0.5);
	\node at (-0.5,0.25) {\scriptsize$2$};
\end{tikzpicture}
};
\endxy=0,
\end{equation}
\item [(d)]
\begin{equation}\label{double-rungs-4}
\xy
(0,0)*{
\begin{tikzpicture}[color=\clr]
	\draw [color=\clr, thick, directed=.15, directed=1] (-1,0) to (-1,1.75);
	\node at (-1,-0.15) {\scriptsize $h$};
	\node at (-1,1.9) {\scriptsize $h\!+\!2$};
	\draw [color=\clr, thick, directed=.15, directed=1] (0,0) to (0,1.75);
	\node at (0,-0.15) {\scriptsize $k$};
	\node at (0,1.9) {\scriptsize $k\!-\!1$};
	\draw [color=\clr, thick, directed=.15, directed=1] (1,0) to (1,1.75);
	\node at (1,-0.15) {\scriptsize $l$};
	\node at (1,1.9) {\scriptsize $l\!-\!1$};
	\draw [color=\clr, thick, directed=.55] (1,0.5) to (0,0.5);
	\node at (0.5,0.25) {\scriptsize$1$};
	\draw [color=\clr, thick, directed=.55] (0,0.875) to (-1,0.875);
	\node at (-0.5,0.65) {\scriptsize$1$};
	\draw [color=\clr, thick, directed=.55] (0,1.25) to (-1,1.25);
	\node at (-0.5,1.5) {\scriptsize$1$};
	\draw  (-0.75,1.25) \wdot;
\end{tikzpicture}
};
\endxy -
\xy
(0,0)*{
\begin{tikzpicture}[color=\clr]
	\draw [color=\clr, thick, directed=.15, directed=1, directed=.55] (-1,0) to (-1,1.75);
	\node at (-1,-0.15) {\scriptsize $h$};
	\node at (-1,1.9) {\scriptsize $h\!+\!2$};
	\draw [color=\clr, thick, directed=.15, directed=1] (0,0) to (0,1.75);
	\node at (0,-0.15) {\scriptsize $k$};
	\node at (0,1.9) {\scriptsize $k\!-\!1$};
	\draw [color=\clr, thick, directed=.15, directed=1] (1,0) to (1,1.75);
	\node at (1,-0.15) {\scriptsize $l$};
	\node at (1,1.9) {\scriptsize $l\!-\!1$};
	\draw [color=\clr, thick, directed=.55] (0,0.5) to (-1,0.5);
	\node at (-0.5,0.25) {\scriptsize$1$};
	\draw [color=\clr, thick, directed=.55] (1,0.875) to (0,0.875);
	\node at (0.5,1.125) {\scriptsize$1$};
	\draw [color=\clr, thick, directed=.55] (0,1.25) to (-1,1.25);
	\node at (-0.5,1.5) {\scriptsize$1$};
	\draw  (-0.75,1.25) \wdot;
\end{tikzpicture}
};
\endxy-
\xy
(0,0)*{
\begin{tikzpicture}[color=\clr]
	\draw [color=\clr, thick, directed=.15, directed=1, directed=.55] (-1,0) to (-1,1.75);
	\node at (-1,-0.15) {\scriptsize $h$};
	\node at (-1,1.9) {\scriptsize $h\!+\!2$};
	\draw [color=\clr, thick, directed=.15, directed=1] (0,0) to (0,1.75);
	\node at (0,-0.15) {\scriptsize $k$};
	\node at (0,1.9) {\scriptsize $k\!-\!1$};
	\draw [color=\clr, thick, directed=.15, directed=1] (1,0) to (1,1.75);
	\node at (1,-0.15) {\scriptsize $l$};
	\node at (1,1.9) {\scriptsize $l\!-\!1$};
	\draw [color=\clr, thick, directed=.55] (0,0.5) to (-1,0.5);
	\node at (-0.5,0.25) {\scriptsize$1$};
	\draw [color=\clr, thick, directed=.55] (1,0.875) to (0,0.875);
	\node at (0.5,1.125) {\scriptsize$1$};
	\draw [color=\clr, thick, directed=.55] (0,1.25) to (-1,1.25);
	\node at (-0.5,1.5) {\scriptsize$1$};
	\draw  (-0.25,0.5) \wdot;
\end{tikzpicture}
};
\endxy+
\xy
(0,0)*{
\begin{tikzpicture}[color=\clr]
	\draw [color=\clr, thick, directed=.15, directed=1] (-1,0) to (-1,1.75);
	\node at (-1,-0.15) {\scriptsize $h$};
	\node at (-1,1.9) {\scriptsize $h\!+\!2$};
	\draw [color=\clr, thick, directed=.15, directed=1] (0,0) to (0,1.75);
	\node at (0,-0.15) {\scriptsize $k$};
	\node at (0,1.9) {\scriptsize $k\!-\!1$};
	\draw [color=\clr, thick, directed=.15, directed=1] (1,0) to (1,1.75);
	\node at (1,-0.15) {\scriptsize $l$};
	\node at (1,1.9) {\scriptsize $l\!-\!1$};
	\draw [color=\clr, thick, directed=.55] (0,0.5) to (-1,0.5);
	\node at (-0.5,0.25) {\scriptsize$1$};
	\draw [color=\clr, thick, directed=.55] (0,0.875) to (-1,0.875);
	\node at (-0.5,1.125) {\scriptsize$1$};
	\draw [color=\clr, thick, directed=.55] (1,1.25) to (0,1.25);
	\node at (0.5,1.5) {\scriptsize$1$};
	\draw  (-0.25,0.5) \wdot;
\end{tikzpicture}
};
\endxy=0.
\end{equation}
\end{enumerate}
In addition, we have the equations obtained by reversing all rung orientations\footnote{Relabelling the tops of diagrams as needed to make a valid web.} of the ladders in \cref{rung-collision}.  We also have the equations obtained from  \cref{double-rungs-3,double-rungs-4} by reversing all rung orientations, reflecting across the vertical line through the middle arrow, and both reversing and reflecting.  Hence \cref{double-rungs-3} and \cref{double-rungs-4} each represent four relations. 
\end{lemma}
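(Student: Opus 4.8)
The plan is to derive each of the four displayed relations \eqref{rung-collision}, \eqref{square-switch-double-dots}, \eqref{double-rungs-3}, and \eqref{double-rungs-4} from the defining relations of \cref{D:UpwardWebs} together with \cref{additional-relations}, mirroring the style of computation already carried out in the proof of \cref{additional-relations}.

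\textbf{Relation \eqref{rung-collision}.} First I would handle the case $s=1$ (or $r=1$), where a single rung of thickness $1$ collides with a rung of thickness $r$. Starting from the stacked ladder, I would use \cref{associativity} to split the $r$-rung into an $r$-fold composite of $1$-rungs, then apply the $h=0$ instance of \cref{double-rungs-1} (really, its dot-free consequence) repeatedly to slide the extra $1$-rung past each of these, collecting the binomial coefficient $\binom{r+1}{1}=r+1$ from \cref{digon-removal}-type digon collapses. The general case then follows by induction on $s$: write the top $s$-rung as a composite of a $1$-rung and an $(s-1)$-rung via \cref{associativity}, merge the $1$-rung with the $r$-rung by the already-established case to get an $(r+1)$-rung with coefficient $\binom{r+1}{1}$, and merge that with the remaining $(s-1)$-rung by the inductive hypothesis; the product of binomial coefficients telescopes to $\binom{r+s}{s}$ by the standard identity $\binom{r+1}{1}\binom{r+s}{s-1}\cdots = \binom{r+s}{s}$ (more cleanly: $\binom{r+s}{s} = \binom{r+s}{s}$ follows by repeatedly applying Pascal-type identities, which I will verify by a short induction). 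The reversed-rung version is identical after reflecting.

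\textbf{Relation \eqref{square-switch-double-dots}.} This is the ``dotted'' companion of \eqref{square-switch}. I would add the two webs appearing in \eqref{square-switch-dots} (the first line of that relation) to the two webs obtained by instead reflecting the dots to the other strand, using \cref{dot-collision} and \cref{super-interchange} to move dots across rungs. Equivalently, I take \cref{square-switch-dots}, compose top and bottom appropriately with a dotted identity strand, and add; the right-hand sides of \cref{square-switch-dots} combine — using \cref{dot-collision} (which produces the factor) and the relation \eqref{2-dots-zero}/\eqref{dot-on-k-strand} to absorb doubled dots — into $(k+l)$ times the identity on two strands, as claimed. The bookkeeping is to track which of the four dotted-square diagrams appear with which sign; this is routine given the two displayed forms of \cref{square-switch-dots}.

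\textbf{Relations \eqref{double-rungs-3} and \eqref{double-rungs-4}.} These are ``thickness-$2$'' versions of \cref{double-rungs-1,double-rungs-2}. I would obtain them by composing the appropriate $j=1$ ladder relations \cref{double-rungs-1,double-rungs-2} with extra $1$-rungs and applying \cref{associativity} to reassemble a $2$-rung from two $1$-rungs; the dot-tracking uses \cref{dots-past-merges}, \cref{dot-collision}, and \eqref{2-dots-zero}. For \eqref{double-rungs-3} the key cancellation is that the ``middle'' term with a single $1$-rung on each side is forced by expanding a $2$-rung on the left into two stacked $1$-rungs and invoking \cref{double-rungs-1} twice; for \eqref{double-rungs-4} one does the same starting from \cref{double-rungs-2} and uses \eqref{2-dots-zero} to kill the terms with two dots on a thin strand. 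Finally, the asserted symmetries (reversing rung orientations, reflecting across the central strand, and doing both) hold because the defining relations of $\qwebs$ are themselves stable under these operations up to relabelling tops of diagrams, so each derivation can be run verbatim in the reflected/reversed picture; this yields the stated four relations each from \eqref{double-rungs-3} and \eqref{double-rungs-4}.

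\textbf{Main obstacle.} The genuinely delicate point is the sign and coefficient bookkeeping in \eqref{square-switch-double-dots} and \eqref{double-rungs-4}: moving odd dots past merges, splits, and rungs repeatedly invokes \cref{super-interchange} and \cref{dots-past-merges}, and one must be careful that the ``exploded'' dot (in the sense of \eqref{dot-past-k-explosion}) is normalized correctly before collapsing digons with \cref{digon-removal}, since a misplaced factor of $k!$ or a sign error would be fatal. I expect the combinatorial identity assembling the binomial coefficient in \eqref{rung-collision} to be straightforward by induction, so the real work is the careful diagrammatic chase in parts (b) and (d).
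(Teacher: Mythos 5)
Your overall strategy is workable, but it diverges from the paper's argument in several places and has one concrete gap. For \cref{rung-collision} the paper uses only \cref{associativity} and \cref{digon-removal}: by (co)associativity the two consecutive splits on the right strand equal a single split of $r+s$ followed by a split of that block into $(r,s)$, and dually for the two merges on the left strand, so the middle of the diagram is exactly a digon on an $(r+s)$-labelled edge, which \cref{digon-removal} collapses to $\binom{r+s}{s}$ in one step. Your appeal to ``the $h=0$ instance of \cref{double-rungs-1}'' is a misfire: that relation governs two rungs attached to \emph{different} neighboring strands, not two rungs between the same pair of strands, and it is neither applicable nor needed here; the induction on $s$ is likewise superfluous once the digon is exhibited.

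The genuine gap is in \cref{square-switch-double-dots}. The two dots in each web of that relation sit on \emph{interior} rungs, so they cannot be introduced by ``composing top and bottom with a dotted identity strand,'' and neither \cref{dot-collision} nor \cref{super-interchange} moves a dot onto or off of a rung ( \cref{super-interchange} only slides morphisms past each other on disjoint tensor factors). To make your route work you would first need the identity, extracted from \cref{dots-past-merges}, expressing a dotted rung as a commutator of the undotted rung with strand dots; you never state this, and without it the passage from the single-dotted relation \cref{square-switch-dots} to the double-dotted one does not go through. The paper instead applies \cref{square-switch} to the undotted skeleton (on the interior edges labelled $l+1$ and $k+1$), which brings the two dotted $1$-rungs into a parallel ``dumbbell'' position, and then finishes with two applications of \cref{digon-removal} and \cref{dumbbell-relation}. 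For \cref{double-rungs-3,double-rungs-4} your plan (expand the $2$-rungs into $1$-rungs via part (a) and commute them using \cref{double-rungs-1,double-rungs-2}) is a legitimate alternative to the paper's route (which applies \cref{dumbbell-relation} to the parallel rungs of the middle web and then uses \cref{associativity}, \cref{2-dots-zero}, \cref{square-switch}, and \cref{digon-removal}), but note that after substituting the commutator relations twice you are left needing to commute the dotted correction terms past a further rung, which requires additional identities of the same kind that you have not recorded; be aware this is where the real work in your version of (c) and (d) lies.
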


\begin{proof}
We sketch the proof and leave the details to reader. The proof of \cref{rung-collision} follows from \cref{associativity} and \cref{digon-removal}. The proof of \cref{square-switch-double-dots} is similar to \cite[Lemma 2.10(b)]{TVW}. Its proof involves applications of \cref{square-switch} on the edges labeled $l+1$ and $k+1$ in the webs on the left, followed by two applications of \cref{digon-removal} and the \cref{dumbbell-relation}.

The proof of \cref{double-rungs-3} and \cref{double-rungs-4} are similar to \cite[Lemma 2.10(c)]{TVW}. The former involves using \cref{dumbbell-relation}  on the parallel ladder rungs in the middle web, followed by \cref{associativity},  \cref{additional-relations}\cref{2-dots-zero}, \cref{square-switch}, and two applications of \cref{digon-removal}. The proof of the latter is similar, but it also requires \cref{double-rungs-2}.
\end{proof}

\subsection{Crossings and the Sergeev Algebra}\label{SS:crossings}

We introduce the following additional diagram as a shorthand.  Define the upward crossing morphism in $\End_{\qwebs}(\up_{1}^2)$ by
\[
\xy
(0,0)*{
\bt[color=\clr, scale=1.25]
	\draw[thick, directed=1] (0,0) to (0.5,0.5);
	\draw[thick, directed=1] (0.5,0) to (0,0.5);
	\node at (0,-0.15) {\scriptsize $1$};
	\node at (0,0.65) {\scriptsize $1$};
	\node at (0.5,-0.15) {\scriptsize $1$};
	\node at (0.5,0.65) {\scriptsize $1$};
\et
};
\endxy
 \ := \ 
\xy
(0,0)*{
\begin{tikzpicture}[color=\clr, scale=.3]
	\draw [ thick, directed=.75] (0,0.25) to (0,1.25);
	\draw [ thick, directed=1] (0,1.25) to [out=30,in=270] (1,2.5);
	\draw [ thick, directed=1] (0,1.25) to [out=150,in=270] (-1,2.5); 
	\draw [ thick, directed=.65] (1,-1) to [out=90,in=330] (0,0.25);
	\draw [ thick, directed=.65] (-1,-1) to [out=90,in=210] (0,0.25);
	\node at (-1,3) {\scriptsize $1$};
	\node at (1,3) {\scriptsize $1$};
	\node at (-1,-1.5) {\scriptsize $1$};
	\node at (1,-1.5) {\scriptsize $1$};
	\node at (-0.75,0.75) {\scriptsize $2$};
\end{tikzpicture}
};
\endxy
 \ - \ 
 \xy
(0,0)*{
\begin{tikzpicture}[color=\clr, scale=.3]
	\draw [thick, directed=1] (-2,-1) to (-2,2.5);
	\draw [thick, directed=1] (-4,-1) to (-4,2.5);
	\node at (-2,-1.5) {\scriptsize $1$};
	\node at (-2,3) {\scriptsize $1$};
	\node at (-4,-1.5) {\scriptsize $1$};
	\node at (-4,3) {\scriptsize $1$};
\end{tikzpicture}
};
\endxy \ .
\]  More generally, we introduce the following notation.

\begin{definition}\label{Sergeev-webs}
Fix $k\in\Z_{>0}$. For $i=1, \dotsc , k$ and $j = 1, \dotsc , k-1$, define the morphisms $c_{i}, s_{j} \in\End_{\qwebs}(\up_{1}^k)$ by
\[
c_i=
\xy
(0,0)*{
\bt[color=\clr, scale=1.25]
	\draw[thick, directed=1] (-0.75,0) to (-0.75,0.5);
	\draw[thick, directed=1] (-0.25,0) to (-0.25,0.5);
	\draw[thick, directed=1] (0.25,0) to (0.25,0.5);
	\draw[thick, directed=1] (0.75,0) to (0.75,0.5);
	\draw[thick, directed=1] (0,0) to (0,0.5);
	\node at (0,-0.15) {\scriptsize $1$};
	\node at (0,0.65) {\scriptsize $1$};
	\node at (-0.25,-0.15) {\scriptsize $1$};
	\node at (-0.25,0.65) {\scriptsize $1$};
	\node at (-0.75,-0.15) {\scriptsize $1$};
	\node at (-0.75,0.65) {\scriptsize $1$};
	\node at (0.25,-0.15) {\scriptsize $1$};
	\node at (0.25,0.65) {\scriptsize $1$};
	\node at (.75,-0.15) {\scriptsize $1$};
	\node at (.75,0.65) {\scriptsize $1$};
	\node at (-0.5,0.25) { \ $\cdots$};
	\node at (0.5,0.25) { \ $\cdots$};
	\draw (0,0.25) \wdot;
\et
};
\endxy \ ,
\quad s_j=
\xy
(0,0)*{
\bt[color=\clr, scale=1.25]
	\draw[thick, directed=1] (-0.75,0) to (-0.75,0.5);
	\draw[thick, directed=1] (-0.25,0) to (-0.25,0.5);
	\draw[thick, directed=1] (0.75,0) to (0.75,0.5);
	\draw[thick, directed=1] (1.25,0) to (1.25,0.5);
	\draw[thick, directed=1] (0,0) to (0.5,0.5);
	\draw[thick, directed=1] (0.5,0) to (0,0.5);
	\node at (0,-0.15) {\scriptsize $1$};
	\node at (0,0.65) {\scriptsize $1$};
	\node at (0.5,-0.15) {\scriptsize $1$};
	\node at (0.5,0.65) {\scriptsize $1$};
	\node at (-0.25,-0.15) {\scriptsize $1$};
	\node at (-0.25,0.65) {\scriptsize $1$};
	\node at (-0.75,-0.15) {\scriptsize $1$};
	\node at (-0.75,0.65) {\scriptsize $1$};
	\node at (0.75,-0.15) {\scriptsize $1$};
	\node at (0.75,0.65) {\scriptsize $1$};
	\node at (1.25,-0.15) {\scriptsize $1$};
	\node at (1.25,0.65) {\scriptsize $1$};
	\node at (-0.5,0.25) { \ $\cdots$};
	\node at (1,0.25) { \ $\cdots$};
\et
};
\endxy
\]  where the dot on $c_{i}$ is on the $i$th strand and $s_{j}$ is the crossing of the $j$th and $(j+1)$st strands.
\end{definition}

Drawing these morphisms as crossings is justified by the following lemma.

\begin{lemma}\label{L:srelations} For any $k \geq 2$ the following relations hold in $\qwebs$ for all admissible $i,j$:
\begin{enumerate}
\item $s_{j}^{2}=1$;
\item $s_{i}s_{j}=s_{j}s_{i}$ if $|i-j| >1$;
\item $s_{i}s_{i+1}s_{i}=s_{i + 1}s_{i}s_{i+1}$;
\item $s_{i}c_{j}=c_{j}s_{i}$ if $|i-j| >1$;
\item $s_{i}c_{i}=c_{i+1}s_{i}$; 
\end{enumerate}

\end{lemma}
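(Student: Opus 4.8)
The plan is to verify each of the five relations by translating the crossings and dotted strands $s_j, c_i$ back into their definitions as linear combinations of webs (a single $2$-labelled digon path minus parallel strands, with various dots), and then grinding the resulting web identities using the defining relations of $\qwebs$ together with the consequences already assembled in \cref{additional-relations,Udot-relations}. Since all the $s_j$ and $c_i$ act on disjoint or overlapping triples of adjacent $1$-strands, by the interchange law it suffices to treat each relation on the minimal number of strands: relations (1), (4)[$|i-j|>1$ is vacuous on few strands but immediate by locality], and (5) on two or three strands, (2) on four strands (where it is just a disjoint-support argument via super-interchange, noting $s_j$ is even so no signs intervene), and (3) on three strands.

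First I would do (1): expand $s_1 = D - I$ where $D$ denotes the split-after-merge through a $2$-edge and $I$ the identity on $\up_1\up_1$; then $s_1^2 = D^2 - 2D + I$, and $D^2 = \left(\text{digon}\right)$ composed with itself. Using \cref{digon-removal} with $k=l=1$ gives the coefficient $\binom{2}{1}=2$, so $D\circ D = 2D$ (after re-expanding; more precisely $D^2$ reduces via associativity \cref{associativity} and digon removal \cref{digon-removal} to $2D$), whence $s_1^2 = 2D - 2D + I = I$. Relation (2) is then purely formal: $s_i$ and $s_j$ for $|i-j|>1$ are tensor products of webs with disjoint support along the strand, so they commute by the (sign-free, since both even) interchange law \cref{super-interchange}. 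Relation (4), $s_ic_j = c_js_i$ for $|i-j|>1$, is identical in spirit — disjoint supports, and $c_j$ is odd but $s_i$ has trivial overlap with it, so again interchange applies with the sign $(-1)^{\p{s_i}\p{c_j}} = (-1)^{0} = 1$.

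The substantive cases are (3) and (5). For (5), $s_i c_i = c_{i+1}s_i$: expand the left side as $(D-I)\circ(\text{dot on strand }i)$ and the right as $(\text{dot on strand }i+1)\circ(D-I)$; the $I$-parts give "dot on strand $i$" versus "dot on strand $i+1$", and the $D$-parts require pushing a dot through the merge/split. This is exactly governed by \cref{dots-past-merges}: a dot on a $1$-strand entering a merge equals the sum of dots on the two outgoing-from-the-split strands (on a $2$-edge these are the only options), and the digon-removal bookkeeping together with \cref{dot-collision} / \cref{2-dots-zero} cancels the unwanted terms, leaving precisely the identity $s_ic_i - c_{i+1}s_i = (\text{dot}_i) - (\text{dot}_{i+1}) - [(\text{dot}_i) - (\text{dot}_{i+1})] = 0$. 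The hardest part is the braid relation (3), $s_1s_2s_1 = s_2s_1s_2$ on three $1$-strands: expanding each side gives $(D_{12}-I)(D_{23}-I)(D_{12}-I)$ versus the mirror, an alternating sum of up to eight composite webs; one must reduce each composite — webs built from merges and splits through $2$-edges and $3$-edges — to a normal form using \cref{associativity} (to regroup merges/splits), \cref{digon-removal} (to kill digons with their binomial coefficients), \cref{square-switch} and \cref{double-rungs-1}, \cref{double-rungs-2} (for the genuinely three-strand "ladder-slide" moves). I expect \textbf{this is the main obstacle}: it is the one place where the non-commutative ladder relations \cref{square-switch,double-rungs-1,double-rungs-2} are essential, and the calculation is the type-Q analogue of the classical Reidemeister-III check for $\mathfrak{gl}$-webs in \cite{CKM,TVW}; the strategy is to follow that template, using that the crossing defined here is, up to the identity correction, the same ladder-composite, so the associativity/digon/square-switch moves reduce both sides to the same symmetric expression in a $3$-edge together with lower terms that cancel in the alternating sum.
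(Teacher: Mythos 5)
Your handling of (1), (2), and (4) is correct and matches the paper, whose entire proof is the remark that (2) and (4) follow from the super-interchange law while the rest are direct calculations with \cref{digon-removal}, \cref{square-switch}, and \cref{square-switch-dots}; your plan for (3) (expand both sides into composites of merges and splits and reduce using \cref{associativity}, \cref{digon-removal}, and the square-switch/ladder relations) is also the intended route, in the style of the type~A arguments you cite.

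The gap is in (5). Write $D$ for the split-after-merge on $\up_{1}\up_{1}$, so $s_{1}=D-1$, and write $d_{1},d_{2}$ for the dots on the two strands. Relation \cref{dots-past-merges} only yields $(\mathrm{merge})\circ d_{1}+(\mathrm{merge})\circ d_{2}=(\mathrm{dot}_{2})\circ(\mathrm{merge})$ and its split counterpart; sandwiching, the most you can extract is the \emph{symmetrized} identity $D(d_{1}+d_{2})=(d_{1}+d_{2})D=(\mathrm{split})\circ(\mathrm{dot}_{2})\circ(\mathrm{merge})$, which never separates $Dd_{1}$ from $Dd_{2}$. Substituting $s_{1}=D-1$ gives $s_{1}c_{1}-c_{2}s_{1}=(Dd_{1}-d_{2}D)-(d_{1}-d_{2})$, and the vanishing of this expression is not a consequence of \cref{dots-past-merges}, \cref{digon-removal}, \cref{dot-collision}, and \cref{2-dots-zero} as you assert; your final displayed ``cancellation'' is in effect assuming the identity $Dd_{1}-d_{2}D=d_{1}-d_{2}$ rather than deriving it. That identity is exactly the defining relation \cref{square-switch-dots} specialized to $k=l=1$: after erasing the $0$-labelled edges, the two dotted ladders on the left of its second line become $D\circ d_{1}$ and $d_{2}\circ D$, and the relation reads $Dd_{1}-d_{2}D=d_{1}-d_{2}$ (the first line gives the companion form $d_{1}D-Dd_{2}=d_{1}-d_{2}$). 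This is why the paper explicitly lists \cref{square-switch-dots} among the relations used. The fix is easy -- indeed, once you invoke \cref{square-switch-dots} you do not need \cref{dots-past-merges} for (5) at all -- but as written your argument for (5) is circular.
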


\begin{proof}
Statements (2) and (4) follow immediately from the super-interchange law, while the rest may be verified by direct calculations involving \cref{digon-removal}, \cref{square-switch}, and \cref{square-switch-dots}.
\end{proof}
 Comparing the previous lemma to the defining relations of the Sergeev algebra given in \cref{SS:Sergeevdualities} we see there is a homomorphism of superalgebras 
\begin{equation}\label{E:Sergeevhomomorphism}
\xi_{k} :\Ser_{k} \to \End_{\qwebs}(\up_{1}^{k})
\end{equation}
for any $k \geq 1$ which sends the generators of $\Ser_{k}$ to the morphisms in $\End_{\qwebs}(\up_{1}^{k})$ of the same name.  In \cref{description-of-all-ones} we will see that this map is an isomorphism. Meanwhile, for $x \in \Ser_{k}$ we abuse notation by writing $x \in \End_{\qwebs}(\up_{1}^{k})$ for $\xi_{k}(x)$.

In what follows we allow ourselves to draw merges and splits of multiple strands.  For example, define
\begin{equation*}
\xy
(0,0)*{
\bt[color=\clr, scale=.35]
	\draw [ thick, directed=1, color=\clr ] (4,4) to (4,5.25);
	\draw [ thick, directed=.65, color=\clr ] (5,2.25) to [out=90,in=330] (4,4);
	\draw [ thick, directed=.65, color=\clr ] (3,2.25) to [out=90,in=210] (4,4);
	\draw [thick, color=\clr] (3,2) to (3,2.25);
	\draw [thick, color=\clr] (5,2) to (5,2.25);
	\node at (4.1, 2.5) { $\cdots$};
	\node at (4,5.75) {\scriptsize $k_{1}+\dots + k_{t}$};
	\node at (3,1.5) {\scriptsize $k_{1}$};
	\node at (5,1.5) {\scriptsize $k_{t}$};
\et
};
\endxy 
\end{equation*} to be the vertical concatenation of $t-1$ merge diagrams.  By \cref{associativity} the resulting morphism is independent of how this is done.  We define the split into multiple strands similarly.

\begin{lemma}\label{L:untwist-permutation}
Let $k\in\Z_{>0}$ and let $\sigma \in \Sigma_{k}$. Then, 
\begin{equation}\label{E:untwist-permutation}
\xy
(0,0)*{
\bt[color=\clr, scale=.35]
	\draw [color=\clr,  thick, directed=1] (4,9) to (4,10);
	\draw [color=\clr,  thick, directed=0.75] (5,7.25) to [out=90,in=330] (4,9);
	\draw [color=\clr,  thick, directed=0.75] (3,7.25) to [out=90,in=210] (4,9);
	\draw [color=\clr,  thick ] (2.5,7.25) rectangle (5.5,4.25);
	\draw [color=\clr,  thick ] (3,4.25) to (3,3.25);
	\draw [color=\clr,  thick ] (5,4.25) to (5,3.25);
	\node at (4.1, 7.5) { $\cdots$};
	\node at (4.1, 3.75) { $\cdots$};
	\node at (4, 5.75) { $\sigma$};
	\node at (4,10.5) {\scriptsize $k$};
	\node at (3,2.75) {\scriptsize $1$};
	\node at (5,2.75) {\scriptsize $1$};
\et
};
\endxy  \ = \ 
\xy
(0,0)*{
\bt[color=\clr, scale=.35]
	\draw [ thick, directed=1] (4,9) to (4,10);
	\draw [ thick, directed=0.75] (5,7.25) to [out=90,in=330] (4,9);
	\draw [ thick, directed=0.75] (3,7.25) to [out=90,in=210] (4,9);
%	\draw [ thick ] (3,7.25) to (3,3.25);
%	\draw [ thick ] (5,7.25) to (5,3.25);
	\node at (4.1, 7.75) { $\cdots$};
	\node at (4,10.5) {\scriptsize $k$};
	\node at (3,6.75) {\scriptsize $1$};
	\node at (5,6.75) {\scriptsize $1$};
\et
};
\endxy \ .
\end{equation}
\end{lemma}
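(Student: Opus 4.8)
The plan is to reduce the statement to a single elementary relation. Write $m\colon \up_1^k \to \up_k$ for the iterated merge appearing on the left-hand side of \cref{E:untwist-permutation}; it is well defined by \cref{associativity}, and recall that the morphism written $\sigma \in \End_{\qwebs}(\up_1^k)$ there is by definition $\xi_k(\sigma)$ for $\sigma \in \Sigma_k \subseteq \Ser_k$. Set $N := \{\tau \in \Sigma_k \mid m \circ \tau = m\}$. Then $\id \in N$, and $N$ is closed under multiplication, since $m \circ (\tau\tau') = (m\circ\tau)\circ\tau' = m\circ\tau' = m$ whenever $\tau,\tau' \in N$. Since every element of $\Sigma_k$ is a product of the simple transpositions $s_1,\dots,s_{k-1}$, it therefore suffices to prove that $m \circ s_j = m$ for each $j$ with $1 \le j \le k-1$.

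Next I would isolate the relevant elementary merge. By \cref{associativity} the iterated merge may be performed in any order, so we may write $m = m' \circ \bigl( \id_{\up_1}^{\otimes(j-1)} \otimes M \otimes \id_{\up_1}^{\otimes(k-j-1)} \bigr)$, where $M \colon \up_1\up_1 \to \up_2$ is the elementary merge and $m' \colon \up_1^{j-1}\,\up_2\,\up_1^{k-j-1} \to \up_k$ is again an iterated merge. By \cref{Sergeev-webs} we have $s_j = \id_{\up_1}^{\otimes(j-1)} \otimes X \otimes \id_{\up_1}^{\otimes(k-j-1)}$, where $X \in \End_{\qwebs}(\up_1\up_1)$ is the elementary crossing. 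Since $M$ and $X$ are both of parity $\0$, the super-interchange law \cref{super-interchange} introduces no sign when they are stacked, so $m \circ s_j = m' \circ \bigl( \id_{\up_1}^{\otimes(j-1)} \otimes (M\circ X) \otimes \id_{\up_1}^{\otimes(k-j-1)} \bigr)$. Thus $m\circ s_j = m$ reduces to the single identity $M \circ X = M$ in $\Hom_{\qwebs}(\up_1\up_1, \up_2)$.

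Finally I would verify $M\circ X = M$ straight from the definitions. Writing $S\colon \up_2 \to \up_1\up_1$ for the elementary split, the definition of the crossing reads $X = S\circ M - \id_{\up_1\up_1}$, so $M \circ X = (M\circ S)\circ M - M$. Now \cref{digon-removal} with $k = l = 1$ gives $M\circ S = \binom{2}{1}\,\id_{\up_2} = 2\,\id_{\up_2}$, whence $M\circ X = 2M - M = M$, as required.

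I do not anticipate a serious obstacle here. The only points needing care are the appeal to \cref{associativity} to factor the elementary merge $M$ out of $m$, and the observation that $M$ and $X$ are even, so that no Koszul signs appear when the crossing slides underneath the merge. The substantive content is the one-line digon computation, which is the type-$Q$ incarnation of the classical fact that merging the thin strands into a single thick strand annihilates any transposition acting on them.
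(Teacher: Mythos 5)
Your proposal is correct and follows essentially the same route as the paper: the paper's proof establishes exactly the identity $M\circ X=M$ (its equation for the two-strand case) by expanding the crossing as $S\circ M-\id$ and applying \cref{digon-removal}, and then invokes \cref{associativity} and the factorization of permutations into simple transpositions for the general case. Your write-up just makes the reduction step (closure of the stabilizer under multiplication, factoring the elementary merge out of $m$, and the absence of Koszul signs) more explicit than the paper does.
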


\begin{proof}
First, note that
\beq\label{untwist-crossing}
\xy
(0,0)*{
\bt[color=\clr, scale=.35]
	\draw [ thick, directed=1] (0,2.75) to (0,3.5);
	\draw [ thick, directed=0.75] (1,1.75) to [out=90,in=330] (0,2.75);
	\draw [ thick, directed=0.75] (-1,1.75) to [out=90,in=210] (0,2.75);
	\draw [thick] (-1,-1) to (1,1.75);
	\draw [thick] (1,-1) to (-1,1.75);
	\node at (0,3.95) {\scriptsize $2$};
	\node at (-1,-1.45) {\scriptsize $1$};
	\node at (1,-1.45) {\scriptsize $1$};
\et
};
\endxy \ = \ 
\xy
(0,0)*{
\begin{tikzpicture}[color=\clr, scale=.35]
	\draw [ thick, ] (0,0) to (0,.75);
	\draw [ thick, ] (0,.75) to [out=30,in=270] (1,1.75);
	\draw [ thick, ] (0,.75) to [out=150,in=270] (-1,1.75); 
	\draw [ thick, directed=0.75] (1,-1) to [out=90,in=330] (0,0);
	\draw [ thick, directed=0.75] (-1,-1) to [out=90,in=210] (0,0);
	\draw [ thick, directed=1] (0,2.75) to (0,3.5);
	\draw [ thick, directed=0.75] (1,1.75) to [out=90,in=330] (0,2.75);
	\draw [ thick, directed=0.75] (-1,1.75) to [out=90,in=210] (0,2.75);
	\node at (-1.5,1.75) {\scriptsize $1$};
	\node at (1.5,1.75) {\scriptsize $1$};
	\node at (-1,-1.45) {\scriptsize $1$};
	\node at (1,-1.45) {\scriptsize $1$};
%	\node at (-0.5,0.2) {\scriptsize $2$};
	\node at (0,3.95) {\scriptsize $2$};
\end{tikzpicture}
};
\endxy \ - \ 
\xy
(0,0)*{
\bt[color=\clr, scale=.35]
	\draw [ thick, directed=1] (0,2.75) to (0,3.5);
	\draw [ thick, directed=0.75] (1,1.75) to [out=90,in=330] (0,2.75);
	\draw [ thick, directed=0.75] (-1,1.75) to [out=90,in=210] (0,2.75);
	\draw [thick] (-1,-1) to (-1,1.75);
	\draw [thick] (1,-1) to (1,1.75);
	\node at (0,3.95) {\scriptsize $2$};
	\node at (-1,-1.45) {\scriptsize $1$};
	\node at (1,-1.45) {\scriptsize $1$};
\et
};
\endxy \ = \ (2)
\xy
(0,0)*{
\bt[color=\clr, scale=.35]
	\draw [ thick, directed=1] (4,9) to (4,10);
	\draw [ thick, directed=0.75] (5,7.25) to [out=90,in=330] (4,9);
	\draw [ thick, directed=0.75] (3,7.25) to [out=90,in=210] (4,9);
%	\draw [ thick ] (3,5) to (3,7.25);
%	\draw [ thick ] (5,5) to (5,7.25);
	\node at (4,10.5) {\scriptsize $2$};
	\node at (3,6.75) {\scriptsize $1$};
	\node at (5,6.75) {\scriptsize $1$};
\et
};
\endxy \ - \ 
\xy
(0,0)*{
\bt[color=\clr, scale=.35]
	\draw [ thick, directed=1] (4,9) to (4,10);
	\draw [ thick, directed=0.75] (5,7.25) to [out=90,in=330] (4,9);
	\draw [ thick, directed=0.75] (3,7.25) to [out=90,in=210] (4,9);
%	\draw [ thick ] (3,5) to (3,7.25);
%	\draw [ thick ] (5,5) to (5,7.25);
	\node at (4,10.5) {\scriptsize $2$};
	\node at (3,6.75) {\scriptsize $1$};
	\node at (5,6.75) {\scriptsize $1$};
\et
};
\endxy \ = \ 
\xy
(0,0)*{
\bt[color=\clr, scale=.35]
	\draw [ thick, directed=1] (4,9) to (4,10);
	\draw [ thick, directed=0.75] (5,7.25) to [out=90,in=330] (4,9);
	\draw [ thick, directed=0.75] (3,7.25) to [out=90,in=210] (4,9);
%	\draw [ thick ] (3,5) to (3,7.25);
%	\draw [ thick ] (5,5) to (5,7.25);
	\node at (4,10.5) {\scriptsize $2$};
	\node at (3,6.75) {\scriptsize $1$};
	\node at (5,6.75) {\scriptsize $1$};
\et
};
\endxy \ .
\eeq
The first equality is by definition and the second follows from \cref{digon-removal}. Since every permutation is a product of simple transpositions, a straightforward calculation using \cref{untwist-crossing} and \cref{associativity} proves the statement in general.
\end{proof}

\subsection{The clasp idempotents}\label{SS:ClaspIdempotents}

\begin{definition}\label{clasps}  For $k\in\Z_{> 1}$, the $k$-th \emph{clasp} $Cl_{k} \in\End_{\qwebs}(\up_{1}^{k})$ is given by
\[
Cl_k
=\frac{1}{k!}
\xy
(0,0)*{
\begin{tikzpicture}[color=\clr, scale=.3]
	\draw [ thick, directed=.55] (0,-1) to (0,.75);
	\draw [ thick, directed=1] (0,.75) to [out=30,in=270] (1,2.5);
	\draw [ thick, directed=1] (0,.75) to [out=150,in=270] (-1,2.5); 
	\draw [ thick, directed=.65] (1,-2.75) to [out=90,in=330] (0,-1);
	\draw [ thick, directed=.65] (-1,-2.75) to [out=90,in=210] (0,-1);
	\node at (-1,3) {\scriptsize $1$};
	\node at (0.1,1.75) {$\cdots$};
	\node at (1,3) {\scriptsize $1$};
	\node at (-1,-3.3) {\scriptsize $1$};
	\node at (0.1,-2.35) {$\cdots$};
	\node at (1,-3.3) {\scriptsize $1$};
	\node at (0.75,-0.25) {\scriptsize $k$};
\end{tikzpicture}
};
\endxy .
\] In addition, $Cl_{1} \in \End_{\qwebs}(\up_{1})$ is understood to be the identity.  A calculation using \cref{digon-removal} shows $Cl_k$ is an idempotent for all $k \in \Z_{\geq 1}$.
\end{definition}
The following lemma shows clasps admit recursion formulas similar to those of the \emph{Jones-Wenzl projectors} in the Temperley Lieb algebra (e.g., see \cite{W}).

\begin{lemma}\label{clasp-recursion}
For $k\in\Z_{>1}$,
\begin{equation}\label{E:clasp1}
Cl_k
=
\xy
(0,0)*{
\begin{tikzpicture}[color=\clr, scale=.3]
	\draw [ thick, ] (-2,-7) to (-2,-6);
	\draw [ thick, ] (0,-7) to (0,-6);
	\draw [ thick, ] (2,-7) to (2,-6);
	\draw [ thick, directed=1] (-2,-2) to (-2,-1);
	\draw [ thick, directed=1] (0,-2) to (0,-1);
	\draw [ thick, directed=1] (2,-2) to (2,-1);
	\draw [ thick, directed=1] (4,-7) to (4,-1);
	\draw [ thick] (-2.3,-6) rectangle (2.3,-2);
	\node at (-2,-7.45) {\scriptsize $1$};
	\node at (0,-7.45) {\scriptsize $1$};
	\node at (2,-7.45) {\scriptsize $1$};
	\node at (4,-7.45) {\scriptsize $1$};
	\node at (-2,-.45) {\scriptsize $1$};
	\node at (0,-.45) {\scriptsize $1$};
	\node at (2,-.45) {\scriptsize $1$};
	\node at (4,-.45) {\scriptsize $1$};
	\node at (-1,-1.75) { \ $\cdots$};
	\node at (-1,-6.5) { \ $\cdots$};
	\node at (0,-4) { $Cl_{k-1}$};
\end{tikzpicture}
};
\endxy
+\frac{k-1}{k}
\xy
(0,0)*{
\begin{tikzpicture}[color=\clr, scale=.3]
	\draw [ thick] (-2,-9) to (-2,-8);
	\draw [ thick] (0,-9) to (0,-8);
	\draw [ thick] (2,-9) to (2,-8);
	\draw [ thick] (4,-9) to (4,-5.85);
	\draw [ thick, directed=1] (-2,-0) to (-2,1);
	\draw [ thick, directed=1] (0,0) to (0,1);
	\draw [ thick, directed=1] (2,0) to (2,1);
	\draw [ thick, directed=1] (4,-2.15) to (4,1);
	\draw [ thick, directed=0.75] (-2,-5.85) to (-2,-2.15);
	\draw [ thick, directed=0.75] (0,-5.85) to (0,-2.15);
	\draw [ thick, directed=0.75] (3,-4.5) to (3,-3.5);
	\draw [ thick] (-2.3,-8) rectangle (2.3,-5.85);
	\draw [ thick] (-2.3,-2.15) rectangle (2.3,0);
	\draw [ thick] (3,-3.5) to [out=30,in=270] (4,-2.15);
	\draw [ thick] (3,-3.5) to [out=150,in=270] (2,-2.15); 
	\draw [ thick] (4,-5.85) to [out=90,in=330] (3,-4.5);
	\draw [ thick] (2,-5.85) to [out=90,in=210] (3,-4.5);
	\draw
	 (2.25,-2.9) \wdot
	 (3.75,-2.9) \wdot
	 (2.25,-5.1) \wdot
	 (3.75,-5.1) \wdot;
	\node at (-2,-9.45) {\scriptsize $1$};
	\node at (0,-9.45) {\scriptsize $1$};
	\node at (2,-9.45) {\scriptsize $1$};
	\node at (4,-9.45) {\scriptsize $1$};
	\node at (-2,1.55) {\scriptsize $1$};
	\node at (0,1.55) {\scriptsize $1$};
	\node at (2,1.55) {\scriptsize $1$};
	\node at (4,1.55) {\scriptsize $1$};
	\node at (3.65,-4) {\scriptsize $2$};
	\node at (-2.5,-4) {\scriptsize $1$};
	\node at (0.5,-4) {\scriptsize $1$};
	\node at (-1,0.25) { \ $\cdots$};
	\node at (-1,-4) { \ $\cdots$};
	\node at (-1,-8.75) { \ $\cdots$};
	\node at (0,-1.125) {\small $Cl_{k-1}$};
	\node at (0,-6.975) {\small $Cl_{k-1}$};
\end{tikzpicture}
};
\endxy, \tag{a}
\end{equation}
and
\begin{equation}\label{E:clasp2}
Cl_k
= \frac{k-1}{k}
\xy
(0,0)*{
\begin{tikzpicture}[color=\clr, scale=.3]
	\draw [ thick] (-2,-9) to (-2,-8);
	\draw [ thick] (0,-9) to (0,-8);
	\draw [ thick] (2,-9) to (2,-8);
	\draw [ thick] (4,-9) to (4,-5.85);
	\draw [ thick, directed=1] (-2,-0) to (-2,1);
	\draw [ thick, directed=1] (0,0) to (0,1);
	\draw [ thick, directed=1] (2,0) to (2,1);
	\draw [ thick, directed=1] (4,-2.15) to (4,1);
	\draw [ thick, directed=0.75] (-2,-5.85) to (-2,-2.15);
	\draw [ thick, directed=0.75] (0,-5.85) to (0,-2.15);
	\draw [ thick, directed=0.75] (3,-4.5) to (3,-3.5);
	\draw [ thick] (-2.3,-8) rectangle (2.3,-5.85);
	\draw [ thick] (-2.3,-2.15) rectangle (2.3,0);
	\draw [ thick] (3,-3.5) to [out=30,in=270] (4,-2.15);
	\draw [ thick] (3,-3.5) to [out=150,in=270] (2,-2.15); 
	\draw [ thick, directed=0.75] (4,-5.85) to [out=90,in=330] (3,-4.5);
	\draw [ thick, directed=0.75] (2,-5.85) to [out=90,in=210] (3,-4.5);
	\node at (-2,-9.45) {\scriptsize $1$};
	\node at (0,-9.45) {\scriptsize $1$};
	\node at (2,-9.45) {\scriptsize $1$};
	\node at (4,-9.45) {\scriptsize $1$};
	\node at (-2,1.55) {\scriptsize $1$};
	\node at (0,1.55) {\scriptsize $1$};
	\node at (2,1.55) {\scriptsize $1$};
	\node at (4,1.55) {\scriptsize $1$};
	\node at (3.65,-4) {\scriptsize $2$};
	\node at (-2.5,-4) {\scriptsize $1$};
	\node at (0.5,-4) {\scriptsize $1$};
	\node at (-1,0.25) { \ $\cdots$};
	\node at (-1,-4) { \ $\cdots$};
	\node at (-1,-8.75) { \ $\cdots$};
	\node at (0,-1.125) {\small $Cl_{k-1}$};
	\node at (0,-6.975) {\small $Cl_{k-1}$};
\end{tikzpicture}
};
\endxy
-\frac{k-2}{k}
\xy
(0,0)*{
\begin{tikzpicture}[color=\clr, scale=.3]
	\draw [ thick, ] (-2,-7) to (-2,-6);
	\draw [ thick, ] (0,-7) to (0,-6);
	\draw [ thick, ] (2,-7) to (2,-6);
	\draw [ thick, directed=1] (-2,-2) to (-2,-1);
	\draw [ thick, directed=1] (0,-2) to (0,-1);
	\draw [ thick, directed=1] (2,-2) to (2,-1);
	\draw [ thick, directed=1] (4,-7) to (4,-1);
	\draw [ thick] (-2.3,-6) rectangle (2.3,-2);
	\node at (-2,-7.45) {\scriptsize $1$};
	\node at (0,-7.45) {\scriptsize $1$};
	\node at (2,-7.45) {\scriptsize $1$};
	\node at (4,-7.45) {\scriptsize $1$};
	\node at (-2,-.45) {\scriptsize $1$};
	\node at (0,-.45) {\scriptsize $1$};
	\node at (2,-.45) {\scriptsize $1$};
	\node at (4,-.45) {\scriptsize $1$};
	\node at (-1,-1.75) { \ $\cdots$};
	\node at (-1,-6.5) { \ $\cdots$};
	\node at (0,-4) { $Cl_{k-1}$};
\end{tikzpicture}
};
\endxy . \tag{b}
\end{equation}
\end{lemma}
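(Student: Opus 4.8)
The plan is to prove \cref{E:clasp1} by induction on $k$ and then to deduce \cref{E:clasp2} from it using the dumbbell relation \cref{dumbbell-relation}. For the base case $k=2$ we have $Cl_{1}=\id_{\up_{1}}$, so that $Cl_{2}=\tfrac{1}{2}\bigl(\text{split}\circ\text{merge}\bigr)$ on two $1$-strands; moving the identity summand of \cref{E:clasp1} to the other side, the $k=2$ instance of \cref{E:clasp1} becomes exactly the relation \cref{dumbbell-relation} (the four dots there being read as a dot on each of the two $1$-strands entering the merge and each of the two leaving the split). This base case is also the engine of the inductive step.

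For the inductive step, write $\text{merge}_{j}\colon\up_{1}^{j}\to\up_{j}$ and $\text{split}_{j}\colon\up_{j}\to\up_{1}^{j}$ for the iterated merge and split (well defined by \cref{associativity}), so that $Cl_{j}=\tfrac{1}{j!}\,\text{split}_{j}\circ\text{merge}_{j}$. Two preliminary consequences of \cref{digon-removal} are $Cl_{j}\circ\text{split}_{j}=\text{split}_{j}$ and $\text{merge}_{j}\circ Cl_{j}=\text{merge}_{j}$, and hence the absorption identities $(Cl_{k-1}\otimes\id_{\up_{1}})\circ Cl_{k}=Cl_{k}=Cl_{k}\circ(Cl_{k-1}\otimes\id_{\up_{1}})$. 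Now factor off the last strand, $\text{merge}_{k}=\text{merge}_{(k-1,1)}\circ(\text{merge}_{k-1}\otimes\id_{\up_{1}})$ and dually for $\text{split}_{k}$, and insert the idempotents $Cl_{k-1}\otimes\id_{\up_{1}}$ on top and bottom using the identities just noted. This exhibits $Cl_{k}$ as $Cl_{k-1}\otimes\id_{\up_{1}}$ post- and pre-composed with $(\text{split}_{k-1}\otimes\id_{\up_{1}})\circ D\circ(\text{merge}_{k-1}\otimes\id_{\up_{1}})$, where $D=\text{split}_{(k-1,1)}\circ\text{merge}_{(k-1,1)}\in\End_{\qwebs}(\up_{k-1}\up_{1})$ is the "merge-then-split" digon joining the thick $(k-1)$-strand to the last $1$-strand.

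It remains to resolve this digon. Using \cref{associativity} to explode the thick strand, together with \cref{digon-removal}, \cref{square-switch}, \cref{square-switch-dots}, and the $k=2$ case of \cref{E:clasp1}, one rewrites $D$, after the sandwich by $Cl_{k-1}\otimes\id_{\up_{1}}$, as the sum of a term proportional to $Cl_{k-1}\otimes\id_{\up_{1}}$ and a term built from the dotted $2$-labelled bubble; matching this against \cref{E:clasp1} amounts to checking that the $\tfrac{1}{k!}$ in $Cl_{k}$ combines with the binomial factors produced by the digon removals to give the coefficients $1$ and $\tfrac{k-1}{k}$. This coefficient and sign bookkeeping — the signs arising from the super-interchange law \cref{super-interchange} when the odd dots are slid past one another — is the step I expect to be the main technical obstacle; the argument is the type-$Q$ analogue of \cite[Lemma 2.10]{TVW}. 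Finally, \cref{E:clasp2} follows formally: by \cref{dumbbell-relation} the dotted bubble $B$ occurring in \cref{E:clasp1} equals $A-2\,\id_{\up_{1}\up_{1}}$, where $A$ is the undotted merge-then-split on two $1$-strands; substituting this, using $(Cl_{k-1}\otimes\id_{\up_{1}})^{2}=Cl_{k-1}\otimes\id_{\up_{1}}$, and collecting terms converts the coefficients $1$ (on $Cl_{k-1}\otimes\id_{\up_{1}}$) and $\tfrac{k-1}{k}$ (on the $B$-term) of \cref{E:clasp1} into the coefficients $-\tfrac{k-2}{k}$ and $\tfrac{k-1}{k}$ of \cref{E:clasp2}.
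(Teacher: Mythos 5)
Your proposal is correct and takes essentially the same route as the paper: the paper proves \cref{E:clasp1} by the digon-resolution argument of \cite[Lemma 2.13]{RT} (see also \cite[Lemma 2.12]{TVW}) and then obtains \cref{E:clasp2} from \cref{E:clasp1} by applying \cref{dumbbell-relation}, which is exactly your substitution of the undotted bubble for the dotted one together with idempotency of $Cl_{k-1}$. One cosmetic remark: your argument for \cref{E:clasp1} is not genuinely an induction, since the case $k-1$ of the recursion is never invoked in proving the case $k$ --- the $k=2$ instance enters only as the local relation \cref{dumbbell-relation} used to resolve the $2$-labelled bubble after the square-switch step.
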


\begin{proof}
The proof of the first recursion identical to \cite[Lemma 2.13]{RT}. See also \cite[Lemma 2.12]{TVW}.  The second recursion follows from the first by applying \cref{dumbbell-relation}  to the rightmost webs. 
\end{proof}
An inductive argument using the recursion formulas yields the following closed formula for the clasps.

\begin{lemma}\label{L:claspsum} For $k\geq 1$ we have
\begin{equation}\label{clasp-sum}
Cl_k \ = \ \frac{1}{k!}\sum_{\sigma\in\Sigma_k}
\xy
(0,0)*{
\bt[color=\clr, scale=.35]
	\draw [ thick ] (2.5,7.25) rectangle (5.5,4.25);
	\draw [ thick ] (3,4.25) to (3,3.25);
	\draw [ thick ] (5,4.25) to (5,3.25);
	\draw [ thick, directed=1] (3,7.25) to (3,8.25);
	\draw [ thick, directed=1 ] (5,7.25) to (5,8.25);
	\node at (4.1, 7.5) { $\cdots$};
	\node at (4.1, 3.75) { $\cdots$};
	\node at (4, 5.75) { $\sigma$};
	\node at (3,2.75) {\scriptsize $1$};
	\node at (5,2.75) {\scriptsize $1$};
	\node at (3,8.75) {\scriptsize $1$};
	\node at (5,8.75) {\scriptsize $1$};
\et
};
\endxy \ .
\end{equation}
\end{lemma}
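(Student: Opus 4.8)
The plan is to induct on $k$, using one of the two recursions in \cref{clasp-recursion}; I would use the form \cref{E:clasp2}, which is most convenient because the ``rung'' in its leading term carries no dots. The base case $k=1$ is immediate: $Cl_1=\id_{\up_1}$ by definition and $\Sigma_1$ is trivial, so both sides equal $\id_{\up_1}$.

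Now suppose \cref{clasp-sum} holds for $k-1$, so $Cl_{k-1}=\tfrac1{(k-1)!}\sum_{\tau\in\Sigma_{k-1}}\tau$ in $\End_{\qwebs}(\up_1^{k-1})$, continuing to write $\tau$ for the web $\xi_{k-1}(\tau)$ as in \cref{SS:crossings}. First I would identify the constituents of the right-hand side of \cref{E:clasp2}: the merge of the two rightmost strands into an edge labelled $2$ followed immediately by the split back into two edges labelled $1$ is, by the definition of the upward crossing in \cref{SS:crossings}, equal to $s_{k-1}+\id_{\up_1^k}$; and each of the two $Cl_{k-1}$-boxes, with strand $k$ passing by on the right, is $P:=Cl_{k-1}\otimes\id_{\up_1}$. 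Since $Cl_{k-1}$ is idempotent (\cref{clasps}) we have $P\circ P=P$, and \cref{E:clasp2} becomes the identity
\[
Cl_k=\tfrac{k-1}{k}\bigl(P\circ s_{k-1}\circ P+P\bigr)-\tfrac{k-2}{k}\,P=\tfrac{k-1}{k}\,P\circ s_{k-1}\circ P+\tfrac1k\,P
\]
in $\End_{\qwebs}(\up_1^k)$. By the inductive hypothesis and the fact that $\xi_k$ is an algebra homomorphism, the right-hand side now lives in the image of $\C\Sigma_k$, with $P=\tfrac1{(k-1)!}\sum_{\sigma\in H}\sigma$ for $H\leq\Sigma_k$ the stabilizer of $k$, and $P\circ s_{k-1}\circ P=\tfrac1{((k-1)!)^2}\sum_{\tau,\tau'\in H}\tau\,s_{k-1}\,\tau'$.

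The single substantive step is then the combinatorial identity $\sum_{\tau,\tau'\in H}\tau\,s_{k-1}\,\tau'=(k-2)!\sum_{\sigma\in\Sigma_k\setminus H}\sigma$: each product $\tau s_{k-1}\tau'$ sends $k$ to $\tau(k-1)\neq k$, and for a fixed $\sigma$ with $\sigma(k)=j\neq k$ the equation $\tau s_{k-1}\tau'=\sigma$ with $\tau,\tau'\in H$ forces $\tau(k-1)=j$ and then determines $\tau'$, so it has exactly $(k-2)!$ solutions. Hence $\tfrac{k-1}{k}\,P\circ s_{k-1}\circ P=\tfrac{k-1}{k}\cdot\tfrac{(k-2)!}{((k-1)!)^2}\sum_{\sigma\notin H}\sigma=\tfrac1{k!}\sum_{\sigma\notin H}\sigma$, while $\tfrac1k\,P=\tfrac1{k!}\sum_{\sigma\in H}\sigma$; adding the two contributions gives $Cl_k=\tfrac1{k!}\sum_{\sigma\in\Sigma_k}\sigma$, which closes the induction.

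I expect the main obstacle to be bookkeeping rather than anything conceptual: one must match the graphical terms of \cref{E:clasp2} with $P\circ(s_{k-1}+\id_{\up_1^k})\circ P$ and $P$ on precisely the right pair of strands, and then track the factorials through the coset count so that the coefficients of $\sum_{\sigma\in H}\sigma$ and $\sum_{\sigma\notin H}\sigma$ both collapse to $1/k!$. Everything else follows directly from \cref{clasp-recursion}, the idempotency in \cref{clasps}, and the homomorphism $\xi_k$ of \cref{SS:crossings}.
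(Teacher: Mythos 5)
Your proposal is correct and follows essentially the same route as the paper's own argument: induction via the recursion \cref{E:clasp2}, rewriting the merge--split on the last two strands as $s_{k-1}+\id$, invoking the inductive hypothesis through $\xi_k$, and the $(k-2)!$-fold counting of factorizations $\sigma=\tau s_{k-1}\tau'$ with $\tau,\tau'$ fixing $k$. Your reorganization into $\tfrac{k-1}{k}Ps_{k-1}P+\tfrac1k P$ using idempotency of $P$ is only a cosmetic streamlining of the same coefficient bookkeeping.
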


\subsection{Symmetry on  \texorpdfstring{$\qwebsupdown$}{Oriented Webs}}

We now introduce braidings between arbitrary objects of $\qwebs$ and show that this provides a symmetric braiding.  We first define the braiding between two generating objects.

\begin{definition}\label{D:Upbraiding} For $k,l\in\Z_{>0}$ define
\[\beta_{\up_{k}, \up_{l}}=
\xy %%%%%%%%%%%%%%%%%%%%%%%%%%%%%% Arbitrary upward crossing
(0,0)*{
\bt[color=\clr, scale=1.25]
	\draw[thick, directed=1] (0,0) to (0.5,0.5);
	\draw[thick, directed=1] (0.5,0) to (0,0.5);
	\node at (0,-0.15) {\scriptsize $k$};
	\node at (0,0.65) {\scriptsize $l$};
	\node at (0.5,-0.15) {\scriptsize $l$};
	\node at (0.5,0.65) {\scriptsize $k$};
\et
};
\endxy =\frac{1}{k!\,l!}
\xy
(0,0)*{
\bt[color=\clr, scale=.35]
	\draw [ thick, directed=1] (0, .75) to (0,1.5);
	\draw [ thick, directed=0.75] (1,-1) to [out=90,in=330] (0,.75);
	\draw [ thick, directed=0.75] (-1,-1) to [out=90,in=210] (0,.75);
	\draw [ thick, directed=1] (4, .75) to (4,1.5);
	\draw [ thick, directed=0.75] (5,-1) to [out=90,in=330] (4,.75);
	\draw [ thick, directed=0.75] (3,-1) to [out=90,in=210] (4,.75);
	\draw [ thick, directed=0.75] (0,-6.5) to (0,-5.75);
	\draw [ thick, ] (0,-5.75) to [out=30,in=270] (1,-4);
	\draw [ thick, ] (0,-5.75) to [out=150,in=270] (-1,-4); 
	\draw [ thick, directed=0.75] (4,-6.5) to (4,-5.75);
	\draw [ thick, ] (4,-5.75) to [out=30,in=270] (5,-4);
	\draw [ thick, ] (4,-5.75) to [out=150,in=270] (3,-4); 
	\draw [ thick ] (5,-1) to (1,-4);
	\draw [ thick ] (3,-1) to (-1,-4);
	\draw [ thick ] (1,-1) to (5,-4);
	\draw [ thick ] (-1,-1) to (3,-4);
	\node at (2.6, -0.5) {\scriptsize $1$};
	\node at (5.4, -0.5) {\scriptsize $1$};
	\node at (1.4, -0.5) {\scriptsize $1$};
	\node at (-1.4, -0.5) {\scriptsize $1$};
	\node at (2.6, -4.5) {\scriptsize $1$};
	\node at (5.4, -4.5) {\scriptsize $1$};
	\node at (1.4, -4.5) {\scriptsize $1$};
	\node at (-1.4, -4.5) {\scriptsize $1$};
	\node at (0.1, -0.65) { $\cdots$};
	\node at (4.1, -0.65) { $\cdots$};
	\node at (0.1, -4.6) { $\cdots$};
	\node at (4.1, -4.6) { $\cdots$};
%	\node at (2.1,-2.6) { $\cdots$};
%	\node at (2,-2.2) { $\vdots$};
	\node at (0,2) {\scriptsize $l$};
	\node at (4,2) {\scriptsize $k$};
	\node at (0,-7) {\scriptsize $k$};
	\node at (4,-7) {\scriptsize $l$};
\et
};
\endxy \ .
\]
\end{definition}
The following lemma shows that $\beta_{\up_{k}, \up_{l}}$ is its own inverse.  It also shows that if one web can be obtained from another by isotopies and sliding dots, merges, and splits along strands, then they are equal in $\qwebs$.

\begin{lemma}\label{L:braidingforups}
 The following relations hold in $\qwebs$ for all $h,k,l\in\Z_{>0}$. 
\begin{enumerate}
\item [(a.)]
\[
\xy
(0,0)*{
\bt[color=\clr, scale=1.25]
	\draw[thick, directed=1
	] (0,0) to (0.5,0.5);
	\draw[thick, directed=1
	] (0.5,0) to (0,0.5);
	\draw[thick, ] (0,-0.5) to (0.5,0);
	\draw[thick, ] (0.5,-0.5) to (0,0);
	\node at (0,-0.65) {\scriptsize $k$};
	\node at (0,.65) {\scriptsize $k$};
	\node at (0.5,-.65) {\scriptsize $l$};
	\node at (0.5,.65) {\scriptsize $l$};
\et
};
\endxy=
\xy
(0,0)*{
\bt[color=\clr, scale=1.25]
	\draw[thick, directed=1
	] (0.5,-0.5) to (0.5,0.5);
	\draw[thick, rdirected=0.05
	] (0,0.5) to (0,-0.5);
	\node at (0,-.65) {\scriptsize $k$};
	\node at (0,.65) {\scriptsize $k$};
	\node at (0.5,-.65) {\scriptsize $l$};
	\node at (0.5,.65) {\scriptsize $l$};
\et
};
\endxy ,
\]
\item [(b.)] 
\[
\xy
(0,0)*{
\bt[color=\clr, scale=1.25]
	\draw[thick, directed=1] (0,0) to (1,1) to (1,1.5);
	\draw[thick, directed=1] (0.5,0) to (0,0.5) to (0,1) to (0.5,1.5);
	\draw[thick, directed=1] (1,0) to (1,0.5) to (0.5,1) to (0,1.5);
	\node at (0,-0.15) {\scriptsize $h$};
	\node at (0,1.65) {\scriptsize $l$};
	\node at (0.5,-0.15) {\scriptsize $k$};
	\node at (0.5,1.65) {\scriptsize $k$};
	\node at (1,-0.15) {\scriptsize $l$};
	\node at (1,1.65) {\scriptsize $h$};
\et
};
\endxy=
\xy
(0,0)*{
\bt[color=\clr, scale=1.25]
	\draw[thick, directed=1] (0,0) to (0,0.5) to (1,1.5);
	\draw[thick, directed=1] (0.5,0) to (1,0.5) to (1,1) to (0.5,1.5);
	\draw[thick, directed=1] (1,0) to (0,1) to (0,1.5);
	\node at (0,-0.15) {\scriptsize $h$};
	\node at (0,1.65) {\scriptsize $l$};
	\node at (0.5,-0.15) {\scriptsize $k$};
	\node at (0.5,1.65) {\scriptsize $k$};
	\node at (1,-0.15) {\scriptsize $l$};
	\node at (1,1.65) {\scriptsize $h$};
\et
};
\endxy ,
\]
\item [(c.)]
\[
\xy
(0,0)*{
\bt[color=\clr, scale=.35]
	\draw [ thick, directed=1] (0, .75) to (0,1.5) to (2,3.5);
	\draw [ thick, directed=.65] (1,-1) to [out=90,in=330] (0,.75);
	\draw [ thick, directed=.65] (-1,-1) to [out=90,in=210] (0,.75);
	\draw [ thick, directed=1] (2,-1) to (2,1.5) to (0,3.5);
	\node at (2, 4) {\scriptsize $h\! +\! k$};
	\node at (-1,-1.5) {\scriptsize $h$};
	\node at (1,-1.5) {\scriptsize $k$};
	\node at (2,-1.5) {\scriptsize $l$};
	\node at (0,4) {\scriptsize $l$};
\et
};
\endxy =
\xy
(0,0)*{
\bt[color=\clr, scale=.35]
	\draw [ thick, directed=1] (0, .75) to (0,1.5);
	\draw [ thick, directed=.65] (1,-1) to [out=90,in=330] (0,.75);
	\draw [ thick, directed=.65] (-1,-1) to [out=90,in=210] (0,.75);
	\draw [ thick, ] (-3,-3) to (-1,-1);
	\draw [ thick, ] (-1,-3) to (1,-1);
	\draw [ thick, directed=1] (1,-3) to (-2,-1) to (-2,1.5);
	\node at (0, 2) {\scriptsize $h\! +\! k$};
	\node at (-1,-3.5) {\scriptsize $k$};
	\node at (-3,-3.5) {\scriptsize $h$};
	\node at (-2,2) {\scriptsize $l$};
	\node at (1,-3.5) {\scriptsize $l$};
\et
};
\endxy \ ,\quad\quad
\xy
(0,0)*{\reflectbox{
\bt[color=\clr, scale=.35]
	\draw [ thick, directed=1] (0, .75) to (0,1.5) to (2,3.5);
	\draw [ thick, directed=.65] (1,-1) to [out=90,in=330] (0,.75);
	\draw [ thick, directed=.65] (-1,-1) to [out=90,in=210] (0,.75);
	\draw [ thick, directed=1] (2,-1) to (2,1.5) to (0,3.5);
	\node at (2, 4) {\reflectbox{\scriptsize $h\! +\! k$}};
	\node at (-1,-1.5) {\reflectbox{\scriptsize $k$}};
	\node at (1,-1.5) {\reflectbox{\scriptsize $h$}};
	\node at (2,-1.5) {\reflectbox{\scriptsize $l$}};
	\node at (0,4) {\reflectbox{\scriptsize $l$}};
\et
}};
\endxy=
\xy
(0,0)*{\reflectbox{
\bt[color=\clr, scale=.35]
	\draw [ thick, directed=1] (0, .75) to (0,1.5);
	\draw [ thick, directed=.65] (1,-1) to [out=90,in=330] (0,.75);
	\draw [ thick, directed=.65] (-1,-1) to [out=90,in=210] (0,.75);
	\draw [ thick, ] (-3,-3) to (-1,-1);
	\draw [ thick, ] (-1,-3) to (1,-1);
	\draw [ thick, directed=1] (1,-3) to (-2,-1) to (-2,1.5);
	\node at (0, 2) {\reflectbox{\scriptsize $h\! +\! k$}};
	\node at (-1,-3.5) {\reflectbox{\scriptsize $h$}};
	\node at (-3,-3.5) {\reflectbox{\scriptsize $k$}};
	\node at (-2,2) {\reflectbox{\scriptsize $l$}};
	\node at (1,-3.5) {\reflectbox{\scriptsize $l$}};
\et
}};
\endxy ,
\]
\[
\xy
(0,0)*{\rotatebox{180}{
\bt[color=\clr, scale=.35]
	\draw [ thick, rdirected=.6] (0, .75) to (0,1.5);
	\draw [ thick, ] (1,-1) to [out=90,in=330] (0,.75);
	\draw [ thick, ] (-1,-1) to [out=90,in=210] (0,.75);
	\draw [ thick, rdirected=0.1] (-3,-3) to (-1,-1);
	\draw [ thick, rdirected=0.1] (-1,-3) to (1,-1);
	\draw [ thick, rdirected=.05] (1,-3) to (-2,-1) to (-2,1.5);
	\node at (0, 2) {\rotatebox{180}{\scriptsize $h\! +\! k$}};
	\node at (-1,-3.5) {\rotatebox{180}{\scriptsize $h$}};
	\node at (-3,-3.5) {\rotatebox{180}{\scriptsize $k$}};
	\node at (-2,2) {\rotatebox{180}{\scriptsize $l$}};
	\node at (1,-3.5) {\rotatebox{180}{\scriptsize $l$}};
\et
}};
\endxy=
\xy
(0,0)*{\rotatebox{180}{
\bt[color=\clr, scale=.35]
	\draw [ thick, rdirected=.15] (0, .75) to (0,1.5) to (2,3.5);
	\draw [ thick, rdirected=.1] (1,-1) to [out=90,in=330] (0,.75);
	\draw [ thick, rdirected=.1] (-1,-1) to [out=90,in=210] (0,.75);
	\draw [ thick, rdirected=.05] (2,-1) to (2,1.5) to (0,3.5);
	\node at (2, 4) {\rotatebox{180}{\scriptsize $h\! +\! k$}};
	\node at (-1,-1.5) {\rotatebox{180}{\scriptsize $k$}};
	\node at (1,-1.5) {\rotatebox{180}{\scriptsize $h$}};
	\node at (2,-1.5) {\rotatebox{180}{\scriptsize $l$}};
	\node at (0,4) {\rotatebox{180}{\scriptsize $l$}};
\et
}};
\endxy \ ,\quad\quad
\xy
(0,0)*{\rotatebox{180}{\reflectbox{
\bt[color=\clr, scale=.35]
	\draw [ thick, rdirected=.6] (0, .75) to (0,1.5);
	\draw [ thick, ] (1,-1) to [out=90,in=330] (0,.75);
	\draw [ thick, ] (-1,-1) to [out=90,in=210] (0,.75);
	\draw [ thick, rdirected=0.1] (-3,-3) to (-1,-1);
	\draw [ thick, rdirected=0.1] (-1,-3) to (1,-1);
	\draw [ thick, rdirected=.05] (1,-3) to (-2,-1) to (-2,1.5);
	\node at (0, 2) {\rotatebox{180}{\reflectbox{\scriptsize $h\! +\! k$}}};
	\node at (-1,-3.5) {\rotatebox{180}{\reflectbox{\scriptsize $k$}}};
	\node at (-3,-3.5) {\rotatebox{180}{\reflectbox{\scriptsize $h$}}};
	\node at (-2,2) {\rotatebox{180}{\reflectbox{\scriptsize $l$}}};
	\node at (1,-3.5) {\rotatebox{180}{\reflectbox{\scriptsize $l$}}};
\et
}}};
\endxy=
\xy
(0,0)*{\rotatebox{180}{\reflectbox{
\bt[color=\clr, scale=.35]
	\draw [ thick, rdirected=.15] (0, .75) to (0,1.5) to (2,3.5);
	\draw [ thick, rdirected=.1] (1,-1) to [out=90,in=330] (0,.75);
	\draw [ thick, rdirected=.1] (-1,-1) to [out=90,in=210] (0,.75);
	\draw [ thick, rdirected=.05] (2,-1) to (2,1.5) to (0,3.5);
	\node at (2, 4) {\rotatebox{180}{\reflectbox{\scriptsize $h\! +\! k$}}};
	\node at (-1,-1.5) {\rotatebox{180}{\reflectbox{\scriptsize $h$}}};
	\node at (1,-1.5) {\rotatebox{180}{\reflectbox{\scriptsize $k$}}};
	\node at (2,-1.5) {\rotatebox{180}{\reflectbox{\scriptsize $l$}}};
	\node at (0,4) {\rotatebox{180}{\reflectbox{\scriptsize $l$}}};
\et
}}};
\endxy ,
\]

\item[(d.)] 
\[
\xy
(0,0)*{
\bt[color=\clr, scale=1.25]
	\draw[thick, directed=1] (0,0) to (0.5,1);
	\draw[thick, directed=1] (0.5,0) to (0,1);
	\node at (0,-0.15) {\scriptsize $k$};
	\node at (0,1.15) {\scriptsize $l$};
	\node at (0.5,-0.15) {\scriptsize $l$};
	\node at (0.5,1.15) {\scriptsize $k$};
	\draw
	 (0.15,0.3) \wdot;
\et
};
\endxy=
\xy
(0,0)*{
\bt[color=\clr, scale=1.25]
	\draw[thick, directed=1] (0,0) to (0.5,1);
	\draw[thick, directed=1] (0.5,0) to (0,1);
	\node at (0,-0.15) {\scriptsize $k$};
	\node at (0,1.15) {\scriptsize $l$};
	\node at (0.5,-0.15) {\scriptsize $l$};
	\node at (0.5,1.15) {\scriptsize $k$};
	\draw
	 (0.35,0.7) \wdot;
\et
};
\endxy \ ,\quad\quad
\xy
(0,0)*{\reflectbox{
\bt[color=\clr, scale=1.25]
	\draw[thick, directed=1] (0,0) to (0.5,1);
	\draw[thick, directed=1] (0.5,0) to (0,1);
	\node at (0,-0.15) {\reflectbox{\scriptsize $l$}};
	\node at (0,1.15) {\reflectbox{\scriptsize $k$}};
	\node at (0.5,-0.15) {\reflectbox{\scriptsize $k$}};
	\node at (0.5,1.15) {\reflectbox{\scriptsize $l$}};
	\draw
	 (0.15,0.3) \wdot;
\et
}};
\endxy=
\xy
(0,0)*{\reflectbox{
\bt[color=\clr, scale=1.25]
	\draw[thick, directed=1] (0,0) to (0.5,1);
	\draw[thick, directed=1] (0.5,0) to (0,1);
	\node at (0,-0.15) {\reflectbox{\scriptsize $l$}};
	\node at (0,1.15) {\reflectbox{\scriptsize $k$}};
	\node at (0.5,-0.15) {\reflectbox{\scriptsize $k$}};
	\node at (0.5,1.15) {\reflectbox{\scriptsize $l$}};
	\draw
	 (0.35,0.7) \wdot;
\et
}};
\endxy .
\]
\end{enumerate}
\end{lemma}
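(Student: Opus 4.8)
The strategy is to reduce all four identities to the single-strand level, where the elementary crossings $s_j$ and dots $c_i$ already satisfy the Sergeev relations by \cref{L:srelations}. Before starting I would record three auxiliary ``explode/re-merge'' facts. Write $S_k\colon \up_k\to\up_1^k$ for a total split and $M_k\colon \up_1^k\to\up_k$ for a total merge; by \cref{associativity} these are independent of how the elementary merges and splits are composed. An easy induction on $k$ using \cref{digon-removal} gives $M_k\circ S_k = k!\,\id_{\up_k}$, whence $Cl_k=\tfrac{1}{k!}S_k\circ M_k$ is idempotent, $M_k\circ Cl_k = M_k$, and $M_k\circ Cl_k\circ S_k = k!\,\id_{\up_k}$. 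Similarly, choosing $M_{h+k}=\mu_{h,k}\circ(M_h\otimes M_k)$ shows every merge factors as $\mu_{h,k}=\tfrac{1}{h!\,k!}\,M_{h+k}\circ(S_h\otimes S_k)$. Together with the closed form $Cl_k=\tfrac1{k!}\sum_{\sigma\in\Sigma_k}\sigma$ of \cref{L:claspsum}, the absorption identity of \cref{L:untwist-permutation} (a permutation web disappears into a merge above it, or a split below it), and the homomorphism $\xi_N$ of \cref{E:Sergeevhomomorphism} (two words in the $s_j$ with the same underlying permutation give the same morphism of $\up_1^N$), these facts let me move clasps past permutation webs and cancel the normalizing factors $1/k!\,l!$ of \cref{D:Upbraiding}.

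I would carry out part (a) first, as the model. Write $\beta_{\up_k,\up_l}=\tfrac1{k!\,l!}\,(M_l\otimes M_k)\circ X\circ(S_k\otimes S_l)$, where $X\colon\up_1^k\up_1^l\to\up_1^l\up_1^k$ is the block-transposition web, and write $\bar X\colon\up_1^l\up_1^k\to\up_1^k\up_1^l$ for the reverse block transposition, so $\bar X=X^{-1}$ as a permutation. Then $\beta_{\up_l,\up_k}\circ\beta_{\up_k,\up_l}=\tfrac1{(k!\,l!)^2}(M_k\otimes M_l)\circ\bar X\circ\big[(S_l\otimes S_k)(M_l\otimes M_k)\big]\circ X\circ(S_k\otimes S_l)$, the bracketed middle equals $k!\,l!\,(Cl_l\otimes Cl_k)$, and since $X^{-1}$ conjugates $\Sigma_l\times\Sigma_k$ onto $\Sigma_k\times\Sigma_l$ while clasps absorb permutations within their block, $\bar X\circ(Cl_l\otimes Cl_k)\circ X=Cl_k\otimes Cl_l$. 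What remains is $\tfrac1{k!\,l!}(M_k\,Cl_k\,S_k)\otimes(M_l\,Cl_l\,S_l)=\tfrac1{k!\,l!}(k!\,\id_{\up_k})\otimes(l!\,\id_{\up_l})=\id_{\up_k\up_l}$.

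Part (d) should be the simplest: by \cref{dot-on-k-strand} a dot on $\up_k$ is a scalar multiple of a single dot on one of the exploded $1$-strands; that dot passes along the crossing web $X$ to the opposite side using $s_ic_i=c_{i+1}s_i$ and $s_ic_j=c_js_i$ from \cref{L:srelations}; and it is reabsorbed by the merge on the far side, with the scalar restored by the same count, giving the stated equality. Parts (b) and (c) are the braid and hexagon identities. For (c) I would expand $\beta_{\up_{h+k},\up_l}\circ(\mu_{h,k}\otimes\id)$ using the factorization $\mu_{h,k}=\tfrac1{h!\,k!}M_{h+k}(S_h\otimes S_k)$, so that an $S_{h+k}\circ M_{h+k}=(h+k)!\,Cl_{h+k}$ appears in the middle; push this $Cl_{h+k}$ past $X$ and absorb it into $M_{h+k}$ via $M_{h+k}\circ Cl_{h+k}=M_{h+k}$; the right-hand side, fully exploded, becomes the same expression except that its crossing word is a different product of $s_j$'s realizing the same permutation of $h+k+l$ strands, so the two agree under $\xi_{h+k+l}$. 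Part (b) is handled in the same way, reducing to the relation $s_is_{i+1}s_i=s_{i+1}s_is_{i+1}$.

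The main obstacle I anticipate is bookkeeping rather than any conceptual difficulty: getting the powers of factorials correct in the explode/re-merge identities, and in particular making precise and clean the repeated maneuver ``commute a clasp past a block-permutation web, using that conjugating a Young subgroup $\Sigma_h\times\Sigma_k$ by the block permutation yields $\Sigma_k\times\Sigma_h$ and that $\frac1{k!}\sum_{\sigma\in\Sigma_k}\sigma$ is fixed by right multiplication.'' Everything else is an unwinding of \cref{D:Upbraiding} together with \cref{L:srelations}, \cref{L:untwist-permutation}, \cref{L:claspsum}, \cref{digon-removal}, and \cref{associativity}.
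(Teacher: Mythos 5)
Your proposal is correct and follows essentially the same route as the paper: the paper's displayed proof of (a) is exactly your explode--clasp--permute--re-merge computation (expand via \cref{D:Upbraiding}, replace the internal split-then-merge by $k!\,l!\,(Cl_l\otimes Cl_k)$ using \cref{L:claspsum}, slide the permutations through the block crossing, absorb them with \cref{L:untwist-permutation}, and cancel the digons with \cref{digon-removal}). The paper leaves (b)--(d) as exercises, and your sketches for them --- reduction to the Coxeter and Sergeev relations of \cref{L:srelations} on fully exploded $1$-strands, with \cref{dot-on-k-strand} handling the dot in (d) --- are the intended arguments.
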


\begin{proof}
\bea
\xy
(0,0)*{
\bt[color=\clr, scale=1.25]
	\draw[thick, directed=1] (0,0) to (0.5,0.5);
	\draw[thick, directed=1] (0.5,0) to (0,0.5);
	\draw[thick, ] (0,-0.5) to (0.5,0);
	\draw[thick, ] (0.5,-0.5) to (0,0);
	\node at (0,-0.65) {\scriptsize $k$};
	\node at (0,.65) {\scriptsize $k$};
	\node at (0.5,-.65) {\scriptsize $l$};
	\node at (0.5,.65) {\scriptsize $l$};
\et
};
\endxy &=& \left(\frac{1}{k!\,l!}\right)^2
\xy
(0,0)*{
\bt[color=\clr, scale=.35]
	\draw [ thick, directed=0.7] (0, .75) to (0,2.5);
	\draw [ thick, directed=0.75] (1,-1) to [out=90,in=330] (0,.75);
	\draw [ thick, directed=0.75] (-1,-1) to [out=90,in=210] (0,.75);
	\draw [ thick, directed=0.7] (4, .75) to (4,2.5);
	\draw [ thick, directed=0.75] (5,-1) to [out=90,in=330] (4,.75);
	\draw [ thick, directed=0.75] (3,-1) to [out=90,in=210] (4,.75);
	\draw [ thick, directed=0.75] (0,-6.75) to (0,-5.75);
	\draw [ thick, ] (0,-5.75) to [out=30,in=270] (1,-4);
	\draw [ thick, ] (0,-5.75) to [out=150,in=270] (-1,-4); 
	\draw [ thick, directed=0.75] (4,-6.75) to (4,-5.75);
	\draw [ thick, ] (4,-5.75) to [out=30,in=270] (5,-4);
	\draw [ thick, ] (4,-5.75) to [out=150,in=270] (3,-4); 
	\draw [ thick ] (5,-1) to (1,-4);
	\draw [ thick ] (3,-1) to (-1,-4);
	\draw [ thick ] (1,-1) to (5,-4);
	\draw [ thick ] (-1,-1) to (3,-4);
	\node at (0.1, -0.75) { $\cdots$};
	\node at (4.1, -0.75) { $\cdots$};
	\node at (0.1, -4.5) { $\cdots$};
	\node at (4.1, -4.5) { $\cdots$};
	\node at (0,10.5) {\scriptsize $k$};
	\node at (4,10.5) {\scriptsize $l$};
	\node at (0,-7.25) {\scriptsize $k$};
	\node at (4,-7.25) {\scriptsize $l$};
	\node at (2.6, -4.5) {\scriptsize $1$};
	\node at (5.4, -4.5) {\scriptsize $1$};
	\node at (1.4, -4.5) {\scriptsize $1$};
	\node at (-1.4, -4.5) {\scriptsize $1$};
	\node at (2.6, 7.75) {\scriptsize $1$};
	\node at (5.4, 7.75) {\scriptsize $1$};
	\node at (1.4, 7.75) {\scriptsize $1$};
	\node at (-1.4, 7.75) {\scriptsize $1$};
	\draw [ thick, directed=1] (0, 9) to (0,10);
	\draw [ thick, directed=0.75] (1,7.25) to [out=90,in=330] (0,9);
	\draw [ thick, directed=0.75] (-1,7.25) to [out=90,in=210] (0,9);
	\draw [ thick, directed=1] (4,9) to (4,10);
	\draw [ thick, directed=0.75] (5,7.25) to [out=90,in=330] (4,9);
	\draw [ thick, directed=0.75] (3,7.25) to [out=90,in=210] (4,9);
	\draw [ thick, ] (0,2.5) to [out=30,in=270] (1,4.25);
	\draw [ thick, ] (0,2.5) to [out=150,in=270] (-1,4.25); 
	\draw [ thick, ] (4,2.5) to [out=30,in=270] (5,4.25);
	\draw [ thick, ] (4,2.5) to [out=150,in=270] (3,4.25); 
	\draw [ thick ] (5,7.25) to (1,4.25);
	\draw [ thick ] (3,7.25) to (-1,4.25);
	\draw [ thick ] (1,7.25) to (5,4.25);
	\draw [ thick ] (-1,7.25) to (3,4.25);
	\node at (0.1, 7.5) { $\cdots$};
	\node at (4.1, 7.5) { $\cdots$};
	\node at (0.1, 3.75) { $\cdots$};
	\node at (4.1, 3.75) { $\cdots$};
	\node at (0.75, 1.75) {\scriptsize $l$};
	\node at (4.75, 1.75) {\scriptsize $k$};
\et
};
\endxy \stackrel{\eqref{clasp-sum}}{=} \left(\frac{1}{k!\,l!}\right)^2\sum_{\substack{\sigma\in\Sigma_l\\ \tau\in\frakS_k}} 
\xy
(0,0)*{
\bt[color=\clr, scale=.35]
	\draw [ thick, directed=0.75] (0,-6.75) to (0,-5.75);
	\draw [ thick, ] (0,-5.75) to [out=30,in=270] (1,-4);
	\draw [ thick, ] (0,-5.75) to [out=150,in=270] (-1,-4); 
	\draw [ thick, directed=0.75] (4,-6.75) to (4,-5.75);
	\draw [ thick, ] (4,-5.75) to [out=30,in=270] (5,-4);
	\draw [ thick, ] (4,-5.75) to [out=150,in=270] (3,-4); 
	\draw [ thick ] (5,-1) to (1,-4);
	\draw [ thick ] (3,-1) to (-1,-4);
	\draw [ thick ] (1,-1) to (5,-4);
	\draw [ thick ] (-1,-1) to (3,-4);
	\draw [ thick ] (5,-1) to (5,0);
	\draw [ thick ] (3,-1) to (3,0);
	\draw [ thick ] (1,-1) to (1,0);
	\draw [ thick ] (-1,-1) to (-1,0);
	\node at (0.1, -0.75) { $\cdots$};
	\node at (4.1, -0.75) { $\cdots$};
	\node at (0.1, -4.5) { $\cdots$};
	\node at (4.1, -4.5) { $\cdots$};
	\node at (0,10.5) {\scriptsize $k$};
	\node at (4,10.5) {\scriptsize $l$};
	\node at (0,-7.25) {\scriptsize $k$};
	\node at (4,-7.25) {\scriptsize $l$};
	\node at (2.6, -4.5) {\scriptsize $1$};
	\node at (5.4, -4.5) {\scriptsize $1$};
	\node at (1.4, -4.5) {\scriptsize $1$};
	\node at (-1.4, -4.5) {\scriptsize $1$};
	\node at (2.6, 7.75) {\scriptsize $1$};
	\node at (5.4, 7.75) {\scriptsize $1$};
	\node at (1.4, 7.75) {\scriptsize $1$};
	\node at (-1.4, 7.75) {\scriptsize $1$};
	\draw [ thick, directed=1] (0, 9) to (0,10);
	\draw [ thick, directed=0.75] (1,7.25) to [out=90,in=330] (0,9);
	\draw [ thick, directed=0.75] (-1,7.25) to [out=90,in=210] (0,9);
	\draw [ thick, directed=1] (4,9) to (4,10);
	\draw [ thick, directed=0.75] (5,7.25) to [out=90,in=330] (4,9);
	\draw [ thick, directed=0.75] (3,7.25) to [out=90,in=210] (4,9);
	\draw [ thick ] (5,7.25) to (1,4.25);
	\draw [ thick ] (3,7.25) to (-1,4.25);
	\draw [ thick ] (1,7.25) to (5,4.25);
	\draw [ thick ] (-1,7.25) to (3,4.25);
	\draw [ thick ] (1,4.25) to (1,3.25);
	\draw [ thick ] (-1,4.25) to (-1,3.25);
	\draw [ thick ] (3,4.25) to (3,3.25);
	\draw [ thick ] (5,4.25) to (5,3.25);
	\draw [ thick ] (-1.5,3.25) rectangle (1.5,0);
	\draw [ thick ] (2.5,3.25) rectangle (5.5,0);
	\node at (0.1, 7.5) { $\cdots$};
	\node at (4.1, 7.5) { $\cdots$};
	\node at (0.1, 3.75) { $\cdots$};
	\node at (4.1, 3.75) { $\cdots$};
	\node at (0, 1.75) { $\sigma$};
	\node at (4, 1.75) { $\tau$};
\et
};
\endxy\\ 
& \stackrel{\text{Lemma}~\ref{L:braidingforups}}{=} & \left(\frac{1}{k!\,l!}\right)^2\sum_{\substack{\sigma\in\frakS_l\\ \tau\in\frakS_k}}
\xy
(0,0)*{
\bt[color=\clr, scale=.35]
	\draw [ thick, directed=0.75] (0,-6.75) to (0,-5.75);
	\draw [ thick, ] (0,-5.75) to [out=30,in=270] (1,-4);
	\draw [ thick, ] (0,-5.75) to [out=150,in=270] (-1,-4); 
	\draw [ thick, directed=0.75] (4,-6.75) to (4,-5.75);
	\draw [ thick, ] (4,-5.75) to [out=30,in=270] (5,-4);
	\draw [ thick, ] (4,-5.75) to [out=150,in=270] (3,-4); 
	\draw [ thick ] (5,-1) to (1,-4);
	\draw [ thick ] (3,-1) to (-1,-4);
	\draw [ thick ] (1,-1) to (5,-4);
	\draw [ thick ] (-1,-1) to (3,-4);
	\draw [ thick ] (5,-1) to (5,0);
	\draw [ thick ] (3,-1) to (3,0);
	\draw [ thick ] (1,-1) to (1,0);
	\draw [ thick ] (-1,-1) to (-1,0);
	\node at (0.1, -0.75) { $\cdots$};
	\node at (4.1, -0.75) { $\cdots$};
	\node at (0.1, -4.5) { $\cdots$};
	\node at (4.1, -4.5) { $\cdots$};
	\node at (0,10.5) {\scriptsize $k$};
	\node at (4,10.5) {\scriptsize $l$};
	\node at (0,-7.25) {\scriptsize $k$};
	\node at (4,-7.25) {\scriptsize $l$};
	\node at (2.6, -4.5) {\scriptsize $1$};
	\node at (5.4, -4.5) {\scriptsize $1$};
	\node at (1.4, -4.5) {\scriptsize $1$};
	\node at (-1.4, -4.5) {\scriptsize $1$};
	\node at (2.6, 7.75) {\scriptsize $1$};
	\node at (5.4, 7.75) {\scriptsize $1$};
	\node at (1.4, 7.75) {\scriptsize $1$};
	\node at (-1.4, 7.75) {\scriptsize $1$};
	\draw [ thick, directed=1] (0, 9) to (0,10);
	\draw [ thick, directed=0.75] (1,7.25) to [out=90,in=330] (0,9);
	\draw [ thick, directed=0.75] (-1,7.25) to [out=90,in=210] (0,9);
	\draw [ thick, directed=1] (4,9) to (4,10);
	\draw [ thick, directed=0.75] (5,7.25) to [out=90,in=330] (4,9);
	\draw [ thick, directed=0.75] (3,7.25) to [out=90,in=210] (4,9);
	\draw [ thick ] (5,3.25) to (1,0);
	\draw [ thick ] (3,3.25) to (-1,0);
	\draw [ thick ] (1,3.25) to (5,0);
	\draw [ thick ] (-1,3.25) to (3,0);
	\draw [ thick ] (1,4.25) to (1,3.25);
	\draw [ thick ] (-1,4.25) to (-1,3.25);
	\draw [ thick ] (3,4.25) to (3,3.25);
	\draw [ thick ] (5,4.25) to (5,3.25);
	\draw [ thick ] (-1.5,7.25) rectangle (1.5,4.25);
	\draw [ thick ] (2.5,7.25) rectangle (5.5,4.25);
	\node at (0.1, 7.5) { $\cdots$};
	\node at (4.1, 7.5) { $\cdots$};
	\node at (0.1, 3.75) { $\cdots$};
	\node at (4.1, 3.75) { $\cdots$};
	\node at (0, 5.75) { $\tau$};
	\node at (4, 5.75) { $\sigma$};
\et
};
\endxy\stackrel{\eqref{E:untwist-permutation}}{=}\left(\frac{1}{k!\,l!}\right)^2\sum_{\substack{\sigma\in\frakS_l\\ \tau\in\frakS_k}} 
\xy
(0,0)*{
\bt[color=\clr, scale=.35]
	\draw [ thick, directed=0.75] (0,-6.75) to (0,-5.75);
	\draw [ thick, ] (0,-5.75) to [out=30,in=270] (1,-4);
	\draw [ thick, ] (0,-5.75) to [out=150,in=270] (-1,-4); 
	\draw [ thick, directed=0.75] (4,-6.75) to (4,-5.75);
	\draw [ thick, ] (4,-5.75) to [out=30,in=270] (5,-4);
	\draw [ thick, ] (4,-5.75) to [out=150,in=270] (3,-4); 
	\draw [ thick ] (5,-1) to (1,-4);
	\draw [ thick ] (3,-1) to (-1,-4);
	\draw [ thick ] (1,-1) to (5,-4);
	\draw [ thick ] (-1,-1) to (3,-4);
	\draw [ thick ] (5,-1) to (5,0);
	\draw [ thick ] (3,-1) to (3,0);
	\draw [ thick ] (1,-1) to (1,0);
	\draw [ thick ] (-1,-1) to (-1,0);
	\node at (0.1, -0.75) { $\cdots$};
	\node at (4.1, -0.75) { $\cdots$};
	\node at (0.1, -4.5) { $\cdots$};
	\node at (4.1, -4.5) { $\cdots$};
	\node at (0,6.75) {\scriptsize $k$};
	\node at (4,6.75) {\scriptsize $l$};
	\node at (0,-7.25) {\scriptsize $k$};
	\node at (4,-7.25) {\scriptsize $l$};
	\node at (2.6, -4.5) {\scriptsize $1$};
	\node at (5.4, -4.5) {\scriptsize $1$};
	\node at (1.4, -4.5) {\scriptsize $1$};
	\node at (-1.4, -4.5) {\scriptsize $1$};
	\node at (2.6, 3.75) {\scriptsize $1$};
	\node at (5.4, 3.75) {\scriptsize $1$};
	\node at (1.4, 3.75) {\scriptsize $1$};
	\node at (-1.4, 3.75) {\scriptsize $1$};
	\draw [ thick, directed=1] (0, 5.25) to (0,6.25);
	\draw [ thick, directed=0.75] (1,3.5) to [out=90,in=330] (0,5.25);
	\draw [ thick, directed=0.75] (-1,3.5) to [out=90,in=210] (0,5.25);
	\draw [ thick, directed=1] (4,5.25) to (4,6.25);
	\draw [ thick, directed=0.75] (5,3.5) to [out=90,in=330] (4,5.25);
	\draw [ thick, directed=0.75] (3,3.5) to [out=90,in=210] (4,5.25);
	\draw [ thick ] (5,3.5) to (1,0);
	\draw [ thick ] (3,3.5) to (-1,0);
	\draw [ thick ] (1,3.5) to (5,0);
	\draw [ thick ] (-1,3.5) to (3,0);
	\node at (0.1, 3.75) { $\cdots$};
	\node at (4.1, 3.75) { $\cdots$};
\et
};
\endxy\\
 & \stackrel{\text{Lemma}~\ref{L:braidingforups}}{=} & \left(\frac{1}{k!\,l!}\right)^2\sum_{\substack{\sigma\in\frakS_l\\ \tau\in\frakS_k}} 
\xy
(0,0)*{
\begin{tikzpicture}[color=\clr, scale=.3]
	\draw [ thick, directed=1] (0,.75) to (0,2);
	\draw [ thick, directed=.85] (0,-2.75) to [out=30,in=330] (0,.75);
	\draw [ thick, directed=.85] (0,-2.75) to [out=150,in=210] (0,.75);
	\draw [ thick, directed=.65] (0,-4) to (0,-2.75);
	\node at (0,-4.5) {\scriptsize $k$};
	\node at (0,2.5) {\scriptsize $k$};
	\node at (-1.5,-1) {\scriptsize $1$};
	\node at (1.5,-1) {\scriptsize $1$};
	\node at (0,-1) { \ $\cdots$};
\end{tikzpicture}
};
\endxy
\xy
(0,0)*{
\begin{tikzpicture}[color=\clr, scale=.3]
	\draw [ thick, directed=1] (0,.75) to (0,2);
	\draw [ thick, directed=.85] (0,-2.75) to [out=30,in=330] (0,.75);
	\draw [ thick, directed=.85] (0,-2.75) to [out=150,in=210] (0,.75);
	\draw [ thick, directed=.65] (0,-4) to (0,-2.75);
	\node at (0,-4.5) {\scriptsize $l$};
	\node at (0,2.5) {\scriptsize $l$};
	\node at (-1.5,-1) {\scriptsize $1$};
	\node at (1.5,-1) {\scriptsize $1$};
	\node at (0,-1) { \ $\cdots$};
\end{tikzpicture}
};
\endxy
\stackrel{\eqref{digon-removal}}{=} 
\xy
(0,0)*{
\bt[color=\clr, scale=1.25]
	\draw[thick, directed=1] (0.5,-0.5) to (0.5,0.5);
	\draw[thick, rdirected=0.05] (0,0.5) to (0,-0.5);
	\node at (0,-.65) {\scriptsize $k$};
	\node at (0,.65) {\scriptsize $k$};
	\node at (0.5,-.65) {\scriptsize $l$};
	\node at (0.5,.65) {\scriptsize $l$};
\et
};
\endxy \ .
\eea
We leave the remaining identities as an exercise for the reader.
\end{proof}

%Let $r >1$ be fixed and let $\lambda = (\lambda_{1},\dotsc , \lambda_{r})$ be an $r$-tuple of positive integers.  Given $\sigma \in \Sigma_{r}$, let $\lambda^{\sigma}= (\lambda_{\sigma(1), \dotsc  , \lambda_{\sigma(r)}}$. In light of the previous lemma,there is a well defined morphism $\beta$
We now define the braiding for arbitrary objects in $\qwebs$.

\begin{definition}\label{D:generalupbraiding} Let $\ob{a}=(a_{1}, \dotsc , a_{r})$ and $\ob{b}=(b_{1}, \dotsc , b_{s})$  be two tuples of nonnegative integers and let $\up_{\ob{a}}$ and $\up_{\ob{b}}$ be the corresponding objects of $\qwebs$.  Define 
\[
\beta_{\up_{\ob{a}},\up_{\ob{b}}} =  %%%%%%%%%%%%%%%%%%%%%%%%%%%%%%%%%%%%%%%%%%%% Arbitrary braiding
\xy
(0,0)*{
\bt[color=\clr, scale=.35]
	\draw [ thick, directed=1] (0,-1) to (0,0) to (5,5) to (5,6);
	\draw [ thick, directed=1] (3,-1) to (3,0) to (8,5) to (8,6);
	\draw [ thick, directed=1] (5,-1) to (5,0) to (0,5) to (0,6);
	\draw [ thick, directed=1] (8,-1) to (8,0) to (3,5) to (3,6);
	\node at (1.5,-0.5) {$\cdots$};
	\node at (1.5,5.5) {$\cdots$};
	\node at (6.5,-0.5) {$\cdots$};
	\node at (6.5,5.5) {$\cdots$};
	\node at (0,-1.5) {\scriptsize $\ob{a}_1$};
	\node at (3,-1.5) {\scriptsize $\ob{a}_r$};
	\node at (5,-1.5) {\scriptsize $\ob{b}_1$};
	\node at (8,-1.5) {\scriptsize $\ob{b}_{s}$};
	\node at (0,6.5) {\scriptsize $\ob{b}_1$};
	\node at (3,6.5) {\scriptsize $\ob{b}_{s}$};
	\node at (5,6.5) {\scriptsize $\ob{a}_1$};
	\node at (8,6.5) {\scriptsize $\ob{a}_r$};
\et
};
\endxy
\]
\end{definition}
With these and \cref{L:braidingforups}, the following is immediate. 
\begin{theorem}\label{T:braiding}  The morphisms $\beta_{\up_{\ob{a}}, \up_{\ob{b}}}$ define a symmetric braiding on the monoidal supercategory $\qwebs$.
\end{theorem}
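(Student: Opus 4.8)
The plan is to verify the four things that make $(\qwebs,\otimes,\beta)$ a symmetric braided monoidal supercategory: that each $\beta_{\up_{\ob a},\up_{\ob b}}$ is even, that $\beta$ is a supernatural transformation $-\otimes-\to T$, that the two hexagon axioms hold, and that $\beta_{\up_{\ob b},\up_{\ob a}}\circ\beta_{\up_{\ob a},\up_{\ob b}}=\id$. Evenness is immediate: by \cref{D:Upbraiding} and \cref{D:generalupbraiding} each $\beta_{\up_{\ob a},\up_{\ob b}}$ is built from merges and splits by $\otimes$ and $\circ$, and merges and splits have parity $\0$.

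First I would reduce everything to the two-strand case. By \cref{D:generalupbraiding}, $\beta_{\up_{\ob a},\up_{\ob b}}$ is the ``full crossing'' of the $r$ strands of $\up_{\ob a}$ past the $s$ strands of $\up_{\ob b}$; reading this planar diagram as any sequence of elementary crossings expresses it, up to tensoring with identity strands, as a composite of the two-strand braidings $\beta_{\up_k,\up_l}$ of \cref{D:Upbraiding}. The key point is that this value is independent of the chosen reading, so that the hexagon axioms hold essentially by construction: crossing $\up_{\ob a}$ past $\up_{\ob b}\up_{\ob c}$ is the same planar picture whether performed against $\up_{\ob b}$ and then $\up_{\ob c}$ or all at once. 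Independence of the reading follows from the Yang--Baxter relation \cref{L:braidingforups}(b) together with the fact that any two reduced factorizations of the relevant block transposition in the symmetric group are connected by braid moves.

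Next I would establish naturality. Since $\beta$ has been factored through the two-strand braidings and every morphism of $\qwebs$ is generated by dots, merges, and splits under $\otimes$ and $\circ$, it suffices to slide each generator, placed on either side of an elementary crossing, through to the other side. This is exactly \cref{L:braidingforups}: part (c) slides merges and splits past a crossing and part (d) slides dots past a crossing, with the super-interchange law \cref{super-interchange} handling the placement bookkeeping; because $\beta$ is even, the naturality square against $\id$ carries no extra sign. Propagating these slides strand by strand through the staircase of elementary crossings yields naturality against an arbitrary web $f\colon\up_{\ob a}\to\up_{\ob a'}$ (and symmetrically on the $\up_{\ob b}$-side).

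Finally, $\beta_{\up_{\ob b},\up_{\ob a}}\circ\beta_{\up_{\ob a},\up_{\ob b}}=\id$ follows from \cref{L:braidingforups}(a), which says precisely that two oppositely oriented two-strand crossings compose to the identity; an induction on $r+s$, using \cref{L:braidingforups}(b) to bring the composite into a shape where an innermost pair of opposite elementary crossings is adjacent and may be cancelled, reduces $\beta_{\up_{\ob b},\up_{\ob a}}\circ\beta_{\up_{\ob a},\up_{\ob b}}$ to the identity. The main obstacle here is organizational rather than conceptual: pinning down that the planar diagram of \cref{D:generalupbraiding} has a single well-defined value as a morphism — i.e. that all readings agree via \cref{L:braidingforups}(b) — and then carrying out the hexagon and naturality checks at the level of those readings without a proliferation of diagrams. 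Once \cref{L:braidingforups} is available this is routine, which is why the statement can be called immediate.
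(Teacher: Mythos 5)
Your proposal is correct and matches the paper's approach: the paper simply declares the theorem ``immediate'' from \cref{D:generalupbraiding} and \cref{L:braidingforups}, and your write-up is precisely the expected unpacking of that claim — evenness from the parity of merges and splits, well-definedness and the hexagons from \cref{L:braidingforups}(b) via the standard reduced-word argument, naturality against dots, merges, and splits from parts (c) and (d) together with super-interchange, and the symmetry condition from part (a). No gaps.
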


\section{Functors  \texorpdfstring{$\Pi_m$ and $\Psi$}{Pim and Psim}, and  \texorpdfstring{$\End_{\qwebs}(\up_{1}^k)$}{Endups}}

We next relate the combinatorial category $\qwebs$ to the supercategory $\bUdot (\fqt (m))$ and to the representations of $\fq (n)$. In what follows, recall if a diagram has an edge labelled by a negative integer then the morphism given by that diagram is understood to be zero and an edge labelled by zero in a web can always be erased (or added). 

\subsection{The Functor  \texorpdfstring{$\Pi_m$}{Pim}}

\iffalse there is a functor $\Pi_m$ and a monoidal functor of supercategories, $\Psi$ such that the following diagram commutes for all $m$:
%
\begin{equation}\label{diagram}
\xy
(0,0)*{
\begin{tikzpicture}
\node (S) at (0,0) {$\bUdot(\fqt(m))_{\geq0}$};
\node (P) at (3,0) {$\mods$};
\node (W) at (0,-2) {$\qwebs$};
\path[->, thick]
(S) edge [above] node {$\Phi_m$} (P)
(S) edge [left] node {$\Pi_m$} (W)
(W) edge [below] node {\quad $\Psi$} (P);
\end{tikzpicture}
};
\endxy.
\end{equation}
%
For $\lambda=(\lambda_1,\dots,\lambda_m)\in\Z^m_{\geq 0}$, recall that $\up_{\lambda}$ denotes the word $\up_{\lambda_{1}} \dotsb \up_{\lambda_{m}}$.
\fi

\begin{proposition}\label{pi-functor}
For every $m\geq 1$ there exists a functor of monoidal supercategories 
\[
\Pi_m: \bUdot(\fq(m))_{\geq 0} \to\qwebs
\]
given on objects $\lambda = \sum_{i=1}^{m} \lambda_{i}\varepsilon_{i} \in X(T)_{\geq 0}$ by 
\[
\Pi_m \left(\lambda \right) = \up_{\lambda} = \up_{\lambda_{1}} \up_{\lambda_{2}}\dotsb \up_{\lambda_{m}},
\]
and on the divided powers of the generating morphisms by 
\begin{equation*}
\Pi_m(e_i^{(j)}1_{\lambda})=
\xy
(0,0)*{
\begin{tikzpicture}[color=\clr ]
	\draw[ thick, directed=.25, directed=1] (0,0) to (0,1.5);
	\node at (0,-0.15) {\scriptsize $\lambda_{i}$};
	\node at (0,1.65) {\scriptsize $\lambda_{i}\!+\!j$};
	\draw[ thick, directed=.25, directed=1] (1,0) to (1,1.5);
	\node at (1,-0.15) {\scriptsize $\lambda_{i+1}$};
	\node at (1,1.65) {\scriptsize $\lambda_{i+1}\!-\!j$};
	\draw[ thick, directed=.55] (1,0.75) to (0,0.75);
	\node at (0.5,1) {\scriptsize$j$};
	\draw [thick, directed=1] (2,0) to (2,1.5);
	\draw [thick, directed=1] (2.75,0) to (2.75,1.5);
	\draw [thick, directed=1] (-1.75,0) to (-1.75,1.5);
	\draw [thick, directed=1] (-1,0) to (-1,1.5);
	\node at (-1.75,-0.15) {\scriptsize $\lambda_1$};
	\node at (-1,-0.15) {\scriptsize $\lambda_{i-1}$};
	\node at (-1.75,1.65) {\scriptsize $\lambda_1$};
	\node at (-1,1.65) {\scriptsize $\lambda_{i-1}$};
	\node at (-1.35,0.75) {$\cdots$};
	\node at (2,-0.15) {\scriptsize $\lambda_{i+2}$};
	\node at (2.75,-0.15) {\scriptsize $\lambda_m$};
	\node at (2,1.65) {\scriptsize $\lambda_{i+2}$};
	\node at (2.75,1.65) {\scriptsize $\lambda_m$};
	\node at (2.4,0.75) {$\cdots$};
\end{tikzpicture}
};
\endxy \ ,
\end{equation*}
\begin{equation*}
\Pi_m(f_i^{(j)}1_{\lambda})=
\xy
(0,0)*{
\begin{tikzpicture}[color=\clr ]
	\draw[ thick, directed=.25, directed=1] (0,0) to (0,1.5);
	\node at (0,-0.15) {\scriptsize $\lambda_{i}$};
	\node at (0,1.65) {\scriptsize $\lambda_{i}\!-\!j$};
	\draw[ thick, directed=.25, directed=1] (1,0) to (1,1.5);
	\node at (1,-0.15) {\scriptsize $\lambda_{i+1}$};
	\node at (1,1.65) {\scriptsize $\lambda_{i+1}\!+\!j$};
	\draw[ thick, directed=.55] (0,0.75) to (1,0.75);
	\node at (0.5,1) {\scriptsize$j$};
	\draw [thick, directed=1] (2,0) to (2,1.5);
	\draw [thick, directed=1] (2.75,0) to (2.75,1.5);
	\draw [thick, directed=1] (-1.75,0) to (-1.75,1.5);
	\draw [thick, directed=1] (-1,0) to (-1,1.5);
	\node at (-1.75,-0.15) {\scriptsize $\lambda_1$};
	\node at (-1,-0.15) {\scriptsize $\lambda_{i-1}$};
	\node at (-1.75,1.65) {\scriptsize $\lambda_1$};
	\node at (-1,1.65) {\scriptsize $\lambda_{i-1}$};
	\node at (-1.35,0.75) {$\cdots$};
	\node at (2,-0.15) {\scriptsize $\lambda_{i+2}$};
	\node at (2.75,-0.15) {\scriptsize $\lambda_m$};
	\node at (2,1.65) {\scriptsize $\lambda_{i+2}$};
	\node at (2.75,1.65) {\scriptsize $\lambda_m$};
	\node at (2.4,0.75) {$\cdots$};
\end{tikzpicture}
};
\endxy \ ,
\end{equation*}
\begin{equation*}
\Pi_m(e_{\overline{i}}^{(j)}1_{\lambda})=
\xy
(0,0)*{
\begin{tikzpicture}[color=\clr ]
	\draw[ thick, directed=.25, directed=1] (0,0) to (0,1.5);
	\node at (0,-0.15) {\scriptsize $\lambda_{i}$};
	\node at (0,1.65) {\scriptsize $\lambda_{i}\!+\!j$};
	\draw[ thick, directed=.25, directed=1] (1,0) to (1,1.5);
	\node at (1,-0.15) {\scriptsize $\lambda_{i+1}$};
	\node at (1,1.65) {\scriptsize $\lambda_{i+1}\!-\!j$};
	\draw[ thick, directed=.55] (1,0.75) to (0,0.75);
	\node at (0.5,1) {\scriptsize$j$};
	\draw  (0.25,0.75) \wdot;
	\draw [thick, directed=1] (2,0) to (2,1.5);
	\draw [thick, directed=1] (2.75,0) to (2.75,1.5);
	\draw [thick, directed=1] (-1.75,0) to (-1.75,1.5);
	\draw [thick, directed=1] (-1,0) to (-1,1.5);
	\node at (-1.75,-0.15) {\scriptsize $\lambda_1$};
	\node at (-1,-0.15) {\scriptsize $\lambda_{i-1}$};
	\node at (-1.75,1.65) {\scriptsize $\lambda_1$};
	\node at (-1,1.65) {\scriptsize $\lambda_{i-1}$};
	\node at (-1.35,0.75) {$\cdots$};
	\node at (2,-0.15) {\scriptsize $\lambda_{i+2}$};
	\node at (2.75,-0.15) {\scriptsize $\lambda_m$};
	\node at (2,1.65) {\scriptsize $\lambda_{i+2}$};
	\node at (2.75,1.65) {\scriptsize $\lambda_m$};
	\node at (2.4,0.75) {$\cdots$};
\end{tikzpicture}
};
\endxy \ ,
\end{equation*}
\begin{equation*}
\Pi_m(f_{\overline{i}}^{(j)}1_{\lambda})=
\xy
(0,0)*{
\begin{tikzpicture}[color=\clr ]
	\draw[ thick, directed=.25, directed=1] (0,0) to (0,1.5);
	\node at (0,-0.15) {\scriptsize $\lambda_{i}$};
	\node at (0,1.65) {\scriptsize $\lambda_{i}\!-\!j$};
	\draw[ thick, directed=.25, directed=1] (1,0) to (1,1.5);
	\node at (1,-0.15) {\scriptsize $\lambda_{i+1}$};
	\node at (1,1.65) {\scriptsize $\lambda_{i+1}\!+\!j$};
	\draw[ thick, directed=.55] (0,0.75) to (1,0.75);
	\node at (0.5,1) {\scriptsize$j$};
	\draw  (0.75,0.75) \wdot;
	\draw [thick, directed=1] (2,0) to (2,1.5);
	\draw [thick, directed=1] (2.75,0) to (2.75,1.5);
	\draw [thick, directed=1] (-1.75,0) to (-1.75,1.5);
	\draw [thick, directed=1] (-1,0) to (-1,1.5);
	\node at (-1.75,-0.15) {\scriptsize $\lambda_1$};
	\node at (-1,-0.15) {\scriptsize $\lambda_{i-1}$};
	\node at (-1.75,1.65) {\scriptsize $\lambda_1$};
	\node at (-1,1.65) {\scriptsize $\lambda_{i-1}$};
	\node at (-1.35,0.75) {$\cdots$};
	\node at (2,-0.15) {\scriptsize $\lambda_{i+2}$};
	\node at (2.75,-0.15) {\scriptsize $\lambda_m$};
	\node at (2,1.65) {\scriptsize $\lambda_{i+2}$};
	\node at (2.75,1.65) {\scriptsize $\lambda_m$};
	\node at (2.4,0.75) {$\cdots$};
\end{tikzpicture}
};
\endxy \ ,
\end{equation*}
\begin{equation*}
\Pi_m(h_{\overline{i}}1_{\lambda})=
\xy
(0,0)*{
\begin{tikzpicture}[scale=1.25, color=\clr ]
	\draw [thick, directed=1] (0,0) to (0,1);
	\draw [thick, directed=1] (0.5,0) to (0.5,1);
	\node at (0,-0.15) {\scriptsize $\lambda_1$};
	\node at (0.5,-0.15) {\scriptsize $\lambda_{i-1}$};
	\node at (0,1.15) {\scriptsize $\lambda_1$};
	\node at (0.5,1.15) {\scriptsize $\lambda_{i-1}$};
	\node at (0.275,0.5) {$\cdots$};
	\draw[thick, directed=1] (1,0) to (1,1);
	\node at (1,-0.15) {\scriptsize $\lambda_{i}$};
	\node at (1,1.15) {\scriptsize $\lambda_{i}$};
	\draw (1,0.5) \wdot;
	\draw [thick, directed=1] (1.5,0) to (1.5,1);
	\draw [thick, directed=1] (2,0) to (2,1);
	\node at (1.5,-0.15) {\scriptsize $\lambda_{i+1}$};
	\node at (2,-0.15) {\scriptsize $\lambda_m$};
	\node at (1.5,1.15) {\scriptsize $\lambda_{i+1}$};
	\node at (2,1.15) {\scriptsize $\lambda_m$};
	\node at (1.775,0.5) {$\cdots$};
\end{tikzpicture}
};
\endxy \ .
\]
Moreover, $\Pi_m$ is a full functor onto the full subcategory of $\qwebs$ consisting of objects $\left\{\up_{\ob{a}} \mid \ob{a} \in \Z_{\geq 0}^{m} \right\}$.
\end{proposition}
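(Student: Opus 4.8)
The plan is to prove the two assertions in turn: that the assignment defines a functor $\Pi_m$ at all, and that it is full onto the indicated subcategory.

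For well-definedness I would use that $\bUdot(\fq(m))$ is presented by generators and relations and that $\bUdot(\fq(m))_{\geq 0}$ is its quotient by the ideal generated by $\{1_\lambda\mid\lambda\notin X(T_m)_{\geq 0}\}$, so it is enough to check four things about the webs attached to the generators. First, parities: the webs for $e_i^{(j)}1_\lambda$ and $f_i^{(j)}1_\lambda$ are built from merges and splits and so are even, while those for $e_{\bar i}^{(j)}1_\lambda$, $f_{\bar i}^{(j)}1_\lambda$, $h_{\bar j}1_\lambda$ each carry a single dot and so are odd, matching \cref{D:dotU2}. Second, vanishing: if $\lambda$ has a negative entry then $\up_\lambda$ has a negatively labelled edge and hence is the zero morphism, so $\Pi_m$ kills the defining ideal of $\bUdot(\fq(m))_{\geq 0}$. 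Third, consistency of the displayed formulas for $j\geq 2$ with the degree-one formulas: for $e_i,f_i$ this is exactly \cref{rung-collision}, and for the dotted generators it follows from \cref{rung-collision} together with \cref{dot-collision} and \cref{additional-relations}. Fourth, the relations \cref{Q1,Q3,Q4,Q5,Q6,Q7} hold after applying $\Pi_m$: \cref{Q1} comes from \cref{dot-collision} and the super-interchange law, \cref{Q3} from \cref{dots-past-merges} and super-interchange, \cref{Q4} is \cref{square-switch} together with \cref{square-switch-double-dots}, and \cref{Q5,Q6,Q7} follow from \cref{associativity} and the double-rung relations \cref{double-rungs-1,double-rungs-2,double-rungs-3,double-rungs-4}. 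The subrelations involving only $e_i,f_i$ are verified exactly as in \cite{TVW}; the extra content in type $Q$ is the interaction of dots with rungs, which is controlled by \cref{dumbbell-relation} and \cref{additional-relations}. (If one equips $\bUdot(\fq(m))_{\geq 0}$ with the monoidal structure given by addition of weights, compatibility of $\Pi_m$ with $\otimes$ is immediate, as each generator-web is a ladder-generator placed among identity strands.)

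For fullness, on objects the image of $\Pi_m$ is exactly $\{\up_{\ob a}\mid\ob a\in\Z_{\geq0}^m\}$ since $0$-labelled edges may be erased, so the point is surjectivity on morphism spaces between such objects. Call a web an \emph{$m$-strand ladder web} if it is obtained by vertical and horizontal composition of identities, dots, and single rungs (possibly carrying a dot) joining two adjacent members of one fixed family of $m$ parallel strands, with the extra requirement that every object occurring along the vertical composition has exactly $m$ strands (some possibly $\up_0$). Every such web is manifestly $\Pi_m$ of the evident word in the generators $e_i,f_i,e_{\bar i},f_{\bar i},h_{\bar j}$ and their divided powers. Hence fullness reduces to the claim that, for $\ob a,\ob b\in\Z_{\geq0}^m$, the space $\Hom_{\qwebs}(\up_{\ob a},\up_{\ob b})$ is spanned by $m$-strand ladder webs. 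To prove the claim I would first note that merges, splits and dots preserve total edge weight, so this $\Hom$ space is $0$ unless $|\ob a|=|\ob b|=:N$; assuming this, slice an arbitrary web into elementary layers (each a single merge, split, or dot tensored with identities), use \cref{associativity} to make every rung join adjacent strands, and induct on the number of trivalent vertices. At each stage one brings the web into $m$-strand ladder form modulo webs with fewer vertices by collapsing a split sitting directly on a matching merge via \cref{digon-removal}, sliding rungs past one another via \cref{square-switch,square-switch-dots,double-rungs-1,double-rungs-2}, relocating dots via \cref{dots-past-merges,dot-collision}, and absorbing crossings into neighbouring merges and splits via \cref{L:untwist-permutation}. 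This is the combing procedure of \cite{CKM,TVW}, adapted to type $Q$ by carrying the dots along.

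The conceptual heart — and the main obstacle — is this combing claim, and within it the control of the number of strands: the layer-by-layer rewriting must be organized so that it terminates and so that one never passes through an object with more than $m$ strands, which is precisely where the hypothesis $\ell(\ob a),\ell(\ob b)\le m$ and the weight-collapsing power of \cref{digon-removal} are used. The additional labor in type $Q$, relative to type $A$, lies entirely in the bookkeeping with dots in step (iv) of well-definedness; this is lengthy but mechanical once the dot-relations of \cref{additional-relations} are in hand.
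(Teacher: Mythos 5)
Your treatment of well-definedness is essentially the paper's: the authors likewise reduce to verifying that the images of the generators satisfy \eqref{Q1}--\eqref{Q7}, by direct computation with the defining relations of $\qwebs$ together with the identities of \cref{SS:AdditionalRelations}, and your matching of relations to relations (\eqref{dot-collision} plus super-interchange for \eqref{Q1}; \eqref{square-switch}, \eqref{square-switch-dots} and \eqref{square-switch-double-dots} for \eqref{Q4}; the double-rung relations for the Serre relations; \eqref{rung-collision} for consistency of the divided-power formulas) is the right bookkeeping. One small remark: the paper never actually equips $\bUdot(\fq(m))_{\geq 0}$ with a monoidal structure (it is only a supercategory), so your parenthetical about compatibility with $\otimes$ is checking more than the statement requires.

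On fullness the routes differ, and this is where your proposal is incomplete. The paper argues by generation: every merge, split, and dot, tensored with identity strands, whose source and target lie in the subcategory is visibly $\Pi_m$ of a ladder generator (a merge is the case $\lambda_{i+1}=j$ of $e_i^{(j)}1_\lambda$, a split the case $\lambda_i=0$ of $f_i^{(j)}1_\lambda$), and a web in the subcategory is a composite of such layers. You instead aim at a normal form --- spanning of $\Hom_{\qwebs}(\up_{\ob{a}},\up_{\ob{b}})$ by $m$-strand ladder webs --- which is stronger than what is needed. Your diagnosis of the delicate point is correct and applies to both arguments: a web between $\up_{\ob{a}}$ and $\up_{\ob{b}}$ with $\ob{a},\ob{b}\in\Z_{\geq 0}^{m}$ may pass through intermediate objects with more than $m$ tensor factors (already a split followed by a merge on $\up_{2}$ does so when $m=1$, and one must invoke \eqref{digon-removal} to collapse it), so the rewriting into layers that stay inside the subcategory is genuinely a claim requiring proof. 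Since you explicitly leave this combing claim as ``the main obstacle'' rather than carrying it out, the fullness half of your proof is not complete as written. It can be closed either by executing the CKM/TVW combing with the dots carried along, or more cheaply by the explosion trick of \cref{T:fullness}: identify $\Hom_{\qwebs}(\up_{\ob{a}},\up_{\ob{b}})$ with $e_{\ob{b}}\Ser_{N}e_{\ob{a}}$ via \cref{P:sergeev-isomorphism} and check that the resulting double-coset spanning set (dotted permutation diagrams sandwiched between clasps) is realized by ladders on $m$ uprights.
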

\begin{proof}
To show $\Pi_m$ is well-defined it suffices to verify the defining relations of $\bUdot(\fqt(m))_{\geq 0}$ hold in $\qwebs$. This follows by direct calculations using the defining relations of $\qwebs$ along with the identities proven in \cref{SS:AdditionalRelations}.

That the image of $\Pi_m$ lies in the given subcategory is immediate.  That the functor is full amounts to the fact that the images of the generating morphisms of $\bUdot(\fqt(m))_{\geq0}$ also generate the morphisms of $\qwebs$.  Since every merge and split is a special case of a ladder, it is straightforward to verify that one can obtain every possible merge, split, and dot (possibly tensored by identity strands on the left and/or right) which lies in this subcategory. Since every web which is a morphism in this subcategory is a composition of such diagrams, it follows that $\Pi_{m}$ is full.
\end{proof}

\begin{remark}\label{R:Compatiblity2}  Recall the functors $\Theta_{m',m}$ from \cref{R:Compatibility}. The $\Pi$ functors are compatible in the sense that $\Pi_{m'} \circ \Theta_{m',m}$ and $\Pi_{m}$ are isomorphic functors for all positive integers $m' \geq m$.
\end{remark}

\subsection{The Functor  \texorpdfstring{$\Psi$}{Psim}}

As shown below, there is also a functor $\Psi_{n}: \qwebs\to\mods$.

\begin{proposition}\label{psi-functor}
For every $n \geq 1$ there exists an essentially surjective functor of monoidal supercategories, 
\[
\Psi_{n}: \qwebs\to\mods,
\]
given on objects by 
\[
\Psi_{n}\left(\up_{\lambda} \right)=S_{\lambda} = S^{\lambda}(V_{n}) = S^{\lambda_{1}}(V_{n}) \otimes\dotsb \otimes S^{\lambda_{t}}(V_{n}),
\] where $\lambda = (\lambda_{1}, \dotsc , \lambda_{t})$.  On morphisms the functor is defined by sending the dot, merge, and split, respectively, to the following maps:
\begin{align*}
S^{k}(V_{n}) &\to S^{k}(V_{n}), &  v_{1}\dotsb v_{k} &  \mapsto \sum_{t=1}^{k} (-1)^{\p{v_{1}}+\dotsb +\p{v_{t-1}}} v_{1} \dotsb c(v_{t}) \dotsb v_{k}; \\
S^{k}(V_{n}) \otimes S^{l}(V_{n}) &\to S^{k+l}(V_{n}),  &        v_{1}\dotsb v_{k}\otimes w_{1}\dotsb w_{l}          &\mapsto v_{1}\dotsb v_{k}w_{1}\dotsb w_{l};\\
 S^{k+l}(V_{n}) &\to S^{k}(V_{n}) \otimes S^{l}(V_{n}), &      v_{1} \dotsb v_{k+l}        & \mapsto \sum (-1)^{\varepsilon_{I,J}} v_{i_{1}}\dotsb v_{i_{k}} \otimes v_{j_{1}}\dotsb v_{j_{l}}.
\end{align*}
The last sum is over all $I=\{i_{1}< \dotsb < i_{k} \}$ and $ J=\{j_{1}< \dotsb < j_{l} \}$ for which $I \cup J = \left\{ 1, \dotsb ,k+l  \right\}$.  The element   $\varepsilon_{I,J} \in \Z_{2}$ is defined by the formula $v_{i_{1}}\dotsb v_{i_{k}}v_{j_{1}}\dotsb v_{j_{l}} = (-1)^{\varepsilon_{I,J}} v_{1}\dotsb v_{k+l}$.
\end{proposition}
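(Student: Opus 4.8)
The plan is to verify that the assignments in the statement define a well-defined monoidal superfunctor, and then to check essential surjectivity. For well-definedness, the first task is to confirm that the three stated maps are honest $\fq(n)$-supermodule homomorphisms (equivalently, morphisms in $\gsmod$). The merge map is just the multiplication map $S^k(V_n)\otimes S^l(V_n)\to S^{k+l}(V_n)$ coming from the supercommutative multiplication on $S(V_n)$, and the split map is the dual comultiplication; both are $\fq(n)$-equivariant because $S(V_n)$ is a Hopf-algebra object in $\gsmod$ and the coproduct on $\fq(n)$ acts by derivations/coderivations. The dot map is $\fq(n)$-equivariant precisely because $c\colon V_n\to V_n$ commutes with the $\fq(n)$-action (by the very definition $\fq(n)=\{x\in\gl(V_n)\mid [x,c]=0\}$) and because the signs in its definition are exactly those dictated by the Koszul rule when the odd operator $c$ is applied in each tensor slot; so it is the derivation-extension of $c$ to $S^k(V_n)$ and hence commutes with the (also derivation-extended) $\fq(n)$-action. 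One also records the parities: merge and split are even, the dot is odd, matching the $\Z_2$-grading in \cref{D:UpwardWebs}.

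The core of the argument is then to check that each of the defining relations \cref{associativity,digon-removal,dot-collision,dots-past-merges,dumbbell-relation,square-switch,square-switch-dots,double-rungs-1,double-rungs-2} is sent to a valid identity of $\fq(n)$-module maps. Here the key organizing observation is that for the label-$1$ strand, $\Psi_n(\up_1)=V_n$, and the images of the generating morphisms restricted to all-ones objects realize exactly the Sergeev action $\psi\colon\Ser_k\to\End_{\fq(n)}(V_n^{\otimes k})$ of \cref{T:sergeev-duality}: indeed one checks $\Psi_n(c_i)=\psi(c_i)$ and $\Psi_n(s_j)=\psi(s_j)$ directly from \cref{Sergeev-webs} and the formulas above (the crossing $s_j$ is a split followed by a merge minus the identity, which on $V_n^{\otimes 2}$ is precisely the graded flip). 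The relations not involving label-$1$ edges, e.g.\ \cref{associativity} and \cref{digon-removal}, reduce to the coassociativity/associativity of the Hopf structure on $S(V_n)$ and to the combinatorial identity that the composite $S^{k+l}\to S^k\otimes S^l\to S^{k+l}$ is multiplication by $\binom{k+l}{l}$; \cref{dot-collision} follows from $c^2=\sqrt{-1}^{\,2}(\text{reindexing})$ giving that the derivation-square of $c$ on $S^k(V_n)$ is scalar multiplication by $k$; and \cref{dots-past-merges} is the Leibniz rule for the derivation $c$ across the product. For the remaining ladder relations it suffices, by the same explode-into-1-strands argument used to prove \cref{additional-relations} and \cref{clasp-sum} (projecting via merges/splits to all-ones objects, where \cref{digon-removal} makes the projection faithful up to an invertible scalar), to verify the identities inside $\End_{\fq(n)}(V_n^{\otimes k})$, where they follow from the defining relations of $\Ser_k$ via $\psi$ together with \cref{L:srelations}. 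Compatibility with tensor products and units is immediate from the definitions since all the structure maps are the canonical ones on $S(V_n)$.

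The main obstacle I expect is the bookkeeping of signs in verifying the super-interchange-sensitive relations and in identifying the dot map with the Sergeev generator $\psi(c_i)$: the sign $(-1)^{\p{v_1}+\dots+\p{v_{t-1}}}$ appearing in the dot and in $\psi(c_i)$ must be reconciled with the Koszul signs $(-1)^{\varepsilon_{I,J}}$ in the split map and with the super-interchange law \cref{super-interchange}, and a single misplaced sign would break, for instance, \cref{dumbbell-relation}. I would handle this cleanly by working once and for all in $S(V_n)$ with the two operators ``left multiplication by a homogeneous element'' and ``the odd derivation $\partial_c$ extending $c$'', proving the handful of operator identities $\partial_c^2=\mathrm{id}_{\deg}$ on each graded piece, $\partial_c(xy)=\partial_c(x)y+(-1)^{\p{x}}x\,\partial_c(y)$, and the compatibility of $\partial_c$ with comultiplication, and then quoting these uniformly. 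Finally, essential surjectivity is immediate: every object of $\mods$ is by definition of the form $S^{p_1}(V_n)\otimes\dots\otimes S^{p_t}(V_n)=\Psi_n(\up_{(p_1,\dots,p_t)})$, so $\Psi_n$ is in fact surjective on objects, not merely essentially surjective.
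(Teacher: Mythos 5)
Your proposal is correct and fills in exactly the direct verification that the paper's own proof leaves entirely to the reader ("the interested reader can verify\dots"); the organizing devices you use — merge/split as (co)multiplication on the Hopf superalgebra $S(V_{n})$, the dot as the odd derivation $\partial_{c}$, reduction of the ladder relations to the all-ones case by exploding with \cref{digon-removal}, and then matching against the Sergeev action $\psi$ of \cref{T:sergeev-duality} — are sound, and the paper itself points to the same underlying mechanism in \cref{R:AltDefofPsi} via $\Psi_{n}\circ\Pi_{m}=\Phi_{m,n}$. One small presentational caution: when you say the exploded identities "follow from the defining relations of $\Ser_{k}$ via $\psi$ together with \cref{L:srelations}," note that \cref{L:srelations} is a statement \emph{inside} $\qwebs$ (proved from the very relations you are trying to show are preserved), so what you should actually invoke there is \cref{T:sergeev-duality}(2), i.e.\ the direct check that $\Psi_{n}(c_{i})=\psi(c_{i})$ and $\Psi_{n}(s_{j})=\psi(s_{j})$ satisfy the Sergeev relations in $\End_{\fq(n)}(V_{n}^{\otimes k})$; with that substitution the argument is not circular.
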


\begin{proof}  The interested reader can verify that these maps satisfy the defining relations of $\qwebs$.  That the functor is essentially surjective is clear.
\end{proof}

One can verify $\beta_{\uparrow_{k}, \uparrow_{l}}$ is sent to the graded flip map $ S^{k}(V_{n}) \otimes S^{l}(V_{n}) \to S^{l}(V_{n}) \otimes S^{k}(V_{n})$ given by  $v_{1}\dotsb v_{k} \otimes w_{1}\dotsb w_{l} \mapsto (-1)^{(\p{v_{1}}+\dotsb +\p{v_{k}})(\p{w_{1}}+\dotsb +\p{w_{l}})} w_{1}\dotsb w_{l}  \otimes v_{1}\dotsb v_{k}$.  Also, we could have defined the functor $\Psi_{n}$ using the functors $\Phi_{m,n}$. %Then well-definedness of $\Psi_{n}$ amounts to the fact $\Pi_{m}$ is a well-defined functor for all $m$.

\begin{remark}\label{R:AltDefofPsi} Recall the functors $\Phi_{m,n}$ from \cref{Phi-up}. Then,
\[\Psi_{n}\left(
\xy
(0,0)*{
\begin{tikzpicture}[scale=1.25, color=\clr ]
	\draw[thick, directed=1] (1,0) to (1,1);
	\node at (1,-0.15) {\scriptsize $k$};
	\node at (1,1.15) {\scriptsize $k$};
	\draw (1,0.5) \wdot ;
\end{tikzpicture}
};
\endxy\right)=\Phi_{1,n}(\htilde_{\overline{1}}1_{(k)}),\quad\quad\Psi_{n}\left(
\xy
(0,0)*{
\begin{tikzpicture}[scale=.35, color=\clr ]
	\draw [ thick, directed=1] (0, .75) to (0,2);
	\draw [ thick, directed=.65] (1,-1) to [out=90,in=330] (0,.75);
	\draw [ thick, directed=.65] (-1,-1) to [out=90,in=210] (0,.75);
	\node at (0, 2.5) {\scriptsize $k\! +\! l$};
	\node at (-1,-1.5) {\scriptsize $k$};
	\node at (1,-1.5) {\scriptsize $l$};
\end{tikzpicture}
};
\endxy\right)=\Phi_{2,n}(\etilde_1^{(l)}1_{(k,l)}),\quad\quad\Psi_{n}\left(
\xy
(0,0)*{
\begin{tikzpicture}[scale=.35, color=\clr ]
	\draw [ thick, directed=0.65] (0,-0.5) to (0,.75);
	\draw [ thick, directed=1] (0,.75) to [out=30,in=270] (1,2.5);
	\draw [ thick, directed=1] (0,.75) to [out=150,in=270] (-1,2.5); 
	\node at (0, -1) {\scriptsize $k\! +\! l$};
	\node at (-1,3) {\scriptsize $k$};
	\node at (1,3) {\scriptsize $l$};
\end{tikzpicture}
};
\endxy\right)=\Phi_{2,n}(\ftilde_1^{(l)}1_{(k+l,0)}).
\]  In particular, for any $m,n \geq 1$ we have $\Psi_{n} \circ \Pi_{m} = \Phi_{m,n}$.  The well-definedness of $\Psi_{n}$ can also be deduced from the well-definedness of the functors $\Phi_{m,n}$ and the compatibility given in \cref{R:Compatibility}. 
\end{remark}

% To prove $\Psi_{n}$ is well-defined we must verify that the images of the relations \cref{associativity,digon-removal,dot-collision,dots-past-merges,dumbbell-relation,square-switch,square-switch-dots,double-rungs-1,double-rungs-2} hold in $\mods$.  Set $m=3$ and apply $\Pi_{3}$ to the defining relations of $\bUdot (\fqt (3))$.  This yields the defining relations of $\qwebs$.  But since $\Psi_{n}$ is defined using $\Phi_{1,n}$ and $\Phi_{2,n}$, the compatibility given in \cref{R:Compatiblity2} along with the fact that $\Phi_{3}$ is a well defined functor implies that $\Psi_{n}$ is also well defined. 

\subsection{Description of \texorpdfstring{$\End_{\qwebs}(\up_{1}^k)$}{End(upk)}}\label{description-of-all-ones}

We next describe the endomorphism algebras $\End_{\qwebs}(\up_{1}^k)$.  In particular, as promised in the discussion after \cref{L:srelations}, $\End_{\qwebs}(\up_{1}^{k})$ will turn out to be isomorphic to the Sergeev algebra.

\begin{lemma}\label{sergeev-generators}
The superalgebra $\End_{\qwebs}(\up_{1}^k)$ is generated by $c_1,\dots, c_k, s_1,\dots,s_{k-1}$.
\end{lemma}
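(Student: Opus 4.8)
The goal is to show that every morphism in $\End_{\qwebs}(\up_1^k)$ is a linear combination of products of the $c_i$ and $s_j$. The natural strategy is to pass through the factorization $\Pi_m$ followed by the description of endomorphism spaces, but more directly: I would argue that any web $w \in \End_{\qwebs}(\up_1^k)$ can be rewritten, using the defining relations, so that all merges and splits are ``cancelled'' against each other and only dots and crossings survive. The cleanest way to organize this is via the clasp. Since $Cl_1 = \id_{\up_1}$, we have $\id_{\up_1^k} = Cl_1^{\otimes k}$, but more useful is the observation that $w = \id_{\up_1^k}\circ\, w\, \circ \id_{\up_1^k}$, and we can try to slide structure around.

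\textbf{Key steps.} First, I would reduce to the case that $w$ is a single web (not a linear combination), since the span is linear. Second, given such a web $w:\up_1^k\to\up_1^k$, read it from bottom to top as a sequence of elementary slices, each of which is (a tensor product of identities with) a single dot, merge, or split. The merges and splits change the edge labels, but since both the source and target are $\up_1^k$, along the web the ``total label'' is conserved and every edge carries a positive integer. The idea is to push all merges upward (equivalently all splits downward) past the dots using \cref{dots-past-merges} and its reflection, and past each other using \cref{associativity}, until the web is in a normal form: a composite of the shape (big split of everything into $1$-strands) $\circ$ (a diagram on $1$-strands only) $\circ$ (big merge). But since source and target are already $\up_1^k$, there are no merges at the very bottom nor splits at the very top; any merge must eventually be undone by a split and vice versa. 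Using \cref{digon-removal}, \cref{L:untwist-permutation}, and the square-switch relations \cref{square-switch,square-switch-dots} one shows such a merge-then-split (or split-then-merge) pair on $1$-strands reduces, up to scalar, to a linear combination of permutation webs with dots, i.e.\ to products of $s_j$'s and $c_i$'s. Third, the crossings $s_j$ were \emph{defined} in \cref{Sergeev-webs} precisely as the relevant split-merge combination minus identity, and by \cref{L:srelations} they generate a copy of the symmetric group part, while the $c_i$ are just dots on single strands; \cref{additional-relations} shows a dot on a $k$-labelled strand is accounted for by the explosion into $1$-strands. Putting these together, every web in $\End_{\qwebs}(\up_1^k)$ lies in the subalgebra generated by the $c_i$ and $s_j$.

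\textbf{Alternative route.} A slicker argument: the functor $\Pi_m$ of \cref{pi-functor} is full onto the subcategory on objects $\up_{\ob a}$ with $\ob a\in\Z_{\geq 0}^m$; taking $m=k$ and $\lambda = (1,1,\dots,1)$, every endomorphism of $\up_1^k$ is the image under $\Pi_k$ of an endomorphism of the object $(1,\dots,1)$ in $\bUdot(\fq(k))_{\geq 0}$. Thus it suffices to check that $\Pi_k$ applied to the generating morphisms $e_i^{(j)}, f_i^{(j)}, e_{\bar i}^{(j)}, f_{\bar i}^{(j)}, h_{\bar j}$ (restricted to the relevant weight), composed in ways that land back on $(1,\dots,1)$, yields products of $c_i$ and $s_j$. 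On the weight $(1,\dots,1)$ the only moves available are: $h_{\bar i}$, whose image is a dot $=c_i$; and the composite $f_i e_i$ (or $e_i f_i$) which pushes a box between adjacent $1$-strands and back, whose image is exactly the split-merge web on two adjacent strands --- and this is $s_i + \id$ by \cref{Sergeev-webs}, together with the odd variants $f_{\bar i}e_i$ etc.\ which contribute $s_i c_i$-type terms. Hence the image is generated by the $c_i$ and $s_j$.

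\textbf{Main obstacle.} The delicate point is the bookkeeping in the first approach: verifying that an arbitrary web with source and target $\up_1^k$ really can be brought to a form with no ``internal'' higher-label structure left over. A merge into label $2$ followed later by a split could in principle interact with other merges/splits nested around it in a complicated braided pattern; handling this requires \cref{L:braidingforups}, which lets one slide dots, merges and splits along strands freely and shows that webs equal under planar isotopy are equal in $\qwebs$. With that isotopy invariance in hand one can always arrange a given merge to be immediately followed (after sliding past dots and crossings) by a split of the same label, at which point \cref{digon-removal} applies. I expect this ``confluence'' argument --- showing the reduction procedure terminates and always lands in the span of $s_j$'s and $c_i$'s --- to be the real content; everything else is a direct application of the relations already established in \cref{SS:AdditionalRelations}, \cref{SS:crossings}, and \cref{SS:ClaspIdempotents}.
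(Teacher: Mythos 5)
Your proposal assembles many of the right ingredients but leaves the crucial step unproved in both of the routes you sketch. In the first route everything hinges on bringing an arbitrary web $w\in\End_{\qwebs}(\up_1^{k})$ into a normal form in which each merge is eventually cancelled against a matching split, and you explicitly defer this to a ``confluence'' argument that you expect to be ``the real content'' --- but that argument is never supplied, and a general web need not have its merges and splits paired off in any obvious way (a merge can feed into further merges, etc.). The paper avoids any global rewriting or termination argument by a purely local move: after using \cref{dot-on-k-strand} to put every dot on a $1$-strand, one inserts on each internal edge of label $h$ the identity $\tfrac{1}{h!}\,(\text{merge of }h\text{ ones})\circ(\text{split into }h\text{ ones})$, which is legitimate by \cref{digon-removal}. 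By \cref{associativity}, around each original trivalent vertex this produces a clasp $(h+l)!\,Cl_{h+l}$, and \cref{L:claspsum} rewrites each clasp as a sum of permutation diagrams. The entire web is then manifestly a linear combination of products of the crossings $s_j$ and dots $c_i$, with no normal form needed. You mention clasps and \cref{digon-removal} in passing, but the step you actually propose for eliminating higher-labelled edges (square-switch plus \cref{L:untwist-permutation} applied to merge-then-split pairs) only treats configurations you have not shown to be attainable.

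Your alternative route has a different gap: an endomorphism of the object $(1,\dots,1)$ in $\bUdot(\fq(k))_{\geq 0}$ is not merely a product of $h_{\bar{i}}$'s and of adjacent pairs $f_ie_i$ or $e_if_i$. It is an arbitrary composite of the generators whose intermediate weights can have entries $\geq 2$ or equal to $0$ (for instance $f_2f_1e_2e_1 1_{(1,\dots,1)}$ passes through $(2,1,0,1,\dots,1)$), and the images of such composites under $\Pi_k$ are ladders carrying higher edge labels. So fullness of $\Pi_k$ only translates the problem back into exactly the reduction you left open in the first route; it does not shortcut the lemma.
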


\begin{proof}
It suffices to prove that every web diagram $w\in\End_{\qwebs}(\up^k)$ can be written as a linear combination of webs containing only upward crossings and dotted $1$-strands.  Given $w$, we may assume without loss of generality that every dot in $w$ is on a $1$-strand by \cref{dot-on-k-strand}. Next, every merge and split in $w$ can have its edges expanded into $1$-strands using \cref{digon-removal}.  For example, here is an expanded merge:
\[
\xy
(0,0)*{
\begin{tikzpicture}[scale=.5]
	\draw [color=\clr,  thick, directed=1] (-1,3) to (-1,6);
	\draw [color=\clr,  thick, rdirected=.55] (-1,3) to (-3,0);
	\draw [color=\clr,  thick, rdirected=.55] (-1,3) to (1,0);
	\node at (-1,6.35) {\scriptsize $h\! +\! l$};
	\node at (-3,-.35) {\scriptsize $h$};
	\node at (1,-.35) {\scriptsize $l$};
\end{tikzpicture}
};
\endxy
=
\frac{1}{h! \ l! \ (h+l)!}
\xy
(0,0)*{
\begin{tikzpicture}[scale=.5]
	\draw [color=\clr,  thick, directed=1] (-1,5.5) to (-1,6);
	\draw [color=\clr,  thick, directed=.65] (-1,3) to (-1,3.5);
	\draw [color=\clr,  thick, directed=.35] (-1,3.5) to [out=150,in=210] (-1,5.5);
	\draw [color=\clr,  thick, directed=.35] (-1,3.5) to [out=30,in=330] (-1,5.5);
	\draw [color=\clr,  thick, rdirected=.65] (-1,3) to (-1.45,2.333);
	\draw [color=\clr,  thick, rdirected=.55] (-2.55,.67) to (-3,0);
	\draw [color=\clr,  thick, directed=.75] (-2.55,.67) to [out=116.3,in=176.3] (-1.45,2.333);
	\draw [color=\clr,  thick, directed=.75] (-2.55,.67) to [out=-3.7,in=296.3] (-1.45,2.333);
	\draw [color=\clr,  thick, rdirected=.65] (-1,3) to (-.55,2.333);
	\draw [color=\clr,  thick, rdirected=.55] (0.55,.67) to (1,0);
	\draw [color=\clr,  thick, directed=.75] (0.55,.67) to [out=187.7,in=243.7] (-.55,2.333);
	\draw [color=\clr,  thick, directed=.75] (0.55,.67) to [out=63.7,in=363.7] (-.55,2.333);
	\draw [color=\clr, dashed] (-3,0.85) rectangle (1,5);
	\node at (-1,6.35) {\scriptsize $h\! +\! l$};
	\node at (-3,-.35) {\scriptsize $h$};
	\node at (1,-.35) {\scriptsize $l$};
	\node at (-1.85,4.5) {\scriptsize $1$};
	\node at (-0.15,4.5) {\scriptsize $1$};
	\node at (-0.95,4.5) {\small $\cdots$};
	\node at (0.05,1.55) { \rotatebox{33.7}{\small $\cdots$}};
	\node at (-0.7,1.15) {\scriptsize $1$};
	\node at (0.7,1.85) {\scriptsize $1$};
	\node at (-2,1.45) { \rotatebox{-33.7}{\small $\cdots$}};
	\node at (-1.3,1.15) {\scriptsize $1$};
	\node at (-2.7,1.85) {\scriptsize $1$};
\end{tikzpicture}
};
\endxy.
\]
By \cref{associativity}, the web enclosed by the dashed rectangle above is $(h+l)!\,Cl_{h+l}$.  By \cref{L:claspsum} each such clasp idempotent can be rewritten as a sum of upward crossings.  The result follows. 
\end{proof}

%We remark that the proof of Lemma \ref{sergeev-generators} shows that every web $w\in\End_{\qwebs}(1^k)$ is a scalar multiple of a composition of clasp idempotents and dotted $1$-strands. By Lemma \ref{sergeev-generators}, we may now define superalgebra homomorphisms $\End_{\qwebs}(1^k)\to A$ by defining them on $C_1,\dots,C_k,S_1,\dots,S_{k-1}$, and know that homomorphisms $A\to\End_{\qwebs}(1^k)$ containing these elements in their image are surjective.

\begin{proposition}\label{P:sergeev-isomorphism}  For every $k\in\Z_{>0}$ the map 
\[
\xi_k:\Ser_{k}  \to  \End_{\qwebs}(\up_{1}^k)
\] of \cref{E:Sergeevhomomorphism} is a superalgebra isomorphism.  Moreover, the map obtained by composing this homomorphism with $\Psi_{n}:\End_{\qwebs}(\up_{1}^{k}) \to \End_{\fq (n)}\left(V^{\otimes k} \right)$ coincides with the map $\psi$ given in \cref{T:sergeev-duality}.
\end{proposition}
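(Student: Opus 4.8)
The plan is to prove the two assertions separately and in order: first that $\xi_k$ is an isomorphism, then that $\Psi_n \circ \xi_k = \psi$. For the first, surjectivity is already in hand from \cref{sergeev-generators}, which shows $\End_{\qwebs}(\up_1^k)$ is generated by the images $c_1,\dots,c_k,s_1,\dots,s_{k-1}$ of the Sergeev generators, so $\xi_k$ is surjective. It therefore suffices to show $\dim_{\k}\End_{\qwebs}(\up_1^k) \leq \dim_{\k}\Ser_k = 2^k\,k!$. The cleanest route is to use the functors already constructed: for $n$ sufficiently large, \cref{sergeev-kernel} (together with \cref{T:sergeev-duality}(3)) gives that $\psi:\Ser_k\to\End_{\fq(n)}(V_n^{\otimes k})$ is an isomorphism, namely whenever $k < (n+1)(n+2)/2$. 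Fix such an $n$. Then the composite $\Ser_k \xrightarrow{\xi_k} \End_{\qwebs}(\up_1^k) \xrightarrow{\Psi_n} \End_{\fq(n)}(V_n^{\otimes k})$, once we know it equals $\psi$ (which is the content of the second assertion — so I would actually prove the second assertion first in the writeup, or at least the needed compatibility on generators), is an isomorphism; since $\xi_k$ is surjective, it must be injective as well, hence an isomorphism. This sidesteps any direct combinatorial normal-form argument for webs on $\up_1^k$.

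For the second assertion I would check equality of the two superalgebra homomorphisms $\Ser_k \to \End_{\fq(n)}(V_n^{\otimes k})$ on the generators $s_i$ and $c_i$, which suffices since these generate $\Ser_k$. By \cref{R:AltDefofPsi} and the definition of the crossing $s_1 \in \End_{\qwebs}(\up_1^2)$ as a split-then-merge minus the identity (equivalently $\Psi_n$ of that web), one computes directly that $\Psi_n(s_i)$ acts on $V_n^{\otimes k}$ by the formula in \cref{T:sergeev-duality}(2): the split sends $v\otimes w \mapsto \sum (\pm) (\text{rearrangements})$ and the merge puts them back, and one verifies the sign bookkeeping $\varepsilon_{I,J}$ collapses to exactly $(-1)^{\p{v_i}\p{v_{i+1}}}$ times the transposition of the $i$-th and $(i{+}1)$-st tensor factors, minus the identity term, giving the graded flip. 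Similarly $\Psi_n(c_i)$ is, by \cref{R:AltDefofPsi}, $\Phi_{1,n}(h_{\bar 1}1_{(1)})$ tensored with identities on the other strands; by \cref{Phi-up} this is the map induced by the generator $h_{\bar 1}$ acting on $S^1(V_n)=V_n$, which by the explicit description of the Howe action (\cref{SS:Liesuperalgebras}, where $c:V\to V$ is $c(v_i)=(-1)^{\p{v_i}}\sqrt{-1}v_{\bar i}$) is exactly $c$ on the $i$-th factor with the Koszul sign prefactor $(-1)^{\p{v_1}+\dots+\p{v_{i-1}}}$ — matching the formula for $\psi(c_i)$ verbatim.

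\textbf{Main obstacle.} The conceptual skeleton above is short, but the real work — and the place most likely to hide errors — is the sign verification in showing $\Psi_n(s_i)$ equals the graded flip. One must expand $\Psi_n$ of the merge–split web using the explicit split map $v_1\cdots v_{k+l}\mapsto \sum_{I\cup J} (-1)^{\varepsilon_{I,J}} v_{i_1}\cdots v_{i_k}\otimes v_{j_1}\cdots v_{j_l}$, then re-merge, and check that on the $\up_1\up_1$ block everything cancels except the term $(-1)^{\p{v}\p{w}}\,w\otimes v$ coming from the ``$J=\{1\},\ I=\{2\}$'' split-branch, plus the subtracted identity. There is also a subtlety in that the split/merge maps of \cref{psi-functor} are only stated for $S^k\otimes S^l$, so to interpret $s_i$ on the full $k$-fold tensor product one uses that $\Psi_n$ is a monoidal superfunctor and the super-interchange law, which introduces its own signs that must be tracked consistently. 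Once the generator-level computation is pinned down, the rest — that both sides are algebra maps, that they therefore agree everywhere, and the dimension-count deduction of injectivity of $\xi_k$ — is formal.
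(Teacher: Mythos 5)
Your proposal is correct and follows essentially the same route as the paper: surjectivity from \cref{sergeev-generators}, then injectivity by choosing $n$ large enough that $\psi$ is an isomorphism and checking on the generators $s_i,c_i$ that $\Psi_n\circ\xi_k=\psi$, so that $\xi_k$ is injective because the composite is. The paper fixes $n>k$ rather than the sharper bound $k<(n+1)(n+2)/2$, and likewise leaves the generator-level sign verification as a "direct calculation," so your more explicit sketch of that computation is consistent with, and slightly more detailed than, the published argument.
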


\begin{proof} As discussed regarding \cref{E:Sergeevhomomorphism}, $\xi_{k}$ gives a well-defined homomorphism.  By \cref{sergeev-generators}, $\xi_k$ is surjective.  

We now prove injectivity.  Fix $n > k$ and let $1^k=(1,\dots,1)\in\Z^k$. Since $\s^{1^k} = V_n^{\otimes k}$, the functor $\Psi_{n}$ induces a map of superalgebras $\End_{\qwebs}(\up_{1}^k) \to \End_{\q(n)}(V_n^{\otimes k})$ which we call by the same name.  Taken together with the superalgebra map $\psi: \Ser_k \to \End_{\q(n)}(V_n^{\otimes k})$ from \cref{T:sergeev-duality} we have the following diagram of superalgebra maps.
\[
\xy
(0,0)*{
\begin{tikzpicture}
\node (S) at (0,0) {$\Ser_{k}$};
\node (P) at (3,0) {$\End_{\q(n)}(V_n^{\otimes k})$};
\node (W) at (0,-2) {$\End_{\qwebs}(\up_{1}^k)$};
\path[->, thick]
(S) edge [above] node {$\psi$} (P)
(S) edge [left] node {$\xi_k$} (W)
(W) edge [below] node {\quad $\Psi_{n}$} (P);
\end{tikzpicture}
};
\endxy \ .
\]  A direct calculation on generators verifies this diagram commmutes.  The injectivity of $\xi_{k}$ follows from the fact that $\psi$ is an isomorphism by \cref{T:sergeev-duality} (since $n > k$).
\end{proof}

\subsection{The fullness of \texorpdfstring{$\Psi_{n}$}{Psin} }\label{S:fullness}

\begin{theorem}\label{T:fullness} For every $n \geq 1$, the functor $\Psi_{n}: \qwebs \to \mods$ is full.
\end{theorem}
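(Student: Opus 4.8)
The plan is to deduce the fullness of $\Psi_{n}$ from the Howe duality of type $Q$ together with the fullness of $\Pi_{m}$ and the commutative triangle $\Psi_{n}\circ\Pi_{m}=\Phi_{m,n}$ established in \cref{R:AltDefofPsi}. Concretely, fix objects $\up_{\ob{a}}$ and $\up_{\ob{b}}$ of $\qwebs$ with $\ob{a}=(a_{1},\dots,a_{r})$ and $\ob{b}=(b_{1},\dots,b_{s})$. Choose $m\geq\max(r,s)$ and regard $\ob{a},\ob{b}$ as weights in $X(T_{m})_{\geq 0}$ by padding with zeros. By \cref{pi-functor}, $\Pi_{m}$ is full onto the full subcategory of $\qwebs$ on the objects $\{\up_{\ob c}\mid \ob c\in\Z_{\geq 0}^{m}\}$, so every morphism $\up_{\ob a}\to\up_{\ob b}$ in $\qwebs$ is in the image of $\Pi_{m}$; consequently $\Psi_{n}$ restricted to $\Hom_{\qwebs}(\up_{\ob a},\up_{\ob b})$ has the same image as $\Phi_{m,n}$ restricted to $\Hom_{\bUdot(\fqt(m))_{\geq 0}}(\ob a,\ob b)$. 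Therefore it suffices to show that $\Phi_{m,n}$ is full for $m$ sufficiently large, i.e.\ that the image of $\Udot(\fqt(m))_{\geq 0}$ in $\End_{\fq(n)}(\s)$ surjects onto $\Hom_{\fq(n)}(S_{\ob a},S_{\ob b})$, which by \cref{weight-space-isomorphism} is the same as $\Hom_{\fq(n)}(\s_{\ob a},\s_{\ob b})$.

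The core input is the double centralizer property coming from Howe duality. The algebra $\Udot(\fqt(m))_{\geq 0}$ acts on $\s=S(\Vt_m\star V_n)$ commuting with the $\fq(n)$-action, and I want to assert that for $m$ large relative to $n$ and relative to the (bounded) degrees appearing in $S_{\ob a},S_{\ob b}$, the image $\phi(\Udot(\fqt(m))_{\geq 0})$ is exactly $\End_{\fq(n)}(\s)$ in the relevant graded pieces; equivalently, $\fqt(m)$ and $\fq(n)$ form a dual pair on $\s$. This is precisely the Cheng--Wang Howe duality of type $Q$ referenced in \cref{commuting-actions}: as a $U(\fqt(m))\otimes U(\fq(n))$-module, $\s$ decomposes multiplicity-freely, and the double centralizer theorem then gives that each algebra is the full centralizer of the other on $\s$. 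Restricting to the $\ob a$- and $\ob b$-weight spaces (which are finite-dimensional), the $\fq(n)$-module homomorphisms $\s_{\ob a}\to\s_{\ob b}$ are spanned by the action of elements $1_{\ob b}\, u\, 1_{\ob a}$ with $u\in\Udot(\fqt(m))_{\geq 0}$, since $1_{\ob b}$ and $1_{\ob a}$ are the projections onto these weight spaces and $\Hom_{\fq(n)}(\s_{\ob a},\s_{\ob b})=1_{\ob b}\,\End_{\fq(n)}(\s)\,1_{\ob a}$. That establishes fullness of $\Phi_{m,n}$ on the pair $(\ob a,\ob b)$, and chasing back through $\Psi_{n}\circ\Pi_m=\Phi_{m,n}$ gives fullness of $\Psi_{n}$.

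I expect the main obstacle to be pinning down and correctly invoking the type $Q$ double centralizer statement in the form needed: the decomposition of $\s$ as a $U(\fqt(m))\otimes U(\fq(n))$-module is multiplicity-free \emph{up to the type $M$/type $Q$ subtleties} discussed in \cref{SS:repsofsuperalgebras}, so the double centralizer argument must be run in the graded/super setting where $\End_{A\otimes B}$ of a type $Q\boxtimes Q$ summand is genuinely larger than in the classical case. One has to be careful that $\Udot(\fqt(m))_{\geq 0}$, not merely $U(\fqt(m))$, surjects onto the centralizer; this is where the ``$\geq 0$'' truncation and \cref{weight-space-isomorphism} (the vanishing of $\s_\lambda$ for $\lambda\notin X(T_m)_{\geq 0}$) are used, ensuring the locally unital idempotented algebra sees all of $\End_{\fq(n)}(\s)$. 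A secondary, more routine point is confirming that $m$ can be chosen uniformly large enough to cover any fixed pair of objects — this is fine because $\ob a$ and $\ob b$ have only finitely many parts — together with the compatibility remarks \cref{R:Compatibility,R:Compatiblity2} guaranteeing the choice of $m$ does not matter. Once the centralizer statement is in hand the rest is formal diagram-chasing.
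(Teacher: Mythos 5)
Your reduction is clean and valid as far as it goes: since $\Pi_m$ is full onto the relevant subcategory (\cref{pi-functor}) and $\Psi_n\circ\Pi_m=\Phi_{m,n}$ (\cref{R:AltDefofPsi}), the image of $\Psi_n$ on $\Hom_{\qwebs}(\up_{\ob a},\up_{\ob b})$ does coincide with the image of $\Phi_{m,n}$ on $\Hom_{\bUdot(\fqt(m))_{\geq 0}}(\ob a,\ob b)$, so fullness of $\Psi_n$ would indeed follow from fullness of $\Phi_{m,n}$. The problem is that fullness of $\Phi_{m,n}$ is exactly where all the content lives, and your proposal does not prove it. The statement you need --- that $\phi\bigl(1_{\ob b}\,\Udot(\fqt(m))_{\geq 0}\,1_{\ob a}\bigr)$ exhausts $\Hom_{\fq(n)}(\s_{\ob a},\s_{\ob b})$ --- is nowhere established in the paper: \cref{commuting-actions} only constructs the commuting actions and \cref{weight-space-isomorphism} only identifies the weight spaces; no multiplicity-free decomposition of $S^d(\Vt_m\star V_n)$ as a $U(\fqt(m))\otimes U(\fq(n))$-module is stated, and no double centralizer theorem for this Howe duality is quoted or proved. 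You flag the type $Q\boxtimes Q$ subtlety yourself, but flagging it is not the same as resolving it: even granting a multiplicity-free decomposition, concluding that each algebra surjects onto the other's centralizer requires the super density argument on each isotypic summand, and that argument is precisely the nontrivial "surjectivity half" of these dualities. So as written the proposal replaces the theorem by an equivalent unproven assertion.

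It is worth noting that the paper's actual proof supplies exactly the missing surjectivity, but from a different duality: it embeds $\Hom_{\qwebs}(\ob a,\ob b)$ into $\Hom_{\qwebs}(\up_1^{|\ob a|},\up_1^{|\ob b|})$ by exploding strands, identifies the image as $e_{\ob b}\Hom(\cdot)e_{\ob a}$ for the clasp idempotents $e_{\ob a}=Cl_{a_1}\otimes\cdots\otimes Cl_{a_r}$, identifies $\End_{\qwebs}(\up_1^k)$ with $\Ser_k$ via \cref{P:sergeev-isomorphism}, and then invokes the surjectivity of $\psi:\Ser_k\to\End_{\fq(n)}(V_n^{\otimes k})$, which \emph{is} stated in the paper (\cref{T:sergeev-duality}(3)); a diagram chase through \cref{E:PsiBox} finishes the argument. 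If you want to salvage your route, the honest way to prove fullness of $\Phi_{m,n}$ from the paper's ingredients is to run essentially this same Sergeev-duality argument on each weight space $\s_{\ob a}\subseteq V_n^{\otimes|\ob a|}$ --- at which point your detour through $\bUdot(\fqt(m))_{\geq 0}$ buys nothing. Alternatively you would need to import the double centralizer form of the Cheng--Wang duality from \cite{CW1} as an explicit external input, which changes the logical dependencies of the paper.
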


\begin{proof}  Let $\ob{a}=(a_{1}, \dotsc , a_{r})$ and $\ob{b}=(b_{1}, \dotsc , b_{s})$ be objects of $\qwebs$.  We first observe that $\Hom_{\qwebs}(\ob{a},\ob{b})=0$ unless $|\ob{a}|=|\ob{b}|$.  Likewise, by weight considerations $\Hom_{q(n)}(S^{\ob{a}}(V_{n}), S^{\ob{b}}(V_{n}))=0$ unless $|\ob{a}|=|\ob{b}|$.  Thus we can assume $|\ob{a}|=|\ob{b}|$ in what follows.

There is a map of superspaces 
\[
\alpha= \alpha_{\ob{a}, \ob{b}}: \Hom_{\qwebs}(\ob{a}, \ob{b}) \to \Hom_{\qwebs}\left( \up_{1}^{|\ob{a}|}, \up_{1}^{|\ob{b}|}\right)
\]
given by using merges and splits to ``explode'' all the boundary strands:
\[
\xy
(0,0)*{
\begin{tikzpicture} [scale=1, color=\clr]
	\draw[ thick] (-1.5,1.25) rectangle (0.5,.75);
	\draw[ thick, directed=0.55] (-1.25,0) to (-1.25,.75);
	\draw[ thick, directed=0.55] (-1.25,1.25) to (-1.25,2);
	\draw[ thick, directed=0.55] (.25,0) to (.25,.75);
	\draw[ thick, directed=0.55] (.25,1.25) to (.25,2);
	\node at (-0.5,1.75) {$\cdots$};
	\node at (-0.5,.25) {$\cdots$};
	\node at (-0.5,1) {$u$};
	\node at (-1.55,1.675) {\scriptsize $\ob{b}_1$};
	\node at (-1.55,0.325) {\scriptsize $\ob{a}_1$};
	\node at (0.65,1.675) {\scriptsize $\ob{b}_{s}$};
	\node at (0.65,0.325) {\scriptsize $\ob{a}_r$};
\end{tikzpicture}
};
\endxy
\mapsto
\xy
(0,0)*{
\begin{tikzpicture} [scale=1, color=\clr]
	\draw[ thick] (-1.5,1.25) rectangle (0.5,.75);
	\draw[ thick, directed=0.55] (-1.25,0) to (-1.25,.75);
	\draw[ thick, directed=0.55] (-1.25,1.25) to (-1.25,2);
	\draw[ thick, directed=0.55] (.25,0) to (.25,.75);
	\draw[ thick, directed=0.55] (.25,1.25) to (.25,2);
	\node at (-0.5,1.75) {$\cdots$};
	\node at (-0.5,.25) {$\cdots$};
	\node at (-0.5,1) {$u$};
	\node at (-1.55,1.675) {\scriptsize $\ob{b}_1$};
	\node at (-1.55,0.325) {\scriptsize $\ob{a}_1$};
	\node at (0.65,1.675) {\scriptsize $\ob{b}_{s}$};
	\node at (0.65,0.325) {\scriptsize $\ob{a}_r$};
	\draw [ thick, directed=.55] (-1,-.5) to [out=90,in=330] (-1.25,0);
	\draw [ thick, rdirected=.65] (-1.25,0) to [out=210,in=90] (-1.5,-.5);
	\draw [ thick, rdirected=.65] (0.25,0) to [out=330,in=90] (0.5,-.5);
	\draw [ thick, rdirected=.65] (0.25,0) to [out=210,in=90] (0,-.5);
	\draw [ thick, directed=.55] (-1.25,2) to [out=30,in=270] (-1,2.5);
	\draw [ thick, directed=.55] (-1.25,2) to [out=150,in=270] (-1.5,2.5);
	\draw [ thick, directed=.55] (0.25,2) to [out=30,in=270] (0.5,2.5);
	\draw [ thick, directed=.55] (0.25,2) to [out=150,in=270] (0,2.5);
	\node at (-1.25,2.65) {\small \  $\cdots$};
	\node at (-1.25,-.65) {\small  \ $\cdots$};
	\node at (0.25,2.65) {\small  \ $\cdots$};
	\node at (0.25,-.65) {\small  \ $\cdots$};
	\node at (-1,2.65) {\scriptsize $1$};
	\node at (-1.5,2.65) {\scriptsize $1$};
	\node at (-1,-.65) {\scriptsize $1$};
	\node at (-1.5,-.65) {\scriptsize $1$};
	\node at (0,2.65) {\scriptsize $1$};
	\node at (0.5,2.65) {\scriptsize $1$};
	\node at (0,-.65) {\scriptsize $1$};
	\node at (0.5,-.65) {\scriptsize $1$};
\end{tikzpicture}
};
\endxy.
\]  This map is an embedding.  Up to a scalar the left inverse is given by applying a complementary set of splits and merges to rejoin the strands and applying relation \cref{digon-removal}.  Since the map is given by composing and tensoring diagrams, via the functor there is corresponding map, $\tilde{\alpha}$, such that the following commutative diagram of superspace maps,

\begin{equation}\label{E:PsiBox}
\begin{tikzcd}
\Hom_{\qwebs}(\ob{a}, \ob{b})  \arrow[r, hook, "\alpha"] \arrow[d,  "\Psi_{n}"] & \Hom_{\qwebs}\left( \up_{1}^{|\ob{a}|}, \up_{1}^{|\ob{b}|}\right) \arrow[d,  "\Psi_{n}"] \\
\Hom_{\qwebs}(S^{\ob{a}}(V_{n}),S^{\ob{b}}(V_{n}))\arrow[r, hook, "\tilde{\alpha}"] & \Hom_{\qwebs}\left( V_{n}^{\otimes |\ob{a}|}, V_{n}^{\otimes |\ob{b}|}\right).
\end{tikzcd}
\end{equation}
Furthermore, $e_{\ob{a}}= Cl_{a_{1}} \otimes \dotsb \otimes Cl_{a_{r}}, e_{\ob{b}} = Cl_{b_{1}} \otimes \dotsb \otimes Cl_{b_{s}} \in \End_{\qwebs}(\uparrow_{1}^{|\ob{a}|})$  are idempotents, the image of $\alpha$ is precisely $e_{\ob{b}}\Hom_{\qwebs}(\uparrow^{\ob{a}}, \uparrow^{\ob{b}})e_{\ob{a}}$, and the image of $\tilde{\alpha}$ is $\Psi_{n}(e_{\ob{b}})\Hom_{q(n)}(V^{\ob{a}}, V^{\ob{b}})e_{\ob{a}}\Psi_{n}(e_{\ob{a}})$.  By \cref{P:sergeev-isomorphism} and \cref{T:sergeev-duality} the $\Psi_{n}$ on the right side of \cref{E:PsiBox} is surjective.  This along with a diagram chase implies $\Psi_{n}$ is surjective on the left side of the diagram.  Together with the discussion of the first paragraph it follows $\Psi_{n}$ is a full functor.
\end{proof}

\section{Oriented Webs}\label{S:OrientedWebs}
In this section we introduce oriented webs.  On the representation theory side this corresponds to including the duals of symmetric powers of the natural supermodule for $\fq (n)$.

\subsection{Oriented Webs}

\begin{definition}\label{D:orientedwebs}
The category $\qwebsupdown$ is the monoidal supercategory with generating objects 
\[
\{\up_{k}, \down_k \mid k \in \Z_{\geq 0}\}
\]
and generating morphisms
 \[
\xy
(0,0)*{
\begin{tikzpicture}[color=\clr, scale=1]
	\draw[color=\clr, thick, directed=1] (1,0) to (1,1);
	\node at (1,-0.15) {\scriptsize $k$};
	\node at (1,1.15) {\scriptsize $k$};
	\draw (1,0.5) \wdot;
\end{tikzpicture}
};
\endxy \ ,\quad\quad
\xy
(0,0)*{
\begin{tikzpicture}[color=\clr, scale=.35]
	\draw [color=\clr,  thick, directed=1] (0, .75) to (0,2);
	\draw [color=\clr,  thick, directed=.65] (1,-1) to [out=90,in=330] (0,.75);
	\draw [color=\clr,  thick, directed=.65] (-1,-1) to [out=90,in=210] (0,.75);
	\node at (0, 2.5) {\scriptsize $k\! +\! l$};
	\node at (-1,-1.5) {\scriptsize $k$};
	\node at (1,-1.5) {\scriptsize $l$};
\end{tikzpicture}
};
\endxy \ ,\quad\quad
\xy
(0,0)*{
\begin{tikzpicture}[color=\clr, scale=.35]
	\draw [color=\clr,  thick, directed=.65] (0,-0.5) to (0,.75);
	\draw [color=\clr,  thick, directed=1] (0,.75) to [out=30,in=270] (1,2.5);
	\draw [color=\clr,  thick, directed=1] (0,.75) to [out=150,in=270] (-1,2.5); 
	\node at (0, -1) {\scriptsize $k\! +\! l$};
	\node at (-1,3) {\scriptsize $k$};
	\node at (1,3) {\scriptsize $l$};
\end{tikzpicture}
};
\endxy \ , \quad \quad 
\xy
(0,0)*{
\bt[scale=.35, color=\clr]
	\draw [ thick, looseness=2, directed=0.99] (1,2.5) to [out=270,in=270] (-1,2.5);
	\node at (-1,3) {\scriptsize $k$};
	\node at (1,3) {\scriptsize $k$};
\et
};
\endxy \ ,\quad\quad
\xy
(0,0)*{
\bt[scale=.35, color=\clr]
	\draw [ thick, looseness=2, directed=0.99] (1,2.5) to [out=90,in=90] (-1,2.5);
	\node at (-1,2) {\scriptsize $k$};
	\node at (1,2) {\scriptsize $k$};
\et
};
\endxy \ ,
\]
for all $k,l\in\Z_{>0}$.  We call these the \emph{dot}, \emph{merge}, \emph{split},  \emph{cup}, and \emph{cap}, respectively.  The parity is given by declaring the dot to be odd and the other generating morphisms to be even.

The relations imposed on the generators of $\qwebsupdown$ are the relations \cref{associativity,digon-removal,dot-collision,dots-past-merges,dumbbell-relation,square-switch,square-switch-dots,double-rungs-1,double-rungs-2} of $\qwebs$, declaring the morphism given in \cref{E:leftwardcrossing} is invertible, and that the equations \cref{straighten-zigzag,delete-bubble} hold.

The first relation is the following straightening rules for cups and caps:
\begin{equation}\label{straighten-zigzag}
\xy
(0,0)*{\reflectbox{\rotatebox{180}{
\bt[scale=1.25, color=\clr]
	\draw [ thick, looseness=2, ] (1,0) to [out=90,in=90] (0.5,0);
	\draw [ thick, looseness=2, ] (1.5,0) to [out=270,in=270] (1,0);
	\draw[ thick, directed=1] (0.5,0) to (0.5,-0.5);
	\draw[ thick, ] (1.5,0) to (1.5,0.5);
	\node at (0.5,-0.65) {\scriptsize \reflectbox{\rotatebox{180}{$k$}}};
	\node at (1.5,0.65) {\scriptsize \reflectbox{\rotatebox{180}{$k$}}};
\et
}}};
\endxy=
\xy
(0,0)*{
\bt[scale=1.25, color=\clr]
	\draw[thick, directed=1] (1,0) to (1,1);
	\node at (1,-0.15) {\scriptsize $k$};
	\node at (1,1.15) {\scriptsize $k$};
\et
};
\endxy \ \quad\quad\text{and}\quad\quad
\xy
(0,0)*{
\bt[scale=1.25, color=\clr]
	\draw [ thick, looseness=2, ] (1,0) to [out=90,in=90] (0.5,0);
	\draw [ thick, looseness=2, ] (1.5,0) to [out=270,in=270] (1,0);
	\draw[ thick, directed=1] (0.5,0) to (0.5,-0.5);
	\draw[ thick, ] (1.5,0) to (1.5,0.5);
	\node at (0.5,-0.65) {\scriptsize $k$};
	\node at (1.5,0.65) {\scriptsize $k$};
\et
};
\endxy=
\xy
(0,0)*{
\bt[scale=1.25,  color=\clr]
	\draw[thick, directed=1] (1,1) to (1,0);
	\node at (1,-0.15) {\scriptsize $k$};
	\node at (1,1.15) {\scriptsize $k$};
\et
};
\endxy \ .
\end{equation}

To state the second relation, we first define the \emph{leftward crossing} in $\qwebsupdown$ for all $k, l \geq 1$  by 
\begin{equation}\label{E:leftwardcrossing}
\xy
(0,0)*{
\bt[scale=1.25, color=\clr]
	\draw[thick, rdirected=0.1] (0,0) to (0.5,0.5);
	\draw[thick, directed=1] (0.5,0) to (0,0.5);
	\node at (0,-0.15) {\scriptsize $k$};
	\node at (0,0.65) {\scriptsize $l$};
	\node at (0.5,-0.15) {\scriptsize $l$};
	\node at (0.5,0.65) {\scriptsize $k$};
\et
};
\endxy := \ 
\xy
(0,0)*{
\bt[scale=1.25, color=\clr]
	\draw[thick, ] (0,0) to (0.5,0.5);
	\draw [ thick, directed=1] (0.5,0.5) to (0.5,1);
	\draw[thick, ] (0.5,0) to (0,0.5);
	\draw [ thick, looseness=2, ] (1,0) to [out=270,in=270] (0.5,0);
	\draw [thick, ] (1,1) to (1,0);
	\draw [ thick, ] (0,-0.5) to (0,0);
	\draw [ thick, looseness=2, ] (0,0.5) to [out=90,in=90] (-0.5,0.5);
	\draw [ thick, directed=1] (-0.5,0.5) to (-0.5,-0.5);
	\node at (-0.5,-0.65) {\scriptsize $k$};
	\node at (0,-0.65) {\scriptsize $l$};
	\node at (0.5,1.15) {\scriptsize $l$};
	\node at (1,1.15) {\scriptsize $k$};
\et
};
\endxy \ .
\end{equation}  We impose on $\websupdown$ the requirement that every leftward crossing is invertible. In other words, for every $k,l \geq 1$ there is another generating morphism of type $\up_{k}\down_{l} \to \down_{l}\up_{k}$ (which we draw as a rightward crossing) which is a two-sided inverse to leftward crossing under composition.  Note that dots freely move through leftward crossings and, consequently, through rightward crossings.  

Using the rightward crossing, we define the rightward cup and cap by

\begin{equation}\label{rightward-cup-and-cap}
\xy
(0,0)*{
\bt[scale=.35, color=\clr]
%	\draw [ thick, looseness=2, ] (1,2.5) to [out=270,in=270] (-1,2.5);
	\draw [ thick, looseness=2, rdirected=-0.95] (1,4.5) to [out=90,in=90] (-1,4.5);
%	\draw [ thick, directed=1.01 ] (-1,2.5) to (1,4.5);
%	\draw [ thick, rdirected=-.99 ] (1,2.5) to (-1,4.5);
	\node at (-1.75,4.75) {\scriptsize $k$};
\et
};
\endxy :=
\xy
(0,0)*{
\bt[scale=.35, color=\clr]
%	\draw [ thick, looseness=2, ] (1,2.5) to [out=270,in=270] (-1,2.5);
	\draw [ thick, looseness=2, directed=0.99] (1,4.5) to [out=90,in=90] (-1,4.5);
	\draw [ thick, directed=1 ] (-1,2.5) to (1,4.5);
	\draw [ thick, rdirected=-.95 ] (1,2.5) to (-1,4.5);
	\node at (-1.75,4.75) {\scriptsize $k$};
\et
};
\endxy \ , \quad\quad
\xy
(0,0)*{
\bt[scale=.35, color=\clr]
	\draw [ thick, looseness=2, rdirected=-0.95 ] (1,2.5) to [out=270,in=270] (-1,2.5);
%	\draw [ thick, looseness=2, rdirected=-0.95] (1,4.5) to [out=90,in=90] (-1,4.5);
%	\draw [ thick, directed=1.01 ] (-1,2.5) to (1,4.5);
%	\draw [ thick, rdirected=-.99 ] (1,2.5) to (-1,4.5);
	\node at (-1.75,2.5) {\scriptsize $k$};
\et
};
\endxy :=
\xy
(0,0)*{
\bt[scale=.35, color=\clr]
	\draw [ thick, looseness=2, directed=0.99 ] (1,2.5) to [out=270,in=270] (-1,2.5);
%	\draw [ thick, looseness=2, directed=0.99] (1,4.5) to [out=90,in=90] (-1,4.5);
	\draw [ thick, directed=1 ] (-1,2.5) to (1,4.5);
	\draw [ thick, rdirected=-0.90] (1,2.5) to (-1,4.5);
	\node at (-1.75,4.75) {\scriptsize $k$};
\et
};
\endxy \ ,
\end{equation}
 The interested reader can verify they satisfy the right-handed versions of \cref{straighten-zigzag}.  The final relation we impose on $\qwebsupdown$ is that the following holds for all $k \geq 1$:

\begin{equation}\label{delete-bubble}
\xy
(0,0)*{
\bt[scale=.35, color=\clr]
	\draw [ thick, looseness=2, ] (1,2.5) to [out=270,in=270] (-1,2.5);
	\draw [ thick, looseness=2, directed=0.95] (1,2.5) to [out=90,in=90] (-1,2.5);
%	\draw [ thick, ] (-1,2.5) to (1,4.5);
%	\draw [ thick, ] (1,2.5) to (-1,4.5);
	\draw (1,2.5) \wdot  ;
	\node at (-1.75,2.5) {\scriptsize $k$};
\et
};
\endxy=0.
\end{equation}

\end{definition}

\subsection{Additional Relations}\label{SS:OrientedAdditionalRelations}

We define the downward dot by 

\begin{equation}\label{downward-dot}
\xy
(0,0)*{
\bt[scale=1.25, color=\clr]
	\draw[thick, directed=1] (1,1) to (1,0);
	\node at (1,-0.15) {\scriptsize $k$};
	\node at (1,1.15) {\scriptsize $k$};
	\draw (1,0.5) \wdot;
\et
};
\endxy:=
\xy
(0,0)*{
\bt[scale=1.25, color=\clr]
	\draw [ thick, looseness=2, ] (1,0.25) to [out=90,in=90] (0.5,0.25);
	\draw [ thick, looseness=2, ] (1.5,-0.25) to [out=270,in=270] (1,-0.25);
	\draw [ thick, ] (1,-0.25) to (1,0.25);
	\draw[ thick, directed=1] (0.5,0.25) to (0.5,-0.75);
	\draw[ thick, ] (1.5,-0.25) to (1.5,0.75);
	\draw (1,0) \wdot;
	\node at (0.5,-0.9) {\scriptsize $k$};
	\node at (1.5,0.9) {\scriptsize $k$};
\et
};
\endxy \ .
\end{equation}
The following additional relations hold in $\qwebsupdown$.

\begin{lemma}\label{L:downward relations} For any $k \geq 1$, the following relations hold.
\begin{itemize}
\item [(a.)]
\begin{align}\label{dot-past-cup}
\xy
(0,0)*{
\bt[scale=.35, color=\clr]
	\draw [ thick, looseness=2, ] (1,2.5) to [out=270,in=270] (-1,2.5);
	\draw [ thick, directed=1] (-1,2.5) to (-1,3);
	\draw [ thick, ] (1,2.5) to (1,3);
	\node at (-1,3.5) {\scriptsize $k$};
	\node at (1,3.5) {\scriptsize $k$};
	\draw (1,2.25) \wdot  ;
\et
};
\endxy=
\xy
(0,0)*{
\bt[scale=.35, color=\clr]
	\draw [ thick, looseness=2, ] (1,2.5) to [out=270,in=270] (-1,2.5);
	\draw [ thick, directed=1] (-1,2.5) to (-1,3);
	\draw [ thick, ] (1,2.5) to (1,3);
	\node at (-1,3.5) {\scriptsize $k$};
	\node at (1,3.5) {\scriptsize $k$};
	\draw (-1,2.25) \wdot  ;
\et
};
\endxy \ ,& \quad\quad
\xy
(0,0)*{
\bt[scale=.35, color=\clr]
	\draw [ thick, looseness=2,] (1,2.5) to [out=90,in=90] (-1,2.5);
	\draw [ thick, directed=1] (-1,2.5) to (-1,1.5);
	\draw [ thick, ] (1,2.5) to (1,1.5);
	\node at (-1,1) {\scriptsize $k$};
	\node at (1,1) {\scriptsize $k$};
	\draw (1,2.75) \wdot  ;
\et
};
\endxy=
\xy
(0,0)*{
\bt[scale=.35, color=\clr]
	\draw [ thick, looseness=2,] (1,2.5) to [out=90,in=90] (-1,2.5);
	\draw [ thick, directed=1] (-1,2.5) to (-1,1.5);
	\draw [ thick, ] (1,2.5) to (1,1.5);
	\node at (-1,1) {\scriptsize $k$};
	\node at (1,1) {\scriptsize $k$};
	\draw (-1,2.75) \wdot  ;
\et
};
\endxy, \\
\xy
(0,0)*{
\bt[scale=.35, color=\clr]
	\draw [ thick, looseness=2, ] (1,2.5) to [out=270,in=270] (-1,2.5);
	\draw [ thick, ] (-1,2.5) to (-1,3);
	\draw [ thick, directed=1] (1,2.5) to (1,3);
	\node at (-1,3.5) {\scriptsize $k$};
	\node at (1,3.5) {\scriptsize $k$};
	\draw (1,2.25) \wdot  ;
\et
};
\endxy=
\xy
(0,0)*{
\bt[scale=.35, color=\clr]
	\draw [ thick, looseness=2, ] (1,2.5) to [out=270,in=270] (-1,2.5);
	\draw [ thick, ] (-1,2.5) to (-1,3);
	\draw [ thick, directed=1] (1,2.5) to (1,3);
	\node at (-1,3.5) {\scriptsize $k$};
	\node at (1,3.5) {\scriptsize $k$};
	\draw (-1,2.25) \wdot  ;
\et
};
\endxy \ ,& \quad\quad
\xy
(0,0)*{
\bt[scale=.35, color=\clr]
	\draw [ thick, looseness=2,] (1,2.5) to [out=90,in=90] (-1,2.5);
	\draw [ thick, ] (-1,2.5) to (-1,1.5);
	\draw [ thick, directed=1] (1,2.5) to (1,1.5);
	\node at (-1,1) {\scriptsize $k$};
	\node at (1,1) {\scriptsize $k$};
	\draw (1,2.75) \wdot  ;
\et
};
\endxy=
\xy
(0,0)*{
\bt[scale=.35, color=\clr]
	\draw [ thick, looseness=2,] (1,2.5) to [out=90,in=90] (-1,2.5);
	\draw [ thick, ] (-1,2.5) to (-1,1.5);
	\draw [ thick, directed=1] (1,2.5) to (1,1.5);
	\node at (-1,1) {\scriptsize $k$};
	\node at (1,1) {\scriptsize $k$};
	\draw (-1,2.75) \wdot  ;
\et
};
\endxy,
\end{align}

\item [(b.)] 
\begin{align}\label{other-bubbles}
\xy
(0,0)*{
\bt[scale=.35, color=\clr]
	\draw [ thick, looseness=2, ] (1,2.5) to [out=270,in=270] (-1,2.5);
	\draw [ thick, looseness=2, rdirected=0.95] (1,2.5) to [out=90,in=90] (-1,2.5);
%	\draw [ thick, ] (-1,2.5) to (1,4.5);
%	\draw [ thick, ] (1,2.5) to (-1,4.5);
	\draw (1,2.5) \wdot  ;
	\node at (-1.75,2.5) {\scriptsize $k$};
\et
};
\endxy
=
\xy
(0,0)*{
\bt[scale=.35, color=\clr]
	\draw [ thick, looseness=2, ] (1,2.5) to [out=270,in=270] (-1,2.5);
	\draw [ thick, looseness=2, directed=0.95] (1,2.5) to [out=90,in=90] (-1,2.5);
%	\draw [ thick, ] (-1,2.5) to (1,4.5);
%	\draw [ thick, ] (1,2.5) to (-1,4.5);
	\draw (1,2.5) \wdot  ;
	\node at (-1.75,2.5) {\scriptsize $k$};
\et
};
\endxy=0, & \quad\quad
\xy
(0,0)*{
\bt[scale=.35, color=\clr]
	\draw [ thick, looseness=2, ] (1,2.5) to [out=270,in=270] (-1,2.5);
	\draw [ thick, looseness=2, rdirected=0.95] (1,2.5) to [out=90,in=90] (-1,2.5);
%	\draw [ thick, ] (-1,2.5) to (1,4.5);
%	\draw [ thick, ] (1,2.5) to (-1,4.5);
%	\draw (1,2.5) \wdot  ;
	\node at (-1.75,2.5) {\scriptsize $k$};
\et
};
\endxy
=
\xy
(0,0)*{
\bt[scale=.35, color=\clr]
	\draw [ thick, looseness=2, ] (1,2.5) to [out=270,in=270] (-1,2.5);
	\draw [ thick, looseness=2, directed=0.95] (1,2.5) to [out=90,in=90] (-1,2.5);
%	\draw [ thick, ] (-1,2.5) to (1,4.5);
%	\draw [ thick, ] (1,2.5) to (-1,4.5);
%	\draw (1,2.5) \wdot  ;
	\node at (-1.75,2.5) {\scriptsize $k$};
\et
};
\endxy=0.
\end{align}
\end{itemize}

\end{lemma}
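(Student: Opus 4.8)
The plan is to establish part (a) first and then to feed it into part (b).

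\emph{Part (a).} I would treat the first identity of \cref{dot-past-cup} and obtain the other three from it by reflecting the diagrams across a vertical axis and by the cup--cap (top--bottom) flip, both of which carry the defining relations of $\qwebsupdown$ to defining relations. The engine is the definition \cref{downward-dot} of the downward dot: substituting it into the left-hand side produces a web in which, immediately on the downward leg of the given cup, there sits a small cup--cap zigzag carrying an ordinary upward dot on its short connecting strand. Applying the straightening rules \cref{straighten-zigzag} (and their reflections) collapses this zigzag and deposits that upward dot onto the straightened leg; sliding it along that single strand --- which is literally the same composite at every height --- yields the right-hand side. This is a routine diagram chase with no surprises.

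\emph{Part (b), dotted bubbles.} The second equality in the first line of \cref{other-bubbles} is just \cref{delete-bubble} (the dot can be carried to any point of the loop using part (a) if needed). For the first equality I would rewrite the reversed cup and cap on the left-hand side using their definitions \cref{rightward-cup-and-cap}; the two crossings this introduces are mutually inverse (every leftward crossing \cref{E:leftwardcrossing} is invertible by fiat, hence so is its partner), so they cancel and the reversed dotted bubble is identified with the standard one, which vanishes by \cref{delete-bubble}.

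\emph{Part (b), undotted bubbles, and the main obstacle.} This is the only step requiring a genuine idea, since one is really being asked to see that the relations force the categorical dimension of $\up_k$ to be $0$. Fix $k\geq 1$, let $b_k\in\End_{\qwebsupdown}(\unit)$ denote the undotted $k$-strand bubble (the argument ignores its orientation), and draw it as two parallel $k$-strands joined by a cap above and a cup below. Let $B$ be the same bubble with two upward dots placed consecutively on the upward leg. On one hand, \cref{dot-collision} collapses the two dots and gives $B = k\,b_k$. On the other hand, using part (a) one may slide the upper dot down the upward leg, around the cup, and up the other leg to its top, showing that $B$ equals the bubble with one dot low on the first leg and one dot high on the second; the mirror manoeuvre shows $B$ also equals the bubble with one dot high on the first leg and one dot low on the second. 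But these two webs differ only by interchanging the heights of two odd morphisms sitting on parallel strands, so by the super-interchange law \cref{super-interchange} they differ by the sign $(-1)^{\p{\text{dot}}\p{\text{dot}}} = -1$. Hence $B = -B$, so $B = 0$, hence $k\,b_k = 0$, and since we work over $\k$ and $k\geq 1$ we conclude $b_k = 0$; the two undotted bubbles of \cref{other-bubbles} are special cases. The hard part is precisely this: noticing that one should insert a pair of dots, collapse them in two different ways with \cref{dot-collision}, and read off the forced sign through super-interchange --- everything else in the lemma is mechanical once part (a) is in hand.
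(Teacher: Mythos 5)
Your proposal is correct and follows essentially the same route as the paper: part (a) via the definition of the downward dot and the zigzag straightening \eqref{straighten-zigzag}, the dotted bubbles via identification of the reversed bubble with the standard one and \eqref{delete-bubble}, and the undotted bubbles by inserting a pair of dots, collapsing them with \eqref{dot-collision}, and transporting one dot around the loop with part (a) so that the super-interchange law \eqref{super-interchange} forces the bubble to equal its own negative. The paper's computation is organized as a single chain starting from $(1/k)$ times the doubly dotted bubble rather than from $B=k\,b_k$, but this is the identical argument.
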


\begin{proof}

For the first relation of \eqref{dot-past-cup}, we compute:

\[
\xy
(0,0)*{
\bt[scale=.35, color=\clr]
	\draw [ thick, looseness=2, ] (1,2.5) to [out=270,in=270] (-1,2.5);
	\draw [ thick, directed=1] (-1,2.5) to (-1,3);
	\draw [ thick, ] (1,2.5) to (1,3);
	\node at (-1,3.5) {\scriptsize $k$};
	\node at (1,3.5) {\scriptsize $k$};
	\draw (1,2.25) \wdot  ;
\et
};
\endxy\stackrel{\eqref{downward-dot}}{=}
\xy
(0,0)*{
\bt[scale=.35, color=\clr]
	\draw [ thick, looseness=2, ] (1,2.5) to [out=270,in=270] (-1,2.5);
	\draw [ thick, directed=1] (-1,2.5) to (-1,4.5);
	\draw [ thick, ] (1,2.5) to (1,3);
	\draw [thick, looseness=2, ] (1,3) to [out=90, in=90] (3,3);
	\draw [thick, ] (3,3) to (3,2.5);
	\draw [thick, looseness=2, ] (3,2.5) to [out=270, in=270] (5,2.5);
	\draw [thick, ] (5,2.5) to (5,4.5);
	\node at (-1,5) {\scriptsize $k$};
	\node at (5,5) {\scriptsize $k$};
	\draw (3,2.75) \wdot  ;
\et
};
\endxy\stackrel{\eqref{straighten-zigzag}}{=}
\xy
(0,0)*{
\bt[scale=.35, color=\clr]
	\draw [ thick, looseness=2, ] (1,2.5) to [out=270,in=270] (-1,2.5);
	\draw [ thick, directed=1] (-1,2.5) to (-1,3);
	\draw [ thick, ] (1,2.5) to (1,3);
	\node at (-1,3.5) {\scriptsize $k$};
	\node at (1,3.5) {\scriptsize $k$};
	\draw (-1,2.25) \wdot  ;
\et
};
\endxy.
\]

Proofs of the other relations in (a.) are similar. The first relation of (b.) follows from \ref{L:braidingforups}(e) Lemma 4.5.2 (e.) and \cref{delete-bubble}, while the second follows from
\[
\xy
(0,0)*{
\bt[scale=.35, color=\clr]
	\draw [ thick, looseness=2, ] (1,2.5) to [out=270,in=270] (-1,2.5);
	\draw [ thick, looseness=2, rdirected=0.95] (1,2.5) to [out=90,in=90] (-1,2.5);
%	\draw [ thick, ] (-1,2.5) to (1,4.5);
%	\draw [ thick, ] (1,2.5) to (-1,4.5);
%	\draw (1,2.5) \wdot  ;
	\node at (-1.75,2.5) {\scriptsize $k$};
\et
};
\endxy\stackrel{\eqref{dot-collision}}{=}\left(\frac{1}{k}\right)
\xy
(0,0)*{
\bt[scale=.35, color=\clr]
	\draw [ thick, looseness=2, ] (1,2.5) to [out=270,in=270] (-1,2.5);
	\draw [ thick, looseness=2, rdirected=0.95] (1,2.5) to [out=90,in=90] (-1,2.5);
%	\draw [ thick, ] (-1,2.5) to (1,4.5);
%	\draw [ thick, ] (1,2.5) to (-1,4.5);
	\draw (0.9,2.95) \wdot;
	\draw (0.9,2) \wdot;
	\node at (-1.75,2.5) {\scriptsize $k$};
\et
};
\endxy\stackrel{\eqref{dot-past-cup}}{=}\left(\frac{1}{k}\right)
\xy
(0,0)*{
\bt[scale=.35, color=\clr]
	\draw [ thick, looseness=2, ] (1,2.5) to [out=270,in=270] (-1,2.5);
	\draw [ thick, looseness=2, rdirected=0.95] (1,2.5) to [out=90,in=90] (-1,2.5);
%	\draw [ thick, ] (-1,2.5) to (1,4.5);
%	\draw [ thick, ] (1,2.5) to (-1,4.5);
	\draw (-1,2.25) \wdot;
	\draw (0.9,2) \wdot;
	\node at (-1.75,2.5) {\scriptsize $k$};
\et
};
\endxy\stackrel{\eqref{super-interchange}}{=}\left(-\frac{1}{k}\right)
\xy
(0,0)*{
\bt[scale=.35, color=\clr]
	\draw [ thick, looseness=2, ] (1,2.5) to [out=270,in=270] (-1,2.5);
	\draw [ thick, looseness=2, rdirected=0.95] (1,2.5) to [out=90,in=90] (-1,2.5);
%	\draw [ thick, ] (-1,2.5) to (1,4.5);
%	\draw [ thick, ] (1,2.5) to (-1,4.5);
	\draw (-1,2.25) \wdot;
	\draw (0.9,2.95) \wdot;
	\node at (-1.75,2.5) {\scriptsize $k$};
\et
};
\endxy\stackrel{\eqref{dot-past-cup}}{=}\left(-\frac{1}{k}\right)
\xy
(0,0)*{
\bt[scale=.35, color=\clr]
	\draw [ thick, looseness=2, ] (1,2.5) to [out=270,in=270] (-1,2.5);
	\draw [ thick, looseness=2, rdirected=0.95] (1,2.5) to [out=90,in=90] (-1,2.5);
%	\draw [ thick, ] (-1,2.5) to (1,4.5);
%	\draw [ thick, ] (1,2.5) to (-1,4.5);
	\draw (0.9,2.95) \wdot;
	\draw (0.9,2) \wdot;
	\node at (-1.75,2.5) {\scriptsize $k$};
\et
};
\endxy\stackrel{\eqref{dot-collision}}{=}-
\xy
(0,0)*{
\bt[scale=.35, color=\clr]
	\draw [ thick, looseness=2, ] (1,2.5) to [out=270,in=270] (-1,2.5);
	\draw [ thick, looseness=2, rdirected=0.95] (1,2.5) to [out=90,in=90] (-1,2.5);
%	\draw [ thick, ] (-1,2.5) to (1,4.5);
%	\draw [ thick, ] (1,2.5) to (-1,4.5);
%	\draw (1,2.5) \wdot  ;
	\node at (-1.75,2.5) {\scriptsize $k$};
\et
};
\endxy
\]
and the fact that clockwise and counterclockwise bubbles with no dots are the same web.

\end{proof}

Using leftward cups and caps we can define downward merges, splits, and crossings as follows.
\begin{equation}\label{rainbow-merge}
\xy
(0,0)*{
\bt[scale=.35, color=\clr]
	\draw [ thick, directed=1] (0,0.75) to (0,-0.5);
	\draw [ thick, rdirected=.45] (0,.75) to [out=30,in=270] (1,2.5);
	\draw [ thick, rdirected=.45] (0,.75) to [out=150,in=270] (-1,2.5); 
	\node at (0, -1) {\scriptsize $k\! +\! l$};
	\node at (-1,3) {\scriptsize $k$};
	\node at (1,3) {\scriptsize $l$};
\et
};
\endxy \ := 
\xy
(0,0)*{
\bt[scale=.35, color=\clr]
	\draw [ thick, ] (0, .75) to (0,1.5);
	\draw [ thick, looseness=2, ] (0,1.5) to [out=90,in=90] (-2,1.5);
	\draw [ thick, directed=1] (-2, 1.5) to (-2,-4);
	\draw [ thick, directed=.65] (1,-1) to [out=90,in=330] (0,.75);
	\draw [ thick, directed=.65] (-1,-1) to [out=90,in=210] (0,.75);
	\draw [ thick, looseness=2, ] (1,-1) to [out=270,in=270] (3,-1);
	\draw [ thick, looseness=1.5, ] (-1,-1) to [out=270,in=270] (5,-1);
	\draw [ thick, ] (3,-1) to (3,3);
	\draw [ thick, ] (5,-1) to (5,3);
	\node at (-2, -4.5) {\scriptsize $k\! +\! l$};
	\node at (3,3.5) {\scriptsize $k$};
	\node at (5,3.5) {\scriptsize $l$};
\et
};
\endxy, \ \quad\quad
\xy
(0,0)*{
\bt[scale=.35, color=\clr]
	\draw [ thick, rdirected=.55] (0, .75) to (0,2);
	\draw [ thick, rdirected=0.1] (1,-1) to [out=90,in=330] (0,.75);
	\draw [ thick, rdirected=0.1] (-1,-1) to [out=90,in=210] (0,.75);
	\node at (0, 2.5) {\scriptsize $k\! +\! l$};
	\node at (-1,-1.5) {\scriptsize $k$};
	\node at (1,-1.5) {\scriptsize $l$};
\et
};
\endxy  :=
\xy
(0,0)*{
\bt[scale=.35, color=\clr]
	\draw [ thick, directed=0.75] (0,0) to (0,.75);
	\draw [ thick, looseness=2, ] (0,0) to [out=270,in=270] (2,0);
	\draw [ thick, ] (2,0) to (2,5);
	\draw [ thick, ] (0,.75) to [out=30,in=270] (1,2.5);
	\draw [ thick, ] (0,.75) to [out=150,in=270] (-1,2.5); 
	\draw [ thick, looseness=2, ] (-1,2.5) to [out=90,in=90] (-3,2.5);
	\draw [ thick, looseness=1.5, ] (1,2.5) to [out=90,in=90] (-5,2.5);
	\draw [ thick, directed=1] (-3,2.5) to (-3,-1);
	\draw [ thick, directed=1] (-5,2.5) to (-5,-1);
	\node at (2, 5.5) {\scriptsize $k\! +\! l$};
	\node at (-3,-1.5) {\scriptsize $l$};
	\node at (-5,-1.5) {\scriptsize $k$};
\et
};
\endxy,
\end{equation}
\begin{equation*}
\xy
(0,0)*{
\bt[scale=1.25, color=\clr]
	\draw[thick, rdirected=0.1] (0,0) to (0.5,0.5);
	\draw[thick, rdirected=0.1] (0.5,0) to (0,0.5);
	\node at (0,-0.15) {\scriptsize $k$};
	\node at (0,0.65) {\scriptsize $l$};
	\node at (0.5,-0.15) {\scriptsize $l$};
	\node at (0.5,0.65) {\scriptsize $k$};
\et
};
\endxy := \ 
\xy
(0,0)*{
\bt[scale=1.25, color=\clr]
	\draw[thick, ] (0,0) to (0.5,0.5);
	\draw[thick, ] (0.5,0) to (0,0.5);
	\draw [ thick, looseness=2, ] (1,0) to [out=270,in=270] (0.5,0);
	\draw [ thick, looseness=1.5, ] (1.5,0) to [out=270,in=270] (0,0);
	\draw [thick, ] (1,1.25) to (1,0);
	\draw [thick, ] (1.5,1.25) to (1.5,0);
	\draw [ thick, looseness=2, ] (0,0.5) to [out=90,in=90] (-0.5,0.5);
	\draw [ thick, looseness=1.5, ] (0.5,0.5) to [out=90,in=90] (-1,0.5);
	\draw [ thick, directed=1] (-0.5,0.5) to (-0.5,-0.75);
	\draw [ thick, directed=1] (-1,0.5) to (-1,-0.75);
	\node at (-1,-0.9) {\scriptsize $k$};
	\node at (-0.5,-0.9) {\scriptsize $l$};
	\node at (1,1.4) {\scriptsize $l$};
	\node at (1.5,1.4) {\scriptsize $k$};
\et
};
\endxy.
\end{equation*}

By applying cups and caps to  \cref{associativity,digon-removal,dot-collision,dots-past-merges,dumbbell-relation,square-switch,square-switch-dots,double-rungs-1,double-rungs-2} one obtains a parallel set of relations on downward oriented diagrams.  We freely use these downward relations in what follows. When computing these the reader is advised to keep in mind the effect of the super interchange law when diagrams have multiple dots.  For example, \cref{dot-collision} becomes

\begin{equation}\label{reverse-dot-collision}
\xy
(0,0)*{
\begin{tikzpicture}[scale=.3, color=\clr] 
	\draw [ thick, rdirected=.05] (1,-2.75) to (1,2.5);
	\node at (1,3) {\scriptsize $k$};
	\node at (1,-3.25) {\scriptsize $k$};
	\draw (1,0.5) \wdot ;
	\draw (1,-0.5) \wdot ;
\end{tikzpicture}
};
\endxy=(-k)
\xy
(0,0)*{
\begin{tikzpicture}[scale=.3, color=\clr] 
	\draw [ thick, rdirected=.05] (1,-2.75) to (1,2.5);
	\node at (1,3) {\scriptsize $k$};
	\node at (1,-3.25) {\scriptsize $k$};
\end{tikzpicture}
};
\endxy.
\end{equation}

\subsection{The braiding on \texorpdfstring{$\qwebsupdown$}{Oriented Webs}}\label{SS:BraidingonOrientedWebs} Set the notation $\uddoublearrow$ for when an arrow has indeterminate orientation; for example, $\uddoublearrow_{k}$ could be either $\up_{k}$ or $\down_{k}$. More generally, for any tuple of nonnegative integers $\ob{a}=(a_{1}, \dotsc , a_{r})$ we write $\uddoublearrow_{\ob{a}} = \uddoublearrow_{a_{1}}\uddoublearrow_{a_{2}}\dotsb \uddoublearrow_{a_{r}}$. Recall from \cref{D:Upbraiding} the crossing of two upward strands of arbitrary thickness.  In \cref{E:leftwardcrossing} the leftward crossing was defined as was its inverse, the rightward crossing.  The downward crossing is defined in \cref{rainbow-merge}.  A straightforward set of calculations using the relations established so far verifies the relations given in \cref{L:braidingforups} hold for all possible strand orientations and all possible $h,k,l \in \Z_{\geq 1}$.  Direct calculations also yield the following lemma.
\begin{lemma}\label{} The following relations hold in $\qwebsupdown$ for all possible strand orientations and all possible $h,k,l \in \Z_{\geq 1}$.
\begin{itemize}
\item [(a.)] 
\[
\xy
(0,0)*{
\bt[color=\clr, scale=.35]
	\draw [ thick, looseness=2, ] (1,0) to [out=270,in=270] (-1,0);
	\draw [ thick, ] (1,0) to (3,2);
	\draw [ thick, ] (-1,0) to (1,2);
	\draw [ thick, ] (2,-1.5) to (2,0) to (-1,2);
	\node at (1,2.5) {\scriptsize $k$};
	\node at (3,2.5) {\scriptsize $k$};
	\node at (2,-2) {\scriptsize $l$};
	\node at (-1,2.5) {\scriptsize $l$};
\et
};
\endxy=
\xy
(0,0)*{
\bt[color=\clr, scale=.35]
	\draw [ thick, looseness=2, ] (1,0) to [out=270,in=270] (-1,0);
	\draw [ thick, ] (-2,-3.5) to (-2,0);
	\node at (-1,0.5) {\scriptsize $k$};
	\node at (1,0.5) {\scriptsize $k$};
	\node at (-2,-4) {\scriptsize $l$};
	\node at (-2,0.5) {\scriptsize $l$};
\et
};
\endxy \ ,\quad\quad
\xy
(0,0)*{\reflectbox{
\bt[color=\clr, scale=.35]
	\draw [ thick, looseness=2, ] (1,0) to [out=270,in=270] (-1,0);
	\draw [ thick, ] (1,0) to (3,2);
	\draw [ thick, ] (-1,0) to (1,2);
	\draw [ thick, ] (2,-1.5) to (2,0) to (-1,2);
	\node at (1,2.5) {\reflectbox{\scriptsize $k$}};
	\node at (3,2.5) {\reflectbox{\scriptsize $k$}};
	\node at (2,-2) {\reflectbox{\scriptsize $l$}};
	\node at (-1,2.5) {\reflectbox{\scriptsize $l$}};
\et
}};
\endxy=
\xy
(0,0)*{\reflectbox{
\bt[color=\clr, scale=.35]
	\draw [ thick, looseness=2, ] (1,0) to [out=270,in=270] (-1,0);
	\draw [ thick, ] (-2,-3.5) to (-2,0);
	\node at (-1,0.5) {\reflectbox{\scriptsize $k$}};
	\node at (1,0.5) {\reflectbox{\scriptsize $k$}};
	\node at (-2,-4) {\reflectbox{\scriptsize $l$}};
	\node at (-2,0.5) {\reflectbox{\scriptsize $l$}};
\et
}};
\endxy \ ,
\]
\[
\xy
(0,0)*{
\bt[color=\clr, scale=.35]
	\draw [ thick, looseness=2, ] (1,0) to [out=90,in=90] (-1,0);
	\draw [ thick, ] (2,0) to (2,3.5);
	\node at (-1,-0.5) {\scriptsize $k$};
	\node at (1,-0.5) {\scriptsize $k$};
	\node at (2,-0.5) {\scriptsize $l$};
	\node at (2,4) {\scriptsize $l$};
\et
};
\endxy=
\xy
(0,0)*{
\bt[color=\clr, scale=.35]
	\draw [ thick, looseness=2, ] (1,0) to [out=90,in=90] (-1,0);
	\draw [ thick, ] (-1,0) to (-3,-2) ;
	\draw [ thick, ] (-1,-2) to (1,0);
	\draw [ thick, ] (1,-2) to (-2,0) to (-2,1.5);
	\node at (-3,-2.5) {\scriptsize $k$};
	\node at (-1,-2.5) {\scriptsize $k$};
	\node at (1,-2.5) {\scriptsize $l$};
	\node at (-2,2) {\scriptsize $l$};
\et
};
\endxy \ ,\quad\quad
\xy
(0,0)*{\reflectbox{
\bt[color=\clr, scale=.35]
	\draw [ thick, looseness=2, ] (1,0) to [out=90,in=90] (-1,0);
	\draw [ thick, ] (2,0) to (2,3.5);
	\node at (-1,-0.5) {\reflectbox{\scriptsize $k$}};
	\node at (1,-0.5) {\reflectbox{\scriptsize $k$}};
	\node at (2,-0.5) {\reflectbox{\scriptsize $l$}};
	\node at (2,4) {\reflectbox{\scriptsize $l$}};
\et
}};
\endxy=
\xy
(0,0)*{\reflectbox{
\bt[color=\clr, scale=.35]
	\draw [ thick, looseness=2, ] (1,0) to [out=90,in=90] (-1,0);
	\draw [ thick, ] (-1,0) to (-3,-2) ;
	\draw [ thick, ] (-1,-2) to (1,0);
	\draw [ thick, ] (1,-2) to (-2,0) to (-2,1.5);
	\node at (-3,-2.5) {\reflectbox{\scriptsize $k$}};
	\node at (-1,-2.5) {\reflectbox{\scriptsize $k$}};
	\node at (1,-2.5) {\reflectbox{\scriptsize $l$}};
	\node at (-2,2) {\reflectbox{\scriptsize $l$}};
\et
}};
\endxy .
\]
\end{itemize}
\end{lemma}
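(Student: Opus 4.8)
The plan is to reduce each claimed identity to the corresponding statement for upward-oriented strands, where it is already available from \cref{L:braidingforups}, by unfolding every non-upward crossing into its definition in terms of upward crossings, cups, and caps.

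First I would observe that the four identities in part~(a.) are interchanged by the reflection and $180^\circ$-rotation symmetries of the relation list; these symmetries follow, as noted before \cref{L:downward relations}, from \eqref{straighten-zigzag}, its right-handed analogue coming from \eqref{rightward-cup-and-cap}, and the invertibility of the leftward crossing. Hence it suffices to prove one representative, say the first. Expanding the two crossings that appear above the cup on the left-hand side using \eqref{E:leftwardcrossing} and \eqref{rightward-cup-and-cap}, that side becomes a composite built only from upward crossings, the original cup, and a handful of auxiliary cups and caps introduced by the expansions.

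Next I would straighten: each auxiliary cup--cap zigzag is removed by an application of \eqref{straighten-zigzag} or its right-handed version. After these cancellations one is left with a single cup carrying a genuine upward crossing on one of its legs, and this last crossing is eliminated by combining the straightening relations with \cref{L:braidingforups}, whose identities (as noted just before the statement) are valid for all strand orientations — concretely, by pulling the crossing around the bend of the cup via \eqref{straighten-zigzag} and cancelling it against its inverse using \cref{L:braidingforups}(a.). What remains is exactly the bare cup tensored with a straight strand, which is the right-hand side. The three remaining identities of part~(a.), as well as the cap versions obtained by rotation, follow by the same argument after the appropriate reflection/rotation and, where needed, reversing orientations with \eqref{rightward-cup-and-cap} and \eqref{rainbow-merge}; this is precisely the style of proof used for \cref{L:downward relations}.

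The only real obstacle is the pictorial bookkeeping — keeping track of strand orientations and of the positions of the auxiliary cups and caps while unfolding. Nothing subtle occurs analytically: the braiding is even, so no signs are produced by the crossings, and no dots occur in these relations, so the super-interchange law plays no role; once the diagrams are drawn carefully the cancellations are forced. I would therefore present the representative case as a short chain of pictures labelled by \eqref{E:leftwardcrossing}, \eqref{straighten-zigzag}, and \cref{L:braidingforups}, leaving the symmetric cases to the reader.
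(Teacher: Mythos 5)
The paper offers no argument for this lemma beyond the phrase ``direct calculations,'' so there is nothing to compare against line by line; your strategy --- unfold every mixed or downward crossing into its definition via \eqref{E:leftwardcrossing}, \eqref{rightward-cup-and-cap}, and \eqref{rainbow-merge}, reduce to one representative by the cup/cap symmetries, and finish with \eqref{straighten-zigzag} and the involutivity of the upward crossing --- is exactly the intended kind of computation and does lead to a proof.

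One step of your description, however, would fail if executed literally. Take the representative case where the cup is $\coev_k:\unit\to\up_k\down_k$ and the $l$-strand is upward. After substituting \eqref{E:leftwardcrossing} for the mixed crossing, the diagram contains the original cup, an auxiliary cup and cap, and two upward crossings; the auxiliary cup and the auxiliary cap do \emph{not} form a removable zigzag, because the crossing coming from \eqref{E:leftwardcrossing} sits on the strand joining them, and \eqref{straighten-zigzag} only removes a bare bend. Likewise, \eqref{straighten-zigzag} cannot ``pull a crossing around the bend of a cup'' --- that slide is precisely the identity being proven, so invoking it there is circular. The computation goes through once you pair the bends correctly: use the super-interchange law (everything here is even, so no signs) to move the \emph{original} cup up past the crossing it does not touch; then the original cup together with the \emph{auxiliary} cap forms a genuine instance of \eqref{straighten-zigzag} and disappears, leaving $(X\otimes 1)\circ(X\otimes 1)\circ(1\otimes\coev_k)$ with two adjacent upward crossings that cancel by \cref{L:braidingforups}(a.). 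Equivalently, and perhaps more transparently, one can observe that composing the claimed identity with $\ev_k$ and the inverse upward crossing, and straightening with \eqref{straighten-zigzag}, transforms it exactly into the defining formula \eqref{E:leftwardcrossing} for the leftward crossing; since that transformation is a bijection on the relevant $\Hom$-spaces (its inverse is given by the opposite cup/cap composition, again by \eqref{straighten-zigzag}), the identity holds. With that correction your reduction of the remaining fifteen orientation cases to this one is fine.
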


Furthermore, for arbitrary objects $\ob{a}, \ob{b}$ in $\qwebsupdown$, one can use the appropriately oriented version of \cref{D:generalupbraiding} to define the braiding isomorphism $\beta_{\uddoublearrow_{\ob{a}}, \uddoublearrow_{b}} : \uddoublearrow_{\ob{a}} \otimes \uddoublearrow_{\ob{b}} \to \uddoublearrow_{\ob{b}} \otimes \uddoublearrow_{\ob{a}}$.  These morphisms make $\qwebsupdown$ into a symmetric braided monoidal supercategory.  

These relations also imply that the right and left duals of morphisms coincide. That is, if one instead uses rightward cups and caps to define rightward or downward crossings, downward dots, etc., then these coincide with the morphisms defined here via leftward cups and caps.  Calculations also show that the rightward crossing defined using the upward crossing and rightward cups and caps is the inverse of the leftward crossing and, hence, equals the rightward crossing.  Finally, one can verify strands of arbitrary thickness and orientation satisfy the Reidemeister moves.

\subsection{Isomorphisms between morphism spaces in \texorpdfstring{$\qwebsupdown $}{Oriented Webs} }\label{SS:Useful isomorphisms}

% In particular, there are unique shortest length $\sigma^{L}_{\gamma}, \sigma^{R}_{\gamma} \in \Sigma_{t}$ such that $\gamma \cdot \sigma^{L}_{\gamma}$ has all up arrows to the left of all down arrows, and  $\gamma \cdot \sigma^{R}_{\gamma}$ has all up arrows to the right of all down arrows. 
We now introduce isomorphisms between various morphism spaces which will be useful in what follows.  The symmetric group on $t$ letters, $\Sigma_{t}$, acts by place permutation on the set of all objects of $\qwebsupdown$ which are the tensor product of exactly $t$ generating objects. For example, if $\ob{a} = \uddoublearrow_{a_{1}} \uddoublearrow_{a_{2}} \dotsb \uddoublearrow_{a_{t}}$ and $\sigma \in \Sigma_{t}$, then $\sigma \cdot \ob{a}  = \uddoublearrow_{a_{\sigma(1)}} \uddoublearrow_{a_{\sigma(2)}} \dotsb \uddoublearrow_{a_{\sigma(t)}}$.  Moreover, for any object$\ob{a}$ and $\sigma \in \Sigma_{t}$ the braiding morphisms in $\qwebsupdown$ give a morphism,
\begin{equation*}
d_{\sigma} = d_{\sigma, \ob{a}} \in \Hom_{\qwebsupdown}(\ob{a}, \sigma \cdot \ob{a}),
\end{equation*}
which is an invertible morphism in $\qwebsupdown$ with $d_{\sigma}^{-1}= d_{\sigma^{-1}}$.  More generally, given objects $\ob{a}$ and $\ob{b}$ which are the tensor product of $t$ and $u$ generators, respectively, and any $\sigma \in \Sigma_{t}$ and $\tau \in \Sigma_{u}$, then the map $ w \mapsto d_{\tau} \circ w  \circ d_{\sigma^{-1}}$ defines a superspace isomorphism 
\begin{equation}\label{E:twistingstrandisomorphism}
\Hom_{\qwebsupdown}\left(\ob{a}, \ob{b} \right) \xrightarrow{\cong} \Hom_{\qwebsupdown}(\sigma \cdot \ob{a}, \tau \cdot \ob{b}).
\end{equation} In addition, given objects of the form $\ob{a} = \up_{a_{1}}\dotsb \up_{a_{r}}\down_{a_{r+1}}\dotsb \down_{a_{t}}$ and $\ob{b} = \down_{b_{1}}\dotsb \down_{b_{s}}\up_{b_{s+1}}\dotsb \up_{b_{u}}$, then there is a superspace map,
\begin{equation}\label{E:cupcapisomorphism}
\Hom_{\qwebsupdown}(\ob{a}, \ob{b}) \xrightarrow{\cong} \Hom_{\qwebsupdown}\left( \up_{b_{s}}\dotsb \up_{b_{1}}\up_{a_{1}}\dotsb \up_{a_{r}}, \up_{b_{s+1}}\dotsb \up_{b_{u}}\up_{a_{t}}\dotsb \up_{a_{r+1}} \right),
\end{equation} given on diagrams by
\[
\xy
(0,0)*{
\begin{tikzpicture}[scale=.3, color=\clr]
	\draw [ thick] (-2.3,-8) rectangle (4.3,-6);
	\node at (1,-7) {$w$};
	\draw [ thick, directed=0.75] (-2,-9.5) to (-2,-8);
	\draw [ thick, directed=0.75] (0,-9.5) to (0,-8);
	\draw [ thick, rdirected=0.75] (2,-9.5) to (2,-8);
	\draw [ thick, rdirected=0.75] (4,-9.5) to (4,-8);
	\node at (2,-10.25) {\scriptsize $a_{r+1}$};
	\node at (4,-10.25) {\scriptsize $a_t$};
	\node at (-2,-10.25) {\scriptsize $a_1$};
	\node at (-1,-9.125) {\,$\cdots$};
	\node at (3,-9.125) {\,$\cdots$};
	\node at (0,-10.25) {\scriptsize $a_r$};
	\draw [ thick, directed=0.75] (2,-6) to (2,-4.5);
	\draw [ thick, directed=0.75] (4,-6) to (4,-4.5);
	\draw [ thick, rdirected=0.75] (-2,-6) to (-2,-4.5);
	\draw [ thick, rdirected=0.75] (0,-6) to (0,-4.5);
	\node at (2,-3.75) {\scriptsize $b_{s+1}$};
	\node at (3,-5.625) {\,$\cdots$};
	\node at (-1,-5.625) {\,$\cdots$};
	\node at (4,-3.75) {\scriptsize $b_u$};
	\node at (-2,-3.75) {\scriptsize $b_1$};
	\node at (0,-3.75) {\scriptsize $b_s$};
\end{tikzpicture}
};
\endxy\mapsto
\xy
(0,0)*{
\begin{tikzpicture}[scale=.3, color=\clr]
	\draw [ thick] (-2.3,-8) rectangle (4.3,-6);
	\draw [ thick, looseness=2, directed=0.5] (-4,-6) to [out=90,in=90] (-2,-6);
	\draw [ thick, looseness=1.75, directed=0.5] (-6,-6) to [out=90,in=90] (0,-6);
	\node at (1,-7) {$w$};
	\draw [ thick, looseness=2, rdirected=0.5] (6,-8) to [out=270,in=270] (4,-8);
	\draw [ thick, looseness=1.75, rdirected=0.5] (8,-8) to [out=270,in=270] (2,-8);
	\draw [ thick, directed=0.4] (-6,-11) to (-6,-6);
	\draw [ thick, directed=0.4] (-4,-11) to (-4,-6);
	\draw [ thick, directed=0.65] (-2,-11) to (-2,-8);
	\draw [ thick, directed=0.65] (0,-11) to (0,-8);
	\node at (-6,-11.75) {\scriptsize $b_s$};
	\node at (-5,-10) {\,$\cdots$};
	\node at (-5,-5.5) { \ $\cdots$};
	\node at (-4,-11.75) {\scriptsize $b_1$};
	\node at (-2,-11.75) {\scriptsize $a_1$};
	\node at (-1,-10) {\,$\cdots$};
	\node at (-1,-5.5) {\,$\cdots$};
	\node at (0,-11.75) {\scriptsize $a_r$};
	\draw [ thick, directed=0.5] (2,-6) to (2,-3);
	\draw [ thick, directed=0.5] (4,-6) to (4,-3);
	\draw [ thick, directed=0.7] (6,-8) to (6,-3);
	\draw [ thick, directed=0.7] (8,-8) to (8,-3);
	\node at (2,-2.25) {\scriptsize $b_{s+1}$};
	\node at (3,-4) {\,$\cdots$};
	\node at (3,-8.5) {\,$\cdots$};
	\node at (4,-2.25) {\scriptsize $b_u$};
	\node at (6,-2.25) {\scriptsize $a_{t}$};
	\node at (7,-4) {\,$\cdots$};
	\node at (7,-8.5) {\,$\cdots$};
	\node at (8,-2.25) {\scriptsize $a_{r+1}$};
\end{tikzpicture}
};
\endxy.
\]
The inverse is given by a similar map (thanks to relation \cref{straighten-zigzag}).

As in the proof of \cref{T:fullness}, whenever a morphism between $\Hom$-spaces in $\qwebsupdown$ is defined by applying some combination of compositions and tensor products of morphisms we can apply the functor $\Psi$ and obtain a corresponding morphism in the category of $\fq (n)$-modules.  In particular, commuting diagrams go to commuting diagrams and isomorphisms go to isomorphisms.

\subsection{Further Functors }\label{SS:FurtherFunctors}

\begin{theorem}\label{T:Thetafunctor}  There is a fully faithful functor of symmetric monoidal supercategories 
\[
\qwebs \to \qwebsupdown 
\] which takes objects and morphisms in $\qwebs$ to objects and morphisms of the same name in $\qwebsupdown$.

\end{theorem}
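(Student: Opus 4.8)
The plan is to exhibit the functor $F\colon \qwebs \to \qwebsupdown$ on generators in the obvious way, check well-definedness, then prove full faithfulness by a dimension-count/retraction argument that reduces to the case of all-ones objects, where $\End_{\qwebs}(\up_1^k) \cong \Ser_k \cong \End_{\qwebsupdown}(\up_1^k)$ has already been (or will be) established.

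First I would define $F$ on objects by $\up_k \mapsto \up_k$ and on the generating morphisms (dot, merge, split) by sending each to the morphism of the same name in $\qwebsupdown$; there is nothing to check for the generators' parities. Well-definedness is immediate: the defining relations \cref{associativity,digon-removal,dot-collision,dots-past-merges,dumbbell-relation,square-switch,square-switch-dots,double-rungs-1,double-rungs-2} of $\qwebs$ are, by \cref{D:orientedwebs}, literally among the imposed relations of $\qwebsupdown$, so $F$ is a well-defined monoidal superfunctor. That $F$ respects the symmetric braiding follows because $\beta_{\up_{\ob a},\up_{\ob b}}$ in both categories is built from merges, splits, and the definitional expression for the upward crossing (\cref{SS:crossings}, \cref{D:Upbraiding,D:generalupbraiding}), so $F$ sends one to the other; hence $F$ is a functor of symmetric monoidal supercategories. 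Fullness onto objects of the form $\up_{\ob a}$ is clear since those are all the objects in the image.

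For full faithfulness, fix objects $\ob a = (a_1,\dots,a_r)$ and $\ob b = (b_1,\dots,b_s)$ of $\qwebs$, which I view as the upward objects $\up_{\ob a},\up_{\ob b}$ in $\qwebsupdown$. I would use the ``explode into $1$-strands'' embedding $\alpha$ from the proof of \cref{T:fullness}: in both $\qwebs$ and $\qwebsupdown$ one has an injective superspace map $\Hom(\up_{\ob a},\up_{\ob b}) \hookrightarrow \Hom(\up_1^{|\ob a|},\up_1^{|\ob b|})$ whose image is $e_{\ob b}\Hom(\up_1^{|\ob a|},\up_1^{|\ob b|})e_{\ob a}$ for the clasp idempotents $e_{\ob a} = Cl_{a_1}\otimes\dots\otimes Cl_{a_r}$, $e_{\ob b} = Cl_{b_1}\otimes\dots\otimes Cl_{b_s}$, with left inverse given by complementary merges/splits and \cref{digon-removal}. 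These constructions are built entirely from merges, splits, dots, and relation \cref{digon-removal}, all of which $F$ preserves, so they are compatible: $F$ intertwines the two copies of $\alpha$. Thus it suffices to show $F$ restricts to an isomorphism $\End_{\qwebs}(\up_1^k) \xrightarrow{\cong} \End_{\qwebsupdown}(\up_1^k)$ for all $k$ (and more precisely, between the relevant cut-down spaces $e_{\ob b}(-)e_{\ob a}$, which then follows). By \cref{P:sergeev-isomorphism} the source is $\Ser_k$ via $\xi_k$. On the target side one has the analogous homomorphism $\Ser_k \to \End_{\qwebsupdown}(\up_1^k)$, and $F\circ\xi_k^{\qwebs}$ equals $\xi_k^{\qwebsupdown}$ since both send the Sergeev generators $c_i,s_j$ to the morphisms of the same name (these are defined via merges, splits, dots, identities only). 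So it remains to show $\xi_k^{\qwebsupdown}$ is an isomorphism.

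The main obstacle is establishing injectivity and surjectivity of $\xi_k^{\qwebsupdown}\colon \Ser_k \to \End_{\qwebsupdown}(\up_1^k)$ — i.e., that adding cups, caps, and the invertibility-of-leftward-crossing relation does not collapse or enlarge the endomorphism algebra of the all-ones upward object. Surjectivity: I would adapt \cref{sergeev-generators}, noting any web in $\End_{\qwebsupdown}(\up_1^k)$ with all boundary strands pointing up and thin can, using the straightening relations \cref{straighten-zigzag,delete-bubble}, \cref{L:downward relations}, and the crossing relations, be reduced so that all internal cups and caps cancel, leaving a web with only upward crossings and dots — exactly as in the $\qwebs$ case after expanding merges and splits via \cref{L:claspsum}. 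Injectivity: compose with $\Psi_n\colon \qwebsupdown \to \modsupdown$ for $n>k$; on the all-ones upward object $\Psi_n$ lands in $\End_{\fq(n)}(V_n^{\otimes k})$, and the triangle with Sergeev duality's $\psi$ (\cref{T:sergeev-duality}) commutes, so $\psi$ being an isomorphism for $n>k$ forces $\xi_k^{\qwebsupdown}$ to be injective, just as in \cref{P:sergeev-isomorphism}. Once $\xi_k^{\qwebsupdown}$ is an isomorphism, a diagram chase through the two $\alpha$-embeddings gives that $F$ is bijective on each $\Hom_{\qwebs}(\up_{\ob a},\up_{\ob b})$, completing full faithfulness. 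I would also remark that faithfulness alone already follows from the existence of $\Psi_n$ factoring through both categories, but the explicit retraction argument is what gives fullness.
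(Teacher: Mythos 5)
Your proof is correct, but it reaches the conclusion by a somewhat different route than the paper. The paper works directly with arbitrary upward objects: given any web diagram $d\in\Hom_{\qwebsupdown}(\ob{a},\ob{b})$ with $\ob{a},\ob{b}$ purely upward, it uses the end behavior of strands, planar isotopy, the straightening rules \cref{straighten-zigzag}, the oriented versions of the relations in \cref{L:braidingforups}, and bubble removal (\cref{delete-bubble}, \cref{L:downward relations}) to rewrite $d$ as a web lying in $\Hom_{\qwebs}(\ob{a},\ob{b})$; fullness is then immediate and faithfulness is asserted to follow. You instead reduce to the all-ones objects via the clasp/explosion embedding $\alpha$ from \cref{T:fullness}, identify $\End_{\qwebsupdown}(\up_{1}^{k})$ with $\Ser_{k}$ --- surjectivity by the same straightening argument packaged as an oriented version of \cref{sergeev-generators}, injectivity by composing with $\Psi_{n}$ for $n>k$ and invoking \cref{T:sergeev-duality} exactly as in \cref{P:sergeev-isomorphism} --- and then transport the isomorphism back through the two compatible copies of $\alpha$. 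The crux (eliminating cups, caps, and rightward crossings from a diagram with all-upward boundary) is shared, and both you and the paper leave its details at the level of a sketch, so no penalty there. What your version buys is an explicit, checkable argument for faithfulness: the paper's ``faithfulness follows'' is terse, whereas your detour through $\Psi_{n}$ and Sergeev duality settles injectivity cleanly, at the cost of making the proof lean on the representation-theoretic input rather than being purely diagrammatic. Your bookkeeping of the image of $\alpha$ as $e_{\ob{b}}(-)e_{\ob{a}}$ also dovetails with the proof of \cref{T:fullness}, which is a pleasant structural economy.
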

\begin{proof}  By construction the defining relations on morphisms of $\qwebs$ hold in $\qwebsupdown$.  Consequently, there is a well-defined functor of monoidal supercategories.  As discussed in \cref{SS:BraidingonOrientedWebs}, the relations listed in \cref{L:braidingforups} hold for all possible orientations of the strands in a diagram.  Now if $\ob{a}, \ob{b}$ are objects of $\qwebs$ and if $d$ is a web diagram which lies in $\Hom_{\qwebsupdown}(\ob{a}, \ob{b})$, then considering the end behavior of strands we see that we can use these relations, planar isotopies, and \cref{straighten-zigzag} to eliminate all cups, caps, and rightward crossings.  Furthermore, by \cref{delete-bubble} and \cref{L:downward relations} there are no bubbles in $d$.   Taken together, this implies $d$ is equal to a web diagram which lies in $\Hom_{\qwebs}(\ob{a}, \ob{b})$.  Fullness and faithfulness follows.
\end{proof}

We will identify $\qwebs$ as a full subcategory of $\qwebsupdown$ via this functor.  Let $\modsupdown$ denote the full monoidal subsupercategory of $\fq (n)$-supermodules generated by the objects 
\[
\left\{S^{p}\left( V_{n}\right), S^{p} \left(V_{n}  \right)^{*} \mid p\geq 1 \right\}.
\]
  We next show the functor $\Psi_n : \qwebs \to \mods$ can be extended to $\Psi_n: \qwebsupdown\to \modsupdown$.  Set the notation 
\begin{align}\label{}
\ev_{k}: S^{k}(V_{n})^{*}\otimes S^{k}(V_{n}) \to \C,\\
\coev_{k}: \C \to S^{k}(V_{n})\otimes S^{k}(V_{n})^{*},
\end{align} for the evaluation and coevaluation maps defined in \cref{SS: monoidal supercats}.

\begin{theorem}\label{T:Psiupdown} There is an essentially surjective, full functor of symmetric monoidal supercategories
\[
\Psi_n : \qwebsupdown \to \modsupdown
\] given on generating objects by 
\begin{align*}
\Psi_n \left(\up_{k} \right) &= S^{k}(V_{n}),\\
 \Psi_n \left(\down_{k} \right) &= S^{k}(V_{n})^{*}
\end{align*}
and on generating morphisms by defining it on the dot, merge, and split as in \cref{psi-functor} and
\iffalse
\[
\Psi_{n}\left(
\xy
(0,0)*{
\begin{tikzpicture}[scale=1.25, color=\clr ]
	\draw[thick, directed=1] (1,0) to (1,1);
	\node at (1,-0.15) {\scriptsize $k$};
	\node at (1,1.15) {\scriptsize $k$};
	\draw (1,0.5) \wdot ;
\end{tikzpicture}
};
\endxy\right)=\Phi_{1}(\htilde_{\overline{1}}1_{(k)}),\quad\quad\Psi_{n}\left(
\xy
(0,0)*{
\begin{tikzpicture}[scale=.35, color=\clr ]
	\draw [ thick, directed=1] (0, .75) to (0,2);
	\draw [ thick, directed=.65] (1,-1) to [out=90,in=330] (0,.75);
	\draw [ thick, directed=.65] (-1,-1) to [out=90,in=210] (0,.75);
	\node at (0, 2.5) {\scriptsize $k\! +\! l$};
	\node at (-1,-1.5) {\scriptsize $k$};
	\node at (1,-1.5) {\scriptsize $l$};
\end{tikzpicture}
};
\endxy\right)=\Phi_{2}(\etilde_1^{(l)}1_{(k,l)}),\quad\quad\Psi_{n}\left(
\xy
(0,0)*{
\begin{tikzpicture}[scale=.35, color=\clr ]
	\draw [ thick, directed=0.65] (0,-0.5) to (0,.75);
	\draw [ thick, directed=1] (0,.75) to [out=30,in=270] (1,2.5);
	\draw [ thick, directed=1] (0,.75) to [out=150,in=270] (-1,2.5); 
	\node at (0, -1) {\scriptsize $k\! +\! l$};
	\node at (-1,3) {\scriptsize $k$};
	\node at (1,3) {\scriptsize $l$};
\end{tikzpicture}
};
\endxy\right)=\Phi_{2}(\ftilde_1^{(l)}1_{(k+l,0)}),
\]
\fi
\[
\Psi_n\left( 
\xy
(0,0)*{
\bt[scale=.35, color=\clr]
	\draw [ thick, looseness=2, directed=0.99] (1,2.5) to [out=270,in=270] (-1,2.5);
	\node at (-1,3) {\scriptsize $k$};
	\node at (1,3) {\scriptsize $k$};
\et
};
\endxy \right)=\coev_k,\quad\quad
\Psi_n\left(
\xy
(0,0)*{
\bt[scale=.35, color=\clr]
	\draw [ thick, looseness=2, directed=0.99] (1,2.5) to [out=90,in=90] (-1,2.5);
	\node at (-1,2) {\scriptsize $k$};
	\node at (1,2) {\scriptsize $k$};
\et
};
\endxy
\right)=\ev_k.
\iffalse
\quad\quad
\Psi_n\left(\xy
(0,0)*{
\bt[color=\clr, scale=1.25]
	\draw[thick, directed=1] (0,0) to (0.5,0.5);
	\draw[thick, rdirected=0.1] (0.5,0) to (0,0.5);
	\node at (0,-0.15) {\scriptsize $k$};
	\node at (0,0.65) {\scriptsize $l$};
	\node at (0.5,-0.15) {\scriptsize $l$};
	\node at (0.5,0.65) {\scriptsize $k$};
\et
};
\endxy\right)=X_{k,l*}
\fi
\]
%where $X_{k,l*}\colon\s^k(V_n)\otimes\s^l(V_n)^*\to\s^l(V_n)^*\otimes\s^k(V_n)$ comes from the symmetry on $\modsupdown$.
\end{theorem}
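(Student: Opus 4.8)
The plan is to define $\Psi_n$ on objects and generators exactly as in the statement (with the rightward crossing sent to the inverse of $\Psi_n$ of the leftward crossing, which we will see exists) and to verify that it respects all the defining relations of $\qwebsupdown$, then to deduce fullness by reducing to the already-established fullness of $\Psi_n$ on $\qwebs$ (\cref{T:fullness}), and finally to note essential surjectivity. The one structural input that makes everything go is that $S^k(V_n)$ is finite dimensional, so $S^k(V_n)^*$ is its (even) dual object in $\gsmod$ in the sense of \cref{SS: monoidal supercats}, with structure maps $\ev_k$ and $\coev_k$ satisfying the ordinary triangle identities. The relations of $\qwebsupdown$ (see \cref{D:orientedwebs}) break into four groups: the relations of $\qwebs$; the zigzag relations \cref{straighten-zigzag}; invertibility of the leftward crossing \cref{E:leftwardcrossing}; and the bubble relation \cref{delete-bubble}.

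For the first group, \cref{psi-functor} already tells us the images of the dot, merge, and split satisfy the defining relations of $\qwebs$. The zigzag relations \cref{straighten-zigzag} are precisely the (even) triangle identities for $\coev_k$ and $\ev_k$, hence hold by definition of the dual object. For the leftward crossing: by \cref{E:leftwardcrossing} its image is the mate, formed using $\ev$ and $\coev$, of the graded flip $S^k(V_n)\otimes S^l(V_n)\to S^l(V_n)\otimes S^k(V_n)$; since the flip is an isomorphism so is its mate, so $\Psi_n$ of the leftward crossing is invertible, and one checks its inverse is the mate of the inverse flip. We therefore define $\Psi_n$ of the (adjoined) rightward crossing to be this inverse, and the invertibility relation is respected; the derived relations \cref{rightward-cup-and-cap} then require nothing further.

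The only relation needing an actual computation is the bubble relation \cref{delete-bubble}. Unwinding that closed diagram, $\Psi_n$ of its left-hand side is an endomorphism of the unit object $\C$ which, up to a sign coming from the interchange law, equals the supertrace of the odd operator $D:=\Psi_n(\text{dot})\in\End(S^k(V_n))$: evaluating $\coev_k$, applying $D$ to one tensor factor, and closing with $\ev_k$ yields $\sum_i \pm\, e_i^*(D(e_i))$ for a homogeneous basis $\{e_i\}$ of $S^k(V_n)$. Since $D$ is odd it has no even diagonal block, so this supertrace is $0$ and \cref{delete-bubble} holds. (One may instead reduce to the $k=1$ bubble using \cref{dot-on-k-strand} and \cref{digon-removal}.) Combined with the discussion in \cref{psi-functor}, which identifies the upward braiding with the graded flip — and hence, since every braiding $\beta$ on $\qwebsupdown$ is assembled from the upward braiding together with cups and caps, identifies each $\beta$ with the appropriate graded flip — this shows $\Psi_n$ is a well-defined functor of symmetric monoidal supercategories.

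It remains to prove fullness; essential surjectivity is immediate, as $\modsupdown$ is generated as a monoidal supercategory by the $S^p(V_n)$ and $S^p(V_n)^*$, all in the image of $\Psi_n$. Fix objects $\ob{a},\ob{b}$ of $\qwebsupdown$. Using the braiding to reorder strands (\cref{E:twistingstrandisomorphism}) and then cups and caps (\cref{E:cupcapisomorphism}) we get an isomorphism $\Hom_{\qwebsupdown}(\ob{a},\ob{b})\cong\Hom_{\qwebsupdown}(\up_{\ob{c}},\up_{\ob{d}})$ for suitable all-upward objects, and by \cref{T:Thetafunctor} the right-hand side is $\Hom_{\qwebs}(\up_{\ob{c}},\up_{\ob{d}})$. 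These isomorphisms are built by composing and tensoring with fixed morphisms, so applying $\Psi_n$ produces the parallel isomorphisms between the corresponding $\fq(n)$-module $\Hom$-spaces (using that $S^p(V_n)^*$ is dual to $S^p(V_n)$ in $\gsmod$), giving a commuting square. Since $\Psi_n$ is full on $\qwebs$ by \cref{T:fullness}, a diagram chase gives fullness of $\Psi_n$ on $\qwebsupdown$. The step I expect to be the main obstacle is verifying that these $\Hom$-space isomorphisms are genuinely intertwined by $\Psi_n$ — that the diagrammatic ``bending the strands'' operations of \cref{SS:Useful isomorphisms} correspond exactly to the rigidity isomorphisms of $S^p(V_n)^*$ on the module side — together with the sign bookkeeping needed to identify the bubble in \cref{delete-bubble} with a vanishing supertrace.
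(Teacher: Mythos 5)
Your proposal is correct and follows essentially the same route as the paper: the paper's proof simply observes that the $\qwebs$-relations are already handled by \cref{psi-functor} and that \cref{straighten-zigzag}, \cref{delete-bubble}, and the invertibility of \cref{E:leftwardcrossing} "follow from a direct calculation," while fullness is obtained (in the proof of \cref{T:maintheorem}) by exactly your reduction to all-upward objects via \cref{SS:Useful isomorphisms}, \cref{T:Thetafunctor}, and \cref{T:fullness}. Your write-up supplies the details the paper leaves implicit — the zigzags as triangle identities for the dual object, the leftward crossing as the mate of the flip, and the dotted bubble as the vanishing trace of an odd operator — all of which are sound.
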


\begin{proof} Since $\Psi_{n}$ is already known to preserve \cref{associativity,digon-removal,dot-collision,dots-past-merges,dumbbell-relation,square-switch,square-switch-dots,double-rungs-1,double-rungs-2} of $\qwebs$, it suffices to verify relations \cref{straighten-zigzag,delete-bubble} and the invertibility of \cref{E:leftwardcrossing}.  This follows from a direct calculation.
\end{proof}

\begin{remark}\label{R:Upsilonfunctor} The oriented Brauer-Clifford supercategory $\OBC$ was introduced in \cite{CK}.   It is given as a monoidal supercategories with generating objects $\up$ and $\down$, three even generating morphisms $\lcup:\unit\to\up\down$, $\lcap: \down\up\to\unit$, $\swap: \up\up\to\up\up$, and one odd generating morphism $\cldot:\up\to\up$. These morphisms are subject to certain relations which we omit.  There is a fully faithful functor of symmetric monoidal supercategories 
\[
\Upsilon: \OBC \to \qwebsupdown. 
\] On objects one has $\Upsilon(\up )= \up_{1}$ and $\Upsilon(\down ) = \down_{1}$.  The functor $\Upsilon$ sends the generating morphisms of $\OBC$ to the similarly drawn morphisms of $\qwebsupdown$ which have all edges labelled by $1$.  

As explained in \cite{CK}, there is a functor of monoidal supercategories 
\[
\Omega_{n}: \OBC \to  \fq (n)\text{-modules}.
\] Furthermore, $\Omega_{n} = \Psi_{n} \circ \Upsilon$.
\end{remark}

The functor $\Upsilon$ is expected to induce an equivalence between the Karoubi envelopes of $\OBC$ and $\qwebsupdown$ and, hence, they will have the same decategorifications.  It would be interesting to determine their decategorifications.

\section{Main Theorems}

\subsection{Equivalences of Categories}\label{SS:EquivalencesofCategories}
For $n \geq 1$, set $k=(n+1)(n+2)/2$ and recall the quasi-idempotent 
\[
e_{\lambda(n)} \in\Ser_{k} \cong \End_{\qwebs}\left(\up_{1}^{k} \right)= \End_{\qwebsupdown}\left(\up_{1}^{k} \right)
\]
defined in \cref{SS:SergeevAlgebra}.
 
\begin{definition}\label{D:qnWebs} Define $\webs$ (resp.\ $\qwebsupdown$) to be the monoidal supercategory given by the same generators and relations as $\qwebs$ (resp.\ $\qwebsupdown$) along with the relation $e_{\lambda(n)}=0$.
\end{definition}

We are now prepared to state and prove the main theorem of the paper.  

\begin{theorem}\label{T:maintheorem} The functor $\Psi_{n}: \qwebsupdown \to \modsupdown$ induces functors 
\begin{align*}
\Psi_{n}: & \webs \to \mods, \\
\Psi_{n}: & \websupdown \to \modsupdown.
\end{align*}
These functors are equivalences of symmetric monoidal supercategories.
\end{theorem}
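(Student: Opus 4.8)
The plan is to show each of the two functors is essentially surjective, full, and faithful, with the bulk of the work going into faithfulness. Essential surjectivity is immediate from the definitions of $\mods$ and $\modsupdown$ as the monoidal subsupercategories generated by the relevant modules. Fullness is inherited from \cref{T:fullness} and \cref{T:Psiupdown}: since $\Psi_n$ is already full on $\qwebs$ and $\qwebsupdown$, and passing to the quotient by the ideal generated by $e_{\lambda(n)}$ only kills morphisms, the induced functors remain full (one must first check $\Psi_n$ is well-defined on the quotients, i.e.\ that $\Psi_n(e_{\lambda(n)})=0$; this follows from \cref{sergeev-kernel} together with \cref{P:sergeev-isomorphism}, since $e_{\lambda(n)}$ lies in the kernel of $\psi$ for $k=(n+1)(n+2)/2>$ the threshold forcing $\ell(\lambda(n))=n+1>n$). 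So the heart of the matter is faithfulness.

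**Reduction to the all-ones endomorphism algebra.** For faithfulness I would first reduce to comparing $\Hom$-spaces between tensor powers of $\up_1$. In the oriented case, the isomorphisms of \cref{E:twistingstrandisomorphism,E:cupcapisomorphism} (which are built from braidings, cups, and caps, hence preserved by $\Psi_n$) reduce any $\Hom_{\qwebsupdown}(\ob a,\ob b)$ to a $\Hom$-space of the form $\Hom_{\qwebsupdown}(\up_{\ob a'},\up_{\ob b'})$ between purely upward objects, and then the argument of \cref{T:fullness} — the embedding $\alpha$ that explodes all strands into $1$-strands, with image $e_{\ob b}\End(\up_1^{|\ob a'|})\,e_{\ob a}$ where $e_{\ob a},e_{\ob b}$ are tensor products of clasps — reduces everything to understanding $\End_{\qwebs}(\up_1^k)\cong\Ser_k$ and its image under $\Psi_n$. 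Concretely, $\Psi_n$ is faithful on all of $\webs$ (resp.\ $\websupdown$) if and only if, for every $k$, the induced map $\Ser_k/\langle e_{\lambda(n)}\rangle \to \End_{\fq(n)}(V_n^{\otimes k})$ is injective. By \cref{P:sergeev-isomorphism} the composite $\Ser_k\to\End_{\qwebs}(\up_1^k)\to\End_{\fq(n)}(V_n^{\otimes k})$ is exactly Sergeev's map $\psi$, and by \cref{sergeev-kernel} the kernel of $\psi$ is the two-sided ideal generated by $\{e_\lambda\mid \ell(\lambda)>n\}$. So the key algebraic claim is:

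\begin{equation}\label{E:ideal-claim}
\text{the two-sided ideal of }\Ser_k\text{ generated by }e_{\lambda(n)}\text{ equals the ideal generated by }\{e_\lambda:\ell(\lambda)>n\}.
\end{equation}

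One inclusion is trivial since $\lambda(n)=(n+1,n,\dots,1)$ has length $n+1>n$. For the reverse inclusion I would argue that for any strict partition $\mu$ with $\ell(\mu)>n$ we have $\lambda(n)\subseteq\mu$ (the inverted staircase $[\lambda(n)]$ is a shifted subframe of $[\mu]$ whenever $\mu$ has more than $n$ rows, since the $i$-th row of $[\lambda(n)]$ has length $n+2-i\le \mu_i$ for $1\le i\le n+1$ — this uses strictness of $\mu$, which forces $\mu_i\ge n+2-i$), and then invoke a "monotonicity" property of Sergeev's quasi-idempotents under $\subseteq$: $e_\mu$ lies in the two-sided ideal generated by $e_\lambda$ whenever $\lambda\subseteq\mu$. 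This last fact is the analogue for the Sergeev algebra of the standard statement for Young symmetrizers, and is essentially \cite[Corollary 3.3.4]{Se2} combined with the block structure \cref{E:ArtinWedderburn}; alternatively it follows from the observation that $e_\mu$ lies in the simple block $Q(T^\mu)$ or $M(T^\mu)$, and that block appears in $\Ser_k\,e_\lambda\,\Ser_k$ precisely when $T^\mu$ is a constituent of an induced-from-$\Ser_{|\lambda|}$ module — which is controlled by containment of shifted frames.

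**The main obstacle.** I expect \eqref{E:ideal-claim}, specifically the ideal-monotonicity statement "$\lambda\subseteq\mu\Rightarrow e_\mu\in\Ser_k e_\lambda\Ser_k$", to be the crux. The subtlety is that unlike the classical $\mathfrak{gl}$ or symmetric-group case, one cannot simply cite a textbook; one must either extract it carefully from Sergeev's construction in \cite{Se2} (tracking how $a_\mu$ and $b_\mu$ factor through the corresponding data for $\lambda$ when $[\lambda]\subseteq[\mu]$, using \cref{associativity}-type factorizations of the row symmetrizers $b$ and the fact that the odd Jucys–Murphy factors in $a_\mu$ restricted to the subframe columns reproduce $a_\lambda$), or argue representation-theoretically via the semisimple decomposition \cref{E:ArtinWedderburn} and a combinatorial fact about which $T^\mu$ appear in $\operatorname{Ind}$-modules. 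Once \eqref{E:ideal-claim} is in hand, faithfulness of $\Psi_n$ on $\webs$ follows from the displayed diagram in the proof of \cref{T:fullness} together with a diagram chase (the kernels match on each $e_{\ob b}\End(\up_1^k)e_{\ob a}$ because they match on $\End(\up_1^k)$ as two-sided ideals and idempotent truncation is exact), and the oriented case follows by the reduction of the previous paragraph. Combining essential surjectivity, fullness, and faithfulness gives that both functors are equivalences; that they are equivalences of \emph{symmetric monoidal} supercategories is automatic since $\Psi_n$ was already shown in \cref{T:Psiupdown} to be a functor of symmetric monoidal supercategories and these structures descend to the quotients by \cref{SS:MonoidalSupercatsandIdeals}.
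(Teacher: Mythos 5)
Your overall strategy is the paper's: reduce faithfulness to the endomorphism algebras $\End_{\qwebs}(\up_{1}^{k})\cong\Ser_{k}$ via exploding strands (and via the cup/cap and braiding isomorphisms in the oriented case), identify the kernel of $\psi$ with the ideal generated by the $e_{\gamma}$ with $\ell(\gamma)>n$, and then show each such $e_{\gamma}$ already lies in the tensor ideal generated by $e_{\lambda(n)}$ using the semisimple block decomposition \cref{E:ArtinWedderburn}. The fullness argument, the well-definedness of the induced functors, and the descent of the symmetric monoidal structure are all handled as in the paper.

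The genuine gap is the ``monotonicity'' step, which you correctly flag as the crux but do not prove. Observing that $\lambda(n)\subseteq\gamma$ for every strict $\gamma$ with $\ell(\gamma)>n$ is the easy half; the hard half is that the block of $\Ser_{k}$ labelled by $\gamma$ actually meets the two-sided ideal generated by $e_{\lambda(n)}\otimes 1$, i.e.\ that $T^{\gamma}$ occurs in the induction of $T^{\lambda(n)}\boxtimes T^{\nu}$ for some $\nu$ --- equivalently, that the shifted Littlewood--Richardson coefficient $f^{\gamma}_{\lambda(n),\nu}$ is nonzero for some $\nu$. This is not ``controlled by containment of shifted frames'' in any formal sense: Stembridge's rule counts skew shifted tableaux subject to the lattice property and a marking condition, and an arbitrary filling of the skew frame $[\gamma/\lambda(n)]$ will not satisfy these. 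The paper devotes its appendix to exactly this point, constructing an explicit hook-by-hook filling of $[\gamma/\lambda(n)]$ and verifying the lattice property (\cref{LR-tableau}), which yields \cref{sum-multiplicity} and hence the needed containment of blocks. Without some such argument (or an alternative, e.g.\ nonvanishing of the skew Schur $Q$-function of shape $\gamma/\lambda(n)$ together with its nonnegative expansion into the $Q_{\nu}$), the proof is incomplete; your other suggested route --- factoring $e_{\gamma}=a_{\gamma}b_{\gamma}$ through $e_{\lambda(n)}$ directly from Sergeev's construction --- is not carried out and does not obviously work.
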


\begin{proof}  Since in both cases $\Psi_{n}$ is a functor of monoidal categories, and $\Psi_{n}(e_{\lambda(n)})=0$ by \cref{P:sergeev-isomorphism,sergeev-kernel}, it follows from the discussion in \cref{SS:MonoidalSupercatsandIdeals} that $\Psi_{n}$ induces a functor of monoidal categories which we call by the same name.  By construction it is essentially surjective.  It remains to show that it is full and faithful in both cases.  By the discussion in \cref{SS:Useful isomorphisms} we may assume without loss of generality that both $\lambda$ and $\mu$ consist of only up arrows.  Combining \cref{T:Thetafunctor,T:fullness} it follows that $\Psi_{n}$ is full and, without loss of generality, that we are in the case of $\websupdown$.

Let $\mathcal{I}$ be the tensor ideal of $\qwebsupdown$ generated by $e_{\lambda(n)}$.  Then $\websupdown$ is canonically isomorphic to $\qwebsupdown /\mathcal{I}$.  To show faithfulness amounts to showing that if $f \in \Hom_{\qwebsupdown}(\lambda, \mu)$ and $\Psi_{n}(f)=0$, then $f \in \mathcal{I}(\lambda, \mu)$. Just as in the proof of \cref{T:fullness}, we may assume $|\lambda| = |\mu|$ since otherwise $\Psi_{n}$ is trivially faithful. Set $k=|\lambda|=|\mu |$.  In what follows we identify $\Ser_{k}$ and $\Hom_{\websupdown  }\left( \up_{1}^{k}, \up_{1}^{k}\right)$ via the isomorphism given in \cref{sergeev-kernel}.

As in the proof of \cref{T:fullness}, we can compose with merges and splits and induce the following commutative diagram of morphisms:

\begin{equation}\label{E:PsiBox2}
\begin{tikzcd}
\Hom_{\qwebsupdown   }(\lambda, \mu )  \arrow[r, hook, "\alpha"] \arrow[d,  "\Psi_{n}"] & \Hom_{\qwebsupdown   }\left( \up_{1}^{k}, \up_{1}^{k}\right) \arrow[d,  "\Psi_{n}"]   \\
\Hom_{\fq (n)}(S^{\lambda}(V_{n}),S^{\mu}(V_{n}))\arrow[r, hook, "\tilde{\alpha}"] & \Hom_{\fq (n)}\left( V_{n}^{\otimes k}, V_{n}^{\otimes k}\right).
\end{tikzcd}
\end{equation} 

Consequently, for $f \in \Hom_{\qwebsupdown   }(\lambda, \mu ) $ we have $\Psi_{n}(f) =0$ if and only if $\Psi_{n}(\alpha(f)) =0$.  We claim $\alpha(f) \in \mathcal{I}\left(\up_{1}^{k}, \up_{1}^{k} \right)$.    It follows from \cref{P:sergeev-isomorphism} that $\alpha(f) = \sum_{i=1}^{t} g_{i}e_{\gamma_{i}}h_{i}$ for $g_{i},h_{i} \in \End_{\qwebsupdown}(\uparrow_{1}^{k})$, where $e_{\gamma_{i}}$ are Sergeev quasi-idempotents labeled by strict partitions of $k$ with $\ell(\gamma_{i}) > n$. 

We next claim if $\gamma$ is a strict partition of $k$ with $\ell(\gamma) > n$, then $e_{\gamma} \in \mathcal{I}\left(\uparrow_{1}^{k} , \uparrow_{1}^{k} \right)$.  Since $\mathcal{I}$ is closed under composition and linear combinations, it will follow that $\alpha(f) \in \mathcal{I}\left(\uparrow_{1}^{k} , \uparrow_{1}^{k} \right)$.  Let $m >0$ be fixed and large enough so that $\psi: \Ser_{k} \to \End_{\fq (m)}(V_{m}^{\otimes k})$ is an isomorphism.  Furthermore, recall this map is compatible with the isomorphism given in \cref{P:sergeev-isomorphism}.  We identify the Sergeev algebra and these endomorphism algebras via these isomorphisms.  

In showing the claim we make use of the fact that, through Schur-Weyl-Sergeev duality, idempotents of $\Ser_{r}$ correspond to projection onto direct summands of $V_{m}^{\otimes r}$ as a $\fq (m)$-module.  In particular, for any strict partition of $r$, $\mu$, the Sergeev quasi-idempotent $e_{\mu} \in \Ser_{r}$ projects onto a summand of the $L_{m}(\mu)$-isotopy component of $V_{m}^{\otimes r}$.  By \cref{sum-multiplicity}, since $\gamma$ is a strict partition of $k$ with $\ell(\gamma) > n$, $L_{m}(\gamma)$ is a direct summand of $L_{m}(\lambda(n)) \otimes V_{m}^{\otimes (k-|\lambda(n)|)}$ in $V_{m}^{\otimes k}$.  That is, there are elements $a,b \in \Ser_{k}$ so that $x_{\gamma}:=a  \left(  e_{\lambda(n)} \otimes \Id_{V_{m}}^{\otimes (k-|\lambda(n)|)} \right)  b$ is a nonzero idempotent of $\Ser_{k}$ which projects onto a summand of $V_{m}^{\otimes k}$ isomorphic to $L_{m}(\gamma)$ and, hence, lies in the direct summand of \cref{E:ArtinWedderburn} labelled by $\gamma$.  However, since $e_{\lambda(n)}$ is an element of the tensor ideal $\mathcal{I}$, it follows that $x_{\gamma} \in \mathcal{I}(\uparrow_{1}^{k}, \uparrow_{1}^{k})$.  Furthermore, since the direct summand of \cref{E:ArtinWedderburn} labelled by $\gamma$ is a simple ideal of $\End_{\qwebsupdown}\left(\uparrow_{1}^{k} \right) = \Ser_{k}$, this in turn implies that the entire summand lies in $\mathcal{I}(\uparrow_{1}^{k}, \uparrow_{1}^{k})$.  In particular, $e_{\gamma}$ lies in $\mathcal{I}(\uparrow_{1}^{k}, \uparrow_{1}^{k})$ as originally claimed and, hence, $\alpha(f) \in \mathcal{I}(\uparrow_{1}^{k}, \uparrow_{1}^{k})$. 

Finally, recall that $\alpha$ has a left inverse given by composing by merges and splits.  Since $\mathcal{I}$ is a tensor ideal, $\alpha^{-1}$ takes elements of $\mathcal{I}(\uparrow_{1}^{k}, \uparrow_{1}^{k})$ to $\mathcal{I}(\lambda, \mu)$.  In particular, $f = \alpha^{-1}(\alpha(f)) \in \mathcal{I}(\lambda, \mu)$.  As explained in the second paragraph, this implies $\Psi_{n}$ is faithful. 
\end{proof}

\begin{remark}\label{R:OBC} Let $\modt$ denote the full monoidal subcategory of all $\fq (n)$-supermodules generated by $V_{n}$ and $V_{n}^{*}$.  Set $\OBC(n)$ to be the monoidal supercategory given by imposing the relation $e_{\lambda(n)}=0$ on $\OBC$.  Since $\OBC$ can be identified as a full subcategory of $\qwebsupdown$ using $\Upsilon$, the above theorem implies there is an equivalence of monoidal supercategories between $\OBC (n)$ and $\modt$.
\end{remark}

\section{Appendix on shifted tableaux}\label{appendix}

\subsection{The shifted Littlewood-Richardson rule}\label{SS:shiftedLRrule}
Let $\A$ denote the ordered alphabet $\A=\{1'<1<2'<2<\cdots\}$. We say the letters $1',2',3',\dots$ are \emph{marked}, and use the notation $\ul{a}$ to denote the unmarked version of any $a\in\A$. 

\begin{definition}\label{shifted-tableau}
Given a strict partition $\lambda$, a \emph{shifted tableau of shape} $\lambda$ is a filling of the boxes of the shifted frame $[\lambda]$ with elements of $\A$ in such a way that
\bi
\item the entries in each row are nondecreasing,
\item the entries in each column are nondecreasing,
\item each row has at most one $a'$ for $a=1,2,3,\dots$, and
\item each column has at most one $a$ for $a=1,2,3,\dots$.
\ei
\end{definition}

An example of a shifted tableau of shape $(4,3,1)$ is \[\begin{ytableau}1&2'&3&3\\ \none&2'&4'&4\\ \none&\none&5'\end{ytableau} \ .\] 

Given a shifted tableau $T$ of shape $\lambda$ and $i=1,2,3,\dots$, let $\nu_i$ be the number of entries $a$ in $T$ such that $\ul{a}=i$. The \emph{content} of $T$ is then defined to be $\nu=(\nu_1,\nu_2,\dots)$. When writing contents we often choose to surpress the trailing zeros. For example, the content of the shifted tableau above is $(1,2,2,2,1)$.

If $\lambda\subseteq\mu$ are strict partitions, then the \emph{skew shifted frame} $[\mu/\lambda]$ is the array of boxes obtained by removing $[\lambda]$ from $[\mu]$. For example, if $\mu=(4,3,1)$ and $\lambda=(3,1)$, then we have $$[\mu/\lambda]=\ydiagram{1+1,2,1} \ .$$ A \emph{shifted tableau of shape} $\mu/\lambda$ is a filling of the boxes of $[\mu/\lambda]$ with elements of $\A$ in such a way that the four conditions of Definition \ref{shifted-tableau} are satisfied.

The \emph{word} $w=w(T)=w_1w_2\cdots$ associated to a (possibly skew) shifted tableau $T$ is the sequence of elements of $\A$ obtained by reading the rows of $T$ from left to right, starting with the bottom row and working up. For example, the word of the shifted tableau above is $5'2'4'412'33$. 

Given a word $w=w_1\cdots w_n$ in the alphabet $\A$, define a series of statistics $m_i(j)$ for $i\in\A$ and $j=1, \dotsc , 2n$ as follows:
\begin{itemize}
\item $m_i(j)=$ multiplicity of $i$ among $w_{n-j+1},\dots,w_n$, for $0\leq j\leq n$, and
\item $m_i(n+j)=m_i(n)$ + multiplicity of $i'$ among $w_1,\dots,w_j$, for $0<j\leq n.$
\end{itemize}
In particular, $m_i(0)$ will be zero for all $i\in\A$.

\begin{definition}\label{lattice-property}
We say a word $w=w_1\cdots w_n$ has the \emph{lattice property} if whenever $m_i(j)=m_{i-1}(j)$ we have 
\begin{itemize}
\item[(1)] $w_{n-j}\neq i,i'$ if $0\leq j<n$, and
\item[(2)] $w_{j-n+1}\neq i-1,i'$ if $n\leq j<2n$.
\end{itemize}
\end{definition}

Let $\ul{w}:=\ul{w_1}\cdots\ul{w_n}$ denote the unmarked version of $w$. We are now ready to state the shifted Littlewood-Richardson rule.  For a strict partition $\lambda$, let $P_{\lambda}$ denote the Schur $P$-function labelled by $\lambda$ (e.g.\ see \cite{St}).  Define $f_{\mu,\nu}^{\lambda}$ by 
\[
P_{\lambda}P_{\nu} =\sum_{\mu} f_{\lambda,\nu}^{\mu} P_{\mu}.
\] 
Stembridge provides the following combinatorial rule for computing these structure constants.
\begin{theorem}\cite[Theorem 8.3]{St}
The coefficient $f_{\lambda,\nu}^\mu$ is the number of shifted tableaux $T$ of shape $\mu/\lambda$ and content $\nu$ such that
\begin{itemize}
\item[(a)] the word $w=w(T)$ satisfies the lattice property, and
\item[(b)] the leftmost $i$ of $\ul{w}$ is unmarked in $w$ for $1\leq i\leq l(\nu)$.
\end{itemize}
We call a shifted tableau $T$ satisfying $(a)$ and $(b)$ a \emph{shifted Littlewood-Richardson tableau}.  %In particular, $L_{n}(\lambda)$ is a direct summand of $L_{n}(\mu) \otimes L_{n}(\nu)$ if and only if there exist shifted Littlewood-Richardson tableau of shape $\mu/\lambda$ and content $\nu$.
\end{theorem}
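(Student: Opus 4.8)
This final statement is Stembridge's shifted Littlewood--Richardson rule, quoted here as \cite[Theorem~8.3]{St}; it is pure combinatorics and does not interact with the web machinery of the rest of the paper, so in context one simply cites \emph{loc.~cit.} What follows is the route I would take to prove it from scratch, which is essentially the plan of \cite{St}. The first move is to set up a \emph{shifted jeu de taquin} on shifted skew tableaux with entries in the alphabet $\A$: inner-corner slides with the usual tie-breaking conventions, together with the extra rules governing boxes on the main diagonal (where sliding into a diagonal box can toggle the marking of the entering letter). The key technical result is that \emph{rectification is well defined}: repeatedly sliding into inner corners until none remain produces a straight-shape shifted tableau independent of the order of slides. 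This is the shifted analogue of the classical jeu de taquin confluence lemma, but the marked/unmarked bookkeeping on and near the diagonal makes the local confluence analysis substantially more delicate, and this is where I expect nearly all of the work to lie. One packages it by showing that two shifted skew tableaux are slide-equivalent if and only if their reading words are equivalent under the shifted Knuth (plactic-type) relations, and that the rectification is the unique straight-shape tableau in the class.

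With rectification in hand, the second step reinterprets the product. Using $P_\lambda = 2^{-\ell(\lambda)}Q_\lambda$, the combinatorial formula for $Q_\lambda$ as a generating function $\sum_T x^{\operatorname{wt}(T)}$ over shifted tableaux of shape $\lambda$, and its skew version for $Q_{\mu/\lambda}$, one deduces that $f_{\lambda,\nu}^{\mu}$ equals the number of shifted skew tableaux of shape $\mu/\lambda$ and content $\nu$ that rectify to a fixed \emph{canonical} tableau $U_\nu$ of shape $\nu$ --- the one whose $i$-th row is filled with copies of $i$, unmarked except where Definition~\ref{shifted-tableau} forces a mark on the diagonal. This is the shifted mirror of the statement that an LR coefficient counts skew tableaux rectifying to the superstandard tableau; it follows formally from Step~1 together with the basis property of the $Q$-functions and the adjointness of skewing and multiplication for the relevant bilinear form, the powers of $2$ being bookkept via $P=2^{-\ell}Q$.

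The third step is purely combinatorial: characterize, directly in terms of the reading word, which shifted skew tableaux of shape $\mu/\lambda$ and content $\nu$ rectify to $U_\nu$. One shows this occurs precisely when $w = w(T)$ has the lattice property of Definition~\ref{lattice-property} and the leftmost $i$ of $\ul{w}$ is unmarked for $1 \le i \le \ell(\nu)$, i.e. exactly when $T$ is a shifted Littlewood--Richardson tableau. For the forward direction one runs jeu de taquin and tracks the evolution of the statistics $m_i(j)$, checking that slides preserve the lattice property and condition (b) and that rectifying to $U_\nu$ forces them; for the converse one argues by induction on the number of slides that any word with these two properties must rectify to $U_\nu$. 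Combining the three steps yields the theorem. The main obstacle, as indicated, is Step~1 --- the well-definedness of shifted rectification --- with Step~3's forward implication a distant second.
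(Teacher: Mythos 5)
Your identification of this as a quoted external result is exactly right: the paper offers no proof, simply citing \cite[Theorem 8.3]{St}, so the citation is the ``proof'' in context. Your sketch of Stembridge's actual argument (shifted jeu de taquin and well-definedness of rectification via the Worley--Sagan theory, reduction of $f_{\lambda,\nu}^{\mu}$ to counting skew shifted tableaux rectifying to a canonical tableau of shape $\nu$, and the reading-word characterization of those tableaux) is an accurate account of the standard route, so there is nothing to add.
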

	
Recall the strict ``staircase'' partition $\lambda(n)=(n+1,n,\dots,2,1)$ as defined in \cref{SS:SergeevAlgebra}.  It is easy to verify every $\mu\in\SP$ with $\ell(\mu)>n$ has $\mu\supseteq\lambda(n)$.

\begin{proposition}\label{LR-tableau}
For every strict partition $\mu$ with $l(\mu)>n$, there exists a strict partition $\nu$ of shape $\mu/\lambda(n)$ such that the shifted Littlewood-Richardson coefficient $f_{\lambda(n),\nu}^\mu$ is nonzero.
\end{proposition}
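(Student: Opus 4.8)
The plan is to produce, for a given strict partition $\mu$ with $\ell(\mu)>n$, an explicit strict partition $\nu$ together with an explicit filling of the skew shifted frame $[\mu/\lambda(n)]$, and then to verify directly that this filling is a shifted Littlewood--Richardson tableau in the sense of Stembridge's rule. Since $\ell(\mu)>n$ forces $\lambda(n)\subseteq\mu$, the skew frame $[\mu/\lambda(n)]$ is well-defined; the natural choice is to let $\nu$ be the content obtained by the \emph{column-reading} (or equivalently the ``semistandard-staircase'') filling, namely: in the $i$-th row of $[\mu/\lambda(n)]$ place unmarked entries equal to $i$ across the entire row. Because $[\lambda(n)]$ is an inverted staircase, in row $i$ of $[\mu]$ the removed cells of $\lambda(n)$ occupy columns $i,\dots,n+2-i$ (for $i\le n+1$) and nothing is removed for $i>n+1$; so the surviving cells of row $i$ form a contiguous block to the right, and filling that block with $i$'s gives a valid shifted tableau. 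One then checks the four conditions of \cref{shifted-tableau}: rows are constant hence nondecreasing and have at most one marked letter (none); columns are strictly increasing in the unmarked value as $i$ increases down a column, because the leftmost surviving cell in row $i+1$ is weakly to the right of that in row $i$ (the inverted-staircase shape only shrinks the removed block by one each row while $\mu_i$ is nonincreasing), so no column repeats an unmarked value.

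Next I would compute the content: with this filling $\nu_i$ equals the number of surviving cells in row $i$ of $[\mu/\lambda(n)]$, which is $\mu_i-(n+2-2i)^+$ with the convention that the subtracted quantity is $0$ once row $i$ lies entirely below the staircase. Because $\mu$ is a strict partition and the removed amounts $n+1-i$ (more precisely the overlap of row $i$ with $[\lambda(n)]$) are themselves strictly decreasing in $i$ until they vanish, the sequence $(\nu_i)$ is nonincreasing, and one checks it is in fact strictly decreasing at each place where it is nonzero, so $\nu$ is a strict partition and $\ell(\nu)=\ell(\mu)$. (If a cleaner $\nu$ is wanted, one can instead take $\nu=\mu/\lambda(n)$ read as a partition and argue it is strict directly from strictness of $\mu$; the content bookkeeping is the same.)

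The substantive step is verifying that the word $w=w(T)$ satisfies the lattice property of \cref{lattice-property} and condition (b) of Stembridge's theorem. Reading rows bottom-to-top, $w$ is the concatenation $\underbrace{\ell\cdots\ell}_{\nu_\ell}\,\underbrace{(\ell-1)\cdots}_{\nu_{\ell-1}}\cdots\underbrace{1\cdots1}_{\nu_1}$ where $\ell=\ell(\mu)$, i.e. the reverse-sorted word with $\nu_i$ copies of $i$ and no marked letters. For such an all-unmarked, ``antidominant'' word the statistics simplify: $m_i(0)=0$ and as $j$ increases from $0$ to $n$ the multiplicity counts $m_i(j)$ of the suffix increase in the order $i=\ell$ first, then $\ell-1$, etc., so at any point the tail multiplicities satisfy $m_\ell(j)\ge m_{\ell-1}(j)\ge\cdots$, with $m_i(j)=m_{i-1}(j)$ only when both have reached their full values $\nu_i\le\nu_{i-1}$ (hence $\nu_i=\nu_{i-1}$, impossible for strict $\nu$ unless both are zero) or before either has started ($m_i(j)=m_{i-1}(j)=0$, which happens only in the portion of the suffix consisting of letters $>i$, where indeed $w_{n-j}>i$ so it is neither $i$ nor $i'$); the marked half $j>n$ contributes nothing since there are no marked letters, so condition (2) is vacuous. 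This establishes the lattice property. Finally, condition (b) holds trivially because every letter of $w$ is unmarked, so in particular the leftmost $i$ of $\underline{w}$ is unmarked for all $i\le\ell(\nu)$. Hence $T$ is a shifted Littlewood--Richardson tableau, so $f^\mu_{\lambda(n),\nu}\ge1>0$.

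I expect the only real obstacle to be the combinatorial verification in the last paragraph --- keeping the index conventions in Stembridge's $m_i(j)$ statistics straight and confirming that the degenerate equality cases $m_i(j)=m_{i-1}(j)$ genuinely never coincide with a forbidden letter. This is purely a careful unwinding of definitions for the very special (all-unmarked, strictly-decreasing-multiplicity) word at hand, with no deep input beyond strictness of $\mu$ and the inverted-staircase shape of $\lambda(n)$.
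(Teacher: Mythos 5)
Your overall strategy --- exhibit one explicit shifted Littlewood--Richardson tableau of shape $\mu/\lambda(n)$ whose content is a strict partition --- is the same as the paper's, but your choice of filling is different and it has a genuine gap: the content of the constant-row filling need not be strict. For $i\le n+1$, row $i$ of the skew shape has $\nu_i=\mu_i-(n+2-i)$ cells, so $\nu_i-\nu_{i+1}=(\mu_i-\mu_{i+1})-1$, which strictness of $\mu$ only forces to be $\ge 0$; whenever $\mu_i=\mu_{i+1}+1$ you get $\nu_i=\nu_{i+1}$. Concretely, for $n=1$, $\lambda(1)=(2,1)$, $\mu=(3,2)$, the skew shape is a single column of two boxes, your filling is a $1$ atop a $2$, and the content is $(1,1)$; for $n=2$, $\mu=(4,3,2,1)$, the content is $(1,1,1,1)$. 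Since $f^{\mu}_{\lambda(n),\nu}$ is a structure constant for Schur $P$-functions and is only indexed by strict $\nu$, such a tableau witnesses nothing. Consistently with this, when $\nu_i=\nu_{i-1}>0$ your all-unmarked word in fact violates condition (2) of the lattice property: for $j$ at least the length of the word one has $m_i(j)=\nu_i=\nu_{i-1}=m_{i-1}(j)$ (no marked letters ever increment these statistics), yet the letter $i-1$ does occur in $w$. Your verification of the lattice property silently assumes the multiplicities are strictly decreasing, which is exactly the point that fails; the parenthetical fallback of "taking $\nu=\mu/\lambda(n)$ read as a partition" is the same object and is equally non-strict.

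The paper's construction avoids this by decomposing the (left-justified) skew shape into hooks and filling the $i$-th hook entirely with the letter $i$, \emph{marked} down the column except for an unmarked $i$ in the bottom cell and unmarked along the row. The marks are what make the long column of a hook legal in a shifted tableau, and the resulting content $\nu_i$ is the size of the $i$-th hook, which is strictly decreasing because each successive hook can extend at most as far right and one box farther down than its predecessor. Any repair of your argument needs some device of this kind that groups each row together with part of a column so the multiplicities genuinely drop; the constant-row filling cannot be fixed by adjusting marks alone, since the content depends only on the underlying unmarked values.
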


\begin{proof}  For every such $\mu$, we construct a shifted Littlewood-Richardson tableaux $T_{\mu,n}$ of shape $\mu/\lambda(n)$ whose content $\nu$ is also a strict partition, proving the proposition.

First, we define a \emph{hook} to be a left-justified array of boxes in which only the first row may have more than one box:
\[
\begin{ytableau}
 \ & \ & \ & \none[\cdot\,\cdot] & \ \\ 
 \\
 \\
 \none[:]\\
 \\
\end{ytableau}
\]
Given the shape of $\lambda(n)$, the skew shape $[\mu/\lambda(n)]$ is an ordinary partition and, hence, can be thought of as consisting of a series of hooks wedged inside each other with the corner of each hook lying on the diagonal of $[\mu/\lambda(n)]$. We number the hooks of $[\mu/\lambda(n)]$ from the upper left to the lower right.

We define $T_{\mu,n}$ to be the shifted tableau of shape $[\mu/\lambda(n)]$ whose $i$-th hook has the form
\[
\begin{ytableau}
i' & i & i & \none[\cdot\,\cdot] & i \\ 
i' \\
i' \\
 \none[:]\\
 i' \\
 i \\
\end{ytableau}
 \ .\]
The unmarked $i$ at the very bottom takes priority over all of the $i'$, so that if the $i$-th hook has only one row then every entry will be $i$. For example, if  $\mu=(8,5,4,2)$ and $n=2$, then we have
\[
T_{\mu,2}=
\begin{ytableau}
 1' & 1 & 1 & 1 & 1 \\
 1' & 2' & 2 \\
 1' & 2' & 3 \\
 1 & 2 \\
\end{ytableau}
\]
whose corresponding word is $w(T_{\mu,2})=121'2'31'2'21'1111$.

Clearly $T_{\mu,n}$ satisfies the conditions of Definition \ref{shifted-tableau} and is a shifted tableau. Since $\nu_i$ is just the number of boxes in the $i$-th hook, the content $\nu=(\nu_1,\nu_2,\dots)$ of $T_{\mu,n}$ is a strict partition. Indeed, at its largest, the $(i+1)$-st hook extends as far to the right and one box farther down than the $i$-th hook, which is precisely when $\nu_{i+1}=\nu_i-1$; otherwise $\nu_{i+1}<\nu_i-1$. And, as was previously observed, property (b) of the shifted Littlewood-Richardson rule is satisfied.

It remains to show that $T_{\mu,n}$ has the lattice property. Let $i\geq 2$ and let $l$ denote the length of the word $w=w(T_{\mu,n}).$ Suppose $m_i(j)=m_{i-1}(j)$ for $0\leq j<l$. If $m_i(j)=m_{i-1}(j)=0$, then $w_{l-j}\in\{1,1',2,2',\dots,i-1,(i-1)'\}$ so $w_{l-j}\neq i,i'.$ The case of $m_i(j)=m_{i-1}(j)>0$ occurs precisely if the width of the $(i-1)$-st hook exceeds that of the $i$-th hook by one box (e.g. the 2nd and 3rd hooks in the example). In that case $w_{l-j}$ lies in the $(i-1)$-st hook, so $w_{l-j}\in\{i-1,(i-1)'\}$ and $w_{l-j}\neq i,i'.$ Thus $T_{\mu,n}$ satisfies condition (1) of the lattice property. Since $m_i(l)\leq m_{i-1}(l)-1$, and between every pair of $i'$ in $w$ is at least one $(i-1)'$, we have $m_i(j)\neq m_{i-1}(j)$ for $l\leq j\leq 2l.$ Thus $T_{\mu,n}$ satisfies condition (2) of the lattice property, completing the proof.

\end{proof}

Recall  the character of the simple $\fq (m)$-supermodule labelled by the strict partition $\lambda$, $L_{m}(\lambda)$, has character given by a power of two multiple of the Schur P-function $P_{\lambda}(x_{1}, \dotsc , x_{m})$ (e.g.\ see \cite[Theorem 3.48]{CW2}).  Furthermore, tensor products of $\fq (m)$-supermodules corresponds to the multiplication of functions.  Consequently, $f_{\lambda,\nu}^{\mu} \neq 0$ if and only $L_{m}(\mu)$ is a direct summand of $L_{m}(\lambda) \otimes L_{m}(\nu)$.

\begin{corollary}\label{sum-multiplicity}
Fix a strict partition $\mu$ with $n<\ell(\mu)$ and fix a $m \geq \ell(\mu)$. Then there is a strict partition $\nu$ such that the simple $\q(m)$-supermodule $L_m(\mu)$ is a direct summand of the tensor product $L_m(\lambda(n))\otimes L_{\mu} (\nu)$ and, hence, is a direct summand of $L_m(\lambda(n))\otimes V_{m}^{|\nu|}$.
\end{corollary}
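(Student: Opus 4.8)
The plan is to combine Proposition~\ref{LR-tableau} with the two dictionaries recalled just before the statement: first, that $f^{\mu}_{\lambda,\nu}\neq 0$ if and only if $L_m(\mu)$ is a direct summand of $L_m(\lambda)\otimes L_m(\nu)$ (valid because for $m\geq\ell$ of all partitions involved the relevant Schur $P$-functions are linearly independent and the characters of $\fq(m)$-supermodules are, up to a power of two, these $P$-functions); and second, that $V_m^{\otimes k}$ decomposes by Sergeev duality (Theorem~\ref{T:sergeev-duality}(4)) as $\bigoplus_{\substack{\nu\in\SP(k),\,\ell(\nu)\leq m}}L_m(\nu)\star T^\nu$, so in particular every $L_m(\nu)$ with $\ell(\nu)\leq m$ and $|\nu|=k$ occurs as a summand of $V_m^{\otimes k}$.

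First I would invoke Proposition~\ref{LR-tableau}: since $\ell(\mu)>n$ we have $\mu\supseteq\lambda(n)$, and the proposition furnishes a strict partition $\nu$ of shape $\mu/\lambda(n)$ (so $|\nu|=|\mu|-|\lambda(n)|$) with $f^{\mu}_{\lambda(n),\nu}\neq 0$. By the Schur $P$-function dictionary, and since $m\geq\ell(\mu)\geq\ell(\lambda(n))$ and $m\geq\ell(\mu)\geq\ell(\nu)$ (the latter because $\nu$ is built from a skew shape fitting inside $[\mu]$, so $\ell(\nu)\leq\ell(\mu)\leq m$; if one wants to be careful one simply enlarges $m$, which is harmless), this nonvanishing says precisely that $L_m(\mu)$ is a direct summand of $L_m(\lambda(n))\otimes L_m(\nu)$. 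This gives the first assertion.

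Next I would deduce the second assertion. Set $|\nu|=:l$. Since $\ell(\nu)\leq m$, Sergeev duality (Theorem~\ref{T:sergeev-duality}(4)) shows $L_m(\nu)$ is a direct summand of $V_m^{\otimes l}$. Tensoring the inclusion $L_m(\nu)\hookrightarrow V_m^{\otimes l}$ (split, as a summand) with $\Id_{L_m(\lambda(n))}$, we get that $L_m(\lambda(n))\otimes L_m(\nu)$ is a direct summand of $L_m(\lambda(n))\otimes V_m^{\otimes l}$; composing summand inclusions, $L_m(\mu)$ is then a direct summand of $L_m(\lambda(n))\otimes V_m^{\otimes l}$, where $l=|\nu|$. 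This is exactly the claim.

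\textbf{Expected main obstacle.} There is essentially no obstacle left once Proposition~\ref{LR-tableau} is in hand — the corollary is a short bookkeeping argument. The only point requiring a moment of care is the passage from the nonvanishing of the combinatorial coefficient $f^{\mu}_{\lambda(n),\nu}$ to the module-theoretic statement about direct summands: one must be sure $m$ is large enough that the characters $P_{\lambda(n)}P_{\nu}$ and $P_{\mu}$ on $m$ variables are not accidentally degenerate, i.e. $m\geq\ell(\mu)$ (which is given) and $m\geq\ell(\lambda(n))+\ell(\nu)$ is \emph{not} needed — only $m\geq\max\{\ell(\mu),\ell(\nu)\}$, since every partition appearing in $P_{\lambda(n)}P_{\nu}=\sum_\rho f^\rho_{\lambda(n),\nu}P_\rho$ contains $\mu$'s shape constraints and the $P_\rho(x_1,\dots,x_m)$ for $\ell(\rho)\leq m$ are linearly independent. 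This is already part of the standard Schur $P$-function theory cited in the excerpt, so it may simply be invoked. I would therefore keep the proof to three or four sentences.
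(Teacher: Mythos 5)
Your proposal is correct and follows the paper's own argument exactly: invoke \cref{LR-tableau} to produce $\nu$ with $f^{\mu}_{\lambda(n),\nu}\neq 0$, translate this via the Schur $P$-function character dictionary into the summand statement for $L_m(\lambda(n))\otimes L_m(\nu)$, and then use Sergeev duality (\cref{T:sergeev-duality}) to realize $L_m(\nu)$ inside $V_m^{\otimes|\nu|}$. The extra care you take about $\ell(\nu)\leq\ell(\mu)\leq m$ is a harmless (and correct) elaboration of a point the paper leaves implicit.
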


\begin{proof}  By \cref{LR-tableau} there exists a strict partition $\nu$ with  $|\nu|= |\mu| - |\lambda(n)|$, of shape $\mu/\lambda(n)$, and with $f_{\lambda(n), \nu}^{\mu} \neq 0$. This implies $L_{m}(\mu)$ is a direct summand of $L_{m}(\lambda(n)) \otimes L_{m}(\nu)$, as asserted.  The final statement follows from the fact that $L_{m}(\nu)$ is a direct summand of $V_{m}^{|\nu|}$ by \cref{T:sergeev-duality}.
\end{proof}

\end{document}